\numberwithin{equation}{section}
\newtheorem{theorem}{Theorem}
\newtheorem{lemma}{Lemma}
\newtheorem{proposition}{Proposition}
\theoremstyle{definition}
\newtheorem{example}[theorem]{Example}
\theoremstyle{remark}
\newcommand {\sgn}{\mbox{sgn}}
\DeclareMathOperator{\N}{{\mathbb N}}                
\newcommand{\RN}[1]{\uppercase\expandafter{\romannumeral#1}}
\newcommand{\Rn}[1]{\romannumeral#1\relax}
\begin{document}
\title[Decay Rate Preservation of Regularly Varying ODEs and SDEs]
{On necessary and sufficient conditions for preserving convergence rates to equilibrium in deterministically and stochastically perturbed
differential equations with regularly varying nonlinearity}

\author{John A. D. Appleby}
\address{Edgeworth Centre for Financial Mathematics, School of Mathematical
Sciences, Dublin City University, Glasnevin, Dublin 9, Ireland}
\email{john.appleby@dcu.ie} \urladdr{webpages.dcu.ie/\textasciitilde
applebyj}

\author{Denis D. Patterson}
\address{School of Mathematical
Sciences, Dublin City University, Glasnevin, Dublin 9, Ireland}
\email{denis.patterson2@mail.dcu.ie} \urladdr{sites.google.com/a/mail.dcu.ie/denis-patterson}

\thanks{John Appleby gratefully acknowledges Science Foundation
Ireland for the support of this research under the Mathematics
Initiative 2007 grant 07/MI/008 ``Edgeworth Centre for Financial
Mathematics''. Denis Patterson is supported by the Government of Ireland Postgraduate Scholarship Scheme operated by the Irish Research Council under the project ``Persistent and strong dependence in growth rates of solutions of stochastic and deterministic functional differential equations with applications to finance'', GOIPG/2013/402.} \subjclass{34D05; 34D20; 93D20; 93D09}
\keywords{asymptotic stability,
global asymptotic stability, fading perturbation, regular variation}
\date{22 December 2013}

\begin{abstract}
This paper develops necessary and sufficient conditions for the preservation of asymptotic convergence rates of deterministically and stochastically perturbed ordinary differential equations with regularly varying nonlinearity close to their equilibrium. Sharp conditions are also established which preserve the asymptotic behaviour of the derivative of the underlying unperturbed equation. Finally, necessary and sufficient conditions are established which enable finite difference approximations to the derivative in the stochastic equation to preserve the asymptotic behaviour of the derivative of the unperturbed equation, even 
though the solution of the stochastic equation is nowhere differentiable, almost surely. 
\end{abstract}

\maketitle

\begin{center}
This paper is dedicated to Professor Istv\'an Gy\H{o}ri on the occasion of his 70th birthday.
\end{center}

\section{Introduction}
In this paper we classify the rates of convergence to a limit of the solutions of scalar ordinary and stochastic differential equations
of the form 
\begin{equation} \label{eq.odepert}
x'(t)=-f(x(t))+g(t), \quad t>0; \quad x(0)=\xi,
\end{equation}
and 
\begin{equation} \label{eq.sde}
dX(t)=-f(X(t))\,dt + \sigma(t)\,dB(t), \quad t\geq 0,
\end{equation}
where $B$ is a one--dimensional standard Brownian motion. The asymptotic behaviour of the derivative in the case of \eqref{eq.odepert}, and 
the scaled increment \\ $\left(X(t+h)-X(t)\right)/h$ for fixed $h>0$, in the case of \eqref{eq.sde}, is also classified. 

We assume that the unperturbed equation
\begin{equation} \label{eq.ode}
y'(t)=-f(y(t)), \quad t>0; \quad y(0)=\zeta
\end{equation}
has a unique globally stable equilibrium (which we set to be at zero). This is characterised by the condition
\begin{equation} \label{eq.fglobalstable}
xf(x)>0 \quad\text{for $x\neq 0$,} \quad f(0)=0.
\end{equation}
In order to ensure that \eqref{eq.ode}, \eqref{eq.odepert} and \eqref{eq.sde} have
continuous solutions, we assume
\begin{equation} \label{eq.fgsigmacns}
  f\in C(\mathbb{R};\mathbb{R}), \quad g\in C([0,\infty);\mathbb{R}), \quad \sigma\in C([0,\infty);\mathbb{R}).
\end{equation}
The condition \eqref{eq.fglobalstable} ensures that any solution of \eqref{eq.odepert} or \eqref{eq.sde} is \emph{global} i.e., that
\begin{align*}
\tau_D:=\inf\{t>0\,:\,x(t)\not\in (-\infty,\infty)\}=+\infty, \\ 
\tau_S=\inf\{t>0\,:\, X(t)\not\in (-\infty,\infty)\}=+\infty, \quad \text{a.s.}
\end{align*}
We also ensure that there is exactly one continuous solution of both \eqref{eq.odepert} and \eqref{eq.ode} by assuming
\begin{equation} \label{eq.floclip}
\text{$f$ is locally Lipschitz continuous on $\mathbb{R}$}.
\end{equation}
This condition ensures the existence of a unique continuous adapted process which obeys \eqref{eq.sde}. 

In \eqref{eq.ode}, \eqref{eq.odepert} and \eqref{eq.sde}, we assume that $f(x)$ does \emph{not} have linear leading order behaviour as $x\to 0$; 
moreover, we do not ask that $f$ forces solutions of \eqref{eq.ode} to hit zero in finite time. Since $f$ is continuous, we are free to define
\begin{equation} \label{def.F}
F(x)=\int_x^1 \frac{1}{f(u)}\,du, \quad x>0,
\end{equation}
and avoiding solutions of \eqref{eq.ode} to hitting zero in finite time forces
\begin{equation} \label{eq.Ftoinfty}
\lim_{x\to 0^+} F(x)=+\infty.
\end{equation}
We notice that $F:(0,\infty)\to\mathbb{R}$ is a strictly decreasing function, so it has an inverse $F^{-1}$. Clearly, \eqref{eq.Ftoinfty} implies that
\[
\lim_{t\to\infty} F^{-1}(t)=0.
\]
The significance of the functions $F$ and $F^{-1}$ is that they enable us to determine the rate of convergence of solutions of \eqref{eq.ode} to zero, because
$F(y(t))-F(\zeta)=t$ for $t\geq 0$ or $y(t)=F^{-1}(t+F(\zeta))$ for $t\geq 0$. It is then of interest to ask whether solutions of \eqref{eq.odepert} or of 
\eqref{eq.sde} will still converge to zero as $t\to\infty$, and to determine conditions (on $g$ and $\sigma$) under which the rate of decay of the solution of the underlying unperturbed equation \eqref{eq.ode} is preserved by the solutions of \eqref{eq.odepert} and \eqref{eq.sde}. 

In order to do this with reasonable generality we find it convenient and natural to assume that the function $f$ is regularly varying at zero.
We recall that a measurable function $f:(0,\infty)\to (0,\infty)$ with $f(x)>0$ for $x>0$ is said to be regularly varying at $0$ with index $\beta\in\mathbb{R}$ if 
\[
\lim_{x\to 0^+} \frac{f(\lambda x)}{f(x)}=\lambda^\beta, \quad \text{for all $\lambda>0$}.
\]
In the case that $f$ is regularly varying at zero with index $\beta>1$, the function $t\mapsto F^{-1}(t)$ is regularly varying at infinity with 
index $-1/(\beta-1)$. 
A measurable function $h:[0,\infty)\to [0,\infty)$ with $h(t)>0$ for $t\geq 0$ is said to regularly varying at infinity with index $\alpha\in\mathbb{R}$ if 
\[
\lim_{t\to\infty} \frac{h(\lambda t)}{h(t)}=\lambda^\alpha, \quad \text{for all $\lambda>0$}.
\]
We use the notation $f\in \text{RV}_0(\beta)$ and $h\in\text{RV}_\infty(\alpha)$. Many useful properties of regularly varying functions, including those employed here, are recorded in Bingham, Goldie and Teugels~\cite{BGT}.  

The main results of the paper give (essentially) necessary and sufficient conditions under which the asymptotic rate of decay of solutions of the perturbed 
equations are inherited from those of \eqref{eq.ode}. We consider first the deterministic equation \eqref{eq.odepert}. 
Suppose that $f$ is regularly varying at zero with index $\beta>1$, and is asymptotic at zero to an odd function. Suppose further that $g$ is continuous, and that it is known that $x(t)\to 0$ as $t\to\infty$. 
Then the following statements are equivalent
\begin{itemize}
\item[(a)] The functions $f$ and $g$ obey
\[
\lim_{t\to\infty} \int_0^t g(s)\,ds \text{ exists}, \quad \lim_{t\to\infty} \frac{\int_t^\infty g(s)\,ds}{F^{-1}(t)}=0;
\]
\item[(b)] There is $\lambda\in \{-1,0,1\}$ such that
\[
\lim_{t\to\infty} \frac{x(t)}{F^{-1}(t)}= \lambda.
\]
\end{itemize} 
The cases $\lambda=\pm 1$ reproduce the asymptotic behaviour of the solution $y$ of \eqref{eq.ode} according to whether the initial condition is positive or 
negative. The case $\lambda=0$ means that solutions of the perturbed equation decay more rapidly to zero than those of the unperturbed equation. We believe that this behaviour is rare, but it can arise for special perturbations. It is notable that this result does not require sign or pointwise conditions on the rate of 
decay of $g$; indeed, it can be shown that $g$ need not be absolutely integrable, a strictly weaker condition than the first part of condition (a). Indeed 
one can have that $\limsup_{t\to\infty} |g(t)|/\Gamma(t)=1$ for arbitrarily rapidly growing $\Gamma$, while solutions still obey condition (b). The asymptotic 
oddness of $f$ is assumed so as to ensure that convergence rates from both sides of the equilibrium are the same.  

Once the above result has been established, it is straightforward to characterise conditions under which the solution of \eqref{eq.odepert} and its derivative inherit the asymptotic behaviour of those of \eqref{eq.ode}. In that case, under the same hypotheses as above, we prove that the following statements are equivalent:
\begin{itemize}
\item[(c)] The functions $f$ and $g$ obey 
\[
\lim_{t\to\infty} \frac{g(t)}{f(F^{-1}(t))}=0; 
\]
\item[(d)] There is $\lambda\in \{-1,0,1\}$ such that
\[
\lim_{t\to\infty} \frac{x(t)}{F^{-1}(t)}= \lambda, 
\quad
\lim_{t\to\infty} \frac{x'(t)}{f(F^{-1}(t))}=-\lambda.
\]
\end{itemize}
We notice that solutions of \eqref{eq.ode} with positive initial condition obey (d) with $\lambda=1$, while those with negative initial condition obey (d) 
with $\lambda=-1$. The condition (c), in the case of positive $g$ and positive initial condition $\xi$, was employed in Appleby and Patterson~\cite{apppatt} to establish condition (a) (with $\lambda=1$). However, condition (a) shows that such a pointwise condition is merely sufficient, rather than necessary, to preserve the asymptotic behaviour of solutions of \eqref{eq.ode}. 

Corresponding results apply to the stochastic equation \eqref{eq.sde}. Once again we assume that $f$ is in $\text{RV}_0(\beta)$ for $\beta>1$ and further suppose that $f$ is asymptotic to an odd function at zero. We note first that if $\sigma\not\in L^2([0,\infty);\mathbb{R})$, then 
\[
\mathbb{P}\left[\lim_{t\to\infty} \frac{X(t)}{F^{-1}(t)} \text{ exists and is finite}  \right]=0.
\]
This corresponds to the necessity of the first part of condition (a) to preserve the rate of decay of solutions of \eqref{eq.ode} in the deterministic case. 
In the case when $\sigma\in L^2(0,\infty)$, we have a sharp characterisation of situations under which the solution of \eqref{eq.sde} inherits the decay rate of solutions of \eqref{eq.ode}. Define, for sufficiently large $t>0$ the function $\Sigma:[T,\infty)\to (0,\infty)$ by  
\[
\Sigma^2(t)=2\int_t^\infty \sigma^2(s)\,ds \log\log\left(\frac{1}{\int_t^\infty \sigma^2(s)\,ds}\right), \quad t\geq T,
\] 
and we suppose that 
\[
\mu:=\lim_{t\to\infty} \frac{\Sigma(t)}{F^{-1}(t)} \in  [0,\infty].
\]
Then $\mu\in (0,\infty]$ implies that 
\[
\mathbb{P}\left[\lim_{t\to\infty} \frac{X(t)}{F^{-1}(t)} \text{ exists and is finite} \right] =0 
\]
while $\mu=0$ implies that there exists a $\mathcal{F}^B(\infty)$--measurable random variable $\lambda$ such that $\mathbb{P}[\lambda\in \{-1,0,1\}]=1$ and 
\[
\mathbb{P}\left[\lim_{t\to\infty} \frac{X(t)}{F^{-1}(t)}=\lambda\right]=1.
\]
A result which is less explicit than the above, but parallel to the main result for \eqref{eq.odepert} is the following equivalence:
\begin{itemize}
\item[(e)] $\sigma$ and $f$ obey 
\[
\sigma\in L^2(0,\infty), \quad \lim_{t\to\infty} \frac{\int_t^\infty \sigma(s)\,dB(s)}{F^{-1}(t)}=0, \quad\text{a.s.};
\]
\item[(f)] There exists a $\mathcal{F}^B(\infty)$--measurable random variable $\lambda$ such that $\mathbb{P}[\lambda\in \{-1,0,1\}]=1$ and 
\[
\mathbb{P}\left[\lim_{t\to\infty} \frac{X(t)}{F^{-1}(t)}=\lambda\right]=1.
\]
\end{itemize}
Lastly, we establish a result analogous to the preservation of the asymptotic behaviour of \eqref{eq.ode} by the solution and derivative of \eqref{eq.odepert}. We suppose $\Psi$ is the complementary standard normal distribution function i.e.
\[
\Psi(x)=\frac{1}{\sqrt{2\pi}}\int_x^\infty e^{-y^2/2}\,dy, \quad x\in \mathbb{R}.
\] 
Then the following statements are equivalent:
\begin{itemize}
\item[(g)] For every $\epsilon>0$, 
\[
S_f(\epsilon,h)=\sum_{n=1}^\infty \Psi\left(\frac{\epsilon}
{
\frac{\sqrt{\int_{nh}^{(n+1)h} \sigma^2(s)\,ds}}{f(F^{-1}(nh))}
}\right)<+\infty;
\]
\item[(h)] There exists a $\mathcal{F}^B(\infty)$--measurable random variable $\lambda$ such that $\mathbb{P}[\lambda\in \{-1,0,1\}]=1$ and 
\[
\lim_{t\to\infty} \frac{X(t)}{F^{-1}(t)}=\lambda, \quad\text{a.s.}
\]
and for each $h>0$ 
\[
\lim_{t\to\infty} \frac{\frac{X(t+h)-X(t)}{h}}{f(F^{-1}(t))}=-\lambda, \quad \text{a.s.}
\]
\end{itemize}
It should be noted that this last result has a rather unexpected quality: remember first that provided $\sigma^2(t)>0$ for all $t\geq 0$, 
the sample paths of $X$ are differentiable nowhere with probability one. Therefore, we would not expect a finite difference approximation to 
the derivative of $X$ (which does not exist!) to have smooth asymptotic behaviour. But in fact, that is precisely what this last result predicts: 
if we take $h>0$ as small as we like and fixed, then provided that the noise $\sigma$ decays rapidly enough, the sample path \textit{observed regularly but not continuously} will appear asymptotically differentiable. 

We conjecture in fact that if for any $h_1>0$ we have $S_f(\epsilon,h_1)<+\infty$, then for any $h>0$ we have $S_f(\epsilon,h)<+\infty$ for all $\epsilon>0$, so 
the dependence on the ``step size'' $h$ is not as important as might be guessed from first sight.   

These asymptotic results are proven by constructing appropriate upper and lower solutions to the differential equation \eqref{eq.odepert} as in Appleby and Buckwar~\cite{appbuck}. In this paper, we prove a new result in which the solutions of \eqref{eq.odepert} and \eqref{eq.sde} are related to that of an 
``internally perturbed'' ordinary differential equation of the form $z'(t)=-f(z(t)+\gamma(t))$. The benefit gained from the added difficulty involved in bringing the perturbation inside the argument of the mean--reverting term is that the function $\gamma$ will typically have good pointwise behaviour (obeying for example 
$\gamma(t)\to 0$ or $\gamma(t)/F^{-1}(t)\to 0$ as $t\to\infty$), while the original forcing functions $g$ or $\sigma$ in \eqref{eq.odepert} and \eqref{eq.sde} 
may not have nice pointwise bounds. By means of this reformulation of the problem, we are able to determine a very fine characterisation of the desired asymptotic results. We also speculate that this approach may be very successful for dealing with highly nonlinear equations of the type \eqref{eq.odepert} or \eqref{eq.sde}
in which $f'(x)\to \infty$ as $x\to 0$ with $f$ being in $\text{RV}_0(1)$.     

The paper is a continuation of work by the authors on the deterministic equation \eqref{eq.odepert}, which only covers the case when $g$ is positive, and obeys pointwise asymptotic bounds, but which considered the asymptotic behaviour with respect to ``large'' perturbations. The principal achievement of that paper was 
therefore to give a complete description of ``positively'' perturbed equations in which the $g$ had regular asymptotic behaviour. The goal here, by contrast is to determine necessary and sufficient conditions for the preservation of convergence rates in the presence of more irregular perturbations: ones which on average may be small, but can possess ``large spikes''; perturbations which may oscillate (perhaps rapidly) between being positive and negative; and also stochastic perturbations of It\^o type, which as well as adding uncertainty, remove smoothness and natural monotonicity in the perturbation size. Despite these new complications, however, we are able to capture the key asymptotic features of the solutions. The general question of sharp conditions under which stability of 
perturbed equations  is preserved is examined by Strauss and Yorke in \cite{StrYork:1967,StrYork:1967b}. For other literature in this direction on limiting 
equations, consult the references in~\cite{JAJC:2011szeged}.

We mention some other connections of this work to research in the stochastic literature. The paper builds directly on work of the first author with Mackey 
\cite{appmack2003} which considers the asymptotic behaviour of \eqref{eq.sde} with $f(x)\sim a|x|^\beta\sgn(x)$ as $x\to 0$. In that work, it is shown for sufficiently rapidly decaying noise intensity that the solution inherits the decay rate of the underlying unperturbed ODE \eqref{eq.ode}. However, completely sharp necessary conditions for the preservation of this rate were not found in this earlier work, and the analysis was confined to the case of polynomial $f$.
Moreover, some information about the asymptotic behaviour of the derivative of $f$ close to zero was needed, and this has now been eliminated. Finally, results concerning the finite difference approximation to the ``derivative'' of $X$ were not presented in that work. Inspiration for the summation condition used to prove this result comes from the papers by Appleby, Cheng and Rodkina~\cite{JAJCAR:2011dresden,appchengrod:2012} which deal with the convergence to zero (but not the rate of convergence) of linear and nonlinear stochastic differential equations of the form \eqref{eq.sde}, and scrutiny of the proofs will show how similar arguments have been used to assist in determining the rate of decay, especially by means of an auxiliary affine SDE whose asymptotic behaviour can be determined by an essentially direct computation. Use of ``asymptotic oddness'' of mean--reverting functions in SDEs of the form \eqref{eq.sde} in order to symmetrise the dynamics can be seen in Appleby, Gleeson and Rodkina~\cite{JAJGAR:2009}, while a useful technical lemma concerning the asymptotic behaviour of the family of random variables $(\int_t^\infty \sigma(s)\,dB(s))_{t\geq 0}$ in the case when $\sigma$ is in $L^2([0,\infty);\mathbb{R})$ comes from another work of Appleby, Gleeson and Rodkina~\cite{JADJGAR:integral}. The asymptotic behaviour of discretisations of SDEs of the form \eqref{eq.sde} are studied in~\cite{appmackrod2008},
and some illustrative simulations of our results are given at the end of the paper. 

There is a nice literature on power--like dynamics in solutions of SDEs, and we invite the reader to consult 
those by Mao~\cite{MaoOx,Mao92}, Liu and Mao~\cite{LiuMao98,LiuMao:01a} and in Liu~\cite{Liu01} which deal with highly non--autonomous equations, as well 
as those of Zhang and Tsoi~\cite{ZhangTsoi96, ZhangTsoi97} and Appleby, Rodkina and Schurz \cite{ARS06} which are concerned with autonomous nonlinear equations.

The role of regular variation in the asymptotic analysis of the asymptotic behaviour of differential equations is a very active area. An important monograph summarising themes in the research up to the year 2000 is Maric~\cite{Maric2000}. Another important strand of research on the exact asymptotic behaviour of non--autonomous ordinary differential equations (of first and higher order) in which the equations have regularly varying coefficients has been developed. For recent contributions, see for example work of Evtukhov and co--workers (e.g., Evtukhov and Samoilenko~\cite{EvSam:2011}) and Koz\'ma~\cite{Kozma:2012}, as well as the references in these papers. These papers tend to be concerned with non--autonomous features which are \emph{multipliers} of the regularly--varying state dependent terms, in contrast to the presence of the nonautonomous term $g$ in \eqref{eq.odepert}, which might be thought of as \emph{additive}. Despite this extensive literature and active research concerning regular variation and asymptotic behaviour of ordinary differential equations, and despite the fact that our analysis deals with first--order equations only, it would appear that the results presented in this work are new.

The paper is organised as follows: a short section follows with notation. Section 3 outlines a result concerning the asymptotic behaviour of an equation of 
the form $x'(t)=-f(x(t)+\gamma(t))$ which turns out to be of great importance in establishing results for the solutions of perturbed equations. Its proof is involved, and deferred to Section 8. Section 4 states results concerning the deterministic equation \eqref{eq.odepert}; Section 5 is devoted to 
the stochastic equation~\ref{eq.sde}. Section 6 considers some ramifications of the results and presents examples, including those which demonstrate that the perturbations $g$ and $\sigma$ can have arbitrarily large extreme growth rates, but that solutions of the perturbed equations still inherit the dynamics of the 
underlying ODE \eqref{eq.ode}. Section 7 shows the results of some simulations of \eqref{eq.sde}. Section 9 contains the proofs deferred from Section 4. Section 10 presents most of the proofs concerning \eqref{eq.sde} postponed from Section 5. One proof from Section 5 is granted its own Section: Section 11 presents a result which characterises conditions under which the SDE preserves the asymptotic behaviour of the solution and derivative of \eqref{eq.ode}. Section 12, which  presents proofs of results stated in Section 6, concludes the paper.   

\section{Preliminaries}
In this section we introduce some common notation and list known properties of regular, slow and rapidly varying functions. We also discuss the hypotheses used in 
the paper, and then lay out and discuss the main results of the paper. 
\subsection{Notation and properties of regularly varying functions}
Throughout the paper, the set of real numbers is denoted by $\mathbb{R}$. We let $C(I;J)$ stand for the space of continuous functions which map $I$ onto $J$, where $I$ and $J$ are typically intervals in $\mathbb{R}$. Similarly, the space of differentiable functions with continuous derivative mapping $I$ onto $J$ is denoted by $C^1(I;J)$. If $h$ and $j$ are real--valued functions defined on $(0,\infty)$ and $\lim_{t\to\infty} h(t)/j(t)=1$, we sometimes use the standard asymptotic notation $h(t)\sim j(t)$ as $t\to\infty$. 
We denote the space of (absolutely) integrable functions $h:[0,\infty)\to \mathbb{R}$ which obey $\int_0^\infty |h(t)|\,dt<+\infty$ by $L^1([0,\infty);\mathbb{R})$, and the space of square integrable functions $j:[0,\infty)\to \mathbb{R}$ which obey $\int_0^\infty |h(t)|^2\,dt<+\infty$ by $L^2([0,\infty);\mathbb{R})$. 

Throughout the paper, when we work with stochastic equations, we assume that we are working  on a complete filtered probability space  $(\Omega,\mathcal{F},(\mathcal{F}(t))_{t\geq 0},\mathbb{P})$. The abbreviation \emph{a.s.} stands for
\emph{almost surely}. $B=\{B(t);t\geq 0\}$ is a standard Brownian motion adapted to $(\mathcal{F}(t))_{t\geq 0}$, and in fact, as we choose deterministic 
initial conditions for the solutions of the stochastic equations studied, there is no loss in setting the filtration to be the one naturally generated by $B$: 
\[
\mathcal{F}(t)=\mathcal{F}^B(t)=\sigma\{ B(s); 0\leq s\leq t\}, \quad t\geq 0.
\]
In our analysis, we consider the stochastic differential equation \eqref{eq.sde} with deterministic initial condition $\xi$. For simplicity, we assume throughout that $f$ is locally Lipschitz continuous and obeys $xf(x)>0$ for $x\neq 0$. 

At various points, properties of regularly varying functions are employed. We ask the reader to consult the monograph~\cite{BGT} for these results. Alternatively,
our recent preprint~\cite{apppatt} which concerns the asymptotic behaviour of \eqref{eq.odepert} with $g$ positive incorporates a self--contained section devoted to all relevant properties of regular variation used in these works.  

\section{Asymptotic Behaviour for Ordinary Differential Equations with Internal Perturbations} 
\subsection{Main result and discussion} 
In this section, we deduce the asymptotic behaviour of the ordinary differential equation 
\begin{equation} \label{eq.odeinternal}
x'(t)=-f(x(t)+\gamma(t)), \quad t>0; \quad x(0)=\xi.
\end{equation}
We demonstrate that when the ``internal'' perturbation $\gamma$ decays to zero so rapidly that 
\begin{equation} \label{eq.gammadivFinv}
\lim_{t\to\infty} \frac{\gamma(t)}{F^{-1}(t)}=0,
\end{equation}
and the solution of \eqref{eq.odeinternal} tends to zero as $t\to\infty$, the asymptotic behaviour of \eqref{eq.ode} is preserved.
\begin{theorem} \label{Thm:Lim}
Let $\gamma$ be continuous and $x$ be the continuous solution of \eqref{eq.odeinternal}. Suppose that  
\begin{align} \label{asym}
\text{There exists $\phi$ such that }
\lim_{x \to 0}\frac{f(x)}{\phi(x)}=1, \quad \text{$\phi$ is odd on $\mathbb{R}$}.
\end{align}
and 
\begin{align} \label{RVat0}
f \in RV_0(\beta), \quad \beta>0, \quad \lim_{x \to 0^+}\frac{f(x)}{x}=0, \quad f(0)=0.
\end{align}
with $\beta>1$, $\gamma$ obeys \eqref{eq.gammadivFinv} and that $\lim_{t \to \infty}x(t)=0$. Then 
\begin{align*}
\lim_{t \to \infty} \frac{|x(t)|}{F^{-1}(t)} = 0 \text{ or } 1.
\end{align*}
\end{theorem}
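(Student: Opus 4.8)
The plan is to analyse the one-sided ratio $x(t)/F^{-1}(t)$ directly, showing that it must converge to $1$ as soon as it does not tend to $0$. The only machinery needed is: comparison of $x$ with the explicit family $t\mapsto F^{-1}(at+b)$, which solves $w'=-af(w)$; the substitution $t\mapsto F(x(t))$; the uniform convergence theorem for regularly varying functions; and \eqref{asym}, used to transfer conclusions from the positive to the negative side of the equilibrium. Throughout one exploits that $\beta>1$ makes $F^{-1}\in RV_\infty(-1/(\beta-1))$ and hence $F\in RV_0(-(\beta-1))$, so that $F(\kappa F^{-1}(t))/t\to\kappa^{-(\beta-1)}$ and $F^{-1}(\kappa t)/F^{-1}(t)\to\kappa^{-1/(\beta-1)}$ for each $\kappa>0$.

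First I would prove a lower bound: if $\limsup_{t\to\infty}x(t)/F^{-1}(t)>0$ then $\liminf_{t\to\infty}x(t)/F^{-1}(t)>0$. Fix $c\in(0,\limsup x/F^{-1})$ and set $a:=c^{-(\beta-1)}+1>1$. There are arbitrarily large times $t_0$ with $x(t_0)>cF^{-1}(t_0)$, hence with $F(x(t_0))\le F(cF^{-1}(t_0))\sim c^{-(\beta-1)}t_0<at_0$ once $t_0$ is large. For such a $t_0$ consider $\underline x(t):=F^{-1}\bigl(a(t-t_0)+F(x(t_0))\bigr)$ on $[t_0,\infty)$; since $F(x(t_0))\le at_0$ one has $a(t-t_0)+F(x(t_0))\le at$, so $\underline x(t)\ge F^{-1}(at)$ and therefore $\underline x(t)$ stays of exact order $F^{-1}(t)$ as $t\to\infty$, while also $\underline x(t)\le x(t_0)\to 0$. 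Consequently $\gamma(t)/\underline x(t)=\bigl(\gamma(t)/F^{-1}(t)\bigr)\big/\bigl(\underline x(t)/F^{-1}(t)\bigr)\to 0$, and the uniform convergence theorem gives $f\bigl(\underline x(t)+\gamma(t)\bigr)/f\bigl(\underline x(t)\bigr)\to 1<a$; so $\underline x'=-af(\underline x)\le -f(\underline x+\gamma)$ eventually, and (after the care described in the last paragraph) on all of $[t_0,\infty)$, whence the comparison principle yields $x(t)\ge\underline x(t)$ for $t\ge t_0$. Letting $t\to\infty$ gives $\liminf x(t)/F^{-1}(t)\ge a^{-1/(\beta-1)}>0$.

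Next I would identify the limit. Suppose $\liminf x(t)/F^{-1}(t)>0$. Then $x(t)>0$ and $x(t)+\gamma(t)>0$ for $t$ large, and $\gamma(t)/x(t)=\bigl(\gamma(t)/F^{-1}(t)\bigr)\big/\bigl(x(t)/F^{-1}(t)\bigr)\to 0$, so by the uniform convergence theorem $\frac{d}{dt}F(x(t))=f\bigl(x(t)+\gamma(t)\bigr)/f\bigl(x(t)\bigr)\to 1$. Integrating, $F(x(t))/t\to 1$, hence $x(t)=F^{-1}\bigl(F(x(t))\bigr)=F^{-1}\bigl(t(1+o(1))\bigr)\sim F^{-1}(t)$. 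Combined with the previous step: if $\limsup x(t)/F^{-1}(t)>0$ then $\lim x(t)/F^{-1}(t)=1$, and in particular $x$ is eventually positive. Running the same two steps with $x$ replaced by $-x$ — which, because $f$ is asymptotic at $0$ to the odd function $\phi\in RV_0(\beta)$ with $\int_x^1 du/\phi(u)\sim F(x)$ as $x\to 0^+$, solves an equation of the form \eqref{eq.odeinternal} with $\gamma$ replaced by $-\gamma$ up to a factor $1+o(1)$ absorbed into the uniform convergence estimates — shows that if $\limsup(-x(t))/F^{-1}(t)>0$ then $\lim(-x(t))/F^{-1}(t)=1$. Finally, if $\limsup x(t)/F^{-1}(t)\le 0$ and $\limsup(-x(t))/F^{-1}(t)\le 0$, then $\limsup x(t)/F^{-1}(t)\le 0\le\liminf x(t)/F^{-1}(t)$, so $\lim x(t)/F^{-1}(t)=0$. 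In every case $\lim_{t\to\infty}|x(t)|/F^{-1}(t)\in\{0,1\}$.

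The main obstacle is the uniformity issue in the lower bound: making $\underline x$ a \emph{genuine} sub-solution on all of $[t_0,\infty)$ rather than only for large $t$, given that the anchor $t_0$ and the shift $b=F(x(t_0))-at_0$ are intertwined. One handles this by reparametrising via $s=a(t-t_0)+F(x(t_0))$, so that one must bound $f\bigl(F^{-1}(s)+\gamma(\tau(s))\bigr)\le af\bigl(F^{-1}(s)\bigr)$ for $s\ge F(x(t_0))$, where $\tau(s)=t_0+\bigl(s-F(x(t_0))\bigr)/a\ge s/a$ because $F(x(t_0))\le at_0$; then $|\gamma(\tau(s))|\le \bigl(\sup_{\sigma\ge t_0}|\gamma(\sigma)|/F^{-1}(\sigma)\bigr)F^{-1}(s/a)$ and $F^{-1}(s/a)\sim a^{1/(\beta-1)}F^{-1}(s)$, so $\gamma(\tau(s))/F^{-1}(s)$ is uniformly small in $s$ for $t_0$ large; a final application of the uniform convergence theorem for $f$ near $0$, with modulus chosen below $a-1$, closes the estimate — here the fact that $F(x(t_0))\to\infty$ as $t_0\to\infty$ is what lets the range $s\ge F(x(t_0))$ absorb any fixed threshold produced by regular variation. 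Tracking these uniformities through the parameter choices (and the symmetric super-solution bound used implicitly above) is the substance of the argument, and is why its proof is deferred to a later section.
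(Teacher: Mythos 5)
Your proposal is correct, and it takes a genuinely different route from the paper. The paper's proof of Theorem~\ref{Thm:Lim} is a four-step ratchet on $|x(t)|/F^{-1}(t)$: it first establishes a Dini-derivative differential inequality $D_+|x(t)|\le -\varphi_\epsilon(|x(t)|-\epsilon\Phi^{-1}(t))$ (Lemma~\ref{lemma.Diffineq}), then proves $\liminf\in\{0,1\}$, $\limsup\in\{0\}\cup[1,\infty]$, and $\limsup>0\Rightarrow\limsup=1$ by comparison against an explicit upper solution $\lambda(\epsilon)\Phi^{-1}(t)$ with $\lambda(\epsilon)=1+\frac{3\beta}{\beta-1}\epsilon$, and finally $\limsup=1\Rightarrow\liminf=1$ via a lower solution $\lambda(\epsilon)F^{-1}(t)$, $\lambda(\epsilon)=1-\frac{3\beta}{\beta-1}\epsilon$. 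You instead work with the signed ratio $x(t)/F^{-1}(t)$ and replace that machinery with two steps: (i) a \emph{crude} lower bound, comparing $x$ against $\underline{x}(t)=F^{-1}(a(t-t_0)+F(x(t_0)))$ for a possibly large $a$, which upgrades $\limsup>0$ to $\liminf>0$ in one stroke; (ii) the exact limit, obtained by noting that $\liminf x/F^{-1}>0$ forces $\gamma/x\to0$, so $\tfrac{d}{dt}F(x(t))=f(x+\gamma)/f(x)\to1$ and hence $F(x(t))/t\to1$ and $x(t)/F^{-1}(t)\to1$. The asymptotic oddness of $f$ then lets you run the identical argument on $-x$ (which satisfies the same type of equation with $\tilde f(u):=-f(-u)\sim\phi(u)\sim f(u)$ and $\tilde F^{-1}\sim F^{-1}$), and the case where both $\limsup x/F^{-1}\le0$ and $\limsup(-x)/F^{-1}\le0$ pins the limit at $0$. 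Step (ii) is essentially the paper's STEP 1 argument (the proof of Lemma~\ref{lemma.xliminf}), but your step (i) sidesteps the Dini-derivative lemma, the $\limsup$-gap lemma, and the $\epsilon$-tuned $(1\pm\frac{3\beta}{\beta-1}\epsilon)$ sub/super-solution constructions entirely, because you never need an \emph{exact} lower or upper bound — only that the lower bound be strictly positive. The cost, as you correctly identify, is the uniformity in the comparison step: one must make $\underline{x}$ a genuine subsolution on all of $[t_0,\infty)$ with $t_0$ running to infinity, which you handle by reparametrising to $s=a(t-t_0)+F(x(t_0))$, using $\tau(s)\ge s/a$ (from $F(x(t_0))\le at_0$), bounding $|\gamma(\tau(s))|/F^{-1}(s)\le\epsilon(t_0)\,F^{-1}(s/a)/F^{-1}(s)$ uniformly via regular variation of $F^{-1}$, and then invoking the uniform convergence theorem for $f\in RV_0(\beta)$ with $z=F^{-1}(s)\le x(t_0)\to0$. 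That argument closes, so your route is sound, and arguably shorter and cleaner than the paper's.
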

This proof is perhaps of independent interest, as it addresses the situation where the autonomous differential equation \eqref{eq.ode} is perturbed inside 
the argument of $f$ (as opposed to the more commonly studied external perturbation seen in \eqref{eq.odepert}, for example). However, it transpires that 
studying \eqref{eq.odeinternal} and employing Theorem~\ref{Thm:Lim} gives very useful information about the solution of both equations \eqref{eq.odepert} and 
\eqref{eq.sde}, and allows them to be analysed in a form which greatly facilitates the proof of necessary and sufficient conditions for preserving the  asymptotic behaviour of \eqref{eq.ode}.

\subsection{Application of Theorem~\ref{Thm:Lim} to \eqref{eq.odepert} and \eqref{eq.sde}}
We now explore how Theorem~\ref{Thm:Lim} can be applied to determine sufficient conditions for certain asymptotic decay in \eqref{eq.odepert} and \eqref{eq.sde}. Consider first the solution $x$ of \eqref{eq.odepert} which we suppose obeys $x(t)\to 0$ as $t\to\infty$. Introduce the function $u(t)=\int_0^t g(s)\,ds$ and assume that it tends to a finite limit as $t\to\infty$, which we call $u(\infty)$. We are therefore free to define $\gamma(t)=u(t)-u(\infty)$ for $t\geq 0$. Clearly, $\gamma$ is continuous and obeys $\gamma(t)\to 0$ as $t\to\infty$. Of course, $u'(t)=g(t)$. Consider now $z(t)=x(t)-u(t)+u(\infty)=x(t)-\gamma(t)$ for $t\geq 0$. Then $z$ is in $C^1((0,\infty);\mathbb{R})$ and we have that $z(t)\to 0$ as $t\to\infty$. Then $z(0)=\xi+\int_0^\infty g(s)\,ds=:\xi'$ and 
\[
z'(t)=x'(t)-u'(t)=-f(x(t))=-f(z(t)+\gamma(t)), \quad t\geq 0.
\]
Therefore, we see that if $\gamma(t)=\int_t^\infty g(s)\,ds$ obeys \eqref{eq.gammadivFinv}, we can apply Theorem~\ref{Thm:Lim} to $z$ to obtain $z(t)/F^{-1}(t)\to \lambda\in \{0,\pm1\}$ as $t\to\infty$. Then, as $\gamma$ obeys \eqref{eq.gammadivFinv}, we have that $x(t)=z(t)+\gamma(t)$ obeys $x(t)/F^{-1}(t)\to \lambda\in \{0,\pm1\}$ as $t\to\infty$. Therefore, we have established the following result.
\begin{theorem} \label{thm.detpressuff}
Suppose that $f$ is continuous, and obeys \eqref{asym} and \eqref{RVat0} for $\beta>1$. Let $g$ be a continuous function such that  
\begin{equation} \label{eq.intgdivF}
\lim_{t\to\infty} \int_0^t g(s)\,ds \text{ exists and is finite}, \quad 
\lim_{t\to\infty} \frac{\int_t^\infty g(s)\,ds}{F^{-1}(t)}=0. 
\end{equation} 
If the continuous solution $x$ of \eqref{eq.odepert} obeys $\lim_{t\to\infty} x(t)=0$, then 
\begin{equation} \label{eq.xdetperservasy}
\text{There exists a $\lambda\in \{0,\pm1\}=1$ such that } \lim_{t\to\infty} \frac{x(t)}{F^{-1}(t)}=\lambda.
\end{equation}
\end{theorem}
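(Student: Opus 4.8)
The plan is to reduce Theorem~\ref{thm.detpressuff} to Theorem~\ref{Thm:Lim} by absorbing the additive forcing $g$ into the argument of $f$. First I would set $u(t)=\int_0^t g(s)\,ds$; by the first half of \eqref{eq.intgdivF} this converges to a finite limit $u(\infty)$, so that $\gamma(t):=u(t)-u(\infty)=-\int_t^\infty g(s)\,ds$ is a well-defined continuous function with $\gamma(t)\to 0$ as $t\to\infty$, and moreover $\gamma(t)/F^{-1}(t)\to 0$ by the second half of \eqref{eq.intgdivF}. Hence $\gamma$ satisfies the hypothesis \eqref{eq.gammadivFinv} required by Theorem~\ref{Thm:Lim}.

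Next I would introduce $z(t)=x(t)-\gamma(t)$. Since $u\in C^1$ and $x$ solves \eqref{eq.odepert}, we have $z\in C^1((0,\infty);\mathbb{R})$, and differentiating gives $z'(t)=x'(t)-u'(t)=-f(x(t))+g(t)-g(t)=-f(z(t)+\gamma(t))$; that is, $z$ solves the internally perturbed equation \eqref{eq.odeinternal} with initial value $z(0)=\xi+\int_0^\infty g(s)\,ds$. Since both $x(t)\to 0$ and $\gamma(t)\to 0$, also $z(t)\to 0$ as $t\to\infty$. All the hypotheses of Theorem~\ref{Thm:Lim} now hold for $z$ — the conditions \eqref{asym} and \eqref{RVat0} on $f$ with $\beta>1$ are assumed in the statement — so $|z(t)|/F^{-1}(t)\to 0$ or $1$.

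Then I would upgrade this modulus statement to a signed one. If $|z(t)|/F^{-1}(t)\to 0$, then $z(t)/F^{-1}(t)\to 0$ trivially. If $|z(t)|/F^{-1}(t)\to 1$, then, since $F^{-1}(t)>0$ for all $t$, we have $z(t)\neq 0$ for all large $t$; by continuity $z$ cannot change sign there, so $z(t)/F^{-1}(t)\to 1$ or $z(t)/F^{-1}(t)\to -1$. In every case there is $\lambda\in\{0,\pm1\}$ with $z(t)/F^{-1}(t)\to\lambda$. Finally, writing $x(t)/F^{-1}(t)=z(t)/F^{-1}(t)+\gamma(t)/F^{-1}(t)$ and using $\gamma(t)/F^{-1}(t)\to 0$ gives $x(t)/F^{-1}(t)\to\lambda$, which is \eqref{eq.xdetperservasy}.

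The genuine difficulty lies entirely upstream: this whole reduction rests on Theorem~\ref{Thm:Lim}, whose proof (deferred to Section~8) is the substantive analytic work — constructing appropriate upper and lower solutions for the internally perturbed ODE and exploiting regular variation of $f$ and of $F^{-1}$ to pin the limit down to $\{0,1\}$. Granting that result, the steps above are essentially bookkeeping; the only points to watch are that the two clauses of \eqref{eq.intgdivF} translate cleanly into \eqref{eq.gammadivFinv} for this particular $\gamma$, and that continuity of $g$ is all that is needed for $z$ to be a genuine $C^1$ solution of \eqref{eq.odeinternal}.
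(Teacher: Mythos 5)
Your argument is exactly the paper's own: it sets $u(t)=\int_0^t g(s)\,ds$, defines $\gamma(t)=u(t)-u(\infty)$, shows $z=x-\gamma$ solves the internally perturbed equation $z'=-f(z+\gamma)$, and applies Theorem~\ref{Thm:Lim} before transferring the conclusion back to $x$ via $\gamma/F^{-1}\to 0$. The only point you make more explicit than the paper is the upgrade from the modulus statement $|z(t)|/F^{-1}(t)\to 0$ or $1$ to the signed statement $z(t)/F^{-1}(t)\to\lambda\in\{-1,0,1\}$ via continuity, which is a welcome clarification but not a different route.
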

We will see shortly that in the conditions $u(t)$ tends to a finite limit and $(u(\infty)-u(t))/F^{-1}(t)\to 0$ are not only sufficient to ensure the appropriate asymptotic behaviour, but are also necessary. 

We remark in the case that $g(t)$ is ultimately of one sign that the hypothesis $\lim_{t\to\infty} x(t)=0$ is unnecessary, because the first part of \eqref{eq.intgdivF} implies that $g$ is integrable, which suffices to prove under the other hypotheses, that $\lim_{t\to\infty} x(t)=0$.

We show also how Theorem~\ref{Thm:Lim} can assist in determining the asymptotic behaviour of \eqref{eq.sde}. We suppose that $\sigma\in L^2(0,\infty)$. In this case, by the martingale convergence theorem, it follows that the process 
\[
U(t)=\int_0^t \sigma(s)\,dB(s)
\]    
tends to a finite limit almost surely: we call this limit $U(\infty)$. Suppose that this occurs on the (a.s.) event $\Omega_1$; moreover, $t\mapsto U(t,\omega)$ can be taken to be continuous on this event. Let $\Omega_2$ be the a.s. event on which there is a well--defined continuous adapted process $X$ which solves \eqref{eq.sde}. Since $\sigma\in L^2(0,\infty)$, it is well--known that $X(t)\to 0$ as $t\to\infty$ a.s., and denote by $\Omega_3$ the almost sure event on which this convergence occurs. Now set $\Omega_4=\Omega_1\cap\Omega_2\cap\Omega_3$, which is also a.s. Consider 
$V(t)=X(t)-U(t)$ for $t\geq 0$, which is well--defined on $\Omega_4$. Then $V$ obeys
\[
V(t)=\xi -\int_0^t f(X(s))\,ds, \quad t\geq 0.
\]   
Since $f$ is continuous and $t\mapsto X(t)$ is continuous on $\Omega_4$, it follows that for each fixed outcome $\omega\in \Omega_4$ we have that $t\mapsto V(t,\omega)$ is in $C^1((0,\infty);\mathbb{R})$ and in fact
\[
V'(t,\omega)=-f(X(t,\omega)), \quad t\geq 0; \quad V(0,\omega)=\xi.
\]
Now, for each $\omega\in\Omega_4$, define $\gamma(t,\omega)=U(\infty,\omega)-U(t,\omega)$. Then, it follows that $\gamma(t,\omega)\to 0$ as $t\to\infty$ and 
that $t\mapsto\gamma(t,\omega)$ is continuous. Finally, for $\omega\in \Omega_4$, define $Z(t,\omega)=X(t,\omega)-U(t,\omega)+U(\infty,\omega)=X(t,\omega)-\gamma(t,\omega)$ for $t\geq 0$.
Then $Z(t,\omega)  \to 0$ as $t\to\infty$. Furthermore, because we can view $Z(t,\omega)=V(t,\omega)+U(\infty,\omega)$, we have that $t\mapsto Z(t,\omega)$ 
is in $C^1((0,\infty);\mathbb{R})$ and moreover 
\begin{align*}
Z'(t,\omega)&=V'(t,\omega)=-f(X(t,\omega))=-f(Z(t,\omega)+\gamma(t,\omega)), \quad t\geq 0; \\
 Z(0,\omega)&=\xi+\left(\int_0^\infty \sigma(s)\,dB(s)\right)(\omega)=:\xi'(\omega).
\end{align*}
Once again, we see that $t\mapsto Z(t,\omega)$ and $t\mapsto\gamma(t,\omega)$ obeys all conditions of Theorem~\ref{Thm:Lim}, provided that 
\[
\int_t^\infty \sigma(s)\,dB(s)=U(\infty)-U(t)
\]
obeys
\begin{equation} \label{eq.intsigdivF}
\lim_{t\to\infty} \frac{\int_t^\infty \sigma(s)\,dB(s)}{F^{-1}(t)}=0, \quad\text{a.s.}
\end{equation}
Suppose that this last limit is true on the a.s. event $\Omega_5$, and let $\Omega_6=\Omega_5\cap \Omega_4$. In that case, we have that $Z(t,\omega)/F^{-1}(t)\to \lambda(\omega)$ as $t\to\infty$, where $\lambda(\omega)\in \{0,\pm1\}$ for each $\omega\in \Omega_6$. Also, 
since $\gamma(t,\omega)/F^{-1}(t)\to 0$ as $t\to\infty$ for all $\omega\in \Omega_6$, we have that $X(t,\omega)/F^{-1}(t)\to \lambda(\omega)$ as $t\to\infty$ for every $\omega\in \Omega_6$. Since $\Omega_6$ is an almost sure event, we have that $X(t)/F^{-1}(t)\to \lambda$ as $t\to\infty$ a.s., where $\lambda$ must be 
a $\mathcal{F}^B(\infty)$--measurable random variable for which $\mathbb{P}[\lambda\in \{0,\pm1\}]=1$. Accordingly, we see that the following result has been established.
\begin{theorem}  \label{thm.stochpressuff}
Suppose that $f$ is continuous, and obeys \eqref{asym} and \eqref{RVat0} for $\beta>1$. Let $\sigma$ be continuous, in $L^2([0,\infty);\mathbb{R})$ and obey \eqref{eq.intsigdivF}. Then 
\begin{gather} \label{eq.Xperservasy}
\text{There exists an $\mathcal{F}^B(\infty)$--measurable random variable $\lambda$ such that} \nonumber \\
\mathbb{P}[\lambda\in \{0,\pm1\}]=1 \text{ and }
\mathbb{P}\left[\lim_{t\to\infty} \frac{X(t)}{F^{-1}(t)}=\lambda\right]=1.
\end{gather}
\end{theorem}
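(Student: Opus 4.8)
The plan is to prove Theorem~\ref{thm.stochpressuff} by a pathwise reduction to Theorem~\ref{Thm:Lim}. First I would assemble the relevant almost sure events. Since $\sigma\in L^2([0,\infty);\mathbb{R})$, the martingale $U(t)=\int_0^t\sigma(s)\,dB(s)$ is bounded in $L^2$, so by the martingale convergence theorem it converges almost surely to a finite limit $U(\infty)$; let $\Omega_1$ be the almost sure event on which this holds and on which $t\mapsto U(t,\omega)$ is continuous. Let $\Omega_2$ be the almost sure event on which \eqref{eq.sde} has a well--defined continuous adapted solution $X$ (guaranteed by \eqref{eq.floclip} together with \eqref{eq.fglobalstable}, which also rules out explosion). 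Since $\sigma\in L^2$ and \eqref{eq.fglobalstable} holds, it is classical that $X(t)\to0$ as $t\to\infty$ almost surely; call the corresponding event $\Omega_3$. Finally, let $\Omega_5$ be the almost sure event on which the hypothesis \eqref{eq.intsigdivF} holds. Then $\Omega^\ast:=\Omega_1\cap\Omega_2\cap\Omega_3\cap\Omega_5$ is almost sure.

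Next I would carry out the reformulation on $\Omega^\ast$. Fix $\omega\in\Omega^\ast$ and set $\gamma(t,\omega)=U(\infty,\omega)-U(t,\omega)=\int_t^\infty\sigma(s)\,dB(s)$; this is continuous in $t$, tends to $0$ as $t\to\infty$, and, by \eqref{eq.intsigdivF}, obeys \eqref{eq.gammadivFinv}. Put $Z(t,\omega)=X(t,\omega)-\gamma(t,\omega)$. Because $V(t,\omega):=X(t,\omega)-U(t,\omega)=\xi-\int_0^t f(X(s,\omega))\,ds$ and $s\mapsto f(X(s,\omega))$ is continuous, being a composition of continuous functions, $t\mapsto V(t,\omega)$, and hence $t\mapsto Z(t,\omega)=V(t,\omega)+U(\infty,\omega)$, lies in $C^1((0,\infty);\mathbb{R})$, with
\[
Z'(t,\omega)=-f(X(t,\omega))=-f\big(Z(t,\omega)+\gamma(t,\omega)\big),\quad t\geq0,
\]
and $Z(t,\omega)\to0$ as $t\to\infty$. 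Since $f$ obeys \eqref{asym} and \eqref{RVat0} with $\beta>1$, all hypotheses of Theorem~\ref{Thm:Lim} are met for the ODE solved by $Z(\cdot,\omega)$, and it yields $|Z(t,\omega)|/F^{-1}(t)\to0$ or $1$ as $t\to\infty$.

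Finally I would promote this two--valued modulus limit to a signed limit and verify measurability. Fix $\omega\in\Omega^\ast$. If $|Z(t,\omega)|/F^{-1}(t)\to0$ then trivially $Z(t,\omega)/F^{-1}(t)\to0$. If $|Z(t,\omega)|/F^{-1}(t)\to1$ then $Z(t,\omega)\neq0$ for all large $t$, so the continuous function $t\mapsto Z(t,\omega)/F^{-1}(t)$ is nonvanishing, hence of constant sign, on some $[T,\infty)$ by connectedness; consequently $Z(t,\omega)/F^{-1}(t)$ converges to $+1$ or to $-1$. In every case there is $\lambda(\omega)\in\{0,\pm1\}$ with $Z(t,\omega)/F^{-1}(t)\to\lambda(\omega)$, and since $\gamma(t,\omega)/F^{-1}(t)\to0$ we obtain $X(t,\omega)/F^{-1}(t)=Z(t,\omega)/F^{-1}(t)+\gamma(t,\omega)/F^{-1}(t)\to\lambda(\omega)$. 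Defining $\lambda:=\limsup_{t\to\infty}X(t)/F^{-1}(t)$ everywhere, $\lambda$ is $\mathcal{F}^B(\infty)$--measurable because $X$ is adapted and $F^{-1}$ is deterministic; on $\Omega^\ast$ it coincides with the limit above and takes values in $\{0,\pm1\}$, so $\mathbb{P}[\lambda\in\{0,\pm1\}]=1$ and $\mathbb{P}[\lim_{t\to\infty}X(t)/F^{-1}(t)=\lambda]=1$, which is \eqref{eq.Xperservasy}.

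The substantive analytic content --- that a $C^1$ solution of the internally perturbed equation $z'=-f(z+\gamma)$ with $\gamma/F^{-1}\to0$ inherits the decay rate of \eqref{eq.ode} --- is entirely absorbed into Theorem~\ref{Thm:Lim}, whose proof is deferred; given that, the remaining work is probabilistic bookkeeping, namely ensuring each ingredient ($U(\infty)$ finite, $X$ continuous and adapted, $X\to0$, and \eqref{eq.intsigdivF}) holds off a common null set, and invoking the classical $L^2$--noise stability result for \eqref{eq.sde}. I would regard the step most at risk of being glossed over as the verification that, for each fixed $\omega$, $Z(\cdot,\omega)$ is genuinely a $C^1$ solution of \eqref{eq.odeinternal} driven by a continuous $\gamma(\cdot,\omega)$ obeying \eqref{eq.gammadivFinv}, so that Theorem~\ref{Thm:Lim} applies verbatim outcome by outcome.
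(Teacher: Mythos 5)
Your proposal is correct and follows essentially the same route as the paper's own argument, which appears as the discussion in Section~3.2 immediately preceding the statement of Theorem~\ref{thm.stochpressuff}: invoke martingale convergence to set up $U(\infty)$, define $\gamma=U(\infty)-U$ and $Z=X-\gamma$, verify that $Z(\cdot,\omega)$ is a $C^1$ solution of the internally perturbed ODE path by path, and apply Theorem~\ref{Thm:Lim}. You are in fact slightly more careful than the paper in two respects worth noting: (i) Theorem~\ref{Thm:Lim} as stated only controls $|Z(t)|/F^{-1}(t)$, and you correctly supply the continuity/connectedness argument that upgrades this to a signed limit in $\{0,\pm1\}$, a step the paper silently absorbs; and (ii) you explicitly record the $\mathcal{F}^B(\infty)$--measurability of $\lambda$ by realising it as $\limsup_{t\to\infty} X(t)/F^{-1}(t)$, again left tacit in the paper.
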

Once again, we return later to establish that if the solution of \eqref{eq.sde} obeys \eqref{eq.Xperservasy}, then it must be the case that $\sigma\in L^2(0,\infty)$ and obeys \eqref{eq.intsigdivF}. 
 
\section{Main Results for Perturbed ODE}
In this section, we list the main results of the paper. We start with analysis for the deterministic equation \eqref{eq.odepert}, and then consider the stochastic 
equation \eqref{eq.sde}. In each case, we show that the sufficient conditions under which the perturbed equations inherit the asymptotic behaviour of \eqref{eq.ode} are also necessary. We also present results which concern the asymptotic behaviour of the derivative or increment of solutions of the perturbed equation.

A converse of Theorem~\ref{thm.detpressuff} requires that \eqref{eq.xdetperservasy} implies \eqref{eq.intgdivF}. We prove first that \eqref{eq.xdetperservasy}
implies that 
\begin{equation} \label{eq.intgfinite}
\lim_{t\to\infty} \int_0^t g(s)\,ds \text{ exists and is finite}.
\end{equation}
Notice that this is a strictly weaker condition than requiring that $g$ be absolutely integrable.
\begin{theorem} \label{Thm1}
Suppose that $f$ is continuous, and obeys \eqref{asym} and \eqref{RVat0} for $\beta>1$. Let $g$ be continuous. 
If the continuous solution $x$ of \eqref{eq.odepert} obeys \eqref{eq.xdetperservasy}, then $g$ obeys \eqref{eq.intgfinite}.
\end{theorem}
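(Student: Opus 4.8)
The plan is to exploit the relation $x(t) = z(t) + \gamma(t)$ where (formally) $z' = -f(z+\gamma)$ and $\gamma(t) = \int_t^\infty g(s)\,ds$, but we cannot assume $\gamma$ is well-defined since finiteness of $\int_0^t g$ is precisely what we want to prove. So instead I would work directly with the primitive $u(t) = \int_0^t g(s)\,ds$ and the function $z(t) = x(t) - u(t)$, which solves $z'(t) = -f(x(t))$ for $t \geq 0$; this is well-defined regardless of integrability of $g$. The hypothesis \eqref{eq.xdetperservasy} tells us $x(t) \to 0$, and since $f$ is continuous with $f(0)=0$, we get $z'(t) = -f(x(t)) \to 0$ as $t \to \infty$. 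The goal is to upgrade ``$z' \to 0$'' to ``$z$ converges'', i.e.\ to show $\int_0^\infty f(x(s))\,ds$ converges, which is equivalent to $\lim_{t\to\infty}u(t)$ existing and being finite since $u(t) = x(t) - z(t)$ and $x(t)\to 0$.

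The key quantitative input is that $x(t)/F^{-1}(t) \to \lambda \in \{0,\pm 1\}$, so $|x(t)| \leq C F^{-1}(t)$ for large $t$; hence $|f(x(t))| = |f(x(t))|$ is controlled by the behaviour of $f$ on the range $[-C F^{-1}(t), C F^{-1}(t)]$. Using \eqref{asym} and \eqref{RVat0}, $f \in RV_0(\beta)$ with $\beta > 1$, one has $f(F^{-1}(t)) \in RV_\infty(-\beta/(\beta-1))$ while $F^{-1}(t) \in RV_\infty(-1/(\beta-1))$; the crucial point is that $f(F^{-1}(t))$ is integrable at infinity precisely because $\beta/(\beta-1) > 1$. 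Concretely, since $(F^{-1})'(t) = -f(F^{-1}(t))$ (differentiating $F(F^{-1}(t)) = t$), we have $\int_T^\infty f(F^{-1}(s))\,ds = F^{-1}(T) < \infty$. So I would argue: by the uniform convergence theorem for regularly varying functions, $f(x(t))$ is dominated (up to a constant, for large $t$) by $f(F^{-1}(t))$ on the relevant scale — more carefully, $|f(x(t))|/f(F^{-1}(t))$ is bounded because $|x(t)|/F^{-1}(t)$ is bounded and $f \in RV_0(\beta)$ is asymptotically monotone-like near $0$ (use the Potter bounds / representation theorem). Then $\int_0^\infty |f(x(s))|\,ds < \infty$, so $z(t) = \xi - \int_0^t f(x(s))\,ds$ converges to a finite limit, and therefore $u(t) = x(t) - z(t)$ converges to a finite limit, which is \eqref{eq.intgfinite}.

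The main obstacle is the case $\lambda = 0$: then $|x(t)|/F^{-1}(t) \to 0$ gives us no uniform lower comparison, but we only need an upper bound, so for large $t$ we have $|x(t)| \leq F^{-1}(t)$ and hence, using monotonicity properties of $f$ near zero implied by $f\in RV_0(\beta)$ (after possibly replacing $f$ by the odd function $\phi$ asymptotic to it and absorbing the error), $|f(x(t))| \leq (1+\varepsilon) f(F^{-1}(t))$ eventually — this still yields integrability. A secondary subtlety is that $f$ need not be monotone near $0$, only regularly varying; here I would invoke the standard fact (Bingham--Goldie--Teugels) that a function in $RV_0(\beta)$ with $\beta > 0$ satisfies Potter-type bounds $f(y)/f(x) \leq K \max\{(y/x)^{\beta-\delta}, (y/x)^{\beta+\delta}\}$ for $0 < x,y$ small, which combined with $0 \leq |x(t)| \leq F^{-1}(t)$ controls $|f(x(t))|$ by $f(F^{-1}(t))$ up to a constant. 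Putting $\limsup$ on \eqref{eq.xdetperservasy} to extract the bound $|x(t)|\le C F^{-1}(t)$ and then dominating the integrand is the heart of the argument; the rest is assembling the pieces via $u = x - z$.
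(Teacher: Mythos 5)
Your proposal is correct and takes essentially the same approach as the paper: both reduce the claim to showing that $\int_0^\infty f(x(s))\,ds$ converges, then use the hypothesis $|x(t)| \le (|\lambda|+1)F^{-1}(t)$ for large $t$ together with the asymptotically odd increasing surrogate $\varphi$ (Lemma~\ref{asym_odd}) and regular variation to dominate $|f(x(t))|$ by a constant multiple of $(f\circ F^{-1})(t)$, whose integrability follows from $(F^{-1})' = -(f\circ F^{-1})$. The paper executes the domination via the change of variable $u = F^{-1}(s)$ rather than via Potter bounds, but the content is the same.
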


Given that \eqref{eq.intgfinite} (which is the first part of \eqref{eq.intgdivF}) holds when $x$ obeys \eqref{eq.xdetperservasy}, we can define 
\[
\int_t^\infty g(s)\,ds := \lim_{T\to\infty}\int_0^T g(s)\,ds -\int_{0}^t g(s)\,ds, \quad t\geq 0.
\]
We now show that if $x$ obeys \eqref{eq.xdetperservasy}, $t\mapsto \int_t^\infty g(s)\,ds$ must obey both parts of \eqref{eq.intgdivF}.
\begin{theorem}  \label{thm.detpresnecc}
Suppose that $f$ is continuous, and obeys \eqref{asym} and \eqref{RVat0} for $\beta>1$. Let $g$ be continuous. 
If the continuous solution $x$ of \eqref{eq.odepert} obeys \eqref{eq.xdetperservasy}, then $g$ obeys \eqref{eq.intgdivF}.
\end{theorem}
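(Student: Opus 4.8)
\emph{Proof proposal.} The strategy is to reduce the tail statement in \eqref{eq.intgdivF} to the pointwise information in \eqref{eq.xdetperservasy} by an asymptotic integration argument, comparing with the unperturbed flow. First, since \eqref{eq.xdetperservasy} holds, Theorem~\ref{Thm1} already delivers the first part of \eqref{eq.intgdivF}, i.e.\ \eqref{eq.intgfinite}; hence $\gamma(t):=\int_t^\infty g(s)\,ds$ is well defined, continuous and $\gamma(t)\to0$ as $t\to\infty$, and \eqref{eq.xdetperservasy} together with $F^{-1}(t)\to0$ forces $x(t)\to0$. Integrating \eqref{eq.odepert} over $[t,T]$ gives $\int_t^T f(x(s))\,ds=x(t)-x(T)+\int_t^T g(s)\,ds$, and letting $T\to\infty$ (using $x(T)\to0$ and the convergence of $\int_0^T g$) shows that the improper integral $\int_t^\infty f(x(s))\,ds$ converges for every $t$ with
\[
\frac{\gamma(t)}{F^{-1}(t)}=\frac{\int_t^\infty f(x(s))\,ds}{F^{-1}(t)}-\frac{x(t)}{F^{-1}(t)}.
\]
Since the last term tends to $\lambda$, it suffices to prove $\int_t^\infty f(x(s))\,ds\big/F^{-1}(t)\to\lambda$.

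The comparison quantity is the unperturbed flow: since $F'=-1/f$ on $(0,\infty)$ and $F(F^{-1}(s))=s$ we have $(F^{-1})'(s)=-f(F^{-1}(s))$, so $\int_t^\infty f(F^{-1}(s))\,ds=F^{-1}(t)$ for all large $t$ because $F^{-1}(s)\to0$. Thus the goal becomes $\int_t^\infty f(x(s))\,ds\sim\lambda\int_t^\infty f(F^{-1}(s))\,ds$. For $\lambda=1$ one has $x(s)\sim F^{-1}(s)$ with both eventually positive and tending to $0$, so the uniform convergence theorem for $f\in RV_0(\beta)$ (equivalently, $a_s\to a>0$ and $u_s\to0^+$ imply $f(a_su_s)/f(u_s)\to a^\beta$), quotable from \cite{BGT}, gives $f(x(s))/f(F^{-1}(s))\to1$. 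For $\lambda=-1$, $x(s)$ is eventually negative and the asymptotic oddness \eqref{asym} (write $f\sim\phi$ with $\phi$ odd, $\phi>0$ on $(0,\infty)$, so $|f(x)|\sim f(|x|)$ as $x\to0$) yields $f(x(s))\sim-f(|x(s)|)\sim-f(F^{-1}(s))$. In both cases, given $\epsilon>0$ choose $T$ with $|f(x(s))-\lambda f(F^{-1}(s))|\le\epsilon f(F^{-1}(s))$ for $s\ge T$; integrating over $[t,\infty)$ for $t\ge T$ and using $\int_t^\infty f(F^{-1}(s))\,ds=F^{-1}(t)$ gives $\big|\int_t^\infty f(x(s))\,ds-\lambda F^{-1}(t)\big|\le\epsilon F^{-1}(t)$, which is the claim.

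For $\lambda=0$ we have $|x(s)|/F^{-1}(s)\to0$, and $x$ may vanish or change sign. Combining \eqref{asym} (so $|f(x)|\sim f(|x|)$ as $x\to0$, with $f(0)=0$) with Potter's bounds for $f\in RV_0(\beta)$ from \eqref{RVat0} --- fix $\delta\in(0,\beta)$ so that $f(y)/f(u)\le C(y/u)^{\beta-\delta}$ for $0<y\le u\le u_0$ --- and noting that eventually $|x(s)|\le F^{-1}(s)$, we obtain an estimate $|f(x(s))|\le\varepsilon(s)f(F^{-1}(s))$ with $\varepsilon(s):=2C\,(|x(s)|/F^{-1}(s))^{\beta-\delta}\to0$ (and the inequality holds trivially when $x(s)=0$). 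Then $\int_t^\infty|f(x(s))|\,ds\le\big(\sup_{s\ge t}\varepsilon(s)\big)\int_t^\infty f(F^{-1}(s))\,ds=\big(\sup_{s\ge t}\varepsilon(s)\big)F^{-1}(t)=o(F^{-1}(t))$, so $\int_t^\infty f(x(s))\,ds\big/F^{-1}(t)\to0=\lambda$. Assembling the three cases proves $\gamma(t)/F^{-1}(t)\to0$, which is the second half of \eqref{eq.intgdivF}.

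The step I expect to need the most care is the transfer from the pointwise ratio $f(x(s))/f(F^{-1}(s))\to\ell$ (or the Potter estimate) to the corresponding statement for the tail integrals; this is, however, tame here because $s\mapsto f(F^{-1}(s))$ is positive with the exact convergent tail integral $F^{-1}(t)$, so the $\epsilon$-argument above closes cleanly and no delicate Abelian/Tauberian input is needed. The genuinely regular-variation ingredients --- the uniform convergence theorem and Potter's bounds near $0$ --- are standard and can be cited from \cite{BGT}; the only real bookkeeping nuisance is the sign and possible vanishing of $x$ in the case $\lambda=0$, which is absorbed through \eqref{asym} and $f(0)=0$.
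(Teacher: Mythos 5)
Your proof is correct and follows essentially the same route as the paper's: invoke Theorem~\ref{Thm1} for the first half of \eqref{eq.intgdivF}, rewrite $\int_t^\infty g(s)\,ds / F^{-1}(t)$ via the integrated form of \eqref{eq.odepert} as $\int_t^\infty f(x(s))\,ds/F^{-1}(t)-x(t)/F^{-1}(t)$, and then evaluate the limit of the first term by regular variation, treating $\lambda=0$ and $\lambda=\pm1$ separately with the asymptotic oddness \eqref{asym}. The only real difference is local and cosmetic: the paper applies L'H\^opital using $(F^{-1})'=-f\circ F^{-1}$ to reduce to $\lim f(x(t))/f(F^{-1}(t))$, whereas you integrate the pointwise estimates (uniform convergence theorem for $\lambda=\pm1$, Potter bounds for $\lambda=0$) directly against the exact tail identity $\int_t^\infty f(F^{-1}(s))\,ds=F^{-1}(t)$ --- a slightly more self-contained way to reach the same limit that cleanly avoids checking L'H\^opital's hypothesis.
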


Combining the results of Theorem~\ref{thm.detpressuff} and~\ref{thm.detpresnecc}, we arrive at the following result. 
\begin{theorem} \label{thm.xdetasyiff}
Suppose that $f$ is continuous, and obeys \eqref{asym} and \eqref{RVat0} for $\beta>1$. Let $g$ be continuous. 
If the continuous solution $x$ of \eqref{eq.odepert} obeys $\lim_{t\to\infty} x(t)=0$. Then the following are equivalent:
\begin{itemize}
\item[(a)] The function $g$ obeys \eqref{eq.intgdivF};
\item[(b)] $x$ obeys \eqref{eq.xdetperservasy}; 
\end{itemize}
\end{theorem}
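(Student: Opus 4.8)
The plan is simply to assemble the equivalence from the two one-directional statements already established, so that no genuinely new argument is needed at this stage; the content here is that the sufficient conditions of Theorem~\ref{thm.detpressuff} turn out to be necessary as well, and the substantive work has been done in Theorems~\ref{Thm:Lim}, \ref{Thm1} and \ref{thm.detpresnecc}.

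For the implication (a) $\Rightarrow$ (b) I would check that, under the standing hypothesis $\lim_{t\to\infty}x(t)=0$ together with the assumption that $g$ obeys \eqref{eq.intgdivF}, every hypothesis of Theorem~\ref{thm.detpressuff} is in force: continuity and regular variation of $f$ through \eqref{asym} and \eqref{RVat0} with $\beta>1$, continuity of $g$, the integral conditions \eqref{eq.intgdivF}, and the convergence of $x$ to zero. The conclusion of that theorem is exactly \eqref{eq.xdetperservasy}, which is statement (b), so this direction is an immediate citation.

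For the converse (b) $\Rightarrow$ (a) I would proceed in the two stages already indicated before the statement of Theorem~\ref{thm.detpresnecc}, since the second half of \eqref{eq.intgdivF} only makes sense once the first half is known: assuming \eqref{eq.xdetperservasy}, Theorem~\ref{Thm1} first yields that $\lim_{t\to\infty}\int_0^t g(s)\,ds$ exists and is finite, which legitimises the definition of $\int_t^\infty g(s)\,ds$, and then Theorem~\ref{thm.detpresnecc} delivers the full \eqref{eq.intgdivF}, including $\int_t^\infty g(s)\,ds/F^{-1}(t)\to 0$. (The standing hypothesis $x(t)\to 0$ is not even needed here, as \eqref{eq.xdetperservasy} forces it via $F^{-1}(t)\to 0$.) Since both implications reduce to invoking earlier results, there is no real obstacle at the level of this theorem; the only point requiring attention is the bookkeeping, namely confirming that the hypothesis packages of the cited theorems are mutually compatible and are all subsumed by those assumed here, which they are.
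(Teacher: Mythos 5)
Your proposal is correct and coincides with the paper's own treatment: the paper states exactly that the theorem follows by combining Theorem~\ref{thm.detpressuff} (which gives (a)~$\Rightarrow$~(b)) with Theorem~\ref{thm.detpresnecc} (which gives (b)~$\Rightarrow$~(a)), and the latter already invokes Theorem~\ref{Thm1} internally to justify the convergence of $\int_0^t g(s)\,ds$, just as you note. Your bookkeeping check of the hypothesis packages is accurate and nothing further is required.
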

We next consider the situation where the solution and derivative of \eqref{eq.odepert} both inherit their asymptotic behaviour from the solution of \eqref{eq.ode}. 
\begin{theorem} \label{thm.xxprdetasyiff}
Suppose that $f$ is continuous, and obeys \eqref{asym} and \eqref{RVat0} for $\beta>1$. Let $g$ be continuous. 
If the continuous solution $x$ of \eqref{eq.odepert} obeys $\lim_{t\to\infty} x(t)=0$. Then the following are equivalent:
\begin{itemize}
\item[(a)] The function $g$ obeys 
\begin{equation} \label{eq.gdivfF}
\lim_{t\to\infty} \frac{g(t)}{f(F^{-1}(t))}=0.
\end{equation}
\item[(b)] There exists $\lambda\in \{-1,0,1\}$ such that 
\begin{equation} \label{eq.xxprdivFfF}
\lim_{t\to\infty} \frac{x(t)}{F^{-1}(t)}=\lambda, \quad \lim_{t\to\infty}\frac{x'(t)}{f(F^{-1}(t))}=-\lambda.
\end{equation}
\end{itemize}
\end{theorem}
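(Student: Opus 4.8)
The plan is to work from the identity obtained by dividing \eqref{eq.odepert} by $f(F^{-1}(t))$, which is legitimate for large $t$ since $F^{-1}(t)\to 0^+$ and $uf(u)>0$ near the origin:
\[
\frac{x'(t)}{f(F^{-1}(t))} = -\frac{f(x(t))}{f(F^{-1}(t))} + \frac{g(t)}{f(F^{-1}(t))},
\]
together with the elementary fact that $\int_t^\infty f(F^{-1}(s))\,ds = F^{-1}(t)$ for $t$ large, which follows from $\frac{d}{dt}F^{-1}(t)=-f(F^{-1}(t))$ and $F^{-1}(t)\to 0$. Both equivalences will be read off these two facts once one controls the ratio $f(x(t))/f(F^{-1}(t))$.

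The technical core, which I would isolate as a preliminary lemma, is the assertion that \emph{if $x(t)\to 0$ and $x(t)/F^{-1}(t)\to\lambda\in\{-1,0,1\}$ as $t\to\infty$, then $f(x(t))/f(F^{-1}(t))\to\lambda$}. For $\lambda=1$ I would deduce this from the uniform convergence theorem for $f\in\mathrm{RV}_0(\beta)$: writing $x(t)=\theta(t)F^{-1}(t)$ with $\theta(t)\to 1$ and $F^{-1}(t)\to 0^+$, the ratio $f(\theta(t)F^{-1}(t))/f(F^{-1}(t))$ tends to $1^\beta=1$. For $\lambda=-1$, $x(t)<0$ eventually, so using the asymptotic oddness \eqref{asym} I would write $f(x(t))\sim\phi(x(t))=-\phi(|x(t)|)\sim -f(|x(t)|)$ and apply the $\lambda=1$ case to $|x(t)|/F^{-1}(t)\to 1$. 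For $\lambda=0$ I would invoke a Potter bound: fixing $\delta\in(0,\beta)$, one has $f(x)/f(y)\le C(x/y)^{\beta-\delta}$ for all sufficiently small $0<x\le y$, and applying this with $x=|x(t)|$, $y=F^{-1}(t)$ (again passing to $\phi$ to handle the sign) together with $|x(t)|/F^{-1}(t)\to 0$ and $\beta-\delta>0$ forces the ratio to $0$. I expect this lemma — and specifically the $\lambda=0$ case — to be the main obstacle, since it requires care in combining the asymptotic oddness with the regular-variation estimates and in confirming that $|x(t)|\to 0^+$ along the relevant times.

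Granted the lemma, the implication (a) $\Rightarrow$ (b) runs as follows. From \eqref{eq.gdivfF}, for each $\epsilon>0$ we have $|g(t)|\le\epsilon f(F^{-1}(t))$ for large $t$, hence $\int_t^\infty|g(s)|\,ds\le\epsilon\int_t^\infty f(F^{-1}(s))\,ds=\epsilon F^{-1}(t)$; thus $g\in L^1([0,\infty);\mathbb{R})$, so $\int_0^t g(s)\,ds$ converges to a finite limit, and $\big|\int_t^\infty g(s)\,ds\big|/F^{-1}(t)\to 0$ — that is, $g$ obeys \eqref{eq.intgdivF}. Theorem~\ref{thm.detpressuff} (equivalently Theorem~\ref{thm.xdetasyiff}) then gives $\lambda\in\{-1,0,1\}$ with $x(t)/F^{-1}(t)\to\lambda$, and the lemma yields $f(x(t))/f(F^{-1}(t))\to\lambda$. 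Substituting this and \eqref{eq.gdivfF} into the displayed identity produces $x'(t)/f(F^{-1}(t))\to-\lambda$, which is \eqref{eq.xxprdivFfF}.

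The implication (b) $\Rightarrow$ (a) is then almost immediate: the first half of \eqref{eq.xxprdivFfF} and the lemma give $f(x(t))/f(F^{-1}(t))\to\lambda$, and rearranging the displayed identity,
\[
\frac{g(t)}{f(F^{-1}(t))} = \frac{x'(t)}{f(F^{-1}(t))} + \frac{f(x(t))}{f(F^{-1}(t))} \longrightarrow -\lambda+\lambda = 0,
\]
which is \eqref{eq.gdivfF}. Apart from the lemma, the only step requiring real thought is that the pointwise bound in (a) forces $g$ to be absolutely integrable, and this is exactly what the identity $\int_t^\infty f(F^{-1}(s))\,ds=F^{-1}(t)$ delivers.
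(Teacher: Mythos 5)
Your proof is correct and follows essentially the same route as the paper's. The only real difference is a matter of packaging: the paper's proof cites the argument already made in Theorem~\ref{thm.detpresnecc} to obtain $\varphi(x(t))/\varphi(F^{-1}(t))\to\lambda$ (and thence $f(x(t))/f(F^{-1}(t))\to\lambda$), while you re-derive this lemma inline via the uniform convergence theorem for regularly varying functions (for $\lambda=\pm 1$) and Potter bounds (for $\lambda=0$) — which is in fact precisely the machinery that the paper's cited argument implicitly relies on. The paper also phrases the passage from \eqref{eq.gdivfF} to the second part of \eqref{eq.intgdivF} as an application of L'H\^opital's rule, whereas you use the identity $\int_t^\infty f(F^{-1}(s))\,ds = F^{-1}(t)$ directly; these are the same computation.
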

\begin{proof}
The proof is easy and we present it here. We show first that (a) implies (b). \eqref{eq.gdivfF} implies that $g$ is in $L^1([0,\infty);\mathbb{R})$ so the first part of \eqref{eq.intgdivF} holds.
By L'H\^opital's rule, \eqref{eq.gdivfF} implies the second part of \eqref{eq.intgdivF}. Therefore the first part of the limit in \eqref{eq.xxprdivFfF} is
 true, by Theorem~\ref{thm.detpressuff}. In the proof of Theorem~\ref{thm.detpresnecc} it was shown that 
 \[
 \lim_{t\to\infty} \frac{x(t)}{F^{-1}(t)}=\lambda
 \]
 for $\lambda=\pm 1,0$ implies
 \[
 \lim_{t\to\infty} \frac{\varphi(x(t))}{\varphi(F^{-1}(t))}=\lambda.
 \]
where $\varphi$ is the function asymptotic to $f$ introduced in Lemma ~\ref{asym_odd}.
Since $f(x)/\varphi(x)\to 1$ as $x\to 0$, and $x(t)\to 0$ as $t\to\infty$, it follows that  
  \[
 \lim_{t\to\infty} \frac{f(x(t))}{f(F^{-1}(t))}=\lambda.
 \]
 Taking limits in \eqref{eq.odepert}, the last limit and \eqref{eq.gdivfF} yield the second part of \eqref{eq.xxprdivFfF}, as claimed. 
 
 To prove that (b) implies (a), simply rearrange \eqref{eq.odepert} to get
 \begin{equation} \label{eq.gdivfFsetup}
 \frac{g(t)}{f(F^{-1}(t))} = \frac{x'(t)}{f(F^{-1}(t))} + \frac{f(x(t))}{f(F^{-1}(t))}.
 \end{equation}
 By hypothesis, \eqref{eq.xxprdivFfF} holds, so by the argument above, the second term on the right--hand side of \eqref{eq.gdivfFsetup} obeys
 \[
 \lim_{t\to\infty} \frac{\varphi(x(t))}{\varphi(F^{-1}(t))}=\lambda.
 \]
 The first term on the right--hand side of \eqref{eq.gdivfFsetup} has limit $-\lambda$ by hypothesis, so inserting these limits into \eqref{eq.gdivfFsetup} yields 
 \eqref{eq.gdivfF}, as required.
\end{proof}
The pointwise condition \eqref{eq.gdivfF} was used in \cite{apppatt} to obtain the first part of \eqref{eq.xxprdivFfF}; it is a sharp condition, in the sense that if the limit in \eqref{eq.gdivfF} exists and is non--zero, the asymptotic behaviour in the first part of \eqref{eq.xxprdivFfF} does not hold. However, in 
the case that the limit does not exist, it can still be the case that the first part of \eqref{eq.xxprdivFfF} holds, as the condition \eqref{eq.intgdivF} 
(which is of course implied by \eqref{eq.gdivfF}) can be true even when \eqref{eq.gdivfF} is violated. However, Theorem~\ref{thm.xxprdetasyiff} reveals the true significance of the condition \eqref{eq.gdivfF}: it is the critical size of the perturbation $g$ that is allowed in order for the solution of \eqref{eq.odepert}
to be sufficiently well--behaved asymptotically that it inherits the appropriate rate of decay by virtue of the fact that its derivative is well--behaved. Naturally, such a stipulation places greater restrictions on the pointwise behaviour of $g$, by virtue of the form of \eqref{eq.odepert}.

It is tacit in this last statement that the second part of \eqref{eq.xxprdivFfF} drives the behaviour of $x$: in fact, it is easily seen by 
L'H\^opital's rule that the second part \eqref{eq.xxprdivFfF} implies the first.  

We finish this section by noting in the case when $g$ is positive and the initial condition is positive (so that $x(t)>0$ for all $t\geq 0$), the limit in 
\eqref{eq.xdetperservasy} is unity. 
\begin{theorem} \label{thm.limis1gpos}
Suppose that $f$ is continuous and is in $\text{RV}_0(\beta)$ for $\beta>1$. Suppose further that $g$ is continuous and positive. 
If $x$ is the continuous solution \eqref{eq.odepert} with $x(0)=\xi>0$, then 
the following are equivalent
\begin{itemize}
\item[(a)] The function $g$ obeys \eqref{eq.intgdivF};
\item[(b)] $x$ obeys 
\[
\lim_{t\to\infty} \frac{x(t)}{F^{-1}(t)}=1.
\]
\end{itemize} 
\end{theorem}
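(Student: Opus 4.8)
The plan is to derive this biconditional from Theorems~\ref{thm.detpressuff} and~\ref{thm.detpresnecc} by cheaply recovering, from the positivity of $g$ and $\xi$, the two ingredients those theorems need but which are not assumed here (asymptotic oddness of $f$, and $x(t)\to 0$). First I would note that $x$ stays strictly positive: if $x$ had a first zero at some $t_0>0$ then $x'(t_0)\le 0$, whereas \eqref{eq.odepert} gives $x'(t_0)=-f(0)+g(t_0)=g(t_0)>0$, a contradiction; hence $x(t)>0$ for all $t\ge 0$. Next I would symmetrise the nonlinearity: set $\tilde f(x)=f(x)$ for $x\ge 0$ and $\tilde f(x)=-f(-x)$ for $x<0$. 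Then $\tilde f$ is continuous, odd, obeys $x\tilde f(x)>0$ for $x\ne 0$, and (since it coincides with $f$ on $(0,\infty)$) lies in $\text{RV}_0(\beta)$ with $\beta>1$, so it satisfies \eqref{asym} (with $\phi=\tilde f$) and \eqref{RVat0}; moreover $F$ and $F^{-1}$ are unchanged, and because $x(t)>0$ the trajectory $x$ also solves $x'(t)=-\tilde f(x(t))+g(t)$. Thus Theorems~\ref{thm.detpressuff} and~\ref{thm.detpresnecc} apply to $x$ with $f$ replaced by $\tilde f$.

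For (a)$\Rightarrow$(b): since $g\ge 0$ and $\int_0^t g(s)\,ds$ tends to a finite limit, $g\in L^1([0,\infty);\mathbb{R})$; writing $x(t)=\xi+\int_0^t g(s)\,ds-\int_0^t f(x(s))\,ds$ with $x(t)\ge 0$ shows $\int_0^t f(x(s))\,ds$ is nondecreasing and bounded, hence convergent, so $x(t)$ converges to a limit $L\ge 0$, and $L$ must be $0$ (otherwise $f(x(t))\to f(L)>0$ forces the integral to diverge). Theorem~\ref{thm.detpressuff}, applied with $\tilde f$, then gives $\lambda\in\{0,\pm1\}$ with $x(t)/F^{-1}(t)\to\lambda$, and positivity of $x$ forces $\lambda\in\{0,1\}$. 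To exclude $\lambda=0$ I would use the identity $\tfrac{d}{dt}F(x(t))=-x'(t)/f(x(t))=1-g(t)/f(x(t))\le 1$: integrating yields $F(x(t))\le F(\xi)+t$, hence $x(t)\ge F^{-1}(t+F(\xi))$ for $t$ large; since $F^{-1}\in\text{RV}_\infty(-1/(\beta-1))$ we have $F^{-1}(t+F(\xi))/F^{-1}(t)\to 1$, so $\liminf_{t\to\infty}x(t)/F^{-1}(t)\ge 1$, forcing $\lambda=1$, which is (b).

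For (b)$\Rightarrow$(a): from $x(t)/F^{-1}(t)\to 1$ and $F^{-1}(t)\to 0$ we get $x(t)\to 0$, so $x$ obeys \eqref{eq.xdetperservasy} with $\lambda=1$, and Theorem~\ref{thm.detpresnecc} applied with $\tilde f$ shows that $g$ obeys \eqref{eq.intgdivF}, which is (a). The only step not already contained in the cited theorems is the comparison estimate ruling out $\lambda=0$, and this is precisely where strict positivity of $g$ enters essentially; the remaining points (positivity of $x$, automatic convergence $x(t)\to 0$, symmetrisation) are routine, so I do not anticipate a genuine obstacle.
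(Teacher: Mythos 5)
Your proof is correct and follows essentially the same strategy as the paper: show $x>0$, deduce $x\to 0$, invoke Theorem~\ref{thm.detpressuff} to get $\lambda\in\{0,1\}$, and rule out $\lambda=0$ by a comparison/lower bound showing $\liminf_{t\to\infty}x(t)/F^{-1}(t)\ge 1$, with Theorem~\ref{thm.detpresnecc} handling the converse. You are somewhat more careful than the paper on the minor points (proving $x>0$ and $x\to 0$ explicitly, and symmetrizing $f$ to $\tilde f$ to meet the hypotheses \eqref{asym}, whereas the paper simply remarks that asymptotic oddness is unnecessary when $x$ stays positive), and where the paper compares $x$ against the unperturbed solution $z$ started from $\xi/2$, you instead integrate $\frac{d}{dt}F(x(t))\le 1$ to get $x(t)\ge F^{-1}(t+F(\xi))$ directly and then use the regular variation of $F^{-1}$; these are cosmetic variations of the same comparison idea.
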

\begin{proof}
Since $x(t)>0$ for all $t\geq 0$, we do not need to assume that $f$ is asymptotically odd, as in other results in this section. 
If (a) holds, then as $g$ is positive, we have that $g\in L^1([0,\infty);(0,\infty))$. Therefore, we have that $x(t)\to 0$ as $t\to \infty$. Therefore, by 
Theorem~\ref{thm.detpressuff} and the fact that $x$ is positive, we have that 
\[
\lim_{t\to\infty} \frac{x(t)}{F^{-1}(t)}=0 \text{ or $1$}.
\]
On the other hand, define $z'(t)=-f(z(t))$ for $t\geq 0$ and $z(0)=x(0)/2>0$. Then $x(t)>z(t)$ for all $t\geq 0$. Integration yields that $F(z(t))/t\to 1$ as $t\to\infty$, and as $F^{-1}$ is regularly varying, it follows that $z(t)/F^{-1}(t)\to 1$ as $t\to\infty$. Therefore we have that 
\[
\limsup_{t\to\infty} \frac{x(t)}{F^{-1}(t)}\geq 1,
\]
and this forces $x$ to obey (b). If (b) holds, we have that $x(t)\to 0$ as $t\to\infty$, and since all other hypotheses of Theorem~\ref{thm.detpresnecc} are true, we have that (a) holds. 
\end{proof}
Finally, we prove a result concerning the global stability of solutions of \eqref{eq.odepert}, a hypothesis which we require in many of the above results. It is to be noted that to achieve this, no additional conditions are required of $f$ close to the equilibrium of \eqref{eq.ode}. Instead, we require some asymptotic control of $f$ at infinity. The condition we use is 
\begin{align} \label{fatinfinity}
\liminf_{x \to +\infty}|f(x)| >0, \quad \liminf_{x \to -\infty} |f(x)| >0. 
\end{align}
and this was employed in \cite{JAJGAR:2009} to cover the case of equations of the form \eqref{eq.odepert} in which the perturbation $g$ obeys $g(t)\to 0$ as $t\to\infty$. We remark that examples in \cite{JAJGAR:2009} and \cite{JAJC:2011szeged} show that if this condition is violated, it can happen that solutions of \eqref{eq.odepert} tend to $\pm\infty$ as $t\to\infty$. Hence, we can see that this condition is not excessively restrictive. We are of course free to postulate alternative sufficient conditions under which $x(t)\to 0$ 
as $t\to\infty$, so as to allow the use of the above theorems; however, we prefer to separate hypotheses which ensure convergence 
from those explicitly required to preserve exact rates of decay. 

As usual, in the following theorem, we assume without explicitly saying that $f$ obeys \eqref{eq.fglobalstable}. We assume in this result that $f$ is locally Lipschitz continuous, as this enables us to simplify the argument: however, we conjecture that with more effort it is possible to require only that $f$ is continuous. In this case, we may not have uniqueness of continuous solutions, but all solutions $x$ should obey $x(t)\to 0$ as $t\to\infty$.
\begin{theorem}  \label{thm.xto0suff}
Let $f$ be locally Lipschitz continuous and suppose \eqref{eq.intgfinite} holds. Suppose further that \eqref{fatinfinity} holds.
Then the unique, continuous solution $x$ of \eqref{eq.odepert} obeys $x(t) \rightarrow 0$ as $t \to \infty$.
\end{theorem}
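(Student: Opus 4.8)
The plan is to reuse the change of variables from Section~3 that turns \eqref{eq.odepert} into an internally perturbed equation, and then to run an elementary barrier-and-oscillation argument; I will \emph{not} invoke Theorem~\ref{Thm:Lim}, whose hypotheses already presuppose the convergence we are trying to establish, and which in any case demands structure on $f$ near $0$ that the present statement does not assume. Concretely, set $u(t)=\int_0^t g(s)\,ds$; by \eqref{eq.intgfinite} this has a finite limit $u(\infty)$, so $\gamma(t):=u(t)-u(\infty)$ is continuous, bounded, tends to $0$, and satisfies $\gamma'=g$. Putting $z(t)=x(t)-\gamma(t)$ gives $z'(t)=x'(t)-g(t)=-f(x(t))=-f(z(t)+\gamma(t))$, with $z$ continuous on the interval of existence, and since $x=z+\gamma$ with $\gamma(t)\to 0$ it suffices to prove $z(t)\to 0$ as $t\to\infty$.

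The first step is to show $z$ (hence $x$) is bounded, which is the only place \eqref{fatinfinity} is used and is, I expect, the step requiring the most care. From \eqref{fatinfinity} together with $xf(x)>0$ there are constants $M,c>0$ with $f(v)\ge c$ for $v\ge M$ and $f(v)\le -c$ for $v\le -M$. Since $\gamma$ is globally bounded, for $|z(t)|$ large the sign of $z(t)+\gamma(t)$ agrees with that of $z(t)$ and $|z(t)+\gamma(t)|\ge M$, so $z'(t)=-f(z(t)+\gamma(t))$ has sign opposite to $z(t)$ and magnitude at least $c$; a standard barrier argument (on the right-most interval where $z$ would exceed a suitably large level $R$, one has $z'<0$, forcing $z$ back below $R$, and symmetrically from below) then yields $|z(t)|\le R$ for all $t$ in the maximal interval, which in particular rules out finite-time blow-up and shows the solution is global. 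It is at exactly this point that \eqref{fatinfinity} is indispensable — without it the solution could escape to $\pm\infty$, as the examples cited from~\cite{JAJGAR:2009,JAJC:2011szeged} illustrate — and boundedness is also what makes $f$ controllable on the compact sets appearing in the next step.

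The second step is the convergence itself. I would argue $\limsup_{t\to\infty}z(t)\le 0$ as follows. First, $\liminf_{t\to\infty}z(t)\le 0$: otherwise $z(t)+\gamma(t)$ eventually lies in a compact subinterval of $(0,\infty)$ on which $f$ has a positive minimum $m$, so $z'(t)\le -m$ for all large $t$ and $z(t)\to-\infty$, contradicting boundedness. Now suppose $L:=\limsup_{t\to\infty}z(t)>0$ and fix $\epsilon\in(0,L)$. Since $\liminf z\le 0<\epsilon/2<\epsilon<\limsup z$, the path $z$ makes infinitely many ``up-crossings'' of the band $[\epsilon/2,\epsilon]$ at arbitrarily large times, so choosing $T$ with $|\gamma(t)|<\epsilon/2$ on $[T,\infty)$ we can select $a<b$ with $a\ge T$, $z(a)=\epsilon/2$, $z(b)=\epsilon$ and $\epsilon/2<z(t)<\epsilon$ on $(a,b)$; there $z(t)+\gamma(t)>\epsilon/2-\epsilon/2=0$, hence $f(z(t)+\gamma(t))>0$ and $z'(t)<0$ on $(a,b)$, which forces $z(b)<z(a)$ — contradicting $z(b)=\epsilon>\epsilon/2=z(a)$. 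Thus $\limsup_{t\to\infty}z(t)\le 0$; the mirror-image argument (using the lower barrier and $f<0$ on $(-\infty,0)$) gives $\liminf_{t\to\infty}z(t)\ge 0$, so $z(t)\to 0$ and therefore $x(t)=z(t)+\gamma(t)\to 0$. Uniqueness of the continuous solution being asserted is of course immediate from local Lipschitz continuity of $f$ and the standard theory.
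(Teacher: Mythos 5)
Your proof is correct and, while it begins with the same change of variables as the paper (introduce $u(t)=\int_0^t g(s)\,ds$, $\gamma=u-u(\infty)$, $z=x-\gamma$, so that $z'=-f(z+\gamma)$), the second half departs genuinely from the paper's route. The paper's boundedness step uses the local Lipschitz constant of $f$ near $x_1$ to show $z$ is eventually trapped in $[-x_1,x_1]$; you instead choose a barrier level $R>M+\sup_t|\gamma(t)|+|z(0)|$ so large that $z(t)=R$ already forces $z(t)+\gamma(t)>M$, which is a little cleaner and avoids the Lipschitz constant entirely. The more substantive difference is the convergence step: the paper rewrites the equation as $z'(t)=-f(z(t))+\tilde g(t)$ with $\tilde g(t)=f(z(t))-f(z(t)+\gamma(t))\to 0$ (again using Lipschitz continuity to control $|\tilde g(t)|$) and then \emph{cites} a stability result from \cite{JAJGAR:2009} to conclude $z(t)\to 0$; you give a direct, self-contained up-crossing argument — first ruling out $\liminf z>0$ by the compactness-and-positive-minimum observation, then forcing a contradiction from any up-crossing of $[\epsilon/2,\epsilon]$ past the time when $|\gamma|<\epsilon/2$. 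This buys something real: your convergence argument only needs continuity of $f$ (plus $xf(x)>0$ and \eqref{fatinfinity}), not Lipschitz continuity, which the paper's authors explicitly conjecture but do not prove is enough (Lipschitz would then only be needed for the uniqueness assertion, as you note). The one place you should tighten the write-up is the boundedness barrier: the phrase ``the right-most interval where $z$ would exceed $R$'' is vague; what you want is the \emph{first} time $t_1$ with $z(t_1)=R$ (it exists if $z$ ever reaches $R$, since $z(0)<R$), at which $z'(t_1)\geq 0$ while simultaneously $z'(t_1)=-f(z(t_1)+\gamma(t_1))\leq -c<0$, a contradiction that holds on the whole maximal interval of existence and therefore also gives global existence.
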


\section{Main results for SDEs}
We now present the main results for solutions of \eqref{eq.sde}. 
We have already shown in Theorem~\ref{thm.stochpressuff} that
\begin{equation} \label{eq.intsig2divF}
\sigma\in L^2([0,\infty);\mathbb{R}), \quad \lim_{t\to\infty} \frac{\int_t^\infty \sigma(s)\,dB(s)}{F^{-1}(t)}=0,\quad \text{a.s.}
\end{equation}
imply \eqref{eq.Xperservasy}. We now establish the converse to this result, along the lines used to prove the converse of Theorem~\ref{thm.detpressuff} in 
the deterministic case. Firstly, we prove that \eqref{eq.Xperservasy} implies the first condition in \eqref{eq.intsig2divF}, namely $\sigma\in L^2([0,\infty);\mathbb{R})$. 
\begin{lemma} \label{lemsig2}
Suppose that $f$ is continuous, and obeys \eqref{asym} and \eqref{RVat0} for $\beta>1$. Let $\sigma$ be continuous. 
If the continuous adapted process $X$ which obeys \eqref{eq.sde} also satisfies \eqref{eq.Xperservasy}, then $\sigma\in L^2([0,\infty);\mathbb{R})$.
\end{lemma}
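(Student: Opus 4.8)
The plan is to argue by contradiction: assume that $\sigma \notin L^2([0,\infty);\mathbb{R})$, i.e. $\int_0^\infty \sigma^2(s)\,ds = +\infty$, and show that this is incompatible with \eqref{eq.Xperservasy}. The key idea is to decompose the solution $X$ of \eqref{eq.sde} into a contribution coming from the martingale part of the noise and a contribution from the drift, and to exploit the fact that when $\sigma \notin L^2$ the stochastic integral $M(t) = \int_0^t \sigma(s)\,dB(s)$ is a continuous martingale whose quadratic variation $\langle M \rangle(t) = \int_0^t \sigma^2(s)\,ds$ diverges to $+\infty$ almost surely. By the Dambis--Dubins--Schwarz theorem, $M(t) = W(\langle M\rangle(t))$ for a standard Brownian motion $W$, and hence by the law of the iterated logarithm for $W$ we have $\limsup_{t\to\infty} |M(t)| = +\infty$ and $\liminf_{t\to\infty} M(t) = -\infty$ almost surely; in particular $M$ oscillates unboundedly.

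The central step is then to show that if \eqref{eq.Xperservasy} holds --- so $X(t)/F^{-1}(t) \to \lambda$ with $\lambda \in \{0,\pm 1\}$, and in particular $X(t) \to 0$ a.s. since $F^{-1}(t) \to 0$ --- then the drift integral $\int_0^t f(X(s))\,ds$ must converge to a finite limit almost surely. Indeed, $f$ is in $\mathrm{RV}_0(\beta)$ with $\beta > 1$, so $|f(F^{-1}(t))|$ is regularly varying at infinity of index $-\beta/(\beta-1) < -1$ and hence integrable near infinity; combined with $X(t)/F^{-1}(t)\to\lambda$ and asymptotic oddness (via the $\varphi$ of \eqref{asym}), one gets $f(X(t)) \sim -\lambda f(F^{-1}(t))$ pointwise in $t$ (the argument already used in the proof of Theorem~\ref{thm.xxprdetasyiff}), so $\int_0^\infty |f(X(s))|\,ds < \infty$ a.s. But then from \eqref{eq.sde} written in integrated form,
\[
X(t) = \xi - \int_0^t f(X(s))\,ds + M(t),
\]
both $X(t)$ and $\int_0^t f(X(s))\,ds$ converge as $t\to\infty$, which forces $M(t)$ to converge to a finite limit almost surely. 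This contradicts the unbounded oscillation of $M$ established above, and the lemma follows.

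I expect the main obstacle to be the bootstrapping between the a priori regularity one can extract and the asymptotic identity $f(X(t)) \sim -\lambda f(F^{-1}(t))$: in the case $\lambda = 0$ the pointwise asymptotic $f(X(t)) = o(f(F^{-1}(t)))$ is what is needed, and one must verify carefully that this still delivers integrability of $f(X(\cdot))$ near infinity (it does, since $o$ of an integrable function is integrable once measurability and eventual positivity are in hand), while in the cases $\lambda = \pm 1$ one must invoke uniform convergence theorems for regularly varying functions together with the asymptotic oddness hypothesis \eqref{asym} to pass from $X(t)/F^{-1}(t)\to\lambda$ to $f(X(t))/f(F^{-1}(t))\to\lambda$ uniformly enough to integrate. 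A secondary technical point is to make sure the exceptional null sets (for convergence of $X$, for the DDS representation of $M$, and for the LIL) are handled on a single almost sure event, but this is routine. The heart of the argument is the observation that preservation of the decay rate forces the drift to be absolutely integrable, which then pins the noise martingale to a convergent --- hence $L^2$ --- path.
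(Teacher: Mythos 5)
Your proposal is correct and is essentially the paper's proof. Both arguments rest on the same observation: \eqref{eq.Xperservasy} together with the regular variation of $f\circ F^{-1}$ (index $-\beta/(\beta-1)<-1$) and the asymptotic oddness of $f$ force $\int_0^\infty f(X(s))\,ds$ to converge a.s.\ (the paper invokes the integrability estimate from the proof of Theorem~\ref{Thm1} pathwise, which is the same content you rederive), and then the integrated form of \eqref{eq.sde} shows $\int_0^t\sigma(s)\,dB(s)$ converges a.s., which is only possible if $\langle M\rangle(\infty)=\int_0^\infty\sigma^2(s)\,ds<\infty$; you make the last step explicit via Dambis--Dubins--Schwarz and the law of the iterated logarithm (phrased as a contradiction), while the paper compresses it into an appeal to the martingale convergence theorem, but these are the same dichotomy. (One small sign slip: you should have $f(X(t))\sim\lambda\,f(F^{-1}(t))$, not $-\lambda$; the minus belongs to the drift $-f(X)$, and it does not affect the integrability argument.)
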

\begin{proof}
Writing \eqref{eq.sde} in integral form and rearranging yields
\begin{equation} \label{eq.sigL2rep}
\int_0^t \sigma(s)\,dB(s)=X(t)-X(0)+\int_0^t f(X(s))\,ds, \quad t\geq 0.
\end{equation}
We have been granted as a hypothesis that $X$ obeys \eqref{eq.Xperservasy}: this implies that $X(t)\to 0$ as $t\to\infty$ a.s. Therefore, if it can be 
shown that the last term on the right--hand side of \eqref{eq.sigL2rep} tends to a finite limit a.s. as $t\to\infty$, we have that 
\[
\lim_{t\to\infty} \int_0^t \sigma(s)\,dB(s) \text{ exists and is finite, a.s.}
\]
By virtue of the martingale convergence theorem, this forces $\sigma$ to be in $ L^2([0,\infty);\mathbb{R})$. Hence it is enough to prove that 
\[
\lim_{t\to\infty} \int_0^t f(X(s))\,ds \text{ exists and is finite a.s.}
\]
under the hypothesis \eqref{eq.Xperservasy}. However, this can be established by employing pathwise (i.e., to each $\omega$ in the almost sure event 
for which \eqref{eq.Xperservasy} holds and for which $\lambda(\omega)=0, 1$, or $-1$) the argument used to prove the convergence of the integral 
\[
\int_0^t f(x(s))\,ds
\] 
in the proof of Theorem~\ref{Thm1}, under the hypothesis that the function $x$ obeys \eqref{eq.xdetperservasy}. 
\end{proof}
Next we show that the second part of \eqref{eq.intsig2divF} is necessary if we are to have that the solution $X$ of \eqref{eq.sde} obeys \eqref{eq.Xperservasy}.
Since $\sigma\in L^2([0,\infty);\mathbb{R})$, we have that 
\[
\lim_{T\to\infty} \int_0^T \sigma(s)\,dB(s) \text{ exists and is finite a.s.}
\] 
Therefore, for each outcome $\omega$ in an a.s. event $\Omega_0$ we can define 
\begin{multline}\label{def.tailmart}
\left(\int_t^\infty \sigma(s)\,dB(s)\right)(\omega)
\\:= \lim_{T\to\infty} \left(\int_0^T \sigma(s)\,dB(s)\right)(\omega) - \left(\int_0^t \sigma(s)\,dB(s)\right)(\omega), \quad t\geq 0.
\end{multline}
It is more useful to view this as an (uncountably) infinite family of $\mathcal{F}^B(\infty)$--measurable random variables that are well--defined on 
$\Omega_0$, rather than as a conventional process. One reason for this is that $I(t):=\int_t^\infty \sigma(s)\,dB(s)$ is not $\mathcal{F}^B(t)$--measurable, as it depends on the Brownian motion $B$ after time $t$: therefore, $I$ is not adapted to the filtration $(\mathcal{F}^B(t))_{t\geq 0}$ on which the solutions of 
the SDE evolves. However, this is not a limitation, as our arguments can be applied path by path, and therefore can be viewed as essentially deterministic, once 
the outcome $\omega$ has been selected in an a.s. event on which desirable asymptotic properties hold.  
\begin{theorem} \label{theorem:Xnecc1}
Suppose that $f$ is continuous, and obeys \eqref{asym} and \eqref{RVat0} for $\beta>1$. Let $\sigma$ be continuous. 
If the continuous adapted process $X$ which obeys \eqref{eq.sde} also satisfies
\[
\mathbb{P}\left[\lim_{t\to\infty} \frac{X(t,\omega)}{F^{-1}(t)} =\lambda(\omega)\in (-\infty,\infty)\right]=1
\]
then $X$ obeys \eqref{eq.Xperservasy} and $\sigma$ obeys \eqref{eq.intsig2divF}.
\end{theorem}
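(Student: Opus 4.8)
\textbf{Proof proposal for Theorem~\ref{theorem:Xnecc1}.}

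The plan is to reduce the statement to the two results already in hand: Lemma~\ref{lemsig2}, which produces $\sigma\in L^2([0,\infty);\mathbb{R})$ from the mere existence of a finite limit for $X(t)/F^{-1}(t)$, and the deterministic converse Theorem~\ref{thm.detpresnecc}, whose proof we intend to run pathwise on a suitable almost sure event. So the first step is to invoke Lemma~\ref{lemsig2}: its hypothesis is exactly that $X(t)/F^{-1}(t)$ tends to a finite (possibly random) limit, so we immediately obtain $\sigma\in L^2([0,\infty);\mathbb{R})$, hence by the martingale convergence theorem $U(t)=\int_0^t\sigma(s)\,dB(s)$ converges a.s.\ to a finite $U(\infty)$, and the tail random variables $I(t)=\int_t^\infty\sigma(s)\,dB(s)$ of \eqref{def.tailmart} are well-defined on an a.s.\ event $\Omega_0$.

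The second step is to sharpen ``finite limit'' to ``limit in $\{-1,0,1\}$'' and simultaneously extract the decay rate of $I(t)$. On the a.s.\ event $\Omega=\Omega_0\cap\{X(t)\to 0\}\cap\{X(t)/F^{-1}(t)\to\lambda\in\R\}$ fix an outcome $\omega$. Exactly as in the passage preceding Theorem~\ref{thm.stochpressuff}, set $\gamma(t,\omega)=U(\infty,\omega)-U(t,\omega)=I(t)(\omega)$ and $Z(t,\omega)=X(t,\omega)-\gamma(t,\omega)$; then $t\mapsto Z(t,\omega)$ is in $C^1((0,\infty);\R)$, solves $Z'(t,\omega)=-f(Z(t,\omega)+\gamma(t,\omega))$, and $Z(t,\omega)\to 0$. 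Now apply the deterministic necessity argument of Theorem~\ref{thm.detpresnecc} to the pair $(Z(\cdot,\omega),\gamma(\cdot,\omega))$ in the guise of the internally perturbed equation \eqref{eq.odeinternal}: that argument, given that $Z(t,\omega)/F^{-1}(t)$ and hence (since $\gamma(t,\omega)=I(t)(\omega)\to0$ and, a priori, only $Z(t)\to0$ is known) requires care — see the obstacle below — but once run it forces $\lambda(\omega)\in\{-1,0,1\}$ and $\gamma(t,\omega)/F^{-1}(t)=I(t)(\omega)/F^{-1}(t)\to 0$. Since this holds for every $\omega$ in an a.s.\ event, we get both the quantised limit in \eqref{eq.Xperservasy} and the second part of \eqref{eq.intsig2divF}, namely \eqref{eq.intsigdivF}; combined with $\sigma\in L^2$ from Step~1 this is precisely \eqref{eq.intsig2divF}.

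The main obstacle is the internal-perturbation necessity step, because Theorem~\ref{thm.detpresnecc} as stated is a converse for the \emph{externally} perturbed equation \eqref{eq.odepert}, whereas here we only have the internally perturbed reformulation $Z'=-f(Z+\gamma)$ with $\gamma$ the tail functional. The resolution is that the deterministic necessity proof actually splits into (i) showing $\int_0^t g$ converges — which in the internal picture corresponds to showing $\gamma(t)=I(t)(\omega)$ has a limit, and this is automatic here because $I(t)(\omega)\to0$ by $L^2$-integrability of $\sigma$ — and (ii) the regular-variation comparison argument that upgrades $x(t)/F^{-1}(t)\to\lambda$ to $\lambda\in\{0,\pm1\}$ together with the tail-decay estimate; part (ii) is genuinely about the equation $x'(t)=-f(x(t))+g(t)$ only through the identity $x(t)=z(t)+\gamma(t)$ with $z$ solving the internal equation, so it transfers verbatim. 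Thus the real work is to confirm that the proof of Theorem~\ref{thm.detpresnecc} is written so as to apply to $Z$ directly, using $\gamma(t,\omega)\to0$ in place of the integrability of $g$; modulo that bookkeeping, everything else is a pathwise restatement of deterministic facts and a measurability remark (the limit $\lambda$ is $\mathcal{F}^B(\infty)$-measurable as an a.s.\ limit of $\mathcal{F}^B(\infty)$-measurable random variables).
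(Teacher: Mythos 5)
There is a genuine gap, and it lies exactly at the point you flag as ``requires care.'' Your plan is to invoke Lemma~\ref{lemsig2} (correct, and the same first step as the paper), and then to apply the deterministic necessity argument of Theorem~\ref{thm.detpresnecc} pathwise to upgrade the finite limit $\lambda(\omega)$ to a value in $\{-1,0,1\}$. But Theorem~\ref{thm.detpresnecc} does not do this. Its hypothesis is \eqref{eq.xdetperservasy}, which already \emph{assumes} $\lambda\in\{-1,0,1\}$; its conclusion is the decay $\int_t^\infty g(s)\,ds / F^{-1}(t)\to 0$. Nothing in that proof ``upgrades $x(t)/F^{-1}(t)\to\lambda$ to $\lambda\in\{0,\pm1\}$.'' The only deterministic result in the paper with that flavour is Theorem~\ref{Thm:Lim}, and it requires $\gamma(t)/F^{-1}(t)\to 0$ as an \emph{input} — which for $\gamma(t)=\int_t^\infty\sigma(s)\,dB(s)$ is precisely the second part of \eqref{eq.intsig2divF} you are trying to establish. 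Your proposal is therefore circular: to conclude $\lambda\in\{0,\pm1\}$ via the internal-perturbation machinery you need $\int_t^\infty\sigma\,dB/F^{-1}(t)\to 0$, and to conclude $\int_t^\infty\sigma\,dB/F^{-1}(t)\to 0$ via the Theorem~\ref{thm.detpresnecc} computation you need $\lambda\in\{0,\pm1\}$.

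The missing ingredient is the law of the iterated logarithm for the tail martingale, Lemma~\ref{lemma.tailsigmart}, and its consequence \eqref{eq.zeroeventSig}: the event on which
$\lim_{t\to\infty}\int_t^\infty\sigma(s)\,dB(s)/F^{-1}(t)$
exists, is finite and nonzero has probability zero. This is a genuinely probabilistic fact with no deterministic analogue, and it is exactly what breaks the circularity. The paper's argument proceeds by rearranging the SDE to write, on an a.s.\ event, $\int_t^\infty\sigma\,dB = -X(t) + \int_t^\infty f(X(s))\,ds$, then uses L'H\^opital and regular variation to compute that the scaled tail $\int_t^\infty\sigma\,dB/F^{-1}(t)$ converges to $-\lambda+\lambda^\beta$ on $\{\lambda>0\}$, to $-\lambda-(-\lambda)^\beta$ on $\{\lambda<0\}$, and to $0$ on $\{\lambda=0\}$; the LIL bound \eqref{eq.zeroeventSig} then forces these limits to vanish a.s., hence $\lambda^\beta=\lambda$ (resp.\ $(-\lambda)^\beta=-\lambda$), giving $\lambda\in\{-1,0,1\}$ and $\int_t^\infty\sigma\,dB/F^{-1}(t)\to 0$ simultaneously. (Note also that in the deterministic setting the quantisation is simply \emph{false} without hypotheses: for $f(x)=x^\beta$ and $g$ chosen so that $x(t)=\tfrac12 F^{-1}(t)$, one has $\lambda=\tfrac12$, so no purely pathwise comparison argument can possibly rule this out — the noise structure must be used.) Without invoking Lemma~\ref{lemma.tailsigmart} your argument cannot close.
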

In order to prove this result, and another result later in this section, the following result is needed concerning the asymptotic behaviour of $\int_t^\infty \sigma(s)\,dB(s)$ when $\sigma\in L^2([0,\infty);\mathbb{R})$.
\begin{lemma} \label{lemma.tailsigmart}
Suppose $\sigma$ is a continuous function such that $\sigma \in L^2([0, \infty);\mathbb{R})$ and
\begin{equation} \label{eq.intsigpos}
\int_t^\infty \sigma^2(s)\,ds > 0 \text{ for all $t\geq 0$},
\end{equation}
then 
\begin{align*}
\limsup_{t \to \infty}\frac{\int_t^\infty \sigma(s)\, dB(s)}
{ \sqrt{2\int_t^\infty \sigma^2(s)\,ds \log\log \left(\frac{1}{\int_t^\infty \sigma^2(s)\,ds}\right)} } = 1, \quad\text{a.s.}
\end{align*}
and
\begin{align*}
\liminf_{t \to \infty}\frac{\int_t^\infty \sigma(s)\,dB(s)}
{ \sqrt{2\int_t^\infty \sigma^2(s)\,ds \log\log \left(\frac{1}{\int_t^\infty \sigma^2(s)\,ds}\right)} } = -1, \quad\text{ a.s.}
\end{align*}
\end{lemma}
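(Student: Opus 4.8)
The plan is to reduce the statement to the classical law of the iterated logarithm (LIL) for Brownian motion at the origin, via a deterministic time change followed by a time reversal. Write $\rho(t):=\int_t^\infty \sigma^2(u)\,du$ and $\tau(t):=\int_0^t \sigma^2(u)\,du$. Since $\sigma$ is continuous and in $L^2([0,\infty);\mathbb{R})$, both functions are continuous, $\tau$ is non-decreasing with $\tau(t)\uparrow T:=\int_0^\infty \sigma^2(u)\,du<+\infty$, and $\rho$ is non-increasing with $\rho(t)\to 0$ as $t\to\infty$; moreover $\rho(t)>0$ for every $t\ge 0$ by \eqref{eq.intsigpos}, and $\rho(t)+\tau(t)=T$ for all $t\ge 0$. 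In this notation the quantities in the statement are $M(t)/\sqrt{2\rho(t)\log\log(1/\rho(t))}$, where $M(t):=\int_t^\infty \sigma(s)\,dB(s)$ is the family of random variables defined in \eqref{def.tailmart}.

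First I would pass to a time-changed Brownian motion. The process $N(t):=\int_0^t \sigma(s)\,dB(s)$ is a continuous $L^2$-bounded martingale with $\langle N\rangle_t=\tau(t)$, so by the martingale convergence theorem $N(t)\to N(\infty)$ a.s.\ and $M(t)=N(\infty)-N(t)$ on an almost sure event. By the Dambis--Dubins--Schwarz theorem, in the form valid when $\langle N\rangle_\infty=T<\infty$ (which produces, after enlarging the space if necessary, a standard Brownian motion $W$ on $[0,\infty)$ with $N(t)=W(\tau(t))$ for all $t\ge 0$, $N$ being constant on each interval where $\tau$ is constant, consistently with this identity), we obtain $N(\infty)=W(T)$ and hence, pathwise on an almost sure event and simultaneously for all $t\ge 0$,
\[
M(t)=W(T)-W(\tau(t)).
\]
Now define $\widehat W(u):=W(T)-W(T-u)$ for $u\in[0,T]$. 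The time reversal of a Brownian motion over a fixed interval is again a Brownian motion, so $\widehat W$ is a standard Brownian motion on $[0,T]$; since $T-\tau(t)=\rho(t)$, this gives the key identity $M(t)=\widehat W(\rho(t))$ for all $t\ge 0$, a.s.

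Next I would invoke the LIL for Brownian motion near zero: for a standard Brownian motion $\widehat W$,
\[
\limsup_{u\to 0^+}\frac{\widehat W(u)}{\sqrt{2u\log\log(1/u)}}=1,\qquad
\liminf_{u\to 0^+}\frac{\widehat W(u)}{\sqrt{2u\log\log(1/u)}}=-1,\quad\text{a.s.},
\]
which follows from the classical LIL at infinity applied to the inverted Brownian motion $u\mapsto u\,W(1/u)$, together with the symmetry $W\mapsto -W$. Finally I would transfer this through the reparametrisation $\rho$. Because $\rho$ is continuous, non-increasing, strictly positive and tends to $0$, the image of $[T_0,\infty)$ under $\rho$ is the interval $(0,\rho(T_0)]$ for every $T_0\ge 0$, with $\rho(T_0)\to 0$ as $T_0\to\infty$; hence for any function $h$ defined near $0$ one has $\limsup_{t\to\infty}h(\rho(t))=\limsup_{u\to 0^+}h(u)$ and $\liminf_{t\to\infty}h(\rho(t))=\liminf_{u\to 0^+}h(u)$. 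Applying this with $h(u)=\widehat W(u)/\sqrt{2u\log\log(1/u)}$ and using $M(t)=\widehat W(\rho(t))$ delivers the two asserted limits.

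The main obstacle is making the time-change/time-reversal step fully rigorous when $\sigma$ is allowed to vanish on intervals, so that $\tau$ is merely non-decreasing rather than strictly increasing: one must use the form of the Dambis--Dubins--Schwarz representation appropriate to a martingale with finite terminal quadratic variation (extending the probability space so that $W$ is defined on all of $[0,\infty)$), verify that $N$, and therefore $M$, is constant on the flat stretches of $\tau$ in a way consistent with $N(t)=W(\tau(t))$, and check that the identity $M(t)=\widehat W(\rho(t))$ holds pathwise on a single almost sure event, simultaneously for all $t$. The remaining ingredients---$L^2$ martingale convergence, the LIL at $0$, and the passage of the $\limsup$ and $\liminf$ through the continuous decreasing change of variable $\rho$---are routine. (Alternatively, this statement is precisely the technical lemma concerning $\left(\int_t^\infty \sigma(s)\,dB(s)\right)_{t\ge 0}$ recorded in Appleby, Gleeson and Rodkina~\cite{JADJGAR:integral}, and one may simply cite it there.)
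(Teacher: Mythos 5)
The paper itself does not prove Lemma~\ref{lemma.tailsigmart}; immediately after the statement it says ``The proof of this lemma can be found in \cite{JADJGAR:integral}'' and moves on. Your proposal therefore supplies a self-contained argument where the paper offers only a citation, and the argument you give is correct. The chain is: write $N(t)=\int_0^t\sigma\,dB$, $\tau(t)=\int_0^t\sigma^2$, $\rho(t)=\int_t^\infty\sigma^2 = T-\tau(t)$ with $T=\int_0^\infty\sigma^2<\infty$; use $L^2$-martingale convergence to get $M(t)=N(\infty)-N(t)$ on an almost sure event; apply Dambis--Dubins--Schwarz (in the finite-terminal-quadratic-variation form, on an enlarged space) to write $N(t)=W(\tau(t))$ with $W$ a standard Brownian motion; time-reverse on the \emph{deterministic} interval $[0,T]$ to set $\widehat W(u)=W(T)-W(T-u)$, a Brownian motion, so that $M(t)=\widehat W(\rho(t))$ simultaneously for all $t$ a.s.; invoke the LIL at the origin for $\widehat W$; and push the $\limsup/\liminf$ through the change of variable, which is legitimate because $\rho$ is continuous, nonincreasing, strictly positive by \eqref{eq.intsigpos}, and tends to $0$, so $\rho$ maps $[T_0,\infty)$ onto $(0,\rho(T_0)]$ for every $T_0$.

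You also correctly identify and handle the two places where one could go wrong if careless: (i) since \eqref{eq.intsigpos} forbids the tail integral from vanishing but does not forbid $\sigma$ from vanishing on intervals, $\tau$ need only be nondecreasing, so one must use the DDS representation for martingales with $\langle N\rangle_\infty<\infty$ and observe that a continuous martingale is constant on the flat stretches of its bracket, which makes $N(t)=W(\tau(t))$ valid for all $t$, not just on intervals where $\tau$ is strictly increasing; and (ii) the a.s.\ identity $M(t)=\widehat W(\rho(t))$ must hold pathwise, simultaneously in $t$, so that the deterministic change-of-variable argument for the upper and lower limits can be applied outcome by outcome. Both points are stated and resolved in your writeup. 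You also note the cite-and-forget alternative, which is what the paper actually does. I see no gap.
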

The proof of this lemma can be found in \cite{JADJGAR:integral}. The condition \eqref{eq.intsigpos} is important; if it did not hold however, the dynamics of the SDE \eqref{eq.sde} would collapse to those of \eqref{eq.ode}. To see this, notice that if  \eqref{eq.intsigpos} does not hold, then 
there exists a deterministic $T>0$ such that $\int_t^\infty \sigma^2(s)\,ds=0$ for all $t\geq T$. Since $\sigma^2$ is non--negative and continuous, this implies that 
$\sigma(t)=0$ a.e. for $t\in [T,\infty)$ and therefore that 
\[
\int_T^t \sigma(s)\,dB(s)=0 \quad\text{for all $t\in [T,\infty)$ a.s.}
\]
Therefore, for $t\geq T$, \eqref{eq.sde} reads
\[
X(t)=X(T)-\int_T^t f(X(s))\,ds,
\]
so $X'(t)=-f(X(t))$ for $t\geq T$ a.s. with ``initial condition'' $X(T)$ being a random variable. Clearly, we have that 
\[
\lim_{t\to\infty} \frac{X(t)}{F^{-1}(t)}=\sgn(X(T)), \quad \text{a.s.}
\]
so in this case, we have \eqref{eq.Xperservasy}. We therefore tacitly assume that \eqref{eq.intsigpos} holds in future, because otherwise the stochastic 
equation \eqref{eq.sde} is simply an equation of the form \eqref{eq.ode} with a random initial condition.  

In the case that \eqref{eq.intsigpos} holds, we see that the function 
\[
\Sigma(t)=\sqrt{2\int_t^\infty \sigma^2(s)\,ds \log\log \left(\frac{1}{\int_t^\infty \sigma^2(s)\,ds}\right)}
\]
is positive for all $t$ sufficiently large and that $\Sigma(t)\to 0$ as $t\to\infty$. Therefore, by Lemma~\ref{lemma.tailsigmart}, because 
\[
\liminf_{t\to\infty} \frac{\int_t^\infty \sigma(s)\,dB(s)}{\Sigma(t)}=-1, \quad \limsup_{t\to\infty} \frac{\int_t^\infty \sigma(s)\,dB(s)}{\Sigma(t)}=1,
\quad \text{a.s}
\]
then as $F^{-1}(t)>0$ for all $t\geq 0$ and $F^{-1}(t)\to 0$ as $t\to\infty$, we have
\begin{equation} \label{eq.zeroeventSig}
\mathbb{P}\left[
\lim_{t\to\infty} \frac{\int_t^\infty \sigma(s)\,dB(s)}{F^{-1}(t)} \text{ exists and is finite and non--zero}\right]=0.
\end{equation}

Combining the results of Theorem~\ref{thm.stochpressuff} and~\ref{theorem:Xnecc1} we obtain the following result. 
\begin{theorem} \label{thm.Xneccsuff}
Suppose that $f$ is continuous, and obeys \eqref{asym} and \eqref{RVat0} for $\beta>1$. Let $\sigma$ be continuous. 
Suppose that $X$ is the continuous adapted process $X$ which obeys \eqref{eq.sde}. Then the following are equivalent:
\begin{itemize}
\item[(a)] $\sigma\in L^2([0,\infty);\mathbb{R})$ and 
\[
\lim_{t\to\infty} \frac{\int_t^\infty \sigma(s)\,dB(s)}{F^{-1}(t)}=0, \quad\text{a.s.}
\] 
\item[(b)] 
\[
\mathbb{P}\left[\lim_{t\to\infty} \frac{X(t)}{F^{-1}(t)}\in (-\infty,\infty) \right]=1;
\]
\item[(c)]
\[
\mathbb{P}\left[ \lim_{t\to\infty} \frac{X(t)}{F^{-1}(t)}\in \{-1,0,1\} \right]=1.
\]
\end{itemize}
\end{theorem}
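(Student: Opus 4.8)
The plan is to prove the three–way equivalence by closing the cycle of implications (a) $\Rightarrow$ (c) $\Rightarrow$ (b) $\Rightarrow$ (a). Since $\{-1,0,1\}\subset(-\infty,\infty)$, the step (c) $\Rightarrow$ (b) is automatic, so only the other two implications need argument, and both reduce to invoking results already in hand.

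For (a) $\Rightarrow$ (c): statement (a) is exactly the pair of conditions \eqref{eq.intsig2divF}, and together with the standing hypothesis that $\sigma$ is continuous it supplies precisely the hypotheses of Theorem~\ref{thm.stochpressuff} (recall that the second half of (a) is \eqref{eq.intsigdivF}). That theorem then yields an $\mathcal{F}^B(\infty)$–measurable random variable $\lambda$ with $\mathbb{P}[\lambda\in\{0,\pm1\}]=1$ and $\mathbb{P}[\lim_{t\to\infty}X(t)/F^{-1}(t)=\lambda]=1$, i.e.\ exactly \eqref{eq.Xperservasy}; in particular the limit lies in $\{-1,0,1\}$ almost surely, which is (c). The refinement of the limit set from $(-\infty,\infty)$ down to $\{-1,0,1\}$ is where the asymptotic oddness of $f$ in \eqref{asym} enters, via Theorem~\ref{Thm:Lim}.

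For (b) $\Rightarrow$ (a): statement (b) says that on an almost sure event the ratio $X(t)/F^{-1}(t)$ converges to a finite (random) limit $\lambda(\omega)$, which is precisely the hypothesis of Theorem~\ref{theorem:Xnecc1}. Its conclusion is that $X$ obeys \eqref{eq.Xperservasy} (so in fact (c) is recovered here as well) and that $\sigma$ obeys \eqref{eq.intsig2divF}, which is statement (a). This closes the cycle.

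I do not expect a genuine obstacle at the level of this theorem: it is a bookkeeping combination of Theorems~\ref{thm.stochpressuff} and~\ref{theorem:Xnecc1}, and the hypotheses on $f$ (continuous, asymptotically odd in the sense of \eqref{asym}, and in $\text{RV}_0(\beta)$ with $\beta>1$) are common to both invoked results and carry over verbatim, with no new assumption on $f$, $\sigma$, or the solution. The real difficulty is internal to Theorem~\ref{theorem:Xnecc1}, whose proof must both rule out limit values outside $\{-1,0,1\}$ and extract $\sigma\in L^2([0,\infty);\mathbb{R})$ together with the tail estimate in \eqref{eq.intsig2divF}; that is where Lemma~\ref{lemsig2}, the law-of-the-iterated-logarithm estimate in Lemma~\ref{lemma.tailsigmart}, and the zero–probability event \eqref{eq.zeroeventSig} do the work. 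The only point requiring care in the present proof is to verify that the mere finiteness of the limit assumed in (b) already triggers Theorem~\ref{theorem:Xnecc1}, which it does, since that theorem is stated under exactly this hypothesis.
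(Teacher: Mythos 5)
Your cycle (a) $\Rightarrow$ (c) $\Rightarrow$ (b) $\Rightarrow$ (a), with (a) $\Rightarrow$ (c) by Theorem~\ref{thm.stochpressuff}, (c) $\Rightarrow$ (b) by inclusion, and (b) $\Rightarrow$ (a) by Theorem~\ref{theorem:Xnecc1}, is precisely the argument the paper gives after the theorem statement. Correct and essentially identical to the paper's proof.
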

We have shown that (a) implies (c) in Theorem~\ref{thm.stochpressuff}); (c) clearly implies (b); and by Theorem~\ref{theorem:Xnecc1}, (b) implies (a).

The second condition in \eqref{eq.intsig2divF} is difficult to check a priori. Instead, we may use Lemma~\ref{lemma.tailsigmart} to arrive 
at a more direct theorem, contingent on the following additional assumption on $\sigma$; 
\begin{equation} \label{def.mu}
\text{There exists $\mu\in [0,\infty]$ such that }
\mu^2:= \lim_{t \to \infty} \frac{2\int_t^\infty \sigma^2(s)\,ds \log\log\left(\frac{1}{\int_t^\infty \sigma^2(s)\,ds}\right) }{ F^{-1}(t)^2 }. 
\end{equation}
We state this result now.
\begin{theorem}  \label{thm.Xneccsuff2}
Suppose that $f$ is continuous, and obeys \eqref{asym} and \eqref{RVat0} for $\beta>1$. Let $\sigma$ be continuous. 
Suppose that $X$ is the continuous adapted process $X$ which obeys \eqref{eq.sde}. 
\begin{itemize}
\item[(a)] Suppose $\sigma \notin L^2([0, \infty);\mathbb{R})$. Then
\begin{align*}
\mathbb{P}\left[\lim_{t \to \infty}\frac{X(t)}{F^{-1}(t)}\in (-\infty, \infty)\right]=0.
\end{align*}
\item[(b)] Suppose $\sigma \in L^2([0, \infty);\mathbb{R})$. 
\begin{enumerate}
\item[(i)] If $\mu$ defined by \eqref{def.mu} is zero, then 
\begin{align*}
\mathbb{P}\left[\lim_{t \to \infty} \frac{X(t)}{F^{-1}(t)} \in \{-1, 0,1\}\right] = 1.
\end{align*} 
\item[(ii)] If $\mu$ defined by \eqref{def.mu} in $(0,\infty]$, then
\begin{align*}
\mathbb{P}\left[ \lim_{t \to \infty} \frac{X(t)}{F^{-1}(t)} \in (-\infty, \infty) \right] = 0.
\end{align*}
\end{enumerate}
\end{itemize}
\end{theorem}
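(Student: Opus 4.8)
The plan is to dispatch the three cases by combining the representation of $\int_t^\infty\sigma(s)\,dB(s)$ already exploited in Lemma~\ref{lemsig2} and Theorem~\ref{theorem:Xnecc1} with the law of the iterated logarithm for $\int_t^\infty\sigma(s)\,dB(s)$ recorded in Lemma~\ref{lemma.tailsigmart}. Parts~(a) and~(b)(ii) are \emph{impossibility} arguments, while part~(b)(i) is a short \emph{positive} argument. Throughout I write $E$ for the event $\{\omega:\lim_{t\to\infty}X(t,\omega)/F^{-1}(t)\text{ exists and is finite}\}$, and I use repeatedly the elementary identity $\int_t^\infty f(F^{-1}(s))\,ds=F^{-1}(t)$, which follows from $\tfrac{d}{dt}F^{-1}(t)=-f(F^{-1}(t))$ and $F^{-1}(t)\to 0$.

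For part~(a), suppose $\mathbb{P}(E)>0$. On $E$ we have $X(t)\to 0$, and, running the pathwise version of the argument used in the proof of Theorem~\ref{Thm1} to show that $\int_0^t f(x(s))\,ds$ converges, I would verify that $\int_0^\infty f(X(s))\,ds$ converges on $E$; the only delicate point is integrability of $s\mapsto f(X(s))$ at infinity, which follows because $X(s)$ is eventually comparable to $F^{-1}(s)$ (or smaller), using the uniform (Potter) bounds for $f\in\mathrm{RV}_0(\beta)$ together with the identity above. Rearranging \eqref{eq.sigL2rep} then shows that $\int_0^t\sigma(s)\,dB(s)$ converges on $E$. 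But the event that $\int_0^t\sigma(s)\,dB(s)$ converges as $t\to\infty$ is a tail event for $B$, hence has probability $0$ or $1$, and it has probability $1$ exactly when $\sigma\in L^2([0,\infty);\mathbb{R})$; since $\sigma\notin L^2$ this event is null, contradicting $\mathbb{P}(E)>0$.

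For part~(b)(i), assume $\sigma\in L^2([0,\infty);\mathbb{R})$ and $\mu=0$. Since \eqref{eq.intsigpos} is in force, $\Sigma(t)=\sqrt{2\int_t^\infty\sigma^2(s)\,ds\,\log\log(1/\int_t^\infty\sigma^2(s)\,ds)}$ is well defined and positive for large $t$, and by Lemma~\ref{lemma.tailsigmart} the ratio $\int_t^\infty\sigma(s)\,dB(s)/\Sigma(t)$ has $\limsup$ equal to $1$ and $\liminf$ equal to $-1$ a.s., hence is a.s.\ eventually bounded. Multiplying by $\Sigma(t)/F^{-1}(t)\to\mu=0$ gives $\lim_{t\to\infty}\int_t^\infty\sigma(s)\,dB(s)/F^{-1}(t)=0$ a.s. Together with $\sigma\in L^2$ this is precisely the hypothesis of Theorem~\ref{thm.stochpressuff} (equivalently condition~(a) of Theorem~\ref{thm.Xneccsuff}), which yields $\mathbb{P}[\lim_{t\to\infty}X(t)/F^{-1}(t)\in\{-1,0,1\}]=1$.

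For part~(b)(ii), assume $\sigma\in L^2([0,\infty);\mathbb{R})$ and $\mu\in(0,\infty]$. Now $\Sigma(t)/F^{-1}(t)\to\mu>0$, so by Lemma~\ref{lemma.tailsigmart} the ratio $\int_t^\infty\sigma(s)\,dB(s)/F^{-1}(t)$ has $\limsup$ equal to $\mu$ and $\liminf$ equal to $-\mu$ a.s.\ (read as $\pm\infty$ when $\mu=\infty$), exactly as in the derivation of \eqref{eq.zeroeventSig}; in particular this ratio a.s.\ fails to converge. On the other hand, on $E$ the computation used in part~(a)---rearranging \eqref{eq.sigL2rep} and using \eqref{def.tailmart}, which is legitimate here since $\sigma\in L^2$---yields $\int_t^\infty\sigma(s)\,dB(s)=-X(t)+\int_t^\infty f(X(s))\,ds$, and dividing by $F^{-1}(t)$ shows, by the regular variation of $f$ and the identity $\int_t^\infty f(F^{-1}(s))\,ds=F^{-1}(t)$ (exactly as in the proof of Theorem~\ref{theorem:Xnecc1}), that $\int_t^\infty\sigma(s)\,dB(s)/F^{-1}(t)$ \emph{does} converge on $E$. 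Hence $E$ lies, modulo a null set, inside the event that this ratio converges, which is null, so $\mathbb{P}(E)=0$. I expect the main obstacle to be precisely this pathwise device borrowed from Theorem~\ref{Thm1}: there the convergence of $\int_0^t f(x(s))\,ds$ is proved for $x$ whose normalised limit lies in $\{-1,0,1\}$, whereas here I only know $X(t)/F^{-1}(t)\to\lambda$ for some finite random $\lambda$; checking that the estimate survives this weakening, via the uniform bounds for $f\in\mathrm{RV}_0(\beta)$ on bounded ranges of $\lambda$ and the identity $\int_t^\infty f(F^{-1}(s))\,ds=F^{-1}(t)$, is the step needing genuine care, the remaining regular-variation manipulations being routine.
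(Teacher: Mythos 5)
Your proposal is correct and follows essentially the same route as the paper: both prove parts (a) and (b)(ii) by showing that if $X(t)/F^{-1}(t)$ converges on a set of positive probability then so does $\int_0^t\sigma(s)\,dB(s)$ (via the rearranged integral form of \eqref{eq.sde}), and both invoke Lemma~\ref{lemma.tailsigmart} and the definition of $\mu$ to rule that out (for (a) and (b)(ii)) or to verify the hypothesis of Theorem~\ref{thm.stochpressuff} (for (b)(i)). The one place where your bookkeeping differs slightly from the paper's is in justifying the a.s.\ convergence of $\int_0^\infty f(X(s))\,ds$ on the event of interest: you import the pointwise bound $|X(t)|\le(|\lambda|+1)F^{-1}(t)$ plus Potter bounds from the proof of Theorem~\ref{Thm1}, whereas the paper decomposes into the sub-events $A_+,A_-,A_0$ and uses the regular variation of $\varphi$ to compute $f(X(t,\omega))/\varphi(\Phi^{-1}(t))\to\pm\lambda(\omega)^\beta$ or $0$ directly, then uses $\varphi\circ\Phi^{-1}\in L^1$. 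The concern you flag at the end — whether the Theorem~\ref{Thm1} estimate survives replacing $\lambda\in\{-1,0,1\}$ by a general finite random $\lambda$ — is unfounded: that proof only ever uses $|x(t)|<(|\lambda|+1)F^{-1}(t)$ and regular-variation bounds that are uniform on $[0,|\lambda|+1]$, so it goes through pathwise for any finite $\lambda(\omega)$, exactly as the paper's own decomposition shows.
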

We now turn to the situation in which the solution of \eqref{eq.sde} possesses asymptotic behaviour similar to that of \eqref{eq.ode} by virtue of the scaled $h$--increment 
\[
\frac{X(t+h)-X(t)}{h}
\]
possessing good asymptotic behaviour. In fact, we will give necessary and sufficient conditions for the solution of \eqref{eq.sde} to obey 
\begin{equation} \label{eq.Xasydiff}
\mathbb{P}\left[ \lim_{t\to\infty} \frac{X(t)}{F^{-1}(t)}=\lambda, \quad 
\lim_{t\to\infty} \frac{\frac{X(t+h)-X(t)}{h}}{f(F^{-1}(t))}=-\lambda, \quad \lambda\in \{-1,0,1\}\right]=1 
\end{equation} 
for each $h>0$. It turns out if we let $\Psi$ be the complementary normal distribution function and define 
\begin{equation} \label{def.Sf}
S_f(\epsilon,h):= \sum_{n=0}^\infty  \Psi \left(\frac{\epsilon}{\theta(n)} \right),  \quad \theta^2(n) := \frac{ \int_{nh}^{(n+1)h}{\sigma^2(s)ds} }{ (f \circ F^{-1})^2(nh) },
\end{equation} 
then \eqref{eq.Xasydiff} holds if and only if for a fixed $h>0$, we have $S_f(\epsilon,h)<+\infty$ for all $\epsilon>0$. We first establish the sufficiency of 
the finiteness of $S_f(\epsilon,h)$.
\begin{theorem} \label{thm:Xderiv}
Suppose that $f$ is locally Lipschitz continuous, and obeys \eqref{asym} and \eqref{RVat0} for $\beta>1$. Let $\sigma$ be continuous. 
Let $X$ be the continuous adapted solution of \eqref{eq.sde}. 
Let $h>0$ and define $S_f(\epsilon,h)$ as in \eqref{def.Sf}. If $S_f(\epsilon,h)< +\infty$ for all $\epsilon > 0$, then 
\begin{multline} \label{eq:Xderiv}
\text{There exists a $\mathcal{F}^B(\infty)$--measurable random variable $\lambda$ such that }\\
\lambda\in \{-1,0,1\} \quad \text{a.s.}, \quad \lim_{t \to \infty}\frac{X(t)}{F^{-1}(t)} = \lambda, \quad \text{a.s.}, \quad  
\lim_{t \to \infty}\frac{ \frac{X(t+h)-X(t)}{h} }{(f \circ F^{-1})(t)} = -\lambda, \quad \text{a.s.}
\end{multline}
\end{theorem}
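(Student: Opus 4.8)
The plan is to extract from the finiteness of $S_f(\epsilon,h)$, first, the hypotheses of Theorem~\ref{thm.stochpressuff} — so that the first limit in \eqref{eq:Xderiv} is already in hand — and then the increment limit by a direct estimate on the block increments of the It\^o integral combined with the regular variation of $f\circ F^{-1}$. Two elementary facts are used throughout. Since $F'(x)=-1/f(x)$ on $(0,\infty)$, differentiating $F(F^{-1}(s))=s$ gives $(F^{-1})'(s)=-f(F^{-1}(s))$, and hence $\int_{mh}^\infty f(F^{-1}(s))\,ds=F^{-1}(mh)$ for every $m$. Since $f\in RV_0(\beta)$ with $\beta>1$ we have $F^{-1}\in RV_\infty(-1/(\beta-1))$ and therefore $f\circ F^{-1}\in RV_\infty(-\beta/(\beta-1))$ with index strictly below $-2$; consequently $\sum_n (f\circ F^{-1})^2(nh)<\infty$, the integral--sum comparison for regularly varying functions gives $\sum_{k\ge m}(f\circ F^{-1})(kh)\sim h^{-1}F^{-1}(mh)$ as $m\to\infty$, and $F^{-1}(t)/F^{-1}(nh)\to1$ and $f(F^{-1}(t))/f(F^{-1}(nh))\to1$ uniformly for $t\in[nh,(n+1)h]$ as $n\to\infty$.

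\textbf{Establishing $\sigma\in L^2$ and the first limit.} For each $n$ the process $u\mapsto\int_{nh}^{nh+u}\sigma(s)\,dB(s)$ on $[0,h]$ is a deterministic time change of Brownian motion with terminal quadratic variation $\int_{nh}^{(n+1)h}\sigma^2(s)\,ds=\theta^2(n)\,(f\circ F^{-1})^2(nh)$, so a time change and the reflection principle yield
\[
\mathbb{P}\left[\sup_{u\in[0,h]}\left|\int_{nh}^{nh+u}\sigma(s)\,dB(s)\right|\ge\epsilon\,(f\circ F^{-1})(nh)\right]\le 4\,\Psi\left(\frac{\epsilon}{\theta(n)}\right).
\]
Summing over $n$, using $S_f(\epsilon,h)<\infty$ and the Borel--Cantelli lemma, and intersecting over a sequence $\epsilon_k\downarrow0$, produces an almost sure event $\Omega^\ast$ on which $\sup_{u\in[0,h]}\left|\int_{nh}^{nh+u}\sigma(s)\,dB(s)\right|=o\big((f\circ F^{-1})(nh)\big)$. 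Since $\theta(n)\to0$, the block integrals $\int_{nh}^{(n+1)h}\sigma(s)\,dB(s)$ are in particular at most $(f\circ F^{-1})(nh)$ for large $n$, and then $\sum_n(f\circ F^{-1})^2(nh)<\infty$ forces $\int_0^\infty\sigma^2(s)\,ds<\infty$, i.e.\ $\sigma\in L^2([0,\infty);\mathbb{R})$, so $\int_t^\infty\sigma(s)\,dB(s)$ of \eqref{def.tailmart} is well defined a.s. Next, for $t\in[nh,(n+1)h)$ split $\int_t^\infty\sigma\,dB=\int_t^{(n+1)h}\sigma\,dB+\sum_{k\ge n+1}\int_{kh}^{(k+1)h}\sigma\,dB$: on $\Omega^\ast$ the first term is $o((f\circ F^{-1})(nh))=o(F^{-1}(nh))$ because $f(x)/x\to0$ as $x\to0^+$; for the tail series, given $\epsilon>0$ one has $|\int_{kh}^{(k+1)h}\sigma\,dB|\le\epsilon(f\circ F^{-1})(kh)$ for all large $k$, so by the integral--sum comparison the series is eventually at most $2\epsilon h^{-1}F^{-1}(nh)$, whence it is $o(F^{-1}(nh))$ as well. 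Since $F^{-1}(t)\sim F^{-1}(nh)$ on $[nh,(n+1)h)$, we obtain $\int_t^\infty\sigma(s)\,dB(s)/F^{-1}(t)\to0$ a.s.; together with $\sigma\in L^2$ this is exactly the hypothesis of Theorem~\ref{thm.stochpressuff}, which supplies an $\mathcal{F}^B(\infty)$--measurable $\lambda$ with $\mathbb{P}[\lambda\in\{-1,0,1\}]=1$ and $X(t)/F^{-1}(t)\to\lambda$ a.s.

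\textbf{The increment limit.} From \eqref{eq.sde}, $h^{-1}(X(t+h)-X(t))=-h^{-1}\int_t^{t+h}f(X(s))\,ds+h^{-1}\int_t^{t+h}\sigma(s)\,dB(s)$. As $[t,t+h]$ meets at most two consecutive blocks, on $\Omega^\ast$ the last term is $o((f\circ F^{-1})(t))$ by the block estimate above together with the uniform comparability of $f\circ F^{-1}$ on consecutive blocks. For the drift term, work on the event where $X(s)/F^{-1}(s)\to\lambda$. If $\lambda=\pm1$, write $X(s)=\rho(s)F^{-1}(s)$ with $\rho(s)\to\lambda$; the asymptotic oddness in \eqref{asym} together with the uniform convergence theorem for $RV_0(\beta)$ functions gives $f(X(s))\sim\lambda f(F^{-1}(s))\sim\lambda f(F^{-1}(t))$ uniformly for $s\in[t,t+h]$, hence $h^{-1}\int_t^{t+h}f(X(s))\,ds\sim\lambda f(F^{-1}(t))$. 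If $\lambda=0$, then for each $\delta>0$ we have $|X(s)|\le 2\delta F^{-1}(t)$ eventually for $s\in[t,t+h]$, and since $|\phi|\in RV_0(\beta)$ with $\beta>0$ satisfies $\sup_{0<u\le x}|\phi(u)|\sim|\phi(x)|$, this yields $\limsup_{t\to\infty}\sup_{s\in[t,t+h]}|f(X(s))|/f(F^{-1}(t))\le(2\delta)^\beta$, so $h^{-1}\int_t^{t+h}f(X(s))\,ds=o(f(F^{-1}(t)))$ on letting $\delta\downarrow0$. In every case $\big(h^{-1}(X(t+h)-X(t))\big)/f(F^{-1}(t))\to-\lambda$ a.s., which completes the argument.

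The step I expect to be the main obstacle is the passage from the summability of $S_f(\epsilon,h)$ — which only directly controls the It\^o integral at the grid points $nh$ — to the uniform-in-$t$ tail estimate $\int_t^\infty\sigma(s)\,dB(s)=o(F^{-1}(t))$ required by Theorem~\ref{thm.stochpressuff}. This needs the reflection-principle bound on the within-block supremum (so that partial blocks can be absorbed), the exact identity $\int_{mh}^\infty f(F^{-1}(s))\,ds=F^{-1}(mh)$ feeding the integral--sum comparison for the tail series, and a careful check that finiteness of $S_f(\epsilon,h)$ already forces $\sigma\in L^2$. By contrast, once $X(t)/F^{-1}(t)\to\lambda$ is known, the increment limit is a fairly routine exercise in regular variation.
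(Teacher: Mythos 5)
Your proposal is correct and arrives at the result by a genuinely different and somewhat more economical route than the paper. The paper's proof works by (i) proving the block estimate (Lemma~\ref{sigmaIntegral:zero}, essentially your reflection-principle bound), (ii) introducing the auxiliary affine SDE $dY=-Y\,dt+\sigma\,dB$ and proving $Y(t)/(f\circ F^{-1})(t)\to 0$ a.s.\ via a discrete recursion for $Y(nh)/(\varphi\circ\Phi^{-1})(nh)$ plus a second time-change/Borel--Cantelli argument, and then (iii) writing $Z=X-Y$, recognising $Z'=-f(Z)+g$ with $g(t)=f(Z(t))-f(Z(t)+Y(t))+Y(t)$, using the local Lipschitz hypothesis to bound $|g|\le(1+K_\delta)|Y|$, and applying the deterministic Theorem~\ref{thm.detpressuff} pathwise. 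Your route bypasses the auxiliary process $Y$ and the ODE comparison entirely: you extract directly from $S_f<\infty$ that $\theta(n)\to 0$, hence $\int_{nh}^{(n+1)h}\sigma^2\le(f\circ F^{-1})^2(nh)$ eventually, which is summable since $(f\circ F^{-1})^2\in RV_\infty(-2\beta/(\beta-1))$ with index below $-2$, giving $\sigma\in L^2$; then the within-block supremum estimate plus the identity $\int_{mh}^\infty f(F^{-1}(s))\,ds=F^{-1}(mh)$ and the integral--sum comparison give $\int_t^\infty\sigma\,dB=o(F^{-1}(t))$ a.s., which is exactly the hypothesis of Theorem~\ref{thm.stochpressuff}. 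What this buys: you avoid the ancillary affine SDE lemma (the most technical block of the paper's proof), you do not use the local Lipschitz hypothesis at all (beyond solution existence — so your argument in effect establishes the result under the weaker hypotheses of Theorem~\ref{thm.stochpressuff}), and the tail estimate $\int_t^\infty\sigma\,dB=o(F^{-1}(t))$ you extract is of independent interest and not stated in the paper. What the paper's route buys in exchange is a representation of $X$ in terms of an explicitly solvable linear SDE, which the authors exploit again in other arguments. Two small expository slips in your write-up that should be tidied: the sentence deducing $\sigma\in L^2$ should refer to $\int_{nh}^{(n+1)h}\sigma^2(s)\,ds\le(f\circ F^{-1})^2(nh)$ (the quadratic variation over the block), not to the stochastic block integral; and in the $\lambda=0$ case of the increment limit, the monotone majorant you should invoke is $\varphi$ from Lemma~\ref{asym_odd} (which is increasing by construction), not the raw odd function $\phi$ of \eqref{asym}, which need not be monotone.
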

Once this result has been secured, we see that it admits a converse.  
\begin{theorem} \label{thm:Xderivconverse}
Suppose that $f$ is locally Lipschitz continuous, and obeys \eqref{asym} and \eqref{RVat0} for $\beta>1$. Let $\sigma$ be continuous. 
Let $X$ be the continuous adapted solution of \eqref{eq.sde}. Let $h>0$ and define $S_f(\epsilon,h)$ as in \eqref{def.Sf}. 
Suppose that there exists an event $A$ with $\mathbb{P}[A]>0$ such that
\begin{multline} \label{eq:XderivA}
A=\biggl\{\omega: \lim_{t \to \infty}\frac{X(t,\omega)}{F^{-1}(t)} = \lambda(\omega), \\
\lim_{t \to \infty}\frac{ \frac{X(t+h,\omega)-X(t,\omega)}{h} }{(f \circ F^{-1})(t)} = -\lambda(\omega), \,
\lambda(\omega)\in \{-1,0,1\}\biggr\}. 
\end{multline}
 Then $S_f(\epsilon,h)< +\infty$ for all $\epsilon > 0$. 
%
\end{theorem}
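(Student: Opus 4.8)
The plan is to extract the size of the normalised noise increments from the $h$--increment of \eqref{eq.sde} over the successive intervals $[nh,(n+1)h]$, and then to use the mutual independence of these increments together with the second Borel--Cantelli lemma. Writing \eqref{eq.sde} in integrated form over $[nh,(n+1)h]$ and dividing through by $(f\circ F^{-1})(nh)$ gives, for each $n\geq 0$,
\[
\xi_n:=\frac{\int_{nh}^{(n+1)h}\sigma(s)\,dB(s)}{(f\circ F^{-1})(nh)}
= h\cdot\frac{\frac{X((n+1)h)-X(nh)}{h}}{(f\circ F^{-1})(nh)}
+\frac{\int_{nh}^{(n+1)h} f(X(s))\,ds}{(f\circ F^{-1})(nh)}.
\]
The key structural fact is that $\xi_n$ is a Gaussian random variable with mean zero and variance $\theta^2(n)$ as in \eqref{def.Sf}, and, since the stochastic integrals over the disjoint intervals $[nh,(n+1)h]$ are independent, the sequence $(\xi_n)_{n\geq 0}$ is independent.

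Next I would show that $\xi_n\to 0$ pointwise on the event $A$. On $A$ the first term on the right above converges to $-h\lambda(\omega)$ by the hypothesis \eqref{eq:XderivA}. For the second term, write $\int_{nh}^{(n+1)h}f(X(s))\,ds=\int_{nh}^{(n+1)h}\frac{f(X(s))}{(f\circ F^{-1})(s)}(f\circ F^{-1})(s)\,ds$; on $A$ one has $X(s)/F^{-1}(s)\to\lambda(\omega)$, which forces $f(X(s))/(f\circ F^{-1})(s)\to\lambda(\omega)$ by the regular--variation argument already used in the proofs of Theorems~\ref{thm.detpresnecc} and~\ref{thm:Xderiv} (in the case $\lambda(\omega)=0$ this step needs a Potter bound for $f\in RV_0(\beta)$ near the origin), while the uniform convergence theorem applied to $f\circ F^{-1}\in RV_\infty(-\beta/(\beta-1))$ gives $(f\circ F^{-1})(s)/(f\circ F^{-1})(nh)\to 1$ uniformly for $s\in[nh,(n+1)h]$. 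Hence the second term converges to $h\lambda(\omega)$, and $\xi_n\to -h\lambda(\omega)+h\lambda(\omega)=0$ on $A$.

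Finally I would invoke Borel--Cantelli. Fix $\epsilon>0$. Since $A\subseteq\{\omega:\xi_n(\omega)\to 0\}\subseteq\{|\xi_n|\leq\epsilon\text{ for all large }n\}$ and $\mathbb{P}[A]>0$, we have $\mathbb{P}[\,|\xi_n|>\epsilon\text{ i.o.}\,]\leq 1-\mathbb{P}[A]<1$. The events $\{|\xi_n|>\epsilon\}$ are independent because the $\xi_n$ are, so the converse half of the Borel--Cantelli lemma yields $\sum_{n\geq 0}\mathbb{P}[\,|\xi_n|>\epsilon\,]<\infty$. For those $n$ with $\theta(n)>0$ we have $\mathbb{P}[\,|\xi_n|>\epsilon\,]=2\Psi(\epsilon/\theta(n))$, while for those $n$ with $\theta(n)=0$ the variable $\xi_n$ is a.s. zero and the corresponding term $\Psi(\epsilon/\theta(n))=\Psi(+\infty)=0$ in \eqref{def.Sf}; hence $S_f(\epsilon,h)=\sum_{n\geq 0}\Psi(\epsilon/\theta(n))<\infty$. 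As $\epsilon>0$ was arbitrary, this is the assertion of the theorem. The main obstacle is the second step — controlling the $f$--integral term uniformly across each window $[nh,(n+1)h]$, and in particular handling the degenerate case $\lambda(\omega)=0$ — but this mirrors estimates already carried out in the proof of Theorem~\ref{thm:Xderiv}, so it presents no essentially new difficulty; the genuinely new ingredient is the independence/zero--one argument used to pass from convergence on a set of positive probability to summability.
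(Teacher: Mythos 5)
Your proposal is correct and follows essentially the same route as the paper: both show that the normalised noise increments $\xi_n=\int_{nh}^{(n+1)h}\sigma\,dB/(f\circ F^{-1})(nh)$ tend to zero on the positive-probability event $A$, and then exploit independence of the $\xi_n$ together with the Borel--Cantelli lemmas to deduce summability of $\Psi(\epsilon/\theta(n))$. Your treatment is somewhat more explicit (handling all three values of $\lambda$ at once, invoking the converse Borel--Cantelli lemma directly instead of via the zero--one law, and attending to the degenerate case $\theta(n)=0$), but the underlying argument is the same.
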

We now consolidate the last two theorems to demonstrate the necessary and sufficient conditions under which the increments of the process as well as the process itself, enjoy the same convergence rate to zero as the derivative of the solution of \eqref{eq.ode}, as well as the solution itself.
\begin{theorem} \label{thm:Xneccsuffderiv}
Suppose that $f$ is locally Lipschitz continuous, and obeys \eqref{asym} and \eqref{RVat0} for $\beta>1$. Let $\sigma$ be continuous. 
Let $X$ be the continuous adapted solution of \eqref{eq.sde}. Let $h>0$ and define $S_f(\epsilon,h)$ as in \eqref{def.Sf}.
Then the following are equivalent:
\begin{itemize}
\item[(a)] $S_f(\epsilon,h)<+\infty$ for all $\epsilon>0$;
\item[(b)] There exists an event $A$ with $\mathbb{P}[A]>0$ such that
\begin{multline*} 
A=\biggl\{\omega: \lim_{t \to \infty}\frac{X(t,\omega)}{F^{-1}(t)} = \lambda(\omega), \\
\lim_{t \to \infty}\frac{ \frac{X(t+h,\omega)-X(t,\omega)}{h} }{(f \circ F^{-1})(t)} = -\lambda(\omega), \,
\lambda(\omega)\in \{-1,0,1\}\biggr\}. 
\end{multline*}
\item[(c)] There exists an event $A$ with $\mathbb{P}[A]=1$ such that
\begin{multline*} 
A=\biggl\{\omega: \lim_{t \to \infty}\frac{X(t,\omega)}{F^{-1}(t)} = \lambda(\omega), \\
\lim_{t \to \infty}\frac{ \frac{X(t+h,\omega)-X(t,\omega)}{h} }{(f \circ F^{-1})(t)} = -\lambda(\omega), \,
\lambda(\omega)\in \{-1,0,1\}\biggr\}. 
\end{multline*} 
\end{itemize}
\end{theorem}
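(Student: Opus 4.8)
The plan is to prove the three-way equivalence by closing the cycle $(a)\Rightarrow(c)\Rightarrow(b)\Rightarrow(a)$, drawing respectively on Theorem~\ref{thm:Xderiv}, a triviality, and Theorem~\ref{thm:Xderivconverse}. The structural observation that makes this work is that statement $(a)$ is \emph{deterministic}: finiteness of $S_f(\epsilon,h)$ for all $\epsilon>0$ is a condition on $\sigma$, $f$ and $h$ alone and involves no $\omega$. Hence, once the two nontrivial implications are in place, the loop automatically upgrades the hypothesis ``$\mathbb{P}[A]>0$'' of $(b)$ to the conclusion ``$\mathbb{P}[A]=1$'' of $(c)$; this zero--one dichotomy is the only feature of the present theorem not already contained in its two predecessors.

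For $(a)\Rightarrow(c)$ I would simply invoke Theorem~\ref{thm:Xderiv}: under $S_f(\epsilon,h)<+\infty$ for every $\epsilon>0$ it produces an $\mathcal{F}^B(\infty)$--measurable $\lambda$ with $\lambda\in\{-1,0,1\}$ a.s., with $X(t)/F^{-1}(t)\to\lambda$ a.s., and with $(X(t+h)-X(t))/\bigl(h\,(f\circ F^{-1})(t)\bigr)\to-\lambda$ a.s. Fixing a measurable version of $\lambda$ on the (null) exceptional set and letting $A$ be the set of $\omega$ on which all three statements hold, $A$ has probability one and is precisely the event written in $(c)$.

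The implication $(c)\Rightarrow(b)$ is immediate, since a probability-one event has positive probability. For $(b)\Rightarrow(a)$ I would feed the event $A$ of $(b)$, which by hypothesis satisfies \eqref{eq:XderivA} with $\mathbb{P}[A]>0$, directly into Theorem~\ref{thm:Xderivconverse}; its conclusion is exactly $S_f(\epsilon,h)<+\infty$ for all $\epsilon>0$, i.e.\ $(a)$. This closes the cycle, and chaining $(b)\Rightarrow(a)\Rightarrow(c)$ shows in particular that a positive-probability occurrence of the joint solution/increment asymptotics forces that behaviour almost surely.

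I do not expect a genuine obstacle at this level of the argument: the analytic substance — the Borel--Cantelli and Gaussian tail estimates relating $\Psi(\epsilon/\theta(n))$ to the increments of $X$ over the mesh $\{nh\}_{n\geq 0}$ — is discharged inside Theorems~\ref{thm:Xderiv} and~\ref{thm:Xderivconverse}. The one point deserving a line of care is the bookkeeping in $(a)\Rightarrow(c)$: one must confirm that the almost sure event delivered by Theorem~\ref{thm:Xderiv} may be taken equal to (not merely contained in) the set $A$ displayed in $(c)$, which is clear once $\lambda$ is identified with the a.s.\ limit $\lim_{t\to\infty}X(t)/F^{-1}(t)$ and extended measurably off the exceptional set.
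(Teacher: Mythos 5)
Your cycle $(a)\Rightarrow(c)\Rightarrow(b)\Rightarrow(a)$, using Theorem~\ref{thm:Xderiv} for the first implication, triviality for the second, and Theorem~\ref{thm:Xderivconverse} for the third, is exactly the paper's own proof. The remarks on the deterministic nature of $(a)$ and the resulting zero--one upgrade are a correct and sensible gloss but do not change the argument.
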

The proof that the statements are equivalent is now easy: part (a) implies part (c) by Theorem~\ref{thm:Xderiv}; part (c) trivially implies part (b); and part (b) implies (a) by Theorem~\ref{thm:Xderivconverse}.

\section{Examples}
We present in this section some examples to illustrate the scope of the results. 

We note that all limits are possible in Theorem \eqref{thm.detpressuff}. 
To see this, simply take $x(0)>0$ and $g(t)>0$ for $t\geq 0$; then if $\int_t^\infty g(s)\,ds/F^{-1}(t)\to 0$ as $t\to\infty$ we have that 
$x(t)/F^{-1}(t)\to 1$ as $t\to\infty$ by Theorem~\ref{thm.limis1gpos}. 
If $x(0)<0$, $g(t)<0$ for all $t\geq0$, we can  prove similarly that $x(t)/F^{-1}(t)\to 1$ as $t\to\infty$. 

To show that a zero limit can obtain (and indeed that $x$ can decay to zero arbitrarily slowly), 
suppose that $d(0)=1$ and that $d\in C^1((0,\infty);(0,\infty))$ with $d(t)\to 0$ as $t\to\infty$. 
Suppose that $d$ decays to zero faster than any exponential function by assuming that $d'(t)/d(t)\to -\infty$ as $t\to\infty$. Then 
\[
\lim_{t\to\infty} \frac{d'(t)}{f(F^{-1}(t))}=0.
\]
Hence $d(t)/F^{-1}(t)\to 0$ as $t\to\infty$. Let $\xi>0$ and define 
$g(t)= \xi d'(t)+f(\xi d(t))$ for $t\geq 0$. Then $x(t)=\xi d(t)$ for $t\geq 0$ is the solution of \eqref{eq.odepert}, and we have that 
\[
\lim_{t\to\infty} \frac{x(t)}{F^{-1}(t)}=0.
\]
Moreover, as $f(x)/x\to 0$ as $x\to 0$, we have that $|f(\xi d(t))|<d(t)$ for $t$ sufficiently large. Thus $f(\xi d(t))/d'(t)\to 0$ as $t\to\infty$. 
Hence $g(t)/d'(t)\to \xi$ as $t\to\infty$. Therefore, as $d'\in L^1([0,\infty);\mathbb{R}$ with $d(t)=\int_t^\infty -d'(s)\,ds$, we have that 
\[
\lim_{t\to\infty} \frac{\int_t^\infty g(s)\,ds}{F^{-1}(t)}=0.
\]
as predicted by Theorem~\ref{thm.detpresnecc}.

We start with a lemma which can be used to show that $g$ can obey 
\[
\lim_{t\to\infty} \int_0^t g(s)\,ds \text{ exists and is finite},
\]
without being absolutely integrable, and that this gives rise to less conservative stability conditions. In fact, we will use the lemma to demonstrate that there are perturbations $g$ whose extremes can grow arbitrarily fast as $t\to\infty$, and which change sign infinitely often, but which nevertheless satisfy \eqref{eq.intgdivF}. 
\begin{lemma}  \label{lemma.osyexamp}
Let $k\in C^1((0,\infty);(0,\infty))$ be such that $k\not\in L^1([0,\infty);(0,\infty))$. Suppose also that 
\[
\lim_{t\to\infty} \sup_{0\leq s\leq T} \left| \frac{k(t+s)}{k(t)}-1\right|=0, \quad 
\lim_{t\to\infty} \sup_{0\leq s\leq T} \left| \frac{k'(t+s)}{k'(t)}-1\right|=0.  
\]
Define $k_0(t)=k(t)\sin(t)$ for $t\geq 0$. Then
\begin{itemize}
\item[(i)] $\lim_{t\to\infty} \int_0^t k_0(s)\,ds$ is finite, but $\lim_{t\to\infty}\int_0^t |k_0(s)|\,ds=+\infty$;
\item[(ii)] \[
\limsup_{t\to\infty} \frac{\left|\int_t^\infty k_0(s)\,ds\right|}{k(t)}=1, \quad \liminf_{t\to\infty} \frac{\left|\int_t^\infty k_0(s)\,ds\right|}{k(t)}=0. 
\]
\end{itemize}
\end{lemma}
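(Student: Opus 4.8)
The plan is to handle $k_0(s)=k(s)\sin s$ and its companion $k'(s)\cos s$ period by period, cutting the half-line into consecutive intervals of length $2\pi$ (of length $\pi$ for the absolute integral): on each such interval the two uniform ratio hypotheses permit us to replace $k$, resp.\ $k'$, by its constant value at the left endpoint with negligible error, while $\int_a^{a+2\pi}\sin s\,ds=\int_a^{a+2\pi}\cos s\,ds=0$ for every $a$ annihilates the frozen main term. Because $k>0$ and $k'(t+s)/k'(t)\to1$ uniformly on $[0,T]$, $k'$ has constant sign for large $t$, so $k$ is eventually monotone; I shall use in addition that $k(t)\to0$ (so that $\int_t^\infty|k'(s)|\,ds=k(t)$ for $t$ large, and, combining the two ratio conditions through $k(t)-k(t+1)=-\int_0^1 k'(t+v)\,dv$, also $k'(t)/k(t)\to0$).

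For the first assertion of (i), integration by parts gives, for $t\ge T_0$ large, $\int_{T_0}^t k(s)\sin s\,ds=k(T_0)\cos T_0-k(t)\cos t+\int_{T_0}^t k'(s)\cos s\,ds$; the boundary term tends to $0$, and on $I_m=[T_0+2\pi m,T_0+2\pi(m+1)]$ the frozen value of $k'$ contributes nothing, leaving an error at most $\int_{I_m}|k'(s)-k'(T_0+2\pi m)|\,ds\le 3\int_{I_m}|k'(s)|\,ds$ for $m$ large (by the ratio hypothesis on $k'$). Since $\sum_m\int_{I_m}|k'|=\int_{T_0}^\infty|k'(s)|\,ds=k(T_0)<\infty$, the integral $\int_{T_0}^t k'(s)\cos s\,ds$ converges as $t\to\infty$, so $\lim_{t\to\infty}\int_0^t k_0(s)\,ds$ exists and is finite (this also drops straight out of Dirichlet's test, $k$ being eventually monotone with limit $0$). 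For the second assertion, $k(s)|\sin s|\ge k(s)\sin^2 s=\tfrac12 k(s)-\tfrac12 k(s)\cos 2s$, whence $\int_{T_0}^t k(s)|\sin s|\,ds\ge\tfrac12\int_{T_0}^t k(s)\,ds-\tfrac12\int_{T_0}^t k(s)\cos 2s\,ds$; the first term tends to $+\infty$ because $k\notin L^1$, while the second converges by the argument just used (with $\cos 2s$ in place of $\cos s$), so $\int_0^t|k_0(s)|\,ds\to+\infty$.

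For (ii), $\int_t^\infty k_0$ is now well defined; integrating by parts over $[t,\infty)$ and using $k(s)\cos s\to0$ gives $\int_t^\infty k(s)\sin s\,ds=k(t)\cos t+\int_t^\infty k'(s)\cos s\,ds$, i.e.\ $\int_t^\infty k_0(s)\,ds=k(t)\bigl(\cos t+R(t)\bigr)$ with $R(t)=k(t)^{-1}\int_t^\infty k'(s)\cos s\,ds$. To show $R(t)\to0$, cut $[t,\infty)$ into $J_m=[t+2\pi m,t+2\pi(m+1)]$; freezing $k'$ at the left endpoint of $J_m$ contributes nothing ($\int_{J_m}\cos s\,ds=0$) and leaves an error at most $\int_{J_m}|k'(s)-k'(t+2\pi m)|\,ds\le 2\pi|k'(t+2\pi m)|\,\rho(t+2\pi m)$, where $\rho(\tau):=\sup_{0\le s\le 2\pi}|k'(\tau+s)/k'(\tau)-1|\to0$. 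Summing over $m$ and using $\sum_{m\ge0}2\pi|k'(t+2\pi m)|\asymp\int_t^\infty|k'(s)|\,ds=k(t)$ yields $\bigl|\int_t^\infty k'(s)\cos s\,ds\bigr|\le C\bigl(\sup_{\tau\ge t}\rho(\tau)\bigr)k(t)$, so $R(t)\to0$. Hence $|\int_t^\infty k_0(s)\,ds|/k(t)=|\cos t+R(t)|$ with $R(t)\to0$; taking $t_n=2\pi n$ gives $\limsup_{t\to\infty}=1$ (the $\limsup$ being $\le1+|R(t)|$, hence exactly $1$), and taking $t_n=\tfrac\pi2+n\pi$ gives $\liminf_{t\to\infty}=0$.

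The step I expect to be the main obstacle is the quantitative control of the freezing errors. In (i) it suffices that they be summable against $|k'|$, which the ratio condition on $k'$ together with $\int_{T_0}^\infty|k'|=k(T_0)<\infty$ furnish. The sharper demand is in (ii): the accumulated error there must be $o(k(t))$ \emph{uniformly in the base point $t$}, and this is exactly where the uniformity in $t$ of the ratio hypothesis on $k'$ is essential, since it produces the vanishing factor $\sup_{\tau\ge t}\rho(\tau)$. Minor technical points are the behaviour of $k$ near $0$ and before it becomes monotone, which is absorbed into the fixed constants $\int_0^{T_0}k_0$ and $\int_0^{T_0}|k_0|$, and the repeated identity $\int_t^\infty|k'(s)|\,ds=k(t)$, which relies on $k$ being eventually decreasing to $0$.
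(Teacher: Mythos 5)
Your proof is correct and reaches the same key estimate, $\int_t^\infty k_0(s)\,ds = k(t)\left(\cos t + o(1)\right)$, by a genuinely different route. The paper works directly on the sine integral over the period blocks $[2j\pi,2(j+1)\pi]$: it pairs the two half-periods, applies the mean value theorem to $k(u+2\pi j)-k(u+2\pi j+\pi)$ to bring $k'$ into play, uses the ratio hypothesis on $k'$ to show $\int_{2j\pi}^{2(j+1)\pi}k_0(s)\,ds\sim k(2j\pi)-k(2(j+1)\pi)$, passes to the tail along the sequence $2n\pi$ via the Toeplitz lemma, and then interpolates to arbitrary $t$ in a further step. You instead integrate by parts once, shifting the oscillation onto $k'\cos$, and then freeze $k'$ on $2\pi$-intervals of the resulting tail integral; the same period cancellation $\int_{J_m}\cos s\,ds=0$ is being exploited, but applied where it directly yields $\int_t^\infty k'(s)\cos s\,ds=o(k(t))$ \emph{uniformly} in $t$, so neither Toeplitz nor a separate interpolation argument is needed. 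Your $k(s)|\sin s|\ge\tfrac12 k(s)(1-\cos 2s)$ device for the divergence of $\int|k_0|$ is likewise leaner than the paper's rerun of the block decomposition for $|k_0|$. One caveat applies equally to both arguments: each tacitly uses $k(t)\to 0$, which is not among the lemma's stated hypotheses but is genuinely needed (for instance $k(t)=\log(t+2)$ satisfies the two ratio conditions and $k\notin L^1$, yet $\int_0^t k_0(s)\,ds$ then oscillates with unbounded amplitude); you at least flag the assumption explicitly, whereas the paper invokes it silently at the Toeplitz step.
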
 
We note that if $k$ is a positive, non--integrable function with $-k'$ regularly varying, then all the above conditions hold. 

We can use this lemma to demonstrate that there are perturbations $g$ whose extremes can grow arbitrarily fast as $t\to\infty$, and which change sign infinitely often, but which nevertheless satisfy \eqref{eq.intgdivF}.
\begin{theorem} \label{thm.goscill}
Suppose that $f$ is continuous and obeys \eqref{asym} and \eqref{RVat0} for $\beta>1$. 
Let $\Gamma$ be a function that obeys
\[
\Gamma\in C([0,\infty);(0,\infty)), \quad \lim_{t\to\infty} \Gamma(t)=+\infty.
\]
Then there exists a function $g$ which obeys \eqref{eq.intgdivF}, changes sign infinitely often, and satisfies 
\[
\limsup_{t\to\infty} \frac{|g(t)|}{\Gamma(t)}=1,
\]
and hence the continuous solution $x$ of \eqref{eq.odepert} obeys \eqref{eq.xdetperservasy}.
\end{theorem}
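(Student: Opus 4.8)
The plan is to exhibit $g$ explicitly as the sum of a tame oscillating background and a sparse sequence of very tall but extremely narrow spikes, and then to read the asymptotics of $x$ off Theorem~\ref{thm.detpressuff}. Write $g=g_0+\psi$. For the background, following the idea of Lemma~\ref{lemma.osyexamp}, take $g_0(t)=k(t)\sin t$ with $k$ positive, eventually decreasing, $-k'$ regularly varying, and $k(t)/F^{-1}(t)\to 0$ as $t\to\infty$ (e.g. a regularly varying function squeezed strictly below $F^{-1}$); note $k(t)\to 0$. Then $g_0$ changes sign infinitely often, $\int_0^t g_0$ converges, and an integration by parts gives $\bigl|\int_t^\infty g_0(s)\,ds\bigr|\le 2k(t)$, whence $\int_t^\infty g_0(s)\,ds\big/F^{-1}(t)\to 0$; so $g_0$ alone already obeys \eqref{eq.intgdivF}. (Lemma~\ref{lemma.osyexamp} additionally gives $\int_0^t|g_0|\to\infty$, making $g$ non-integrable; this feature requires $F^{-1}\notin L^1$, i.e. $\beta\ge 2$, but it is not needed for the statement, and indeed the spike part below would suffice on its own.)

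For the spikes, fix once and for all a continuous piecewise linear profile $\chi$ supported on $[-1,1]$ with $\chi(\pm1)=0$, $\int_{-1}^1\chi=0$, and $\max_{[-1,1]}\chi=\chi(0)=1$ (so $\|\chi\|_\infty=1$ and $\chi$ takes negative values). Pick any $t_n\uparrow\infty$ with $t_{n+1}-t_n\ge 1$ and set $\psi(t)=\sum_{n\ge1}\Gamma(t_n)\,\chi\bigl((t-t_n)/w_n\bigr)$, where each $w_n\in(0,1/2)$ is taken small enough that simultaneously the supports $[t_n-w_n,t_n+w_n]$ are pairwise disjoint, $w_n\Gamma(t_n)/F^{-1}(t_n+1)<1/n$, $w_n\Gamma(t_n)<1/n$, and $\sup_{|s-t_n|\le w_n}|\Gamma(s)-\Gamma(t_n)|<1/n$ (possible by continuity of $\Gamma$). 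Since each bump has zero integral, $\int_0^t\psi$ differs from $0$ only by the bump currently in progress, which is $O(w_n\Gamma(t_n))\to0$, so $\int_0^t\psi\to0$; for $t$ outside every bump $\int_t^\infty\psi=0$, and for $t$ inside bump $n$ one has $\bigl|\int_t^\infty\psi\bigr|\le w_n\Gamma(t_n)$ with $F^{-1}(t)\ge F^{-1}(t_n+1)$, giving $\bigl|\int_t^\infty\psi\bigr|/F^{-1}(t)<1/n$. Hence $g=g_0+\psi$ obeys both parts of \eqref{eq.intgdivF}.

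It remains to check the pointwise features and the conclusion on $x$. The function $g$ changes sign infinitely often, because on each bump $\psi$ runs from $\Gamma(t_n)>0$ at $t_n$ down to $-\tfrac12\Gamma(t_n)<0$, and since $\Gamma(t_n)\to\infty$ while $g_0\to0$ the term $\psi$ dominates $g_0$ there. For the upper bound on the $\limsup$: on bump $n$, $|g(t)|\le|g_0(t)|+\Gamma(t_n)$ and $\Gamma(t)\ge\Gamma(t_n)-1/n$, so as $t\to\infty$ along the bumps the ratio $|g(t)|/\Gamma(t)$ is eventually at most $(|g_0(t)|+\Gamma(t_n))/(\Gamma(t_n)-1/n)\to1$, while off the bumps $|g(t)|=k(t)|\!\sin t|\le k(t)\to0$ and $\Gamma(t)\to\infty$; thus $\limsup_{t\to\infty}|g(t)|/\Gamma(t)\le1$. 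Conversely $|g(t_n)|=|k(t_n)\sin t_n+\Gamma(t_n)|=\Gamma(t_n)(1+o(1))$, so $|g(t_n)|/\Gamma(t_n)\to1$ and the $\limsup$ equals $1$. Finally, because $g$ obeys \eqref{eq.intgdivF}, and in particular \eqref{eq.intgfinite} holds, the solution $x$ of \eqref{eq.odepert} satisfies $x(t)\to0$ as $t\to\infty$ — by Theorem~\ref{thm.xto0suff} under the mild structural condition \eqref{fatinfinity} on $f$, which we take to be in force; alternatively one prescribes $x(t)=F^{-1}(t+F(\zeta))+p(t)$ with $p=o(F^{-1})$ carrying the spikes in its derivative and reads $g$ off the equation, so that $x\to0$ by construction — and then Theorem~\ref{thm.detpressuff} yields \eqref{eq.xdetperservasy}.

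The main obstacle is the tension in the second part of \eqref{eq.intgdivF}: the running tail of $g$ must vanish faster than $F^{-1}(t)$, a quantity already tending to $0$, yet $|g|$ is required to grow as fast as the arbitrary function $\Gamma\to\infty$. These are reconciled exactly by placing the large values of $g$ only on intervals $[t_n-w_n,t_n+w_n]$ whose length can be taken arbitrarily small relative to everything else and on which $g$ carries zero net mass; the smallness conditions on $w_n$ are precisely what renders the spikes invisible to the two integral conditions while keeping them visible to the $\limsup$. The only other delicate point is arranging $\limsup|g|/\Gamma$ to equal $1$ on the nose rather than merely being finite and positive, which is handled by making the spike heights exactly $\Gamma(t_n)$ and using continuity of $\Gamma$ together with $g_0\to0$ to pinch the ratio between a lower bound tending to $1$ (at the centres $t_n$) and an upper bound tending to $1$ (uniformly on the bumps).
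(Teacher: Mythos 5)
Your construction is correct, and it proves the stated theorem, but it is a genuinely different route from the paper's. The paper takes the single explicit oscillator $g(t)=\Gamma(t)\sin\bigl(\{\int_0^t\Gamma\}^n\bigr)$ for $n\ge(2\beta-1)/(\beta-1)$, and reduces the tail estimate $\int_t^\infty g/F^{-1}(t)\to 0$ to Lemma~\ref{lemma.osyexamp} (applied to $k(u)=u^{-(1-1/n)}/n$) via the substitution $u=I(t)^n$: the smallness of the tail there comes entirely from ever--faster oscillatory cancellation, with the amplitude of $g$ pinned to $\Gamma$ for all $t$. You instead decompose $g=g_0+\psi$: a tame, decaying oscillatory background $g_0=k(t)\sin t$ (responsible for infinitely many sign changes), plus a sparse train $\psi$ of tall but extremely narrow spikes of height $\Gamma(t_n)$ and zero local mass (responsible for the exact $\limsup|g|/\Gamma=1$). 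Tail control for $\psi$ is then essentially free — each complete bump contributes nothing, and the partial bump in progress is crushed by the choice of widths $w_n$ — so you avoid both the exponent tuning of $n$ and the quantitative machinery of Lemma~\ref{lemma.osyexamp}. What the paper's construction buys, and yours does not, is the remark preceding the theorem: for their $g$, the set on which $|g(t)|>\Gamma(t)/2$ occupies a positive proportion of $[t,t+1]$, i.e.\ the extreme behaviour is \emph{common}, whereas your extremes are confined to intervals of length $2w_n\to 0$ and so are rare; since the theorem's statement does not demand that feature, this does not affect correctness, but it is worth knowing that the paper proves a qualitatively stronger example. Two small points: your claim that $\psi$ dips to $-\tfrac12\Gamma(t_n)$ implicitly fixes $\min\chi=-\tfrac12$, which you should state when choosing $\chi$; and your final appeal to Theorem~\ref{thm.xto0suff} (via \eqref{fatinfinity}) to secure $x(t)\to 0$ matches an omission the paper itself also leaves tacit, since Theorem~\ref{thm.detpressuff} needs $x(t)\to 0$ as a hypothesis — flagging it, as you did, is reasonable.
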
 
The proof is deferred to Section 12. We notice that the function $g$ constructed to verify this theorem does not merely oscillate, but does so with increasing frequency as $t\to\infty$. In fact, as $t\to\infty$, the number of sign changes of $g$ in the interval $[t,t+1]$ tends to infinity. This rapid ``self--cancellation'' in $g$ is what accounts for 
the good asymptotic behaviour of $\int_t^\infty g(s)\,ds$. Furthermore, for an appreciable proportion of the time as $t\to\infty$, we have that $|g(t)|>\Gamma(t)/2$, so the periods of extreme behaviour of $g$ are common. 

In the case when $g$ is a positive function which has rapidly growing extremes, we cannot rely on such fortuitous self--cancellation to preserve the asymptotic 
behaviour of the solution of \eqref{eq.ode} in \eqref{eq.odepert}. Instead, we show that while arbitrarily rapidly--growing perturbations $g$ can still preserve the rate of decay, such frequent extreme behaviour should be limited to relatively short intervals of time. In other words, short ``spikes'' in $g$ are still admissible.   

In order to demonstrate this, we start by establishing the following lemma. As often, the proof is deferred to the end.
\begin{lemma}  \label{lemma.gspikes}
Suppose $k_s$ is a positive, $C^1(0, \infty)$ function with 
\[
\lim_{t \to \infty}\int_t^\infty{k_s(s)ds}=0
\]
and $\lim_{t \to \infty}k_s(t)=0$. Suppose also that $\Gamma \in C^1(0,\infty)$ is increasing, with $\Gamma(t)\nearrow \infty$.
Define $\Gamma_+(t) = \Gamma(t)+ \bar{k_s}+1, \, t \geq 0$, where $\bar{k_s} = \sup_{t \geq 0}k_s(t)$ and also define the sequence $\{w_j \}_{j=0}^\infty$ by
\begin{align} \label{w_j}
w_j := \frac{1}{2} \wedge \frac{\int_{j+1}^{j+2}{k_s(u)du}}{\Gamma_+(j+1)}.
\end{align}
Suppose $a>0$ and $b>0$ and consider the function
\begin{center}
$h_s(x,a,b) :=
\begin{cases}
b\left( 1-3(\frac{x-a}{a})^2 -2(\frac{x-a}{a})^3 \right), \, x \in [0,a], \\
h(2a-x,a,b), \, x \in (a,2a].
\end{cases}
$
\end{center}
Then the function defined for $t \in [n,n+1]$, for all $n \geq 0$, by 
\begin{align} \label{spikes}
k(t) :=
\begin{cases}
k_s(t) + h(t-n,\frac{w_n}{2},\Gamma_+(t)-k_s(t)), \, t \in [n,n+w_n), \\
k_s(t), \, t \in [n+w_n,n+1].
\end{cases}
\end{align}
is $C^1(0,\infty)$ and, obeys 
\begin{align} \label{intRatio}
\lim_{t \to \infty}\frac{\int_t^\infty{k(s)ds}}{\int_t^\infty{k_s(s)ds}} = 1.
\end{align}
Furthermore, we note that 
\begin{align} \label{max}
\limsup_{t \to \infty}\frac{k(t)}{\Gamma(t)} = 1.
\end{align}
\end{lemma}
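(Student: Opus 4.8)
The statement bundles three claims --- that $k\in C^1(0,\infty)$, the integral asymptotic \eqref{intRatio}, and the $\limsup$ relation \eqref{max} --- and I would prove them in that order. The unifying point is that $k-k_s\ge 0$ is supported on the union of the bump intervals $[n,n+w_n]$, so the two asymptotic claims come down to estimating these bumps.

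\emph{Regularity.} The plan is first to record the properties of the Hermite cubic $p(u):=1-3u^2-2u^3$: one has $p(-1)=p'(-1)=p'(0)=0$, $p(0)=1$, and $p'(u)=-6u(1+u)\ge 0$ on $[-1,0]$, so $0\le p\le 1$ there. Hence, for every $a,b>0$ the profile $h_s(\cdot,a,b)$ is $C^1$ on $[0,2a]$, even about $x=a$, valued in $[0,b]$, with $h_s=\partial_x h_s=0$ at $x\in\{0,2a\}$; being affine in $b$ (indeed $\partial_b h_s(\cdot,a,b)=h_s(\cdot,a,1)$) it also has $\partial_b h_s=0$ at $x\in\{0,2a\}$. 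Since $\Gamma,k_s\in C^1$ and $\Gamma_+-k_s>0$, the map $t\mapsto h_s\!\left(t-n,\tfrac{w_n}{2},\Gamma_+(t)-k_s(t)\right)$ is $C^1$ on $[n,n+w_n]$, and by the chain rule together with the boundary vanishing of $\partial_x h_s$ and $\partial_b h_s$, both it and its derivative vanish at $t=n$ and $t=n+w_n$. Because $w_{n-1}\le\tfrac12<1$, $k$ coincides with $k_s$ on $[(n-1)+w_{n-1},n]$, so $k$ and $k'$ agree with $k_s$ and $k_s'$ at every junction $t=n$ and $t=n+w_n$; hence $k\in C^1(0,\infty)$, and $k\ge k_s>0$.

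\emph{The integral ratio.} Write $\int_t^\infty k(s)\,ds=\int_t^\infty k_s(s)\,ds+R(t)$ with $R(t):=\int_t^\infty\bigl(k(s)-k_s(s)\bigr)\,ds\ge 0$. The per-bump estimate uses $\int_0^{2a}h_s(x,a,1)\,dx=a$ together with $h_s\!\left(s-n,\tfrac{w_n}{2},\Gamma_+(s)-k_s(s)\right)=\bigl(\Gamma_+(s)-k_s(s)\bigr)\,h_s\!\left(s-n,\tfrac{w_n}{2},1\right)$: since $\Gamma$ is increasing and $w_n\le 1$,
\[
\int_n^{n+w_n} h_s\!\left(s-n,\tfrac{w_n}{2},\Gamma_+(s)-k_s(s)\right)\,ds \;\le\; \Gamma_+(n+1)\int_n^{n+w_n} h_s\!\left(s-n,\tfrac{w_n}{2},1\right)\,ds \;=\; \tfrac{w_n}{2}\,\Gamma_+(n+1),
\]
and the definition of $w_n$ gives $w_n\,\Gamma_+(n+1)=\tfrac12\Gamma_+(n+1)\wedge\int_{n+1}^{n+2}k_s(u)\,du\le\int_{n+1}^{n+2}k_s(u)\,du$. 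Summing $\tfrac12\int_{n+1}^{n+2}k_s$ over the (possibly partial) bump of index $\lfloor t\rfloor$ and all later ones gives $R(t)\le\tfrac12\int_{\lfloor t\rfloor+1}^\infty k_s(u)\,du$, so that $\int_t^\infty k_s(s)\,ds\le\int_t^\infty k(s)\,ds\le\tfrac32\int_t^\infty k_s(s)\,ds$ for all large $t$; in particular the ratio in \eqref{intRatio} is bounded. To upgrade this to the asserted limit $1$, one must show $R(t)=o\!\left(\int_t^\infty k_s(s)\,ds\right)$, i.e., extract from the construction an extra vanishing factor --- morally the factor $\Gamma_+(n+1)\to\infty$ in the denominator of $w_n$, which forces the bumps to be asymptotically thin relative to the unit mass $\int_{n+1}^{n+2}k_s$ they are allotted. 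I expect this estimate, and the care needed in organising the sum over the bumps near $t$, to be the main obstacle in the lemma; the $C^1$ and $\limsup$ parts are comparatively routine.

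\emph{The limit superior.} On a bump interval $k(t)\le k_s(t)+\bigl(\Gamma_+(t)-k_s(t)\bigr)=\Gamma(t)+\bar{k_s}+1$, and on the complementary intervals $k(t)=k_s(t)\le\bar{k_s}\le\Gamma(t)+\bar{k_s}+1$; hence $k(t)\le\Gamma(t)+\bar{k_s}+1$ for all $t$, and since $\Gamma(t)\to\infty$ this gives $\limsup_{t\to\infty}k(t)/\Gamma(t)\le 1$. For the reverse inequality, evaluate at the bump peaks $t_n:=n+\tfrac{w_n}{2}$: since $h_s\!\left(\tfrac{w_n}{2},\tfrac{w_n}{2},b\right)=b\,p(0)=b$, we get $k(t_n)=\Gamma_+(t_n)=\Gamma(t_n)+\bar{k_s}+1$, whence $k(t_n)/\Gamma(t_n)\to 1$ as $t_n\to\infty$, so $\limsup_{t\to\infty}k(t)/\Gamma(t)\ge 1$. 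Combining the two bounds yields \eqref{max}.
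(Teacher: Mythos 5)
Your regularity argument and your $\limsup$ argument are both correct and take essentially the same route as the paper.

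On the integral ratio you have correctly identified a genuine gap, but the diagnosis needs sharpening. With $w_j$ exactly as printed, the bump of index $n$ has area $\tfrac{w_n}{2}\Gamma_+(n+1)\le\tfrac12\int_{n+1}^{n+2}k_s(u)\,du$, which gives your bound $1\le\int_t^\infty k/\int_t^\infty k_s\le\tfrac32$ but cannot be pushed to a limit of $1$. Indeed, once $n$ is large enough that $\int_{n+1}^{n+2}k_s\le\tfrac12\Gamma_+(n+1)$ (which happens eventually because the numerator vanishes and the denominator blows up), one has $w_n=\int_{n+1}^{n+2}k_s/\Gamma_+(n+1)$ exactly, so $w_n\Gamma_+(n+1)/\int_{n+1}^{n+2}k_s=1$ and the bump mass is genuinely comparable to $\int_{n+1}^{n+2}k_s$, not $o$ of it. Your heuristic that $\Gamma_+(n+1)\to\infty$ should supply the vanishing is misplaced: that factor is cancelled exactly in the product $w_n\Gamma_+(n+1)$. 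The paper's own proof uses the estimate $w_n\Gamma_+(n+1)/\int_{n+1}^{n+2}k_s(u)\,du\le 1/(n+1)$, which is false for the $w_j$ given in the statement; it requires the definition to be
\begin{align*}
w_j := \frac{1}{2}\wedge\left(\frac{1}{j+1}\cdot\frac{\int_{j+1}^{j+2}k_s(u)\,du}{\Gamma_+(j+1)}\right),
\end{align*}
i.e.\ the statement has a misprint that drops the $\tfrac{1}{j+1}$ factor. With the corrected $w_j$, the per-bump ratio tends to zero, and then Toeplitz's lemma applied to $\sum_{j\ge n}w_j\Gamma_+(j+1)\big/\sum_{j\ge n}\int_{j+1}^{j+2}k_s$ gives the required $\to 1$; this is exactly the paper's argument, and it is the missing ingredient in your write-up. (It also leaves \eqref{max} unaffected, since the corrected $w_j$ is still positive, so the peak evaluation $k(n+\tfrac{w_n}{2})=\Gamma_+(n+\tfrac{w_n}{2})$ still applies.)
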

Armed with this result we can now prove the following theorem.
\begin{theorem} \label{thm.gspikes}
Suppose that $f$ is continuous and obeys \eqref{asym} and \eqref{RVat0} for $\beta>1$. 
Let $\Gamma$ be a function that obeys
\[
\Gamma\in C^1([0,\infty);(0,\infty)), \quad \lim_{t\to\infty} \Gamma(t)=+\infty.
\]
Then there exists a function $g\in C^1((0,\infty);(0,\infty))$ which obeys \eqref{eq.intgdivF} and satisfies 
\[
\limsup_{t\to\infty} \frac{|g(t)|}{\Gamma(t)}=1,
\]
and hence the continuous solution $x$ of \eqref{eq.odepert} obeys \eqref{eq.xdetperservasy}.
\end{theorem}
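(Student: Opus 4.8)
The plan is to realise $g$ as an output of Lemma~\ref{lemma.gspikes}, choosing the smooth part so that its tail integral is controlled against $F^{-1}$, and then making a sparse correction so that the required $\limsup$ is pinned to exactly $1$. Since Lemma~\ref{lemma.gspikes} needs $\Gamma$ increasing, I first replace $\Gamma$ by a $C^1$ increasing minorant $\Gamma_1$ with $\Gamma_1(t)\le\Gamma(t)$ and $\Gamma_1(t)\nearrow\infty$; such a $\Gamma_1$ exists because $t\mapsto\inf_{s\ge t}\Gamma(s)$ is non-decreasing and tends to $+\infty$ (as $\Gamma$ does), and may be smoothed from below without losing these properties. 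For the smooth part I take $k_s(t)=e^{-t}$: then $k_s\in C^1((0,\infty);(0,\infty))$, $k_s(t)\to0$, $\bar k_s=1$, $\int_t^\infty k_s(s)\,ds=e^{-t}\to0$, and — this is the reason for the exponential choice — since $F^{-1}\in\mathrm{RV}_\infty(-1/(\beta-1))$ decays no faster than a power of $t$ (Potter bounds), we have $e^{-t}/F^{-1}(t)\to0$.

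Feeding $k_s$ and $\Gamma_1$ into Lemma~\ref{lemma.gspikes} produces $g_0:=k\in C^1((0,\infty);(0,\infty))$, positive since $k_s>0$ and the cubic bump $h_s$ is non-negative. The lemma delivers $\int_t^\infty g_0(s)\,ds\sim\int_t^\infty k_s(s)\,ds=e^{-t}$ via \eqref{intRatio}, together with $\limsup_{t\to\infty}g_0(t)/\Gamma_1(t)=1$ via \eqref{max}; moreover the explicit form \eqref{spikes} gives $g_0(t)\le\Gamma_{1,+}(t)=\Gamma_1(t)+\bar k_s+1\le\Gamma(t)+2$ for all large $t$ (on a spike interval $h_s$ is bounded by its own height $\Gamma_{1,+}(t)-k_s(t)$, off the spikes $g_0=k_s\le\bar k_s$). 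From $\int_t^\infty g_0(s)\,ds\to0$ we get that $\int_0^t g_0$ has a finite limit (the first part of \eqref{eq.intgdivF}), and from $e^{-t}/F^{-1}(t)\to0$ we get $\int_t^\infty g_0(s)\,ds/F^{-1}(t)\to0$ (the second part). The upper bound also gives $\limsup_{t\to\infty}g_0(t)/\Gamma(t)\le1$.

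The only delicate point is to force $\limsup_{t\to\infty}g(t)/\Gamma(t)\ge1$: for $g_0$ alone this can fail when $\Gamma$ is far from monotone, since the extrema of $g_0$ sit near the integers while $\Gamma_1$ may lie well below $\Gamma$ there. I would remedy this by superimposing on $g_0$ a sparse correction $\delta$: for each large integer $n$, a single $C^1$ bump modelled on the function $h_s$ of Lemma~\ref{lemma.gspikes}, of height $(1-\tfrac1n)\Gamma(n+\tfrac12)$, supported on an interval of length $\eta_n$ about $n+\tfrac12$, where $\eta_n>0$ is small enough that (i) the support is disjoint from the spike interval $[n,n+w_n)$ of $g_0$, (ii) $\Gamma(t)\ge(1-\tfrac1n)\Gamma(n+\tfrac12)$ on the support (possible by continuity of $\Gamma$), and (iii) the bump has area at most $2^{-n}$. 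Since $h_s$ vanishes together with its derivative at the endpoints of its support, $g:=g_0+\delta$ is still in $C^1((0,\infty);(0,\infty))$. On the support of $\delta$ we have $\delta(t)/\Gamma(t)\le1$ while $g_0(t)=k_s(t)\to0$, and off the support $\delta\equiv0$, so $\limsup_{t\to\infty}g(t)/\Gamma(t)\le1$; evaluating at $t=n+\tfrac12$ gives $g(n+\tfrac12)/\Gamma(n+\tfrac12)\ge1-\tfrac1n\to1$, so the $\limsup$ is exactly $1$. The tail $\int_t^\infty\delta(s)\,ds\le\sum_{n\ge t-1}2^{-n}\le2^{-t+2}$ is absorbed by the $e^{-t}$ estimate, so $g$ still obeys both parts of \eqref{eq.intgdivF}. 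Finally $g>0$ and $\int_0^\infty g(s)\,ds<\infty$ make $g$ integrable, which (as noted after Theorem~\ref{thm.detpressuff}, $g$ being of one sign) forces $\lim_{t\to\infty}x(t)=0$ for the solution of \eqref{eq.odepert}; Theorem~\ref{thm.detpressuff} then yields \eqref{eq.xdetperservasy} (and Theorem~\ref{thm.limis1gpos} identifies the limit as $1$ when $\xi>0$). The main obstacle is precisely the exactness $\limsup|g|/\Gamma=1$: the bound "$\le1$" is essentially free once $g\le\Gamma+O(1)$, but "$\ge1$" for arbitrary oscillating $\Gamma$ is what forces the auxiliary minorant $\Gamma_1$ and the calibrated correction bumps; everything else is bookkeeping off \eqref{intRatio} and \eqref{max}.
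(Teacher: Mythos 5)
Your proposal is correct and follows the same basic route as the paper: pick a well--behaved positive $C^1$ function $k_s$ whose tail integral is $o(F^{-1}(t))$, and feed it into Lemma~\ref{lemma.gspikes} to build in the spikes. The paper takes $k_s(t)=(\varphi\circ\Phi^{-1})(t)/(1+t)$ and checks the tail bound by L'H\^opital; your choice $k_s(t)=e^{-t}$ works just as well (since $F^{-1}$ is regularly varying and hence decays only polynomially) and is if anything a little cleaner.

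The genuine added value in your argument is the part you flagged as ``the only delicate point''. Lemma~\ref{lemma.gspikes} is stated (and, through the bound $\Gamma_+(t)\le\Gamma_+(n+1)$ for $t\in[n,n+w_n)$ used in establishing \eqref{intRatio}, really does require) that $\Gamma$ be increasing, whereas Theorem~\ref{thm.gspikes} only assumes $\Gamma\in C^1$ and $\Gamma(t)\to\infty$. The paper's own proof applies the lemma directly to $\Gamma$ without addressing this mismatch; as you observe, if $\Gamma$ is far from monotone the lemma's spike times at the integers need not come anywhere near the extrema of $\Gamma$, so the conclusion $\limsup_t k(t)/\Gamma(t)=1$ does not obviously follow from the lemma's argument. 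Your two--step remedy --- first run the lemma on an increasing $C^1$ minorant $\Gamma_1\le\Gamma$ to secure $g_0\le\Gamma+O(1)$ and the tail estimate, then superimpose a sparse family of calibrated bumps at $n+\tfrac12$ with heights $(1-1/n)\Gamma(n+\tfrac12)$ and geometrically small areas to pin $\limsup g/\Gamma$ up to exactly $1$ --- is sound: the bumps are $C^1$, nonnegative, disjoint from the spike intervals for $n$ large (since $w_n\to0$), stay below $\Gamma$ by continuity, and their total tail integral decays like $2^{-t}$ and hence is absorbed into the $o(F^{-1}(t))$ bound. So your proposal not only reproduces the paper's result but also repairs an implicit assumption in its proof; either one should state the theorem with $\Gamma$ increasing (which costs nothing in the examples), or one needs precisely the kind of additional argument you supply.
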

\begin{proof}
Let $k_s(t)=(\varphi \circ \Phi^{-1}(t))/(1+t)$ for $t\geq 0$. Notice that $k_s \in C^1((0,\infty);(0,\infty))$ tends to zero. In fact $k_s(t)/(f\circ F^{-1})(t)\to 0$ as $t\to\infty$, so by L'H\^opital's rule we get
\[
\lim_{t\to\infty} \frac{\int_t^\infty k_s(u)\,du}{F^{-1}(t)}=0.
\]
Given this function $k_s$, by Lemma~\ref{lemma.gspikes} there is a positive and $C^1$ function $k$ defined by \eqref{spikes} which additionally satisfies
\[
\lim_{t\to\infty}\frac{\int_t^\infty k(u)\,du}{\int_t^\infty k_s(u)\,du} =1, \quad \limsup_{t\to\infty} \frac{k(t)}{\Gamma(t)}=1.
\]
Now let $g(t)=k(t)$ for all $t\geq 0$, so that $g$ obeys \eqref{eq.intgdivF} and $\limsup_{t\to\infty} g(t)/\Gamma(t)=1$, as required, and hence Theorem~\ref{thm.detpressuff} applies to the solution $x$ of \eqref{eq.odepert}, as claimed.
\end{proof}
We can prove a similar result for the stochastic differential equation.
\begin{theorem} \label{thm.sig2spikes}
Suppose that $f$ is continuous and obeys \eqref{asym} and \eqref{RVat0} for $\beta>1$. 
Let $\Gamma$ be a function that obeys
\[
\Gamma\in C^1([0,\infty);(0,\infty)), \quad \lim_{t\to\infty} \Gamma(t)=+\infty.
\]
Then there exists a function $\sigma^2\in C^1((0,\infty);(0,\infty))$ which obeys \eqref{def.mu} with $\mu=0$ and satisfies 
\[
\limsup_{t\to\infty} \frac{\sigma^2(t)}{\Gamma(t)}=1,
\]
and hence the continuous adapted process $X$ which obeys  \eqref{eq.sde} satisfies the conclusions of Theorem~\ref{thm.stochpressuff}.
\end{theorem}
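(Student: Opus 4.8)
The plan is to mimic the proof of Theorem~\ref{thm.gspikes}: produce $\sigma^2$ by grafting tall, narrow spikes onto a smooth, rapidly decaying base function through Lemma~\ref{lemma.gspikes}, with the base chosen small enough that the extra $\log\log$ factor in \eqref{def.mu} is absorbed and $\mu=0$. First I would fix the base. Writing $\varphi$ for the $C^1$ odd asymptote of $f$ (so $f(x)/\varphi(x)\to 1$ as $x\to 0$) and $\Phi$ for the analogue of $F$ built from $\varphi$, exactly as in the proof of Theorem~\ref{thm.gspikes} (so that $\Phi^{-1}(t)\sim F^{-1}(t)$ as $t\to\infty$), I would set $k_s(t):=(\varphi(\Phi^{-1}(t)))^2$. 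Then $k_s\in C^1((0,\infty);(0,\infty))$ is bounded, $k_s(t)\to 0$, and $k_s\sim(f\circ F^{-1})^2$ at infinity. Since $f\in\text{RV}_0(\beta)$ and $F^{-1}\in\text{RV}_\infty(-1/(\beta-1))$, we have $f\circ F^{-1}\in\text{RV}_\infty(-\beta/(\beta-1))$ and hence $k_s\in\text{RV}_\infty(-2\beta/(\beta-1))$; because $\beta>1$ this index is strictly below $-1$, so $k_s$ is integrable at infinity and Karamata's theorem gives $\int_t^\infty k_s(s)\,ds\in\text{RV}_\infty(-(\beta+1)/(\beta-1))$, whence $\int_t^\infty k_s(s)\,ds/F^{-1}(t)^2\in\text{RV}_\infty(-1)$. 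In particular this ratio tends to $0$ polynomially fast, while $\log\log(1/\int_t^\infty k_s(s)\,ds)$ grows only like $\log\log t$; multiplying, $2\int_t^\infty k_s(s)\,ds\,\log\log(1/\int_t^\infty k_s(s)\,ds)/F^{-1}(t)^2\to 0$.

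Next I would feed $k_s$ and $\Gamma$ into Lemma~\ref{lemma.gspikes} (handling the non-monotonicity of $\Gamma$ exactly as in Theorem~\ref{thm.gspikes}) to obtain $\sigma^2:=k\in C^1((0,\infty);(0,\infty))$ satisfying \eqref{intRatio} and \eqref{max}, i.e. $\int_t^\infty\sigma^2(s)\,ds/\int_t^\infty k_s(s)\,ds\to 1$ and $\limsup_{t\to\infty}\sigma^2(t)/\Gamma(t)=1$. Since $\sigma^2>0$ throughout, $\sigma:=\sqrt{\sigma^2}$ is a legitimate (indeed $C^1$) diffusion coefficient, $\sigma\in L^2([0,\infty);\mathbb{R})$ because $k$ is integrable, and $\int_t^\infty\sigma^2(s)\,ds>0$ for every $t\ge 0$. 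Inserting the first of these limits into the estimate of the previous paragraph (the $(1+o(1))$ factor being harmless inside the double logarithm) gives $\lim_{t\to\infty}2\int_t^\infty\sigma^2(s)\,ds\,\log\log(1/\int_t^\infty\sigma^2(s)\,ds)/F^{-1}(t)^2=0$, i.e. \eqref{def.mu} holds with $\mu=0$. Theorem~\ref{thm.Xneccsuff2}(b)(i) then yields that the solution $X$ of \eqref{eq.sde} satisfies the conclusion of Theorem~\ref{thm.stochpressuff}; alternatively one notes that $\mu=0$ forces $\Sigma(t)/F^{-1}(t)\to 0$, so Lemma~\ref{lemma.tailsigmart} delivers $\int_t^\infty\sigma(s)\,dB(s)/F^{-1}(t)\to 0$ a.s. and Theorem~\ref{thm.stochpressuff} applies directly.

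The only genuinely delicate step is the verification that the double-logarithmic correction in \eqref{def.mu} is swallowed: this is precisely where $\beta>1$ enters, since it is what makes $\int_t^\infty k_s(s)\,ds/F^{-1}(t)^2$ regularly varying with the strictly negative index $-1$ rather than merely slowly varying, leaving the polynomial room needed to dominate the $\log\log$ term. I expect everything else to be routine bookkeeping, the only further nuisance---shared verbatim with Theorem~\ref{thm.gspikes}---being to ensure that the spikes of $k$ attain the height $\Gamma(t)$ asymptotically even when $\Gamma$ fails to be monotone.
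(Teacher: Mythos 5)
Your proof is correct, and the overall strategy is the same as the paper's: pick a smooth, rapidly decaying base function $k_s$, feed it into Lemma~\ref{lemma.gspikes} to graft tall narrow spikes of height $\Gamma$, set $\sigma^2:=k$, and check that the extra $\log\log$ factor in \eqref{def.mu} is swallowed so that $\mu=0$. The only divergence is the choice of base. You take $k_s=(\varphi\circ\Phi^{-1})^2\sim(f\circ F^{-1})^2$, mirroring what the paper does in the proof of Theorem~\ref{thm.gspikes}; this yields $\int_t^\infty k_s/F^{-1}(t)^2\in\mathrm{RV}_\infty(-1)$, which is a fixed polynomial rate dominating the slowly varying $\log\log$ term. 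The paper instead uses the explicit power $k_s(t)=(1+t)^{-2\gamma-\epsilon}$ with $\gamma=1/(\beta-1)+1/2$ and an arbitrary $\epsilon>0$, giving a ratio in $\mathrm{RV}_\infty(-\epsilon)$. Both choices satisfy the hypotheses of Lemma~\ref{lemma.gspikes} and give $\mu=0$; yours is more intrinsic to the data $(f,F)$ and avoids introducing an auxiliary parameter, at the modest cost of having to verify that $(\varphi\circ\Phi^{-1})^2$ decays with index $-2\beta/(\beta-1)<-1$ (which you do correctly), while the paper's is more elementary since the index of $k_s$ is pinned down by inspection. Your concluding appeal to Theorem~\ref{thm.Xneccsuff2}(b)(i) (or equivalently to Lemma~\ref{lemma.tailsigmart} followed by Theorem~\ref{thm.stochpressuff}) matches the paper's final step, and your observation that the $(1+o(1))$ factor inside the $\log\log$ is harmless is sound since $\log\log(1/A)\sim\log\log(1/B)$ whenever $A\sim B\to 0$.
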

\begin{proof}
Let $\gamma=1/(\beta-1)+1/2$. Let $k_s(t)=(1+t)^{-2\gamma-\epsilon}$ for $t\geq 0$. Notice that $k_s \in C^1((0,\infty);(0,\infty))$ tends to zero and $k_s\in L^1([0,\infty);\mathbb{R})$. 
We have that $\int_t^\infty k_s(u)\,du\in \text{RV}_{\infty}(-2\gamma+1-\epsilon)$. Hence 
\[
t\mapsto \int_t^\infty k_s(u)\,du \log\log\left(\frac{1}{\int_t^\infty k_s(u)\,du}\right)\in\text{RV}_{\infty}(-2\gamma+1-\epsilon). 
\]
Given this function $k_s$, by Lemma~\ref{lemma.gspikes} there is a positive and $C^1$ function $k$ defined by \eqref{spikes} which additionally satisfies
\[
\lim_{t\to\infty}\frac{\int_t^\infty k(u)\,du}{\int_t^\infty k_s(u)\,du} =1, \quad \limsup_{t\to\infty} \frac{k(t)}{\Gamma(t)}=1.
\]
Now let $\sigma^2(t)=k(t)$ for all $t\geq 0$. Then we have that 
\[
t\mapsto \int_t^\infty \sigma^2(u)\,du \log\log\left(\frac{1}{\int_t^\infty \sigma^2(u)\,du}\right)\in\text{RV}_{\infty}(-2\gamma+1-\epsilon)
\]
Since $F^{-1}\in \text{RV}_\infty(-1/(\beta-1))$, we have that \eqref{def.mu} holds with $\mu=0$, because $2\gamma-1+\epsilon>2/(\beta-1)$. 
Moreover, $\limsup_{t\to\infty} \sigma^2(t)/\Gamma(t)=1$, as required, and hence Theorem~\ref{thm.stochpressuff} applies to the solution $X$ of \eqref{eq.sde}, as claimed.
\end{proof}

In many cases it is straightforward to determine the asymptotic behaviour of differential equations directly, because the asymptotic behaviour of 
$F^{-1}$ can be determined. The following result gives an easily--checked and sufficient condition on $f$ under which the asymptotic behaviour of $F^{-1}$ can be read off.

\begin{proposition} \label{prop.Finvasy}
Suppose that $f\in \text{RV}_0(\beta)$ is continuous and $\beta>1$. Define 
\[
\ell(x)=\left(\frac{f(x)}{x^\beta}\right)^{-1/(\beta-1)}, 
\] 
and assume that 
\begin{equation} \label{eq.ell}
\lim_{x\to 0}\frac{\ell(x\ell(x))}{\ell(x)}=1.
\end{equation}
If $F$ is defined by \eqref{def.F},  
\[
F(x)\sim \frac{1}{\beta-1}\frac{x}{f(x)}, 
\quad\text{as $x\to 0^+$},
\]
and 
\begin{equation} \label{eq.Finvasy}
F^{-1}(t)\sim \left(\frac{1}{\beta-1}\right)^{1/(\beta-1)} t^{-1/(\beta-1)} \ell(t^{-1/(\beta-1)}), \quad\text{as $t\to\infty$}.
\end{equation}
\end{proposition}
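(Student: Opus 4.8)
The plan is to prove the two displayed asymptotic relations in turn: the estimate for $F$ follows at once from Karamata's theorem, and the estimate for $F^{-1}$ is obtained by an asymptotic inversion argument organised around the ansatz suggested by the $F$-estimate, with \eqref{eq.ell} supplying the one nontrivial ingredient.

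First I would handle $F$. Since $f\in\text{RV}_0(\beta)$ we have $1/f\in\text{RV}_0(-\beta)$ with $-\beta<-1$, so Karamata's theorem for integrals of the form $\int_x^1$ of a regularly varying function of index less than $-1$ (see \cite{BGT}) gives directly
\[
F(x)=\int_x^1\frac{du}{f(u)}\sim\frac{1}{\beta-1}\,\frac{x}{f(x)},\qquad x\to0^+.
\]
In particular $F\in\text{RV}_0(1-\beta)$; as $F$ is strictly decreasing with $F(0^+)=+\infty$, the theorem on asymptotic inverses of monotone regularly varying functions (\cite{BGT}) yields $F^{-1}\in\text{RV}_\infty(-1/(\beta-1))$, as already noted in the introduction. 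This fact is used in the inversion step below.

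Next I would set up the ansatz. Writing $f(x)=x^\beta L(x)$ with $L$ slowly varying at $0$, the function $\ell=L^{-1/(\beta-1)}$ is slowly varying at $0$ and $x/f(x)=x^{1-\beta}\ell(x)^{\beta-1}$. Put $\phi(t)=t^{-1/(\beta-1)}$, $c=(\beta-1)^{-1/(\beta-1)}$, and $a(t)=c\,\phi(t)\,\ell(\phi(t))$, the right-hand side of \eqref{eq.Finvasy}; note $a(t)\to0$ as $t\to\infty$, so $F(a(t))$ is well-defined for large $t$, and the goal is to show $F^{-1}(t)\sim a(t)$. Evaluating the $F$-estimate at $a(t)$ and using $c^{1-\beta}=\beta-1$ and $\phi(t)^{1-\beta}=t$, a short computation gives
\[
F(a(t))\sim\frac{1}{\beta-1}\,\frac{a(t)}{f(a(t))}=t\left(\frac{\ell(a(t))}{\ell(\phi(t))}\right)^{\beta-1},
\]
so everything reduces to proving $\ell(a(t))/\ell(\phi(t))\to1$. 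This is where \eqref{eq.ell} enters: writing $x=\phi(t)\to0^+$ we have $a(t)=cx\ell(x)$; since $\ell$ is slowly varying at $0$ one has $x\ell(x)\to0^+$, hence $\ell(c\,u)\sim\ell(u)$ with $u=x\ell(x)$ by slow variation, while \eqref{eq.ell} gives $\ell(x\ell(x))\sim\ell(x)$; chaining these yields $\ell(a(t))\sim\ell(\phi(t))$ and therefore $F(a(t))\sim t$.

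Finally I would invert: with $\lambda_t:=t/F(a(t))\to1$ and $y_t:=F(a(t))\to\infty$, we have $F^{-1}(t)=F^{-1}(\lambda_t y_t)$, and since $F^{-1}\in\text{RV}_\infty(-1/(\beta-1))$ the uniform convergence theorem for regularly varying functions gives $F^{-1}(\lambda_t y_t)\sim\lambda_t^{-1/(\beta-1)}F^{-1}(y_t)=\lambda_t^{-1/(\beta-1)}a(t)\sim a(t)$, which is exactly \eqref{eq.Finvasy}. The main obstacle is the chaining step for $\ell$: condition \eqref{eq.ell} is precisely what allows the argument of $\ell$ in the ansatz to be replaced by $\phi(t)$, and without it the slowly varying factors $\ell(a(t))$ and $\ell(\phi(t))$ need not be asymptotically equivalent; everything else is a routine application of Karamata's theorem and of the standard uniform-convergence and inversion machinery for regularly varying functions.
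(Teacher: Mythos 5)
Your proof is correct and mirrors the paper's own argument step by step: Karamata's theorem for the $F$-estimate, the candidate $G(t)=c\,\phi(t)\ell(\phi(t))$, the reduction to $\ell(G(t))\sim\ell(\phi(t))$ via slow variation of $\ell$ combined with \eqref{eq.ell}, and then inversion using regular variation of $F^{-1}$. The only substantive difference is cosmetic: you spell out the uniform-convergence inversion step ($F^{-1}(t)=F^{-1}(\lambda_t y_t)\sim a(t)$) that the paper asserts without elaboration.
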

\begin{proof}
The proof is not hard and introduces useful notation for the rest of this section, so we give it here.  
Define $l(x)=f(x)/x^\beta$. Then $\ell(x)=l(x)^{-1/(\beta-1)}$. 
Since $f\in \text{RV}_0(\beta)$ for $\beta>1$ it follows that $l$ and $\ell$ are both in $\text{RV}_0(0)$. 
The asymptotic behaviour of $F$ is well--known. Since  $f(x)/x^\beta=l(x)$, we have that $1/l(x)= \ell(x)^{\beta-1}$ as $x\to 0^+$, and so 
it is true that 
\[
F(x)\sim \frac{1}{\beta-1}x^{1-\beta}\ell(x)^{\beta-1}, \quad\text{as $x\to 0^+$}.
\]
Define 
\[
G(t)=\left(\frac{1}{\beta-1}\right)^{1/(\beta-1)} t^{-1/(\beta-1)} \ell(t^{-1/(\beta-1)}), \quad t\geq 1.
\]
If we can show that $\lim_{t\to\infty} F(G(t))/t=1$, then as $F^{-1}\in \text{RV}_\infty(-1/(\beta-1))$, it follows that $G(t)/F^{-1}(t)\to 1$ as $t\to\infty$, 
which proves the claim. 
 
Clearly, as $\ell\in \text{RV}_0(0)$, we have that $G\in \text{RV}_\infty(-1/(\beta-1))$ and thus $G(t)\to 0$ as $t\to\infty$. Hence as $t\to\infty$, the asymptotic behaviour of $F$ at $0$ and the definition of $G$ give  
\begin{equation} \label{eq.FG}
F(G(t))\sim \frac{1}{\beta-1}G(t)^{1-\beta}\ell(G(t))^{\beta-1}\sim t \left(\frac{\ell(G(t))}{\ell(t^{-1/(\beta-1)})} \right)^{\beta-1}.
\end{equation}
Since $\ell\in \text{RV}_0(0)$, we have that 
\[
\lim_{t\to\infty} \frac{\ell(G(t))}{\ell(t^{-1/(\beta-1)}\ell(t^{-1/(\beta-1)}))}
=1.
\]
Therefore
\begin{multline*}
\lim_{t\to\infty} \frac{\ell(G(t))}{\ell(t^{-1/(\beta-1)})} 
\\=\lim_{t\to\infty} \frac{\ell(G(t))}{\ell(t^{-1/(\beta-1)}\ell(t^{-1/(\beta-1)}))} \cdot \frac{\ell(t^{-1/(\beta-1)}\ell(t^{-1/(\beta-1)}))}{\ell(t^{-1/(\beta-1)})}=1,
\end{multline*}
because the second limit is unity, by \eqref{eq.ell}. Returning to \eqref{eq.FG}, we see that $F(G(t))/t\to 1$ as $t\to\infty$, as we required.
\end{proof}
Once a regularly varying function $f$ has been given, $\ell$ is determined. It happens that many regularly varying functions $f$ enjoy the property \eqref{eq.ell}. We give the details now for a parameterised family of such functions. 
\begin{example}  \label{eq.examplepowerloglog}
Suppose for instance that $\beta>1$ and $\beta_1$ and $\beta_2$ are real and $f$ obeys 
\[
\lim_{x\to 0^+} \frac{f(x)}{a|x|^\beta\log^{\beta_1}(1/|x|)\{\log\log(1/|x|)\}^{\beta_2}\sgn(x)}=1. 
\]
Then, in the terminology above, we may take $l(x)=a \log^{\beta_1}(1/x){\log\log(1/x)}^{\beta_2}$ for $x>0$ sufficiently small. 
Then 
\[
\ell(x)=l(x)^{-1/(\beta-1)}=a^{-1/(\beta-1)} \log^{-\beta_1/(\beta-1)}(1/x)\{\log\log(1/x)\}^{-\beta_2/(\beta-1)}.
\]
Hence $x\ell(x)$ is in $\text{RV}_0(1)$ and so $\log(1/(x\ell(x)))/\log(1/x)\to 1$ as $x\to 0^+$. This implies $\log\log(1/(x\ell(x)))/\log\log(1/x)\to 1$ 
as $x\to 0^+$. Armed with these limits and the definition of $\ell$ we get
\begin{align*}
\lefteqn{\lim_{x\to 0^+} \frac{\ell(x\ell(x))}{\ell(x)}}\\
&=
\lim_{x\to 0^+}
\frac
{a^{-1/(\beta-1)} \log^{-\beta_1/(\beta-1)}(1/(x\ell(x)))\{\log\log(1/(x\ell(x)))\}^{-\beta_2/(\beta-1)}}
{a^{-1/(\beta-1)} \log^{-\beta_1/(\beta-1)}(1/x){\log\log(1/x)}^{-\beta_2/(\beta-1)}}\\
&=1,
\end{align*}
as so \eqref{eq.ell} holds. Therefore, by Proposition~\ref{prop.Finvasy}, we have that 
\begin{align*}
F^{-1}(t)&\sim \left(\frac{1}{a(\beta-1)}\right)^{1/(\beta-1)} t^{-1/(\beta-1)}\left(\frac{1}{\beta-1}\log t\right)^{-\beta_1/(\beta-1)}\\
&\qquad\times(\log\log t)^{-\beta_2/(\beta-1)}, \quad\text{as $t\to\infty$}.
\end{align*}
Suppose now that $g$ is continuous such that $\int_t^\infty g(s)\,ds=0$ and obeys 
\[
\lim_{t\to\infty} \frac{\int_t^\infty g(s)\,ds}{t^{-1/(\beta-1)}\left(\log t\right)^{-\beta_1/(\beta-1)}
(\log\log t)^{-\beta_2/(\beta-1)}}=:\mu_D\in [-\infty,\infty].
\]
If we suppose that $f$ has the above asymptotic behaviour at zero, is locally Lipschitz continuous on $\mathbb{R}$, and obeys \eqref{fatinfinity}, then there 
is a unique continuous solution of \eqref{eq.odepert} which obeys $x(t)\to 0$ as $t\to\infty$. Furthermore, if $\mu_D=0$, then 
\[
\lim_{t\to\infty} \frac{x(t)}{t^{-1/(\beta-1)}\left(\frac{1}{\beta-1}\log t\right)^{-\beta_1/(\beta-1)}(\log\log t)^{-\beta_2/(\beta-1)}}\in {-1,0,1}.
\]
If, on the other hand, $\mu_D\neq 0$, then the above limit may not exist, and cannot be $0$ or $\pm1$.

Suppose that $f$ has the same properties (but not necessarily \eqref{fatinfinity}), and consider instead the solution of the stochastic equation \eqref{eq.sde} where $\sigma\in L^2(;0,\infty);\mathbb{R})$ is a continuous function for which 
\[
\lim_{t\to\infty} \frac{\int_t^\infty \sigma^2(s)\,ds \log\log\left(1/\int_t^\infty \sigma^2(s)\,ds\right)}{t^{-2/(\beta-1)}\left(\log t\right)^{-2\beta_1/(\beta-1)} (\log\log t)^{-2\beta_2/(\beta-1)}}=:\mu_S\in [0,\infty].
\]  
Then the unique continuous adapted process $X$ which obeys \eqref{eq.sde} obeys $X(t)\to 0$ as $t\to\infty$ a.s. Furthermore, if $\mu_S=$, we have that 
\[
\frac{X(t)}{t^{-1/(\beta-1)}\left(\frac{1}{\beta-1}\log t\right)^{-\beta_1/(\beta-1)}(\log\log t)^{-\beta_2/(\beta-1)}}\in {-1,0,1}, \quad \text{a.s.,}
\]
while if $\mu_S\neq 0$, we have that $X(t)/F^{-1}(t)$ tends to a limit with zero probability. 
\end{example}

\begin{example} \label{examp.derivcondnversesXcondn}
We have seen that when $S_f(\epsilon,h)<+\infty$ for all $\epsilon>0$ and \textit{some} $h>0$, then $X(t)/F^{-1}(t)$ has limit in $\{-1,0,1\}$ a.s. and 
that $-(X(t+h)-X(t))/h)/(f\circ F^{-1})(t)$ also has the same limit a.s. Therefore, we see that preservation of the asymptotic behaviour of the finite difference approximation to the derivative of \eqref{eq.ode} in the solution of \eqref{eq.sde} requires a condition on $\sigma$ which is not weaker than that required to preserve solely the asymptotic behaviour of the solution of \eqref{eq.ode}. 

In the following example, we explore two aspects of these asymptotic results. First, it is our 
conjecture that if there is \textit{some} $h'>0$ for which $S_f(\epsilon,h')<+\infty$ for all $\epsilon>0$, then it is the case that 
$S_f(\epsilon,h)<+\infty$ for all $\epsilon>0$ and \textit{all} $h>0$. Therefore, if the asymptotic behaviour of the finite difference approximation to the derivative of \eqref{eq.ode} is preserved for any step--size $h'>0$, it will be preserved for any fixed time step $h>0$. Secondly, we see that the condition 
under which the finite difference approximation of the derivative is preserved is \textit{strictly} stronger than that needed to preserve the asymptotic
behaviour of the underlying unperturbed deterministic equation \eqref{eq.ode}.

Let us take for definiteness the simple case when $f(x)\sim a|x|^\beta\sgn(x)$ as $x\to 0$ for some $a>0$ and $\beta>1$. Suppose also that 
$\sigma(t)\sim ct^{-\gamma}$ as $t\to\infty$ for $c\neq0$ and $\gamma>0$. If $\gamma\leq 1/2$, we have that $\sigma\not\in L^2([0,\infty);\mathbb{R})$, so 
the solution of \eqref{eq.sde} cannot inherit the decay properties of the solution of \eqref{eq.ode}. 

Therefore, we let $\gamma>1/2$. We note that elementary considerations, or Proposition~\ref{prop.Finvasy} enable us to show that 
\[
F^{-1}(t)\sim \left(\frac{1}{a(\beta-1)}\right)^{1/\beta-1} t^{-1/(\beta-1)}, \quad\text{as $t\to\infty$}.
\]
and of course $F^{-1}\in \text{RV}_\infty(-1/(\beta-1))$. Clearly $\sigma^2\in \text{RV}_\infty(-2\gamma)$, and so $t\mapsto \int_t^\infty \sigma^2(s)\,ds\in \text{RV}_\infty(-2\gamma+1)$. On  account of the logarithmic factor, we see that for $\gamma>(\beta+1)/(2(\beta-1))$ we have that 
\[
\lim_{t\to\infty} \frac{X(t)}{\left(\frac{1}{a(\beta-1)}\right)^{1/\beta-1} t^{-1/(\beta-1)}}\in \{-1,0,1\}, \quad \text{a.s.}
\]
while for $\gamma\leq (\beta+1)/(2(\beta-1))$ the limit on the left--hand side exists with probability zero. 

Considering $S_f(\varepsilon,h)$, we need to find the asymptotic behaviour of 
\[
\frac{\int_{nh}^{(n+1)h} \sigma^2(s)\,ds}{(f\circ F^{-1})^2(nh)}.
\] 
The numerator scales like $K_1(h)n^{-2\gamma}$ as $n\to\infty$; since $(f\circ F^{-1})(t)\sim K_2 t^{-\beta/(\beta-1)}$ as $t\to\infty$, the denominator 
behaves according to $(f\circ F^{-1})(nh)\sim K_2(h)^2 n^{-2\beta/(\beta-1)}$ as $n\to\infty$. Hence 
\[
\left(
\frac{\int_{nh}^{(n+1)h} \sigma^2(s)\,ds}{(f\circ F^{-1})^2(nh)} \right)^{1/2}
\sim K_3(h) n^{-\gamma+\beta/(\beta-1)}
\]
as $n\to\infty$. Therefore, if $\gamma>\beta/(\beta-1)$ it follows that $S_f(\epsilon,h)<+\infty$ for all $\epsilon>0$ and all $h>0$: thus in this case, 
$(X(t+h)-X(t)/h)/t^{-\beta/(\beta-1)}$ tends to a (known) constant limit with probability one. If however, $\gamma\leq \beta/(\beta-1)$, then for every $h>0$, $S_f(\epsilon,h)=+\infty$ for all $\epsilon>0$, and so $(X(t+h)-X(t)/h)/t^{-\beta/(\beta-1)}$ tends to a finite limit with probability zero. 

Since for $\beta>1$ it is always the case that $(\beta+1)/(2(\beta-1))<\beta/(\beta-1)$, we see that there exist $\gamma$ for which the asymptotic behaviour of 
the approximation to the derivative does not behave like that of the underlying deterministic equation, while the asymptotic behaviour of $X$ itself does.
\end{example}

\section{Simulations}
In order to graphically illustrate our results for stochastic equation, in this section we plot graphs derived from a simulation of a single sample path from the SDE
\begin{align}\label{SDE}
dX(t) = -\sgn(X(t))|X(t)|^{\beta} + (1+t)^{-\gamma}dB(t), \, t \geq 0, 
\end{align}
where $B$ a standard Brownian motion. This is an example of the class of equation examined in Example~\ref{examp.derivcondnversesXcondn}.

In the first instance, we look at the case where $\beta = 3$ and $\gamma = 2.5$. With these parameters we expect to see both ODE--like asymptotics in the solution and the finite difference of approximation of the (non--existent) derivative of $X$ behaving like the those of the ODE as $t\to\infty$. This equation was discretised using the standard explicit Euler-Maruyama method. Convergent solutions of such a numerical scheme are known to possess some asymptotic properties in common with the underlying SDE, as established in \cite{appmackrod2008}, and therefore such simulations should capture faithfully the asymptotic behaviour of the SDE. However, a proof of that this is the case remains to date open: we hope to address this situation for both explicit and implicit Euler--type schemes in a later work. 

It should be remarked that the limit $F(X(t))/t\to 1$ as $t\to\infty$ is observed. Simulations seem to confirm that the limits $X(t)/F^{-1}(t)\to \pm 1$ both occur with positive probability, but that the limit $X(t)/F^{-1}(t)\to 0$ seems to happen with probability zero.
\begin{figure}[h!]
\caption{\small
We observe that the asymptotic regime of the solution quickly settles down to that of the corresponding ODE.}
\centering
\includegraphics[scale=0.38]{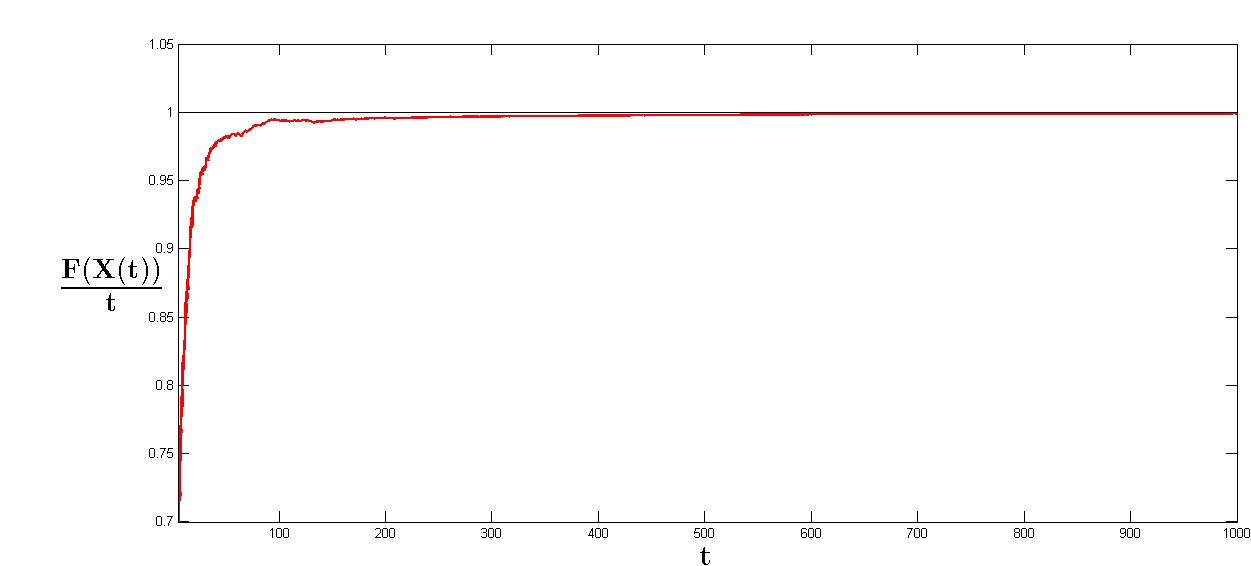}
\caption{\small Once time becomes large enough the approximation to the derivative is well behaved.}
\centering
\includegraphics[scale=0.38]{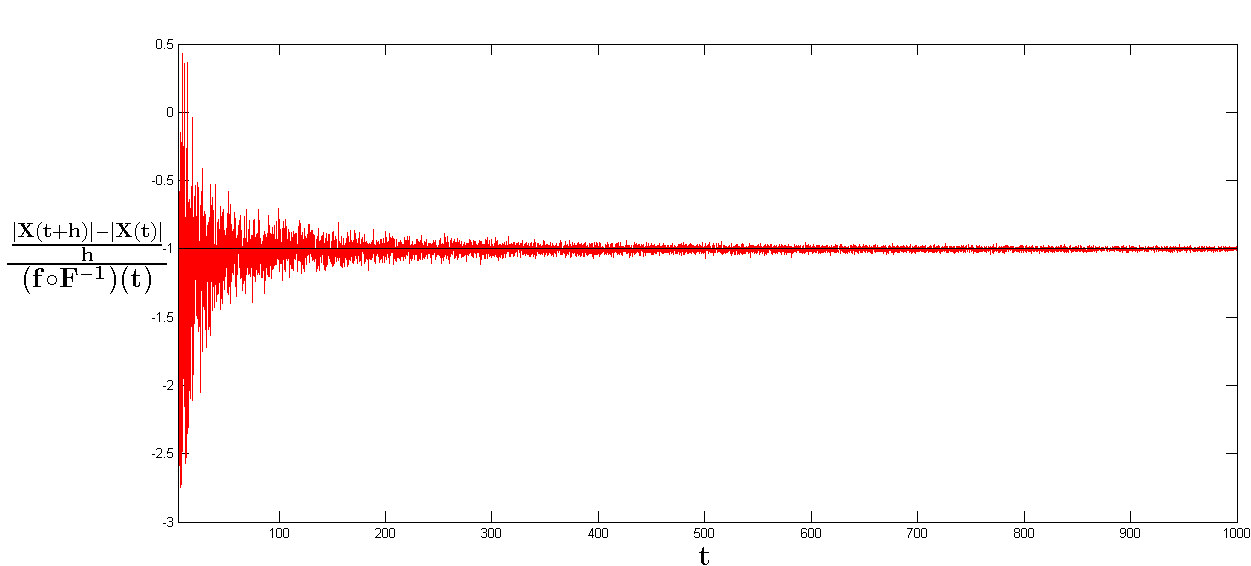}
\end{figure}
\newpage
Below we have two more graphs derived from a single path of (\ref{SDE}) but in this case the parameters are $\beta=3$ and $\gamma = 1.5$. Hence we expect to see the ODE asymptotics preserved but we do not expect to retain the nice asymptotic behaviour of the derivative of the underlying ODE being preserved. The plots confirm this hypothesis. 
\begin{figure}[h!]
\caption{\small
As before the solution settles down to asymptotic regime of the corresponding ODE.}
\centering
\includegraphics[scale=0.38]{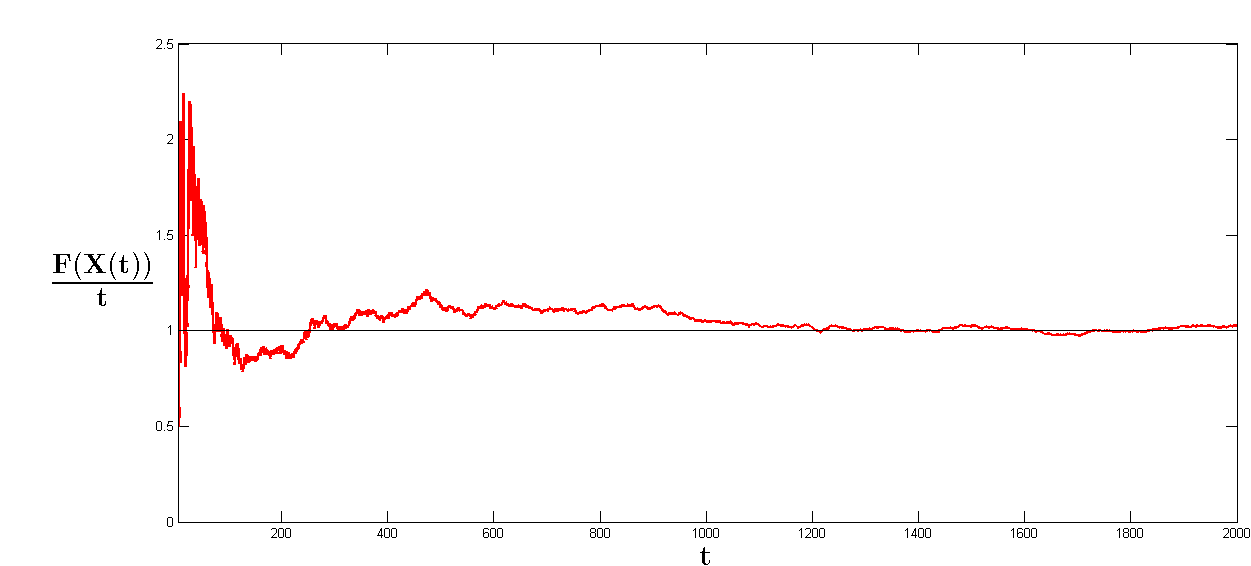}
\caption{\small It is clear that the increased volume of ``noise'' present has caused us to lose the limiting behaviour of the scaled difference in this instance.}
\centering
\includegraphics[scale=0.38]{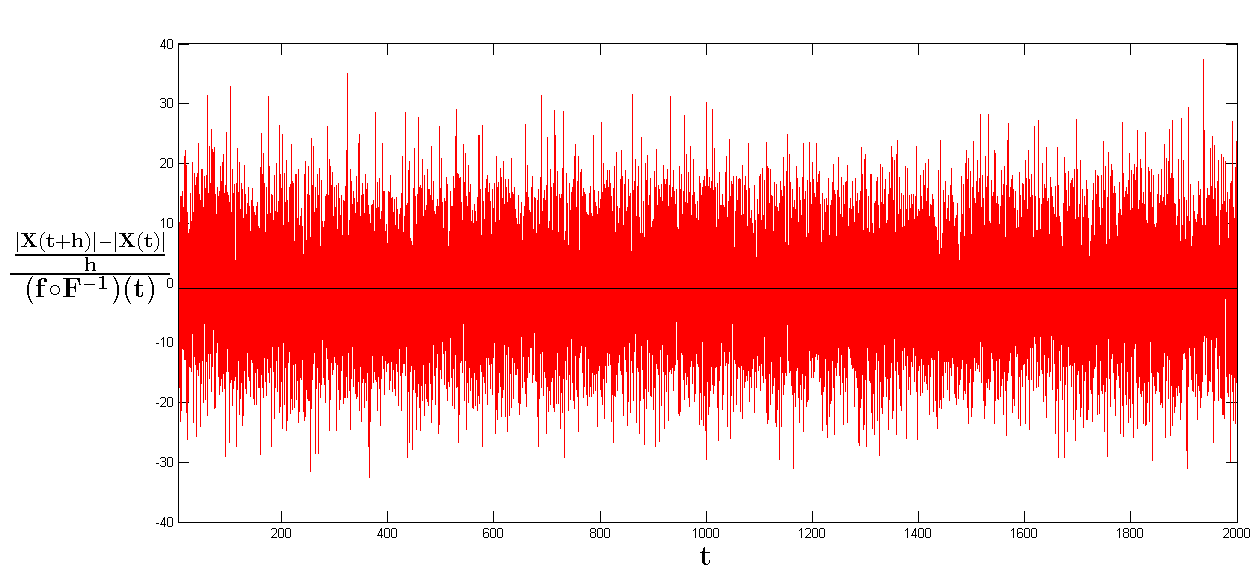}
\end{figure}
\newpage
All graphs presented thus far have been with initial condition $X(0)=1$. We now show two graphs derived from a path of (\ref{SDE}) with $\beta=3$ and $\gamma=2.5$, as before, but with initial condition $X(0)=0$. This helps us to demonstrate some novel behaviour of the scaled finite differences, in particular, the appearance of transient phases which considerably slow convergence to the expected limiting value.
\begin{figure}[h!]
\caption{\small We note here how the scaled differences go through a surprisingly long transient period in which they are very close to zero, almost up to time 1,000. However, upon careful inspection we can see that the graph is in fact beginning to lift away from zero.}
\centering
\includegraphics[scale=0.38]{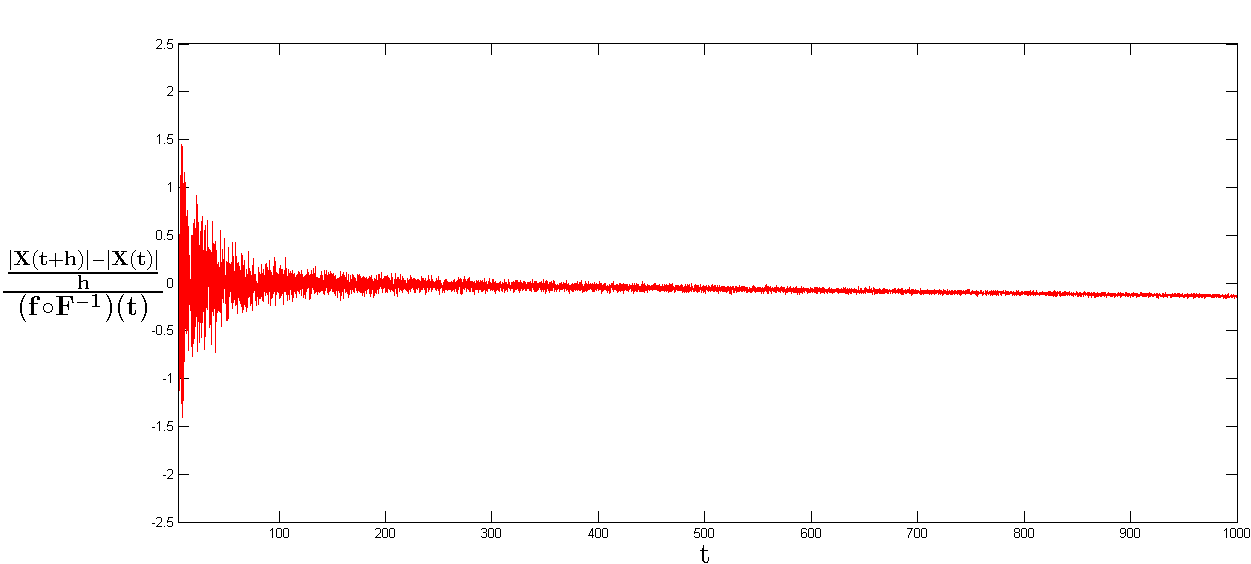}
\caption{\small In this case we show the full path out to time 100,000 and it is clear that the above graph was indeed a transient phase, as claimed, and that the expected limit does eventually prevail.}
\centering
\includegraphics[scale=0.38]{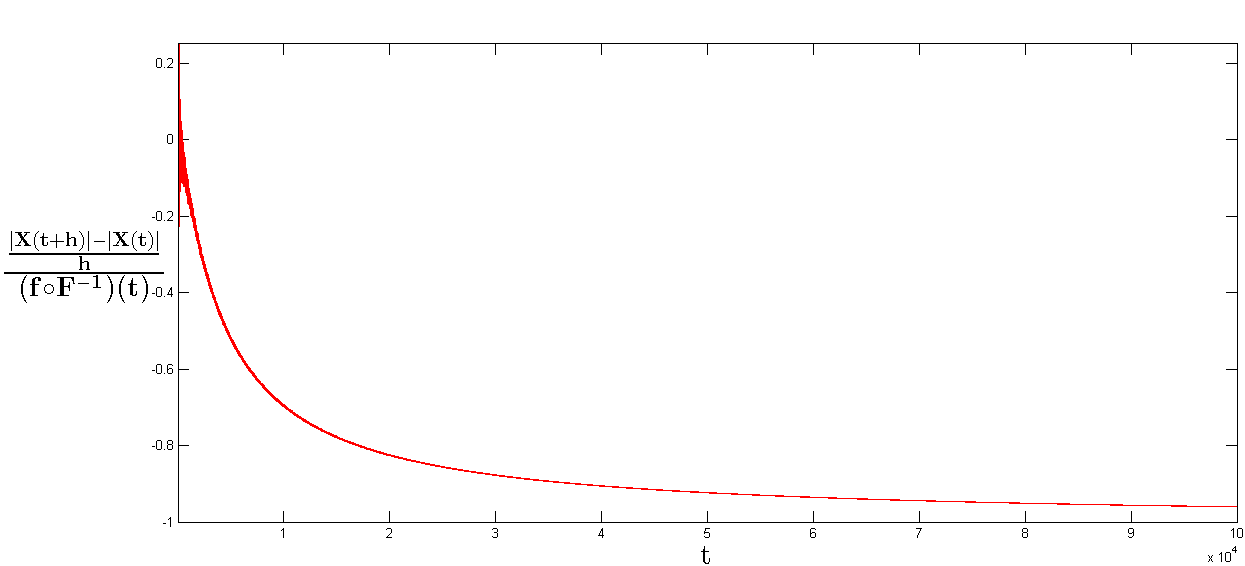}
\end{figure}

\newpage{}

\section{Proof of Theorem~\ref{Thm:Lim}} 
\subsection{Idea and outline of the proof}
Theorem~\ref{Thm:Lim} is the key underlying result of this paper, and its proof relies on careful asymptotic analysis, and a number of interlinked intermediate results. Accordingly, we take a moment to summarise the structure of the proof. in a number of steps. First, we establish that $f$ being asymptotically odd and regularly varying implies that $f$ is asymptotic to a regularly varying, increasing and $C^1$ function $\varphi$ that is odd: in other words, $f$ is asymptotic to a function with improved regularity properties. Then, we show that $t\mapsto |x(t)|$ can be written in terms of the solution of a differential inequality with depends solely on $\varphi$, modulo some small parameter which deals with the asymptotic behaviour of $f$, that $x(t)\to 0$ as $t\to\infty$, and that $\gamma(t)/F^{-1}(t)\to 0$ as $t\to\infty$.

The rest of the proof involves a successive ``ratcheting'' of the asymptotic results: the last two steps of the proof in particular rely on constructing functions 
that are guaranteed to majorise and minorise $x$ for sufficiently large $t$. The majorisation relies on a comparison principle based on the differential 
inequality derived for $t\mapsto |x(t)|$; the minorisation also relies on a comparison argument, but on this occasion the original ODE \eqref{eq.odepert} is employed to make the comparison argument work. In particular, we prove the result through the following steps:
\begin{itemize}
\item[STEP 1:] $\liminf_{t\to\infty} |x(t)|/F^{-1}(t)=0$ or $1$.
\item[STEP 2:] $\limsup_{t\to\infty} |x(t)|/F^{-1}(t)=0$ or $\limsup_{t\to\infty} |x(t)|/F^{-1}(t)\in [1,\infty)$.
\item[STEP 3:] If $\limsup_{t\to\infty} |x(t)|/F^{-1}(t)>0$, then $\limsup_{t\to\infty} |x(t)|/F^{-1}(t)=1$.
\item[STEP 4:] If $\limsup_{t\to\infty} |x(t)|/F^{-1}(t)=1$, then $\liminf_{t\to\infty} |x(t)|/F^{-1}(t)=1$.
\end{itemize}
Of course, it can be seen that STEPs 3 and 4 together imply that the limit of $t\mapsto x(t)/F^{-1}(t)$ must exist and be 0, -1 or 1, which is the desired result.

The sequence of steps mimics those used to determine the asymptotic behaviour in \cite{appmack2003} for stochastic differential equations and in \cite{appmackrod2008} for stochastic difference equations in the special case that $f(x)$ is asymptotic to $a|x|^\beta\sgn(x)$ as $x\to 0$ for $a>0$. 
The proofs of STEPS 1, 3 and 4 differ from those in both papers, although comparison arguments are employed. The proof of STEP 2 is essentially identical 
to that used in both papers. 

The rest of this section is devoted to the 
\subsection{Statement and proofs of technical results}
\begin{lemma} \label{asym_odd}
Suppose that $f$ obeys \eqref{asym} and \eqref{RVat0}. Then there exists a function $\varphi$ such that 
\begin{align} \label{properties}
\varphi \text{ is increasing, in $C^1(\mathbb{R})$, is odd and $\varphi \in RV_0(\beta)$},
\end{align} 
and
\begin{align} \label{asym2}
\lim_{x \to 0}\frac{f(x)}{\varphi(x)}=1.
\end{align}
Moreover, if $\beta>1$ and we define 
\begin{equation}  \label{def.Phi}
\Phi(x)=\int_x^1 \frac{1}{\varphi(u)}\,du, \quad x>0,
\end{equation}
we have that 
\begin{equation} \label{eq.FasyPhi}
\lim_{x\to 0^+} \frac{\Phi(x)}{F(x)}=1, \quad \lim_{t\to\infty} \frac{\Phi^{-1}(t)}{F^{-1}(t)}=1.
\end{equation}
\end{lemma}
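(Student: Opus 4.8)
The plan is to treat the two assertions of the lemma separately, obtaining $\varphi$ by a smoothing-and-reflection construction and then deducing the $\Phi$ statements from standard properties of regularly varying functions.

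\textbf{Construction of $\varphi$.} Applying the Smooth Variation Theorem for regularly varying functions (after the substitution $x\mapsto 1/x$, which carries $RV_0(\beta)$ onto $RV_\infty(-\beta)$; see \cite{BGT}), there exist $a\in(0,1)$ and a $C^1$ function $\varphi_0:(0,a)\to(0,\infty)$ with $\varphi_0\in RV_0(\beta)$, $f(x)/\varphi_0(x)\to 1$ and $x\varphi_0'(x)/\varphi_0(x)\to\beta$ as $x\to 0^+$. Since $\beta>0$, after shrinking $a$ we may assume $\varphi_0'>0$ on $(0,a)$; I would then glue $\varphi_0$ on $(0,a/2]$ to any positive increasing $C^1$ function on $[a/2,\infty)$, so that $\varphi_0$ extends to a positive, increasing function of class $C^1((0,\infty))$ which is unchanged near $0$. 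Finally I set $\varphi(0)=0$ and $\varphi(x)=-\varphi_0(-x)$ for $x<0$; positivity of $\varphi_0$ on $(0,\infty)$ together with oddness makes $\varphi$ increasing on all of $\mathbb{R}$, and $\varphi\in RV_0(\beta)$ since it agrees with $\varphi_0$ near $0$. The one nontrivial point is $C^1$-regularity at the origin, and this is precisely where the hypothesis $\lim_{x\to 0^+}f(x)/x=0$ from \eqref{RVat0} is used: it forces $\varphi_0(x)/x\to 0$, so $\varphi'(0):=\lim_{x\to 0}\varphi(x)/x=0$, while $\varphi_0'(x)\sim\beta\varphi_0(x)/x\to 0$ and $\varphi'$ is even, so $\varphi'$ is continuous at $0$. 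This gives \eqref{properties}. For the two-sided version of \eqref{asym2}: on $(0,\infty)$ it holds by construction, while for $x<0$ I would chain equivalences using \eqref{asym} (both-sided) and the positive-axis construction applied with argument $-x>0$, namely $f(x)\sim\phi(x)=-\phi(-x)\sim -f(-x)\sim-\varphi_0(-x)=\varphi(x)$.

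\textbf{The $\Phi$ statements ($\beta>1$).} Now $1/\varphi(u)\sim 1/f(u)$ as $u\to 0^+$, and since $\beta>1$ both integrands are non-integrable at $0$ (Karamata), so $\int_x^1 1/f$ and $\int_x^1 1/\varphi$ both diverge to $+\infty$ as $x\to 0^+$; preservation of asymptotic equivalence of integrands under integration of a divergent integral then yields $\Phi(x)\sim F(x)$, the first limit in \eqref{eq.FasyPhi}. For the second, because $F^{-1}(t)\to 0$ I would write
\[
\Phi(F^{-1}(t))=\frac{\Phi(F^{-1}(t))}{F(F^{-1}(t))}\cdot t =: a(t)\,t,\qquad a(t)\to 1\ \text{as}\ t\to\infty .
\]
Since $\varphi\in RV_0(\beta)$ with $\beta>1$, the same reasoning that makes $F^{-1}$ regularly varying (recalled in the introduction) gives $\Phi\in RV_0(1-\beta)$ and $\Phi^{-1}\in RV_\infty(-1/(\beta-1))$. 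Then $F^{-1}(t)=\Phi^{-1}(a(t)t)$ with $a(t)\to 1$, so the uniform convergence theorem for regularly varying functions yields $F^{-1}(t)/\Phi^{-1}(t)=\Phi^{-1}(a(t)t)/\Phi^{-1}(t)\to 1$, which is the second limit in \eqref{eq.FasyPhi}.

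\textbf{Main obstacle.} I expect the construction of $\varphi$ near the origin to be the only genuinely delicate step: one must simultaneously secure monotonicity, the correct regular variation index, asymptotic equivalence to $f$, and — most subtly — $C^1$-smoothness across $0$ after the odd reflection, which is exactly why \eqref{RVat0} insists on $f(x)/x\to 0$. The remaining steps (the two-sided asymptotic and the two limits involving $\Phi$) are routine manipulations with the properties of regularly varying functions recorded in \cite{BGT}.
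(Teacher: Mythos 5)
Your proposal follows the same route as the paper: invoke the Smooth Variation Theorem to replace $f$ near $0^+$ by an increasing $C^1$ regularly varying function, extend and reflect oddly to get $\varphi$, check $\varphi'(0)=0$ via $f(x)/x\to 0$, handle the negative axis by chaining through the asymptotically odd $\phi$, and deduce the $\Phi$ limits from $\Phi\sim F$ plus uniform convergence of regularly varying functions. Your version is, if anything, slightly more careful than the paper's at two points — you explicitly verify continuity of $\varphi'$ at $0$ (not just one-sided differentiability) and you spell out the $\Phi^{-1}/F^{-1}\to 1$ argument that the paper only asserts — but the underlying argument is the same.
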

\begin{proof}
Recall that $f \in \text{RV}_0(\beta)$ implies that there exists $\phi_+ \in C^1$ such that 
\begin{align*}
\lim_{x \to 0^+}\frac{f(x)}{\phi_+(x)}=1, \,\, \lim_{x \to 0^+}\frac{x\,\phi_+'(x)}{\phi_+(x)} = \beta >0.
\end{align*}
Thus there exists $\delta > 0$ such that $\phi_+$ is increasing and $C^1$ on $(0,\delta)$. We can extend $\phi_+$ to all of $(0,\infty)$ in such a manner that $\phi_+$ is increasing and $C^1$ on all of $(0,\infty)$. 
Define 
\[\varphi(x) =
\begin{cases}
\phi_+(x), \,\, x>0, \\
0, \,\, x=0, \\
-\phi_+(x), \,\, x<0.
\end{cases}
\]
Then $\varphi$ is increasing and odd on $\mathbb{R}$. Moreover, we have
\begin{align*}
\varphi'(0^+) = \lim_{x \to 0^+}\frac{\varphi(x)-\varphi(0)}{x} = \lim_{x \to 0^+}\frac{\phi_+(x)}{x} = 0,
\end{align*}
since $f \sim \phi_+$. Similarly,
\begin{align*}
\varphi'(0^-) = \lim_{x \to 0^-}\frac{\varphi(x)-\varphi(0)}{x} =  \lim_{x \to 0^-}\frac{-\phi_+(-x)}{x} = \lim_{x \to 0^+}\frac{\phi_+(-x)}{-x}=0.
\end{align*}
Hence, as $\varphi$ is in $C^1(0,\infty)$ and $C^1(-\infty,0)$ we conclude that $\varphi \in C^1(\mathbb{R})$. Finally, 
\begin{align*}
\lim_{x \to 0^+}\frac{f(x)}{\varphi(x)} = \lim_{x \to 0^+}\frac{f(x)}{\phi_+(x)} = 1.
\end{align*}
Similarly, we have 
\begin{align*}
\lim_{x \to 0^-}\frac{f(x)}{\varphi(x)} &= \lim_{x \to 0^-}\frac{f(x)}{\varphi(x)}\frac{\varphi(x)}{-\phi_+(-x)} = \lim_{x \to 0^-}\frac{f(x)}{\varphi(x)}\frac{-\varphi(-x)}{-\phi_+(-x)} \\ &= \lim_{x \to 0}\frac{f(x)}{\varphi(x)}\frac{\varphi(-x)}{f(-x)}\frac{f(-x)}{\phi_+(-x)} = 1, 
\end{align*}
as required. For $\beta>1$, the asymptotic behaviour of $\Phi$ defined in \eqref{eq.FasyPhi} is a consequence of the regular variation of $f$ and the fact that 
$f$ is asymptotic to $\varphi$.
\end{proof}
Although $x$ is continuously differentiable, $t\mapsto |x(t)|$ will not be differentiable if $x$ assumes zero values. Since this cannot be ruled out, we derive 
a differential inequality (in terms of Dini derivatives) for $t\mapsto |x(t)|$. Accordingly, we use in the next proof the notation 
\[
D_+ u(t)=\limsup_{h\to 0,h>0} \frac{u(t+h)-u(t)}{h}
\]
for the appropriate Dini derivative. 
\begin{lemma} \label{lemma.Diffineq}
Suppose that $f$ satisfies (\ref{asym}) and (\ref{RVat0}) with $\beta>1$. 
Suppose that $\gamma$ is continuous and $x$ is the unique continuous solution of  
\begin{equation} \label{eq.xfxgamma}
x'(t) = -f( x(t) + \gamma(t) ), \quad t \geq 0,
\end{equation}
such that 
\begin{equation} \label{eq.xto0}
\lim_{t \to \infty}x(t)=0.
\end{equation}
Suppose also that $\gamma$ and $F$ obey \eqref{eq.gammadivFinv}. If $\varphi$ is the function in \eqref{properties} which satisfies (\ref{asym2}), and $\Phi$ is defined by \eqref{def.Phi}, then 
for every $\epsilon\in (0,1)$ there exists $T_1(\epsilon)>0$ and $T(\epsilon)>0$ such that 
\begin{align} \label{gamma}
|\gamma(t)| < \epsilon \Phi^{-1}(t), \quad t \geq T_1(\epsilon), 
\end{align}
and 
\begin{align} \label{DiffInequality}
D_+|x(t)| \leq -\varphi_\epsilon( |x(t)| - \epsilon \Phi^{-1}(t) ), \quad t \geq T(\epsilon),
\end{align} 
where 
\begin{align*}
\varphi_\epsilon(x) := \min\left( (1+\epsilon)\varphi(x) ,\, (1-\epsilon)\varphi(x) \right).
\end{align*}
\end{lemma}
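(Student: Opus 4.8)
\emph{Proof strategy.} The two assertions \eqref{gamma} and \eqref{DiffInequality} are proved in turn, and everything rests on Lemma~\ref{asym_odd} together with an elementary Dini-derivative estimate, so the work is careful sign book-keeping rather than analysis. For \eqref{gamma}: by hypothesis \eqref{eq.gammadivFinv} we have $\gamma(t)/F^{-1}(t)\to0$, and by the second limit in \eqref{eq.FasyPhi} we have $\Phi^{-1}(t)/F^{-1}(t)\to1$; hence $\gamma(t)/\Phi^{-1}(t)\to0$ as $t\to\infty$, and since $\Phi^{-1}(t)>0$ for $t$ large this yields \eqref{gamma} once $T_1(\epsilon)$ is chosen so that $\abs{\gamma(t)/\Phi^{-1}(t)}<\epsilon$ for $t\ge T_1(\epsilon)$.

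For \eqref{DiffInequality}, fix $\epsilon\in(0,1)$ and first record two preparatory facts. The function $\varphi_\epsilon$ is continuous and increasing, being a pointwise minimum of the two increasing continuous functions $(1-\epsilon)\varphi$ and $(1+\epsilon)\varphi$; since $\varphi$ is odd and increasing, $\varphi_\epsilon(u)=(1-\epsilon)\varphi(u)$ for $u\ge0$ and $\varphi_\epsilon(u)=(1+\epsilon)\varphi(u)$ for $u<0$. Also, by \eqref{asym2} there is $\eta>0$ such that $f(w)/\varphi(w)\in(1-\epsilon,1+\epsilon)$ whenever $0<\abs{w}\le\eta$. Because $x(t)\to0$, $\gamma(t)\to0$ and $\Phi^{-1}(t)\to0$, I would then fix $T(\epsilon)\ge T_1(\epsilon)$ so that $\abs{x(t)}+\abs{\gamma(t)}+\epsilon\Phi^{-1}(t)\le\eta$ for all $t\ge T(\epsilon)$, which keeps every argument of $f$ and of $\varphi$ appearing below inside $[-\eta,\eta]$.

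Now fix $t\ge T(\epsilon)$ and argue by cases. If $x(t)=0$, then from $\abs{x(t+h)}-\abs{x(t)}\le\abs{x(t+h)-x(t)}$ we get $D_+\abs{x(t)}\le\abs{x'(t)}=\abs{f(\gamma(t))}$, and (taking $\gamma(t)\ne0$, the case $\gamma(t)=0$ being trivial) $\abs{f(\gamma(t))}\le(1+\epsilon)\abs{\varphi(\gamma(t))}=(1+\epsilon)\varphi(\abs{\gamma(t)})\le(1+\epsilon)\varphi(\epsilon\Phi^{-1}(t))=-\varphi_\epsilon(-\epsilon\Phi^{-1}(t))=-\varphi_\epsilon(\abs{x(t)}-\epsilon\Phi^{-1}(t))$, using \eqref{gamma}, the oddness and monotonicity of $\varphi$, and the formula for $\varphi_\epsilon$ on $(-\infty,0)$. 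If $x(t)\ne0$, write $s=\sgn(x(t))$, $z=x(t)+\gamma(t)$ and $v=s\,z=\abs{x(t)}+s\,\gamma(t)$. On a neighbourhood of $t$ the function $\abs{x}$ equals $s\,x$, so $D_+\abs{x(t)}=s\,x'(t)=-s\,f(z)$. When $z\ne0$, put $r=f(z)/\varphi(z)\in(1-\epsilon,1+\epsilon)$; oddness of $\varphi$ gives $s\,f(z)=r\,s\,\varphi(z)=r\,\varphi(v)$, and since $r\,\varphi(v)$ lies between $(1-\epsilon)\varphi(v)$ and $(1+\epsilon)\varphi(v)$ we deduce $s\,f(z)\ge\min\{(1-\epsilon)\varphi(v),(1+\epsilon)\varphi(v)\}=\varphi_\epsilon(v)$; when $z=0$ then $v=0$ and $s\,f(z)=0=\varphi_\epsilon(v)$ directly. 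Finally \eqref{gamma} gives $v\ge\abs{x(t)}-\abs{\gamma(t)}>\abs{x(t)}-\epsilon\Phi^{-1}(t)$, so monotonicity of $\varphi_\epsilon$ yields $\varphi_\epsilon(v)\ge\varphi_\epsilon(\abs{x(t)}-\epsilon\Phi^{-1}(t))$ and therefore $D_+\abs{x(t)}=-s\,f(z)\le-\varphi_\epsilon(\abs{x(t)}-\epsilon\Phi^{-1}(t))$, which is \eqref{DiffInequality}.

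I do not expect a genuine obstacle here. The one delicate point is the chain $s\,f(z)=r\,\varphi(v)\ge\varphi_\epsilon(v)$: the substitution $v=\sgn(x(t))\,(x(t)+\gamma(t))$ merges the positive- and negative-$x$ cases into a single computation, and the \emph{min} in the definition of $\varphi_\epsilon$ is precisely what absorbs the two-sided asymptotic $f\sim\varphi$ regardless of the sign of $v$. The remaining ingredients — the limit $\gamma(t)/\Phi^{-1}(t)\to0$ from Lemma~\ref{asym_odd}, the continuity and monotonicity of $\varphi_\epsilon$, and the Dini-derivative estimate at the zeros of $x$ — are entirely routine.
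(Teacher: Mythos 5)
Your proof is correct and reaches the same inequality by a cleaner path through the case analysis. Both proofs rest on the identical three ingredients — the three Dini-derivative formulas for $t\mapsto|x(t)|$ according to the sign of $x(t)$, the two-sided bound $f/\varphi\in(1-\epsilon,1+\epsilon)$ near zero, and the estimate $|\gamma(t)|<\epsilon\Phi^{-1}(t)$ — but you diverge in how the cases are organized. The paper treats $x(t)>0$, $x(t)<0$, $x(t)=0$ as three separate blocks and, inside the first two, splits again on the sign of $x(t)+\gamma(t)$ (respectively $-x(t)+\gamma(t)$), in each sub-case selecting whichever of $(1\pm\epsilon)\varphi$ bounds $f$ from the appropriate side and then absorbing the answer into $\varphi_\epsilon$ via the elementary fact ``$a<b$ and $a<c$ implies $a<\max(b,c)$''. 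You instead set $s=\sgn(x(t))$, $z=x(t)+\gamma(t)$, $v=sz$, observe $D_+|x(t)|=-sf(z)=-r\,\varphi(v)$ with $r=f(z)/\varphi(z)\in(1-\epsilon,1+\epsilon)$, and note once and for all that a number lying strictly between $(1-\epsilon)\varphi(v)$ and $(1+\epsilon)\varphi(v)$ is $\ge\min\{(1-\epsilon)\varphi(v),(1+\epsilon)\varphi(v)\}=\varphi_\epsilon(v)$ regardless of the sign of $v$. That single observation collapses the paper's four sub-cases into one line, after which monotonicity of $\varphi_\epsilon$ and $v>|x(t)|-\epsilon\Phi^{-1}(t)$ finish the argument exactly as in the paper. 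The one micro-gap worth smoothing: when $z\ne0$ but $v\in\{0\}$ cannot occur (since $v=sz$ and $s\ne0$), and when $z=0$ you treat it directly, so the ratio $r$ is always well defined where you use it — fine, but it is worth stating that $\varphi(z)\ne0$ for $z\ne 0$, which follows from $\varphi$ being increasing with $\varphi(0)=0$. Your version is shorter and arguably easier to audit because all the sign book-keeping is funneled into the single identity $s\,\varphi(z)=\varphi(sz)$; the paper's version is more verbose but makes each sub-case's inequality explicit, which some readers may prefer.
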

\begin{proof} 
Fix $t>0$ and suppose that $x(t)>0$. 
Then as $x \in C^1$, there exists a $h_1$ small enough so that $x(t+h)>0$ for all $0<h<h_1$. Thus for $h<h_1$  
\begin{align*}
\frac{|x(t+h)| - |x(t)|}{h} = \frac{x(t+h) - x(t)}{h} = \frac{1}{h} \int_t^{t+h}{ -f(x(s)+\gamma(s)) ds} .
\end{align*}
Thus we obtain $D_+|x(t)| = -f( x(t) + \gamma(t) ) = -f( |x(t)| + \gamma(t) )$. 
If $x(t)<0$, then there exists a $h_2>0$ such that $x(t+h)<0$ for all $0<h<h_2$. Similarly, we can write
\begin{align*}
\frac{|x(t+h)| - |x(t)|}{h} &= - \frac{x(t+h) - x(t)}{h} = - \frac{1}{h} \int_t^{t+h}{ -f(x(s)+\gamma(s)) ds} .
\end{align*}
Hence $D_+|x(t)| = f( x(t) + \gamma(t) ) = f( -|x(t)| + \gamma(t) )$. 
Finally, if $x(t)=0$, for $h>0$ we have
\begin{align*}
\frac{|x(t+h)| - |x(t)|}{h} = \left\vert{\frac{x(t+h)}{h}}\right\vert = \left\lvert{\frac{x(t+h)-x(t)}{h}} \right\rvert.
\end{align*}
Thus $D_+|x(t)| = |x'(t)| = |-f( x(t) + \gamma(t) )| = |f( x(t) + \gamma(t) )|$. 
Therefore we have 
\begin{align}
\label{positive}D_+|x(t)| =& -f(|x(t)|+ \gamma(t)), \, x(t)>0. \\
\label{negative}D_+|x(t)| =& f(-|x(t)| + \gamma(t)), \, x(t) <0 .\\
\label{zero}D_+|x(t)| =& | f(x(t) + \gamma(t)) |, \, x(t)=0.
\end{align}
Next, by Lemma~\ref{asym_odd}, there exists a function $\varphi$ satisfying (\ref{properties}) and (\ref{asym2}).
Since $\gamma(t)/F^{-1}(t)\to 0$ as $t\to\infty$, we have that $\gamma(t)/\Phi^{-1}(t)\to 0$ as $t\to\infty$. Hence for every $\epsilon>0$, there exists 
$T_1(\epsilon)>0$ such that $|\gamma(t)|<\epsilon \Phi^{-1}(t)$ for all $t\geq T_1(\epsilon)$, as claimed. 

By \eqref{asym2}, for all $\epsilon > 0$ there is $x_1(\epsilon)>0$ such that 
\[
1-\epsilon<\frac{f(x)}{\varphi(x)}<1+\epsilon, \quad |x|<x_1(\epsilon), x\neq 0.
\]
Therefore, as $f(0)=\varphi(0)=0$ and $x\varphi(x)>0$ for all $x\neq 0$, this implies that 
\begin{align*}
-(1+\epsilon)\varphi(x) &\leq  -f(x) \leq  -(1-\epsilon)\varphi(x), \quad 0\leq x<x_1(\epsilon), \\
-(1-\epsilon)  \varphi(x) &< -f(x) < -(1+\epsilon) \varphi(x), \quad -x_1(\epsilon)<x<0.
\end{align*}
Since $x(t)\to 0$ as $t\to\infty$ and $\gamma(t)\to 0$ as $t\to\infty$, there is a $T^*(\epsilon)$ large enough such that for all $\epsilon>0$ we have $|x(t)| + |\gamma(t)|< x_1(\epsilon)$. Set $T(\epsilon) = 1 + \max(T^*(\epsilon),\, T_1(\epsilon))$. We now deduce that the differential inequality \eqref{DiffInequality}
holds for $t\geq T(\epsilon)$ by considering separately the cases when $x(t)$ is positive, negative and zero. 

(\Rn{1}) If $x(t)>0$, we have from (\ref{positive}) that $D_+|x(t)| = -f(|x(t)|+\gamma(t))$. Therefore for $t\geq T(\epsilon)$, the argument of $-f$ has modulus less than $x_1(\epsilon)$. Hence, if $|x(t)|+\gamma(t)\geq 0$, we have $-f(|x(t)|+\gamma(t))\leq -(1-\epsilon)\varphi(|x(t)|+\gamma(t))\leq 0$. Now 
$-\epsilon \Phi^{-1}(t)<\gamma(t)$, so 
$|x(t)| -\epsilon  \Phi^{-1}(t) < |x(t)|+\gamma(t)$. 
Since $\varphi$ is increasing, we have 
\[
-(1-\epsilon)\varphi(|x(t)| -\epsilon  \Phi^{-1}(t)) > -(1-\epsilon)\varphi(|x(t)|+\gamma(t)). 
\] 
Hence 
\[
D_+|x(t)|=-f(|x(t)|+\gamma(t))\leq -(1-\epsilon)\varphi(|x(t)|+\gamma(t))< -(1-\epsilon)\varphi(|x(t)| -\epsilon  \Phi^{-1}(t)).
\]
Suppose on the other hand that $|x(t)|+\gamma(t)<0$. Since $t>T(\epsilon)$ we have that  $-x_1(\epsilon)<|x(t)|+\gamma(t)<0$. 
Then $-f(|x(t)|+\gamma(t)) < -(1+\epsilon) \varphi(|x(t)|+\gamma(t))$, and it is moreover the case that $0>\gamma(t)>-\epsilon \Phi^{-1}(t)$. Hence 
$|x(t)| -\epsilon  \Phi^{-1}(t) < |x(t)|+\gamma(t)$. 
Since $\varphi$ is increasing, we have 
\[
-(1+\epsilon)\varphi(|x(t)| -\epsilon  \Phi^{-1}(t)) > -(1+\epsilon)\varphi(|x(t)|+\gamma(t)). 
\] 
Hence
\[
D_+|x(t)|=-f(|x(t)|+\gamma(t))< -(1+\epsilon)\varphi(|x(t)|+\gamma(t))< -(1+\epsilon)\varphi(|x(t)| -\epsilon  \Phi^{-1}(t)).
\]
Therefore, when $x(t)>0$, by using the fact that  $a<b$ and $a<c$ implies $a<\max(b,c)$, we have
\begin{align*}
D_+|x(t)| &< \max(-(1-\epsilon)\varphi(|x(t)| -\epsilon  \Phi^{-1}(t)), -(1+\epsilon)\varphi(|x(t)| -\epsilon  \Phi^{-1}(t)) )\\
&=-\min((1-\epsilon)\varphi(|x(t)| -\epsilon  \Phi^{-1}(t)), (1+\epsilon)\varphi(|x(t)| -\epsilon  \Phi^{-1}(t)))\\
&=-\varphi_\epsilon(|x(t)| -\epsilon  \Phi^{-1}(t)),
\end{align*}
where we have used the definition of $\varphi_\epsilon$ at the last step. Hence 
\begin{equation} \label{eq:D1}
D_+|x(t)| <  =-\varphi_\epsilon(|x(t)| -\epsilon  \Phi^{-1}(t)), \quad t\geq T(\epsilon), \quad x(t)>0.
\end{equation}

(\Rn{2}) $x(t)<0$, so $|x(t)| = -x(t) > 0$. First we note that for $t \geq T(\epsilon)$ that 
$D_+|x(t)| = f(-|x(t)| + \gamma(t))$. 
Suppose first that $-|x(t)|+\gamma(t)\geq 0$. Then, we have that $\gamma(t)\geq 0$. Hence $\gamma(t)>-\epsilon \Phi^{-1}(t)$. Therefore, as 
$x_1(\epsilon)>-|x(t)|+\gamma(t)\geq 0$, $-f(-|x(t)|+\gamma(t))\geq -(1+\epsilon)\varphi(-|x(t)|+\gamma(t))$. Hence as $\varphi$ is odd, we get
\[
D_+|x(t)| = f(-|x(t)| + \gamma(t))\leq (1+\epsilon)\varphi(-|x(t)|+\gamma(t))= -(1+\epsilon)\varphi(|x(t)|-\gamma(t)).
\]
Next as $\varphi$ is increasing, $-(1+\epsilon)\varphi(|x(t)| -\epsilon \Phi^{-1}(t)) > -(1+\epsilon)\varphi(|x(t)| -\gamma(t))$, 
so
\[
D_+|x(t)| \leq -(1+\epsilon)\varphi(|x(t)|-\gamma(t)) < -(1+\epsilon)\varphi(|x(t)| -\epsilon \Phi^{-1}(t)).
\]
Suppose next that $-|x(t)|+\gamma(t)< 0$. Then $-x_1(\epsilon)<-|x(t)|+\gamma(t)< 0$, and we have that 
$-f(-|x(t)|+\gamma(t))> -(1-\epsilon)\varphi(-|x(t)|+\gamma(t))$. Hence as $\varphi$ is odd we get
\begin{align*}
D_+|x(t)| = f(-|x(t)| + \gamma(t)) < (1-\epsilon)\varphi(-|x(t)| + \gamma(t)) = -(1-\epsilon)\varphi(|x(t)| - \gamma(t)).
\end{align*}
Now, as $\varphi$ is increasing, we have $-(1-\epsilon)\varphi(|x(t)|-\gamma(t))<-(1-\epsilon)\varphi(|x(t)|-\epsilon \Phi^{-1}(t))$, so
\[
D_+|x(t)| < -(1-\epsilon)\varphi(|x(t)|-\epsilon \Phi^{-1}(t)).
\]
Therefore, regardless of the sign of $-|x(t)|+\gamma(t)$, we have that 
\[
D_+|x(t)| < \max(-(1-\epsilon)\varphi(|x(t)|-\epsilon \Phi^{-1}(t)), -(1+\epsilon)\varphi(|x(t)| -\epsilon \Phi^{-1}(t)))
\]
and so by the definition of $\varphi_\epsilon$ we get
\begin{align} 
\label{eq:D2} D_+|x(t)| < -\varphi_\epsilon(|x(t)| - \epsilon \Phi^{-1}(t)), \quad, t \geq T(\epsilon), \quad x(t)<0.
\end{align}

(\Rn{3}) When $x(t)=0$ we have  
\begin{align*}
D_+|x(t)| = |f(\gamma(t))| \leq (1+\epsilon)|\varphi(\gamma(t))|, \quad t \geq T(\epsilon).
\end{align*}
Therefore, as $\varphi$ is odd and increasing we have
\begin{align*}
|\varphi(\gamma(t))| &= \varphi(|\gamma(t)|) \leq \varphi(\epsilon \Phi^{-1}(t)) \\
&= -\varphi(-\epsilon \Phi^{-1}(t)) = -\varphi(|x(t)|-\epsilon \Phi^{-1}(t)).
\end{align*}
Hence for $t \geq T(\epsilon)$ we obtain
\begin{align*}
 D_+|x(t)| &\leq -(1+\epsilon)\varphi(|x(t)|-\epsilon \Phi^{-1}(t))\\ 
 &\leq \max( -(1+\epsilon)\varphi(|x(t)|-\epsilon \Phi^{-1}(t))  , -(1-\epsilon)\varphi(|x(t)|-\epsilon \Phi^{-1}(t)) )\\
 &=-\varphi_\epsilon(|x(t)|-\epsilon \Phi^{-1}(t)))
\end{align*}
from the definition of $\varphi_\epsilon$. Hence
\begin{align}
\label{eq:D3} D_+|x(t)| &\leq  -\varphi_\epsilon(|x(t)|-\epsilon \Phi^{-1}(t)), \quad t\geq T(\epsilon),\quad x(t) =0.
\end{align}
Combining (\ref{eq:D1}), (\ref{eq:D2}) and (\ref{eq:D3}) we have that, for all $t \geq T(\epsilon)$,
\begin{align} \label{eq:D4}
D_+|x(t)| \leq -\varphi_\epsilon( |x(t)| - \epsilon \Phi^{-1}(t) ), \quad t\geq T(\epsilon),
\end{align}
as required. 
\end{proof}
\begin{lemma} \label{lemma.xliminf}
Suppose that $f$ satisfies (\ref{asym}) and (\ref{RVat0}) with $\beta>1$. 
Suppose that $\gamma$ is continuous and $x$ is the unique continuous solution of \eqref{eq.xfxgamma}
which satisfies \eqref{eq.xto0}. Suppose also that $\gamma$ and $F$ obey \eqref{eq.gammadivFinv}. 
Then 
\begin{align*}
\liminf_{t \to \infty}\frac{|x(t)|}{F^{-1}(t)}=0 \text{ or }1. 
\end{align*}
\end{lemma}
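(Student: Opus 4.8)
The plan is to transfer the statement from $F^{-1}$ to the regular, odd comparison function $\Phi^{-1}$ supplied by Lemma~\ref{asym_odd}, and then to exclude every value of the $\liminf$ other than $0$ and $1$ by two comparison arguments: one for the range $(1,\infty]$, based on the differential inequality \eqref{DiffInequality}, and one for the range $(0,1)$, based on the equation \eqref{eq.xfxgamma} itself. Write $u(t)=|x(t)|$ and $L:=\liminf_{t\to\infty}u(t)/\Phi^{-1}(t)$. Since $\Phi^{-1}(t)/F^{-1}(t)\to 1$ by \eqref{eq.FasyPhi}, it suffices to show $L\in\{0,1\}$; if $L=0$ there is nothing to do, so assume $L>0$. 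Then $u(t)>0$ for all large $t$, hence $x$ has a fixed sign on some $[T_0,\infty)$ and $u$ is $C^1$ there. I record once the mechanism used in both halves: on an interval where $u$ is $C^1$ and positive, from the definition \eqref{def.Phi} of $\Phi$ one has $\frac{d}{dt}\Phi(u(t))=-u'(t)/\varphi(u(t))$, so if $u'(t)\le -K\varphi(u(t))$ on $[T,\infty)$ then $\Phi(u(t))\ge Kt+O(1)$, and inverting (with $\Phi$ decreasing and $\Phi^{-1}\in RV_\infty(-1/(\beta-1))$) gives $\limsup_{t\to\infty}u(t)/\Phi^{-1}(t)\le K^{-1/(\beta-1)}$; symmetrically, $u'(t)\ge -M\varphi(u(t))$ on $[T,\infty)$ yields $\liminf_{t\to\infty}u(t)/\Phi^{-1}(t)\ge M^{-1/(\beta-1)}$. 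I also use $(\Phi^{-1})'(t)=-\varphi(\Phi^{-1}(t))$ and the properties \eqref{properties} of $\varphi$.

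To rule out $L>1$ (including $L=+\infty$), fix $c$ with $1<c<L$ (take $c=2$ if $L=\infty$), so $u(t)>c\,\Phi^{-1}(t)$ for all large $t$. Given $\epsilon\in(0,1)$, for $t\ge T(\epsilon)$ the inequality \eqref{DiffInequality} holds and $u(t)-\epsilon\Phi^{-1}(t)\ge\frac{c-\epsilon}{c}\,u(t)>0$; since $\varphi$ is increasing this forces $\varphi_\epsilon(u(t)-\epsilon\Phi^{-1}(t))=(1-\epsilon)\varphi(u(t)-\epsilon\Phi^{-1}(t))\ge(1-\epsilon)\varphi(\frac{c-\epsilon}{c}u(t))$, and then, using $\varphi\in RV_0(\beta)$ with $u(t)\to 0$, for any $\delta>0$ and all large $t$ this is at least $(1-\epsilon)(1-\delta)(\frac{c-\epsilon}{c})^{\beta}\varphi(u(t))$. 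Hence $u'(t)\le -K(\epsilon,\delta)\varphi(u(t))$ with $K(\epsilon,\delta)=(1-\epsilon)(1-\delta)(\frac{c-\epsilon}{c})^{\beta}$, so by the mechanism above $\limsup_{t\to\infty}u(t)/\Phi^{-1}(t)\le K(\epsilon,\delta)^{-1/(\beta-1)}$. As $c$ is fixed and $K(\epsilon,\delta)\to 1$ as $\epsilon,\delta\to 0^{+}$, letting $\epsilon,\delta\to 0^{+}$ gives $\limsup_{t\to\infty}u(t)/\Phi^{-1}(t)\le 1$, contradicting $L>1$. Thus $L\le 1$.

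To rule out $L\in(0,1)$ I would argue from \eqref{eq.xfxgamma} rather than from \eqref{DiffInequality}. Suppose first $x(t)>0$ on $[T_0,\infty)$ (the case $x(t)<0$ is the same after writing $\varphi(-u+\gamma)=-\varphi(u-\gamma)$, using that $\varphi$ is odd). For $\epsilon\in(0,L/2)$ we have $x(t)>(L-\epsilon)\Phi^{-1}(t)$ and, by \eqref{gamma}, $|\gamma(t)|<\epsilon\Phi^{-1}(t)$ for all large $t$, whence $0<x(t)+\gamma(t)<\frac{L}{L-\epsilon}\,x(t)$. Since $f(y)\le(1+\epsilon)\varphi(y)$ for small $y>0$ (by \eqref{asym2}) and $\varphi$ is increasing and in $RV_0(\beta)$, for any $\delta>0$ and all large $t$ we get $f(x(t)+\gamma(t))\le(1+\epsilon)\varphi(\frac{L}{L-\epsilon}x(t))\le(1+\epsilon)(1+\delta)(\frac{L}{L-\epsilon})^{\beta}\varphi(x(t))=:M(\epsilon,\delta)\varphi(x(t))$, so $x'(t)=-f(x(t)+\gamma(t))\ge -M(\epsilon,\delta)\varphi(x(t))$. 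By the mechanism above $\liminf_{t\to\infty}x(t)/\Phi^{-1}(t)\ge M(\epsilon,\delta)^{-1/(\beta-1)}$, and since $M(\epsilon,\delta)\to 1$ as $\epsilon,\delta\to 0^{+}$ we obtain $L\ge 1$, contradicting $L<1$. Combining the two exclusions, $L\in\{0,1\}$, as claimed.

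The step I expect to be the main obstacle is the one isolated in the first paragraph: converting a one-sided differential (in)equality for $u$ into a \emph{sharp} bound on the \emph{ratio} $u(t)/\Phi^{-1}(t)$. A naive additive Gronwall comparison fails here because $\int^{\infty}\varphi(\Phi^{-1}(s))\,ds<\infty$, so a constant of integration never washes out in the limit; the remedy is to pass to $\Phi(u(t))$, which grows \emph{linearly} in $t$, and only afterwards to invert through the regular variation of $\Phi^{-1}$, so that the comparison becomes multiplicative with a constant that can be driven to $1$. A related technical nuisance is the uniform (Potter-type) control of $\varphi\in RV_0(\beta)$ needed to produce the factors $(1\pm\delta)$ uniformly as $t\to\infty$, together with the bookkeeping of the several ``for all large $t$'' thresholds coming from \eqref{gamma}, \eqref{asym2}, \eqref{DiffInequality} and the regular variation limits.
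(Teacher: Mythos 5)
Your proof is correct, and I verified the individual steps: the positivity of $u(t)-\epsilon\Phi^{-1}(t)$ when $u(t)>c\,\Phi^{-1}(t)$, the selection of the $(1-\epsilon)\varphi$ branch in $\varphi_\epsilon$, the Potter estimates, and the passage through $\Phi$ to turn a one-sided bound $u'\le -K\varphi(u)$ (resp.\ $\ge -M\varphi(u)$) into $\limsup u/\Phi^{-1}\le K^{-1/(\beta-1)}$ (resp.\ $\liminf\ge M^{-1/(\beta-1)}$) all go through, and $L=\infty$ is covered by fixing $c=2$.

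However, the route is genuinely different from the paper's, and somewhat more roundabout. The paper exploits a single observation that collapses the whole argument: if $L:=\liminf|x(t)|/F^{-1}(t)>0$ (finite or infinite), then $|x(t)|$ dominates a positive multiple of $F^{-1}(t)$, so $\gamma(t)/x(t)\to 0$, hence $(x(t)+\gamma(t))/x(t)\to 1$; by the uniform convergence theorem for $\varphi\in\mathrm{RV}_0(\beta)$ and $f\sim\varphi$ this gives the \emph{two-sided} asymptotics $x'(t)/\varphi(x(t))\to -1$, and integrating $\tfrac{d}{dt}\Phi(|x(t)|)\to 1$ immediately yields $\Phi(|x(t)|)/t\to 1$, hence $|x(t)|/\Phi^{-1}(t)\to 1$. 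So the paper proves that $L>0$ forces the full limit to exist and equal $1$, which simultaneously rules out $L\in(0,1)$, $L\in(1,\infty)$ and $L=\infty$ in one stroke. You instead split into $L\in(0,1)$ and $L\in(1,\infty]$ and handle each by a one-sided Potter-style comparison, bringing in the Dini inequality \eqref{DiffInequality} of Lemma~\ref{lemma.Diffineq} for the upper case. That lemma is not needed here: since $L>0$ guarantees $x$ has eventually fixed sign, you can derive the matching one-sided bound $x'(t)\le -K\varphi(x(t))$ directly from \eqref{eq.xfxgamma} exactly as you do for the lower case, so the appeal to \eqref{DiffInequality} adds machinery without adding content. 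What your version buys is a self-contained comparison mechanism that never invokes the uniform convergence theorem explicitly (only Potter bounds), and your isolated remark about why the comparison must be made multiplicatively through $\Phi$ rather than additively through Gronwall is exactly the right diagnosis and is implicitly at work in the paper's integration step as well.
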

\begin{proof}
Either $\liminf_{t \to \infty} |x(t)|/F^{-1}(t)=0$ holds, or $\liminf_{t \to \infty} |x(t)|/F^{-1}(t) \in (0,\infty]$. Suppose that 
\begin{align*}
\liminf_{t \to \infty}\frac{|x(t)|}{F^{-1}(t)}= M \in (0, \infty), \quad M\neq 1. 
\end{align*}
Then there exists $T_0>0$ such that $|x(t)| > \frac{M}{2}F^{-1}(t)$ for all $t \geq T_0$.
Hence $\lim_{t \to \infty} (x(t)+\gamma(t))/x(t)=1$, since $\gamma(t)/F^{-1}(t) \to 0$ as $t \to \infty$. Thus as $\varphi$ is asymptotic to $f$, we have 
\begin{align*}
\lim_{t \to \infty}\frac{f(x(t)+\gamma(t))}{\varphi(x(t))}=1.
\end{align*}
Hence $\lim_{t \to \infty} x'(t)/\varphi(x(t))=-1$, and integrating yields $\lim_{t \to \infty} \Phi(|x(t)|)/t=1$, which implies that 
$\lim_{t \to \infty} |x(t)|/\Phi^{-1}(t)=1$. Hence $\lim_{t \to \infty} |x(t)|/F^{-1}(t)=1$. Since by supposition $\liminf_{t \to \infty} |x(t)|/F^{-1}(t)=M\neq1$, we have a contradiction. Therefore, if the liminf is finite and non--zero, it must be unity. We now rule out the 
possibility that 
\[
\liminf_{t \to \infty}\frac{|x(t)|}{F^{-1}(t)}= +\infty
\]
Suppose this holds. Then there is $T_0>0$ such that for all $t \geq T_0$, 
$|x(t)| > 2F^{-1}(t)$. Arguing as above, we prove once again that this leads to 
\begin{align*}
\lim_{t \to \infty}\frac{|x(t)|}{F^{-1}(t)}=1, 
\end{align*}
which contradicts our supposition. Therefore, we must have either liminf zero or unity, as all other possibilities have been eliminated.
\end{proof}
\begin{lemma} \label{lemma.xlimsup}
Suppose that $f$ satisfies (\ref{asym}) and (\ref{RVat0}) with $\beta>1$. 
Suppose that $\gamma$ is continuous and $x$ is the unique continuous solution of \eqref{eq.xfxgamma}
which satisfies \eqref{eq.xto0}. Suppose also that $\gamma$ and $F$ obey \eqref{eq.gammadivFinv}. 
Then 
\begin{align*}
\limsup_{t \to \infty}\frac{|x(t)|}{F^{-1}(t)}=0 \mbox{ or } \limsup_{t \to \infty}\frac{|x(t)|}{F^{-1}(t)} \in[1, \infty].
\end{align*}
\end{lemma}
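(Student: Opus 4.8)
\emph{Proof proposal.} I would argue by contradiction, since the only case not covered by the conclusion is $\limsup_{t\to\infty}|x(t)|/F^{-1}(t)=L$ with $L\in(0,1)$. Because $\Phi^{-1}(t)/F^{-1}(t)\to1$ by Lemma~\ref{asym_odd}, I would reformulate everything in terms of $r(t):=|x(t)|/\Phi^{-1}(t)$, comparing against the self-similar profile $\Phi^{-1}$, which obeys $(\Phi^{-1})'(t)=-\varphi(\Phi^{-1}(t))$ (differentiate $\Phi(\Phi^{-1}(t))=t$ and use $\Phi'(x)=-1/\varphi(x)$). Note that whenever $r(t)>0$ one has $x(t)\neq0$, so $|x|$ is locally $C^1$ near $t$ with $D_+|x(t)|=-\sgn(x(t))f(x(t)+\gamma(t))$ (exactly as in the proof of Lemma~\ref{lemma.Diffineq}), and hence $r$ is differentiable there with
\[
r'(t)=\frac{\varphi(\Phi^{-1}(t))}{\Phi^{-1}(t)}\left(r(t)-\frac{\sgn(x(t))\,f(x(t)+\gamma(t))}{\varphi(\Phi^{-1}(t))}\right).
\]

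The first real step is the ``nullcline'' estimate. If $r(t)$ stays in a fixed compact subset of $(0,\infty)$, then $\gamma(t)=o(\Phi^{-1}(t))$ forces $x(t)+\gamma(t)=\sgn(x(t))(r(t)+o(1))\Phi^{-1}(t)$, and combining $f\sim\varphi$, the asymptotic oddness of $f$, and the uniform convergence theorem for $f\in RV_0(\beta)$ (\cite{BGT}) gives $\sgn(x(t))f(x(t)+\gamma(t))/\varphi(\Phi^{-1}(t))=r(t)^\beta+o(1)$, with the $o(1)$ uniform over that compact set. Hence, for $t$ large,
\[
r'(t)=\frac{\varphi(\Phi^{-1}(t))}{\Phi^{-1}(t)}\bigl(\rho(r(t))+o(1)\bigr),\qquad \rho(s):=s-s^\beta ,
\]
and the decisive feature is that $\rho(s)>0$ for all $s\in(0,1)$ while $\rho(1)=0$. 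I expect this uniform estimate — in particular checking it for both signs of $x$ via the asymptotic oddness — to be the only genuinely delicate point; by restricting attention to times where $r(t)$ is bounded away from $0$ I avoid any bookkeeping at the zeros of $x$ altogether.

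To finish, I would assume $\limsup r=L\in(0,1)$, fix $\eta>0$ with $L<1-\eta$ and any $c\in(0,L)$, and set $\rho_0:=\min_{[c/2,\,1-\eta]}\rho>0$. Then there is $T$ with $r(t)<1-\eta$ for $t\geq T$ and $r'(t)\geq\tfrac12\rho_0\,\varphi(\Phi^{-1}(t))/\Phi^{-1}(t)>0$ whenever $t\geq T$ and $r(t)\geq c/2$. Since $\limsup r=L>c$, pick $t_0\geq T$ with $r(t_0)>c$; then $r(t)\geq c/2$ for all $t\geq t_0$, for otherwise the first time $\sigma>t_0$ with $r(\sigma)=c/2$ would have $r\in[c/2,1-\eta]$, hence $D_+r>0$, hence $r$ nondecreasing, on $[t_0,\sigma]$, contradicting $r(\sigma)=c/2<c<r(t_0)$. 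Consequently $r\in[c/2,1-\eta]$ on $[t_0,\infty)$, and integrating the lower bound on $r'$ gives
\[
r(t)\geq r(t_0)+\frac{\rho_0}{2}\int_{t_0}^t\frac{\varphi(\Phi^{-1}(s))}{\Phi^{-1}(s)}\,ds = r(t_0)+\frac{\rho_0}{2}\log\frac{\Phi^{-1}(t_0)}{\Phi^{-1}(t)}\longrightarrow+\infty ,
\]
since $\Phi^{-1}(t)\to0$ — contradicting $r(t)<1-\eta$. Thus $\limsup r\in\{0\}\cup[1,\infty]$, and the same holds for $\limsup|x(t)|/F^{-1}(t)$ because $\Phi^{-1}\sim F^{-1}$. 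This is the same mechanism as in the polynomial case treated in~\cite{appmack2003,appmackrod2008}: the sign of $\rho$ on $(0,1)$ makes $r$ strictly increasing once it enters a band bounded away from $0$ and $1$, while the divergence of $-\log\Phi^{-1}$ forbids $r$ from lingering in such a band; the only extra work relative to those papers is replacing the explicit power law by $\varphi$ and propagating the estimates uniformly in $r$.
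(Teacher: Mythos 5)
Your proposal is correct, and it reaches the same algebraic crux as the paper — that $\lambda < \lambda^\beta$ is impossible for $\lambda\in(0,1)$ when $\beta>1$ — but by a genuinely different route. The paper works with the \emph{integrated} equation: it uses $x(t)=\int_t^\infty f(x(s)+\gamma(s))\,ds$, bounds $|f(x(s)+\gamma(s))|$ by $(1+\epsilon)^2(\lambda+\epsilon)^\beta(\varphi\circ F^{-1})(s)$ on the tail using the limsup hypothesis, the regular variation of $\varphi$, and $|\gamma|<\tfrac{\epsilon}{2}F^{-1}$, integrates against $F^{-1}(t)=\int_t^\infty(f\circ F^{-1})(s)\,ds$, and takes limsup to obtain $\lambda\le(1+\epsilon)^2(\lambda+\epsilon)^\beta/(1-\epsilon)$; letting $\epsilon\to 0^+$ yields $\lambda\le\lambda^\beta$, hence $\lambda\ge 1$. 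You instead pass to the scaled variable $r=|x|/\Phi^{-1}$ and show, via the uniform convergence theorem and the asymptotic oddness of $f$, that $r'(t)=\frac{\varphi(\Phi^{-1}(t))}{\Phi^{-1}(t)}\bigl(\rho(r(t))+o(1)\bigr)$ with $\rho(s)=s-s^\beta>0$ on $(0,1)$, and then run a ``cannot linger in a band'' argument: once $r$ enters $[c/2,1-\eta]\subset(0,1)$, it is forced to be nondecreasing and in fact to grow by at least a multiple of $-\log\Phi^{-1}$, which diverges. Both proofs use the same RV estimates; the paper's avoids any discussion of differentiability of $|x|$ and of exit/entry times (the integral estimate is purely a one-sided bound), while yours makes the dynamical mechanism explicit, and dovetails with the comparison method used in Lemma~\ref{lemma.xlimsup0or1} and the proof of Theorem~\ref{Thm:Lim} proper. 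One small point worth making explicit when you write this up: the constant $\rho_0$ must be paired with a threshold $T$ large enough that the uniform-in-$r$ error term is $<\rho_0/2$ before invoking the first-exit argument — you implicitly do this, but it is exactly the place where the uniformity from the UCT is being consumed.
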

\begin{proof}
Applying Lemma~\ref{asym_odd} to $f$ we know there exists a $\varphi$ satisfying (\ref{properties}) and (\ref{asym2}) with $\beta>1$. Thus 
\begin{align*}
\frac{\varphi((\lambda+\epsilon)x)}{\varphi(x)} < (\lambda+\epsilon)^\beta (1+\epsilon), \quad, |x|< x_0(\epsilon).
\end{align*}
Suppose that 
\begin{align*}
\limsup_{t \to \infty}\frac{|x(t)|}{F^{-1}(t)}=\lambda \in (0, \infty).
\end{align*}
Then, for every $\epsilon>0$, there is $T'(\epsilon)>0$ such that $|x(t)|<(\lambda+\frac{\epsilon}{2})F^{-1}(t)$ for all $t \geq T'(\epsilon)$. 
By \eqref{eq.gammadivFinv}, we also have that there is $T''(\epsilon)>0$ and $T^\ast>0$ such that 
$|\gamma(t)|<\frac{\epsilon}{2}F^{-1}(t)$ for all $t \geq T''(\epsilon)$ and $F^{-1}(t) < x_0(\epsilon)$ for all $t \geq T^\ast$.
Define $T'''(\epsilon)=\max(T'(\epsilon),T''(\epsilon))$, which implies
$|x(t) + \gamma(t)| < (\lambda+\epsilon)F^{-1}(t)$ for all $t \geq T'''(\epsilon)$. 
Hence as $\varphi$ is odd and increasing, for $t \geq T'''(\epsilon)$ we have 
\begin{align*}
|f( x(t)+\gamma(t) )| &< (1+\epsilon)\varphi( |x(t)+\gamma(t)| ) < (1+\epsilon)\varphi( (\lambda+\epsilon)F^{-1}(t) ) \\
&< (1+\epsilon)^2(\lambda+\epsilon)^\beta(\varphi \circ F^{-1})(t). 
\end{align*}
Therefore for $t\geq T'''(\epsilon)$,
\begin{align*}
\left| \int_t^\infty f( x(s)+\gamma(s) )\,ds \right| &\leq (1+\epsilon)^2(\lambda+\epsilon)^\beta \int_t^\infty (\varphi \circ F^{-1})(s)\,ds.
\end{align*}
By \eqref{asym2}, for every $\epsilon\in (0,1)$ there exists $T^\ast(\epsilon)$ such that 
\begin{align*}
\varphi(F^{-1}(t)) < \frac{f(F^{-1}(t))}{1-\epsilon}, \quad t \geq T^*(\epsilon).
\end{align*}
This allows us to write, for $t\geq \max(T'''(\epsilon),T^*(\epsilon))$, the inequality 
\begin{align*}
\left| \int_t^\infty f( x(s)+\gamma(s) )\,ds \right| &\leq \frac{(1+\epsilon)^2(\lambda+\epsilon)^\beta}{1-\epsilon} \int_t^\infty (f \circ F^{-1})(s)\,ds \\
&= \frac{(1+\epsilon)^2(\lambda+\epsilon)^\beta}{(1-\epsilon)}F^{-1}(t).
\end{align*}
Since $x(t)= \int_t^\infty f(x(s)+\gamma(s))\,ds$ we have, for $t\geq \max(T'''(\epsilon),T^*(\epsilon))$,
\begin{align*}
\frac{|x(t)|}{F^{-1}(t)} = \frac{|\int_t^\infty{f(x(s)+\gamma(s))ds}|}{F^{-1}(t)}
\leq \frac{(1+\epsilon)^2(\lambda+\epsilon)^\beta}{(1-\epsilon)}.
\end{align*}
Hence, taking the $\limsup$ yields
\begin{align*}
\lambda \leq \frac{(1+\epsilon)^2(\lambda+\epsilon)^\beta}{(1-\epsilon)}.
\end{align*}
Letting $\epsilon \to 0^+$ gives us $\lambda \leq \lambda^\beta$ or $\lambda^{\beta-1} \geq 1$. Hence $\lambda \geq 1$, as required.
\end{proof}
\begin{lemma} \label{lemma.xlimsup0or1}
Suppose that $f$ satisfies (\ref{asym}) and (\ref{RVat0}) with $\beta>1$. 
Suppose that $\gamma$ is continuous and $x$ is the unique continuous solution of \eqref{eq.xfxgamma}
which satisfies \eqref{eq.xto0}. Suppose also that $\gamma$ and $F$ obey \eqref{eq.gammadivFinv}. 
If  
\begin{align*}
\limsup_{t \to \infty}\frac{|x(t)|}{F^{-1}(t)}>0, 
\end{align*}
then 
\begin{align*} 
\limsup_{t \to \infty}\frac{|x(t)|}{F^{-1}(t)} = 1.
\end{align*}
\end{lemma}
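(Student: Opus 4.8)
The plan is to prove the \emph{majorisation} bound $\limsup_{t\to\infty}|x(t)|/F^{-1}(t)\le 1$ by a comparison argument based on the differential inequality of Lemma~\ref{lemma.Diffineq}, and then to combine it with Lemma~\ref{lemma.xlimsup} and the standing hypothesis $\limsup_{t\to\infty}|x(t)|/F^{-1}(t)>0$ to force the limit superior to equal $1$. Throughout I work with the odd, increasing, $C^1$, $RV_0(\beta)$ function $\varphi$ furnished by Lemma~\ref{asym_odd}, recalling that $\Phi^{-1}(t)/F^{-1}(t)\to 1$ and $(\Phi^{-1})'(t)=-\varphi(\Phi^{-1}(t))$.

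First I would fix an arbitrary $c>1$. Since $(1-\epsilon)^2(c-\epsilon)^\beta\to c^\beta>c$ as $\epsilon\to 0^+$, I may choose $\epsilon\in(0,1)$ so small that $c<(1-\epsilon)^2(c-\epsilon)^\beta$. Because $\varphi\in RV_0(\beta)$ and $\Phi^{-1}(t)\to 0$, there is a time $T_2$ with $\varphi\big((c-\epsilon)\Phi^{-1}(t)\big)>(1-\epsilon)(c-\epsilon)^\beta\,\varphi(\Phi^{-1}(t))$ for all $t\ge T_2$. Setting $u(t):=c\,\Phi^{-1}(t)$, so that $u(t)-\epsilon\Phi^{-1}(t)=(c-\epsilon)\Phi^{-1}(t)>0$ and hence $\varphi_\epsilon\big(u(t)-\epsilon\Phi^{-1}(t)\big)=(1-\epsilon)\varphi\big((c-\epsilon)\Phi^{-1}(t)\big)$, the choice of $\epsilon$ gives
\[
\dot u(t)=-c\,\varphi(\Phi^{-1}(t)) > -(1-\epsilon)\varphi\big((c-\epsilon)\Phi^{-1}(t)\big)=-\varphi_\epsilon\big(u(t)-\epsilon\Phi^{-1}(t)\big),\quad t\ge T_2,
\]
so $u$ is a \emph{strict} supersolution of the inequality appearing in Lemma~\ref{lemma.Diffineq}.

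Next, Lemma~\ref{lemma.Diffineq} provides $T(\epsilon)$ with $D_+|x(t)|\le-\varphi_\epsilon(|x(t)|-\epsilon\Phi^{-1}(t))$ for $t\ge T(\epsilon)$, and Lemma~\ref{lemma.xliminf} gives $\liminf_{t\to\infty}|x(t)|/F^{-1}(t)\in\{0,1\}$, whence (using $\Phi^{-1}/F^{-1}\to 1$) $\liminf_{t\to\infty}|x(t)|/\Phi^{-1}(t)\le 1<c$; so there is a time $T\ge\max(T_2,T(\epsilon))$ with $|x(T)|<c\,\Phi^{-1}(T)=u(T)$. A standard comparison argument now yields $|x(t)|\le u(t)$ for all $t\ge T$: if $p:=|x|-u$ were positive at some $t_1>T$, let $t_0:=\sup\{t\in[T,t_1]:p(t)\le 0\}$; then $p(t_0)=0$, $p>0$ on $(t_0,t_1]$, so $D_+p(t_0)\ge 0$, while, since $u$ is $C^1$ and $|x(t_0)|=u(t_0)$,
\[
D_+p(t_0)=D_+|x(t_0)|-\dot u(t_0)\le-\varphi_\epsilon\big(u(t_0)-\epsilon\Phi^{-1}(t_0)\big)-\dot u(t_0)<0,
\]
a contradiction. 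Letting $t\to\infty$ and using $\Phi^{-1}/F^{-1}\to 1$ gives $\limsup_{t\to\infty}|x(t)|/F^{-1}(t)\le c$, and since $c>1$ was arbitrary, $\limsup_{t\to\infty}|x(t)|/F^{-1}(t)\le 1$. On the other hand, the hypothesis $\limsup_{t\to\infty}|x(t)|/F^{-1}(t)>0$ together with Lemma~\ref{lemma.xlimsup} forces $\limsup_{t\to\infty}|x(t)|/F^{-1}(t)\ge 1$, and the two bounds give the claim.

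The only non-routine point is the comparison step: because $t\mapsto|x(t)|$ need not be differentiable, one must work with the upper-right Dini derivative throughout and use the identity $D_+(|x|-u)=D_+|x|-\dot u$ valid since $u$ is $C^1$; the strictness of the supersolution inequality (secured by the choice of $\epsilon$) is what drives the contradiction, and it is precisely the regular variation of $\varphi$ at $0$ combined with $\Phi^{-1}(t)\to 0$ that guarantees this strict inequality holds \emph{uniformly} for all large $t$. Everything else is bookkeeping with the asymptotic relations $\Phi^{-1}/F^{-1}\to 1$ and $\gamma/F^{-1}\to 0$ already encoded in the cited lemmas.
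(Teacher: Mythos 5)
Your proposal is correct and is essentially the same argument as the paper's: a comparison with a supersolution of the form $c\,\Phi^{-1}(t)$ (the paper's $x_+(t)=\lambda(\epsilon)\Phi^{-1}(t)$ with $\lambda(\epsilon)=1+\tfrac{3\beta}{\beta-1}\epsilon$), built on the differential inequality of Lemma~\ref{lemma.Diffineq}, the regular variation of $\varphi$, and the asymptotics $\Phi^{-1}\sim F^{-1}$, with Lemma~\ref{lemma.xliminf} supplying a starting time at which $|x|$ lies below the comparison function. The only difference is organisational: the paper assumes $\limsup>1$ for contradiction and couples the majorising constant to $\epsilon$ via $\lambda(\epsilon)$, whereas you fix an arbitrary $c>1$, prove directly that $\limsup\le c$, and let $c\to 1^+$; this is marginally cleaner but mathematically identical.
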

\begin{proof}
From Lemma~\ref{lemma.xlimsup}, if 
\begin{align*}
\limsup_{t \to \infty}\frac{|x(t)|}{F^{-1}(t)}>0, 
\end{align*} 
we have that 
\begin{align*}
\limsup_{t \to \infty}\frac{|x(t)|}{F^{-1}(t)} \geq 1.
\end{align*}
In the case when $\limsup_{t\to\infty} |x(t)|/F^{-1}(t)=1$, we are done. We assume therefore that $\limsup_{t \to \infty} |x(t)|/F^{-1}(t) >1$.  
From Lemma~\ref{lemma.xliminf} we have that either 
\begin{align*}
\liminf_{t \to \infty}\frac{|x(t)|}{F^{-1}(t)}= 0 \mbox{ or } 1.
\end{align*}
If this prevails, for every $\epsilon \in (0,1)$ sufficiently small, there exists $t_n(\epsilon)\nearrow \infty$
such that $|x(t_n)|=(1+\frac{2\beta}{\beta-1}\epsilon)\Phi^{-1}(t_n)$. 
Let $\eta= 3\beta/(\beta-1)$ and define the function 
\begin{align*}
h(\epsilon) := (1+(\eta-1)\epsilon)^\beta (1-\epsilon)^\beta - (1+\eta \epsilon), \quad \epsilon \in [0,1).
\end{align*}
Note that $h(0)=0$ and $h'(0)>0$. Thus there exists $x_1(\beta)>0$ such that $h(\epsilon)>0$ for all $\epsilon< x_1(\beta)$.  
Let $\lambda(\epsilon) = 1+ \frac{3\beta}{\beta-1}\epsilon = 1+ \eta \epsilon$. Therefore 
\begin{align*}
\lambda-\epsilon = 1 + \left(\frac{3\beta}{\beta-1} -1 \right)\epsilon = 1 + \left(\frac{2\beta +1}{\beta-1} \right)\epsilon > 1.
\end{align*}
Furthermore, for $\epsilon<x_1(\beta)$ we have
\begin{align*}
\lefteqn{(\lambda(\epsilon)-\epsilon)^\beta (1-\epsilon)^{\beta-1} - \frac{\lambda}{1-\epsilon}} \\
&=(1+ (\eta-1) \epsilon)^\beta (1-\epsilon)^{\beta-1} - \frac{1+ \eta \epsilon}{1-\epsilon}=\frac{1}{1-\epsilon}h(\epsilon)>0. 
\end{align*}
Since $\varphi \in \text{RV}_0(\beta)$ we have that 
\begin{align*}
\frac{\varphi( (1+\frac{2\beta +1}{\beta-1}\epsilon)x )}{\varphi(x)} > \left(1+\frac{2\beta +1}{\beta-1}\epsilon \right)^\beta (1-\epsilon)^\beta, \quad x< x_2(\epsilon).
\end{align*}
Since $\Phi^{-1}(t)\to 0$ as $t\to\infty$, there exists $T_2(\epsilon)$ such that $\Phi^{-1}(t) < x_2(\epsilon)$ for all $t > T_2(\epsilon)$ 
and as $\gamma$ obeys \eqref{eq.gammadivFinv}, we have that there is $T_1(\epsilon)$ such that $|\gamma(t)| < \epsilon \Phi^{-1}(t)$ for all $t > T_1(\epsilon)$.
Also, by Lemma~\ref{lemma.Diffineq}, there exists $T(\epsilon)>0$ such that we have
\begin{align*} 
D_+|x(t)| \leq -\varphi_\epsilon( |x(t)| - \epsilon \Phi^{-1}(t) ), \quad t \geq T(\epsilon).
\end{align*}
Let $T^\ast(\epsilon) := \inf \{ t_n(\epsilon):t_n > T_1(\epsilon) \vee T_2(\epsilon) \vee T(\epsilon) \}$. Define 
$x_+(t) = \lambda(\epsilon)\Phi^{-1}(t)$ for $t \geq T^\ast(\epsilon)$.
Therefore $x_+'(t) = - \lambda(\epsilon)(\varphi \circ \Phi^{-1})(t)$ for all $t \geq T^\ast(\epsilon)$.
Using the regular variation of $\varphi$ and the fact that $\epsilon<x_1(\beta)$,  for $t\geq T^\ast(\epsilon)$ we have
\begin{align*}
\frac{\varphi((\lambda(\epsilon)-\epsilon)\Phi^{-1}(t))}{\varphi(\Phi^{-1}(t))} > (\lambda(\epsilon)-\epsilon)^\beta (1-\epsilon)^{\beta-1} > \frac{\lambda}{1-\epsilon}.
\end{align*}
Thus
\begin{align*}
-\lambda(\epsilon)\varphi(\Phi^{-1}(t)) > -(1-\epsilon)\varphi((\lambda(\epsilon)-\epsilon)\Phi^{-1}(t)) = -(1-\epsilon)\varphi(x_+(t) -\epsilon \Phi^{-1}(t)).
\end{align*}
Therefore we have
\begin{align*}
x_+'(t) > -(1-\epsilon)\varphi(x_+(t) -\epsilon \Phi^{-1}(t)), \quad t \geq T^*(\epsilon).
\end{align*}
Furthermore, we have
\begin{align*}
x_+(T^\ast) &= \lambda(\epsilon)\Phi^{-1}(T^\ast) = \left(1 + \frac{3\beta}{\beta-1}\epsilon \right)\Phi^{-1}(T^\ast) > \left(1 + \frac{2\beta}{\beta-1}\epsilon \right)\Phi^{-1}(T^\ast) \\
&=|x(T^\ast)|.
\end{align*}
Hence
\begin{align}
\label{Upper} &x_+'(t) > -(1-\epsilon)\varphi( x_+(t)-\epsilon \Phi^{-1}(t) ), \quad t \geq T^\ast(\epsilon) \\
\nonumber &x_+(T^\ast) > |x(T^\ast)|.
\end{align}
Also, by Lemma~\ref{lemma.Diffineq}, we have
\begin{align} \label{Dini} 
D_+|x(t)| \leq -\varphi_\epsilon( |x(t)| - \epsilon \Phi^{-1}(t) ), \quad t \geq T^\ast(\epsilon).
\end{align}
Suppose there is a minimal $t' > T^*(\epsilon)$ such that $|x(t')|=x_+(t')=\lambda(\epsilon)\Phi^{-1}(t')$. Then 
$|x(t')| - \epsilon \Phi^{-1}(t')=x_+(t')-\epsilon \Phi^{-1}(t')=(\lambda-\epsilon)\Phi^{-1}(t')>0$. Hence
\begin{align*}
\lefteqn{\varphi_\epsilon( |x(t')| - \epsilon \Phi^{-1}(t') )}\\
&=\min\{ 
(1+\epsilon)\varphi( |x(t')| - \epsilon \Phi^{-1}(t')),
(1-\epsilon)\varphi( |x(t')| - \epsilon \Phi^{-1}(t'))  
\}\\
&=(1-\epsilon)\varphi( |x(t')| - \epsilon \Phi^{-1}(t')).
\end{align*}
Hence by (\ref{Upper}) and (\ref{Dini}) we get
\[
D_+|x(t)| \leq -(1-\epsilon)\varphi( |x(t')| - \epsilon \Phi^{-1}(t') )=-(1-\epsilon)\varphi(x_+(t')-\epsilon \Phi^{-1}(t'))<x_+'(t').
\]
The minimality of $t'$ implies that $D_+|x(t')| \geq x_+'(t')$, which gives a contradiction. 
Therefore we must have $|x(t)| < x_+(t)$ for all $t \geq T^*(\epsilon)$. Hence,
\begin{align*}
|x(t)| < x_+(t) = \lambda(\epsilon) \Phi^{-1}(t) = \left(1+ \frac{3\beta}{\beta-1}\epsilon\right)\Phi^{-1}(t), \quad t \geq T^*(\epsilon).
\end{align*}
Thus 
\begin{align*}
\limsup_{t \to \infty} \frac{|x(t)|}{\Phi^{-1}(t)} \leq 1+ \frac{3\beta}{\beta-1}\epsilon, 
\end{align*}
so by letting $\epsilon \to 0^+$ and using the fact that $\Phi^{-1} \sim F^{-1}$, we get
\begin{align*}
\limsup_{t \to \infty}\frac{|x(t)|}{F^{-1}(t)} \leq 1.
\end{align*}
This contradicts the supposition that $\limsup_{t\to\infty} |x(t)|/F^{-1}(t)>1$, and so we must have $\limsup_{t\to\infty} |x(t)|/F^{-1}(t)=1$ or $\limsup_{t\to\infty} |x(t)|/F^{-1}(t)=0$, 
as claimed.
%
%
\end{proof}

We are now in a position to prove Theorem~\ref{Thm:Lim}.
\begin{proof}[Proof of Theorem~\ref{Thm:Lim}]
Define 
\begin{align*}
\eta = \frac{3\beta}{\beta-1}, \quad \lambda(\epsilon) = 1 - \eta \epsilon,  \quad 0 < \epsilon < \frac{1}{\eta +1} < \frac{1}{\eta}.
\end{align*}
Then $\lambda(\epsilon) \in (0,1)$ and $\epsilon < \lambda(\epsilon)$. Define 
$h(\epsilon) := (1-\eta \epsilon)-(1-\eta \epsilon + \epsilon)^\beta (1+\epsilon)^\beta$. We note that $h(0)=0$ and $h'(0)>0$. 
Hence there exists $\epsilon'=\epsilon'(\beta)>0$ such that $h(\epsilon)>0$ for all $\epsilon < \epsilon'(\beta)<1$. Therefore
\begin{align*}
(1-\eta \epsilon)-(1-\eta \epsilon + \epsilon)^\beta (1+\epsilon)^\beta > 0, \quad \epsilon < \epsilon',
\end{align*}
or 
\begin{align*}
\lambda(\epsilon) - (\lambda(\epsilon)+\epsilon)^\beta (1+\epsilon)^\beta > 0 , \quad \epsilon < \epsilon'.
\end{align*}
This implies 
\begin{align*}
\lambda(\epsilon) > (\lambda(\epsilon)+\epsilon)^\beta (1+\epsilon)^\beta , \quad \epsilon < \epsilon'.
\end{align*}
For every $\epsilon \in (0,1)$, there is $x_1(\epsilon)>0$ such that $f(x)<(1+\epsilon)\varphi(x)$ for $x<x_1(\epsilon)$ and  
\begin{align*}
\frac{\varphi( (\lambda+\epsilon)x )}{f(x)} < (\lambda+\epsilon)^\beta (1+\epsilon)^{\beta-1}, \quad x < x_1(\epsilon).
\end{align*}
Since $F^{-1}(t)\to 0$ as $t\to\infty$, for every $\epsilon>0$ there is $T_1(\epsilon)>0$ such that $t > T_1(\epsilon)$ implies $F^{-1}(t) < x_1(\epsilon)$
and $F^{-1}(t)<x_2(\epsilon)/(\lambda+\epsilon)$. Also, as $\gamma$ obeys \eqref{eq.gammadivFinv} for every $\epsilon>0$ there exists $T_2(\epsilon)>0$ such that for $t > T_2(\epsilon)$, we have $|\gamma(t)|< \epsilon F^{-1}(t)$. Since $\limsup_{t \to \infty} |x(t)|/F^{-1}(t)=1$, we have that either 
\begin{align*}
(\text{I})\quad \limsup_{t \to \infty}\frac{x(t)}{F^{-1}(t)} = 1,  \quad\mbox{ or } \quad (\text{II}) \quad \limsup_{t \to \infty}\frac{-x(t)}{F^{-1}(t)}=1.
\end{align*}
We consider case (I) first. If it holds, there exists $t_n \nearrow \infty$ such that $x(t_n) > (1 - \frac{\eta}{2}\epsilon)F^{-1}(t_n)$. 
Let $T(\epsilon) = \inf\{ t_n(\epsilon) : t_n(\epsilon) > T_1 \vee T_2 \}$ and define 
$x_-(t) = \lambda(\epsilon)F^{-1}(t)$ for all $t \geq T(\epsilon)$. Then 
$x_-(T) = x_-(t_n) = \lambda(\epsilon)F^{-1}(t_n)$ and we have
\begin{align*}
x(T) &= x(t_n) > (1 - \frac{\eta}{2}\epsilon)F^{-1}(t_n) > (1- \eta \epsilon)F^{-1}(t_n) = x_-(t_n)= x_-(T).
\end{align*}
Hence $x(T) > x_-(T)$. Now for $t \geq T(\epsilon)$, $x_-(t)+ \gamma(t)< (\lambda(\epsilon)+\epsilon)F^{-1}(t)$, 
which implies
\begin{align*}
f(x_-(t)+ \gamma(t)) &< (1+\epsilon)\varphi((\lambda(\epsilon)+\epsilon)F^{-1}(t))\\
&<(1+\epsilon) (\lambda+\epsilon)^\beta(1+\epsilon)^{\beta-1} f( F^{-1}(t) ) 
< 
 \lambda(\epsilon) (f \circ F^{-1})(t), 
\end{align*}
since $F^{-1}(t) < x_1(\epsilon)$, $(\lambda+\epsilon)F^{-1}(t)<x_2(\epsilon)$ and $\epsilon<\epsilon'$. Thus 
\begin{align*}
-f( x_-(t) + \gamma(t) ) > -\lambda(\epsilon)(f \circ F^{-1})(t) = -x_-'(t), \quad t \geq T(\epsilon).
\end{align*}
Therefore $x_-'(t) < -f( x_-(t) +\gamma(t))$ for $t \geq T(\epsilon)$ and $x_-(T) < |x(T)|$.
Now suppose there exists $t'>T$ such that $x(t') = x_-(t')$. Then $x'(t') \leq x_-'(t')$. Hence 
\begin{align*}
x_-'(t') &< -f( x_-(t')+\gamma(t') ) = -f( x(t') + \gamma(t') ) = x'(t') \\
&\leq x_-'(t'), 
\end{align*}
which gives a contradiction. Hence $x_-(t) < x(t), t \geq T(\epsilon)$. Thus 
\begin{align*}
x(t) > x_-(t) = \lambda(\epsilon)F^{-1}(t) = \left(1-\frac{3\beta}{\beta-1}\epsilon \right)F^{-1}(t), \quad t \geq T(\epsilon).
\end{align*}
Therefore we have that 
\begin{align*}
\liminf_{t \to \infty}\frac{x(t)}{F^{-1}(t)} \geq 1 - \frac{3\beta}{\beta-1}\epsilon,
\end{align*}
so by letting $\epsilon \to 0^+$ we get 
\begin{align*}
\liminf_{t \to \infty}\frac{x(t)}{F^{-1}(t)} \geq 1.
\end{align*}
Thus, if $\limsup_{t \to \infty} x(t)/F^{-1}(t)=1$, we have $\liminf_{t \to \infty} x(t)/F^{-1}(t)\geq 1$.
Therefore we have 
\begin{align*}
\limsup_{t \to \infty}\frac{x(t)}{F^{-1}(t)}=1 \, \mbox{ implies } \, \lim_{t \to \infty}\frac{x(t)}{F^{-1}(t)} = 1.
\end{align*}
In case (II), if we have that 
\begin{align*}
\limsup_{t \to \infty}\frac{-x(t)}{F^{-1}(t)} = 1,
\end{align*}
then let $z(t) = -x(t)$ and follow the same argument as before. In this case we let 
\begin{align*}
z_-(t) = \lambda(\epsilon)F^{-1}(t), \text{ for all } t > T^*(\epsilon)
\end{align*} 
and similarly we arrive at $z(t) > z_-(t)$ for all $t > T^*(\epsilon)$. Translating this back to a statement about $x(t)$ we obtain
\begin{align*}
\limsup_{t \to \infty}\frac{-x(t)}{F^{-1}(t)}=1 \, \mbox{ implies } \, \lim_{t \to \infty}\frac{-x(t)}{F^{-1}(t)} = 1,
\end{align*}
as required.
\end{proof} 

\section{Proofs from Section 4}
\subsection{Proof of Theorem~\ref{eq.intgfinite}}
We start by making uniform asymptotic estimates of the terms involving $x$ in the integrated form of \eqref{eq.odepert}, namely
\begin{align} \label{eq:0} 
x(t) = x(0) + \int_0^t f(x(s))\,ds + \int_0^t g(s)\,ds.
\end{align}
This entails making a pointwise estimate of $f(x(t))$. If it can be shown that $\int_0^t f(x(s))\,ds$ tends to a finite limit as $t\to\infty$, the result is secured, because the hypothesis \eqref{eq.xdetperservasy} implies that $x(t)\to 0$ as $t\to\infty$, and therefore that $g$ obeys \eqref{eq.intgfinite}. 

By Lemma~\ref{asym_odd}, there is a function $\varphi$ such that 
\begin{align*}
\frac{1}{2} < \frac{f(x)}{\varphi(x)} < \frac{3}{2}, \,\, |x|<x_1,
\end{align*}
for some $x_1>0$, where $\varphi$ is increasing, odd and $\varphi \in RV_0(\beta)$. Since $\varphi \in RV_0(\beta)$ we also have that
\begin{align*}
\frac{\varphi(x)}{\varphi(\frac{x}{L+1})} < 2(|\lambda|+1)^\beta, \text{ for } |x|<x_2.
\end{align*}
for some $x_2>0$. Thus $|f(x)| < \frac{3}{2}\varphi(|x|)$ for all $|x|<x_1$.
Since $x(t) \rightarrow 0$ as $t \to \infty$, $|x(t)| < x_1$ for all $t \geq T_1$. Since $x$ obeys \eqref{eq.xdetperservasy} and $F^{-1}(t)\to 0$ as $t\to\infty$, we have that there exist $T_2>0$ and $T_3>0$ such that $|x(t)| < (|\lambda|+1)F^{-1}(t)$ for $t \geq T_2$ and $(|\lambda|+1)F^{-1}(t) < x_1$, for $t \geq T_3$. Hence, for $t \geq T := 1 + T_1 \vee T_2 \vee T_3$ we have 
$|f(x(t))| \leq 2\,\varphi(|x(t)|) \leq 2\,\varphi((|\lambda|+1)F^{-1}(t))$. 
Now we estimate the integral involving $f(x(t))$. For $t\geq T$ we have
\begin{align} \label{eq:1}
\left|\int_T^t{f(x(s))ds}\right| &\leq \int_T^t \frac{3}{2}\varphi((|\lambda|+1)F^{-1}(s))\,ds \nonumber \\
&= \frac{3}{2(|\lambda|+1)}\int_{F^{-1}(t)}^{F^{-1}(T)} \frac{\varphi(u)}{\varphi(\frac{u}{|\lambda|+1})}\cdot\frac{\varphi(\frac{u}{|\lambda|+1})}{f(\frac{u}{|\lambda|+1})} \,du.
\end{align}
Now $(|\lambda|+1)F^{-1}(T) \leq (|\lambda|+1)F^{-1}(T_3) < x_1$ so if $0 < u \leq F^{-1}(T)$, then 
\begin{align*}
\frac{u}{|\lambda|+1} \leq \frac{F^{-1}(T)}{|\lambda|+1} < \frac{x_1}{(|\lambda|+1)^2} < x_1.
\end{align*}
Hence
\begin{align} \label{eq:2}
\frac{\varphi(\frac{u}{|\lambda|+1})}{f(\frac{u}{|\lambda|+1})} < 2, \text{ for } u \leq F^{-1}(T).
\end{align}
Next $T>T_3$, so $F^{-1}(T) < F^{-1}(T_3)$ so $(|\lambda|+1)F^{-1}(T) < (|\lambda|+1)F^{-1}(T_3) < x_2$. Hence $u \leq F^{-1}(T)$ implies $u < x_2$. Thus 
\begin{align} \label{eq:3}
\frac{\varphi(u)}{\varphi(\frac{u}{|\lambda|+1})} < 2(|\lambda|+1)^\beta, \text{ for } u \leq F^{-1}(T).
\end{align}
If we insert equations (\ref{eq:2}) and (\ref{eq:3}) into (\ref{eq:1}) we obtain the following inequalities, for $t \geq T$,
\begin{align*}
\left| \int_T^t f(x(s))\,ds \right| \leq \frac{3}{2(|\lambda|+1)} \int_{F^{-1}(t)}^{F^{-1}(T)} 2\cdot 2(|\lambda|+1)^\beta\, du 
\leq 6(|\lambda|+1)^{\beta-1} F^{-1}(T), 
\end{align*}
which is finite. Since $T$ is finite, $\lim_{t \to \infty}\int_0^t f(x(s))\,ds$ is finite, and so $g$ obeys \eqref{eq.intgfinite}, as required. 

\subsection{Proof of Theorem~\ref{thm.detpresnecc}}
By Theorem~\ref{Thm1}, we have that $\lim_{t \to \infty}\int_0^t{g(s)ds}$ exists. 
By \eqref{eq.xdetperservasy}, we have that $\lim_{t \to \infty}x(t)=0$. Also, by Theorem~\ref{Thm1} it follows that 
\[
\lim_{t \to \infty}\int_0^t{f(x(s))}\, ds
\] is finite, so $\int_t^\infty f(x(s))\,ds$ is well defined for all $t \geq 0$. Hence we have 
\begin{equation*}
\int_0^\infty g(s)\,ds = \int_0^\infty f(x(s))\,ds - x(0), \quad \int_0^t g(s)\,ds = \int_0^t f(x(s))\,ds +x(t) - x(0).
\end{equation*}
Therefore we have
\begin{align} \label{eq:+}
\frac{1}{F^{-1}(t)}\int_t^\infty g(s)\,ds = \frac{1}{F^{-1}(t)}\int_t^\infty f(x(s))\,ds - \frac{x(t)}{F^{-1}(t)}, \quad t\geq 0.
\end{align}
We now analyse the asymptotic behaviour of the right--hand side of \eqref{eq:+} to prove the second part of \eqref{eq.intgdivF}. 
Under \eqref{eq.xdetperservasy}, we have either
\begin{align*}
&(\text{\Rn{1}}) \lim_{t \to \infty}\frac{x(t)}{F^{-1}(t)} = 0 \text{ or} \\
&(\text{\Rn{2}}) \lim_{t \to \infty}\frac{x(t)}{F^{-1}(t)} = \pm1. 
\end{align*}
By L'H\^opitals rule, and recalling the properties of the function $\varphi$ introduced in Lemma~\ref{asym_odd}, we may consider 
\begin{align*}
\lim_{t \to \infty}\frac{\int_t^\infty{f(x(s))ds}}{F^{-1}(t)} &= \lim_{t \to \infty}\frac{-f(x(t))}{-f(F^{-1}(t))} = \lim_{t \to \infty}\frac{f(x(t))}{f(F^{-1}(t))} = \lim_{t \to \infty}\frac{\varphi(x(t))}{\varphi(F^{-1}(t))},
\end{align*}
provided that the limit on the right--hand side exists. We now show that it does when \eqref{eq.xdetperservasy}
prevails. In case (i), as $\varphi \in RV_0(\beta)$, $\varphi$ is odd and $|x(t)|/F^{-1}(t) \rightarrow 0$ as $t \to \infty$ we have
\begin{align*}
\lim_{t \to \infty}\frac{|\varphi(x(t))|}{\varphi(F^{-1}(t))} = \lim_{t \to \infty}\frac{\varphi(|x(t)|)}{\varphi(F^{-1}(t))} = 0.
\end{align*}
Thus 
\begin{align*}
\lim_{t \to \infty}\frac{\int_t^\infty{f(x(s))ds}}{F^{-1}(t)} = 0,
\end{align*}
so by taking limits on both sides of (\ref{eq:+}), we have the second part of \eqref{eq.intgdivF}, as required.

In case (ii) the limit 
\begin{align} \label{eq:4}
\lim_{t \to \infty}\frac{\int_t^\infty{f(x(s))ds}}{F^{-1}(t)} = \lim_{t \to \infty}\frac{\varphi(x(t))}{\varphi(F^{-1}(t))},
\end{align}
still obtains, provided the limit on the right--hand side exists. If $x(t)/F^{-1}(t) \rightarrow 1$ as $t \to \infty$ the limit on the right--hand side of (\ref{eq:4}) is 1, so (\ref{eq:4}) and (\ref{eq:+}) combine to yield the second part of \eqref{eq.intgdivF}, as claimed. 
If, on the other hand, $x(t)/F^{-1}(t) \rightarrow 1$ as $t \to \infty$, then from (\ref{eq:4}) we use the fact that $\varphi$ is odd to write 
\begin{align*}
\lim_{t \to \infty}\frac{\int_t^\infty f(x(s))\,ds}{F^{-1}(t)} &= \lim_{t \to \infty}\frac{\varphi(x(t))}{\varphi(F^{-1}(t))} = \lim_{t \to \infty}\frac{-\varphi(-x(t))}{\varphi(F^{-1}(t))} =-1.
\end{align*}
Using this, $x(t)/F^{-1}(t) \rightarrow 1$ as $t \to \infty$ and (\ref{eq:+}) gives \eqref{eq.intgdivF}, as required.

\subsection{Proof of Theorem~\ref{thm.xto0suff}}
We define 
\[
\phi_+=\liminf_{x \to +\infty} |f(x)|, \quad \phi_=\liminf_{x \to -\infty} |f(x)|. 
\]
Note by \eqref{fatinfinity} that $\phi_+,\phi_->0$. 
As before we define $u(t) = \int_0^t g(s)ds$ and thus \eqref{eq.intgfinite} implies that $u(\infty) = \int_0^\infty g(s)\,ds$ is well defined. Hence $\gamma(t) := u(t) - u(\infty) \to 0$ as $t \to \infty$. Define $z(t)=x(t)-u(t)+u(\infty)=x(t)-\gamma(t)$ for $t\geq 0$. Therefore $z(t)\to 0$ as $t\to\infty$ if and only if $x(t)\to 0$ as $t\to\infty$. Moreover $z'(t) = x'(t) - u'(t) = -f(x(t)) = -f(z(t)+\gamma(t))$. Thus we proceed to show that $z$ has the desired limit.
We set $\phi = \min(\phi_+, \, \phi_-)$ and thus there exists $x_1$ such that
\begin{align*}
f(x) \geq \frac{\phi}{2} \text{ for all } x \geq x_1 > 0 \text{ and }
-f(x) \geq \frac{\phi}{2} \text{ for all } x \leq - x_1.
\end{align*}
Now by the Lipschitz continuity of $f$ there exists $K_1$ such that
\begin{align} \label{eq.floclipK1}
|f(x)-f(y)| \leq K_1 |x-y| \text{ for all } x,y \text{ such that } |x|,|y| \leq x_1+1.
\end{align}
Choose $\delta \in (0,1)$ to be small enough that $\phi / 4  > K_1 \delta$ with $\delta/2 < x_1$. Thus for $x \in (x_1-\delta, \, x_1+\delta)$ we have
\[
-K_1 \delta \leq f(x) - f(x_1) \leq K_1 \delta.
\]
Hence we obtain 
\[
f(x) \geq f(x_1) - K_1 \delta \geq \frac{\phi}{2} - \frac{\phi}{4} \geq \frac{\phi}{4}.
\]
Since we know that $\gamma(t) \to 0$ there exists a $T_0>0$ such that $|\gamma(t)|<\delta/2$ for all $t\geq T_0$. 

If there exists $T_2>T_0$ such that $z(T_2) < x_1$, it can be shown that $z(t)<x_1$ for all $t\geq T_2$. We defer the proof of this fact temporarily. Instead, we first assume to the contrary that $z(t)\geq x_1$ for all $t\geq T_0$. Therefore $z(t)+\gamma(t)\geq x_1-\delta/2$ for $t\geq T_0$, and therefore $-z'(t)=f(z(t)+\gamma(t))\geq \phi/4>0$ for $t\geq T_0$. But this implies that $z(t)$ will ultimately lie below $x_1$, a contradiction. 

It remains to prove that if  $z(T_2) < x_1$ for some $T_2>T_0$, then $z(t)<x_1$ for all $t\geq T_2$. Suppose to the contrary that there is a minimal $T_1>T_2$ such $z(T_1)=x_1$. Then $z'(T_1)\geq 0$. On the other hand, $f(z(T_1)+\gamma(T_1))=f(x_1+\gamma(T_1))$. Since $T_1>T_0$, we have $|\gamma(T_1)|\leq \delta/2$, and so it follows that $f(x_1+\gamma(T_1))\geq \phi/4$. Hence 
$0\leq z'(T_1)=-f(x_1+\gamma(T_1))\leq -\phi/4<0$, a contradiction. 

Therefore, we have shown that there exists $T_2>0$ such that $z(t)<x_1$ for all $t\geq T_2$. By a similar argument, it can be shown that there is a $T_3>0$ such that $z(t)>-x_1$ for all $t\geq T_3$. Hence, with $T_4=\max(T_0,T_2,T_3)$, we have that $|z(t)|\leq x_1$ 
for all $t\geq T_4$, and also that $|\gamma(t)|\leq \delta/2$. 

It remains to show that the boundedness of $z$ implies that it tends to zero. Write, for $t\geq 0$, $g(t)=f(z(t))-f(z(t)+\gamma(t))$.
Then $g$ is continuous on $[0,\infty)$, by dint of the continuity of $z$, $\gamma$ and $f$. Since $|\gamma(t)|\leq \delta/2<1/2$ for $t\geq T_4$, it follows that $|z(t)+\gamma(t)|<x_1+1$ and $|z(t)|<x_1+1$ for all $t\geq T_4$. Hence, by \eqref{eq.floclipK1}, we have 
$|g(t)|\leq K_1|\gamma(t)$ for $t\geq T_4$. Since $\gamma(t)\to 0$ as $t\to\infty$, we have that $g(t)\to 0$ as $t\to\infty$. Moreover, 
by the definition of $g$, it follows that $z$ obeys 
\[
z'(t)=-f(z(t))+g(t), \quad t\geq 0.
\] 
The Lipschitz continuity of $f$, the property \eqref{eq.fglobalstable}, \eqref{fatinfinity}, and the fact that $g$ is continuous on $[0,\infty)$ and $g(t)\to 0$ as $t\to\infty$ means, by a result in \cite{JAJGAR:2009}, that $z(t)\to 0$ as $t\to\infty$. This allows us to conclude that $x(t)\to 0$ as $t\to\infty$, as claimed. 

\section{Proofs from Section 5}
We now prove some results from Section 5, up to but not including Theorem~\ref{thm:Xderiv}.
\subsection{Proof of Theorem~\ref{theorem:Xnecc1}}
We rearrange \eqref{eq.sde} and write
\begin{equation} \label{eq.intfrep1sde}
\int_0^t -f(X(s))\,ds = X(t) - \xi - \int_0^t \sigma(s)\,dB(s).
\end{equation}
Since $X$ obeys \eqref{eq.Xperservasy}, it follows from Lemma~\ref{lemsig2}  that $\sigma \in L^2(0,\infty)$. The martingale convergence theorem then implies that the last term on the right--hand side of \eqref{eq.intfrep1sde} has a finite limit as $t\to\infty$ a.s. Moreover, $X$ obeys \eqref{eq.Xperservasy} implies 
that $X(t)\to 0$ as $t\to\infty$ a.s., so from this it follows that all the terms on the right--hand side of \eqref{eq.intfrep1sde} converge to 0 
with probability 1. Therefore 
there is an event $\Omega_1$ such that the limit as $t\to\infty$ of the left--hand side of \eqref{eq.intfrep1sde} is well--defined and we may write 
\begin{align*}
\int_0^\infty -f(X(s))\,ds = - \xi - \int_0^\infty \sigma(s)\,dB(s), \mbox{ on } \Omega_1.
\end{align*}
Taking this identity together with \eqref{eq.intfrep1sde} on $\Omega_1$, we can obtain 
\begin{equation} \label{eq.Xasyintsig}
\int_t^\infty \sigma(s)\,dB(s) = -X(t)+\int_t^\infty f(X(s))\,ds.
\end{equation}
Define the a.s. event on which \eqref{eq.Xasyintsig} holds to be $\Omega^\ast$ and 
\begin{align*}
A := \left\{ \omega : \lim_{t \to \infty}\frac{X(t, \omega)}{F^{-1}(t)}=\lambda(\omega)\in(-\infty,\infty) \right\}\cap \Omega^\ast.
\end{align*}
We have presumed that $\mathbb{P}[A]=1$. We decompose $A=A_+\cup A_-\cup A_0$ where the events $A_\cdot$ are defined by  
\[
A_+=A\cap\{\omega:\lambda(\omega)>0\}, \quad A_-=A\cap\{\omega:\lambda(\omega)<0\}, \quad A_0=A\cap\{\omega:\lambda(\omega)=0\}.
\]
Now, consider $\omega\in A$ so that $\lambda(\omega)\neq 0$; then 
\[
\lim_{t \to \infty}\frac{X(t, \omega)}{\lambda(\omega)F^{-1}(t)}=1. 
\]
Then as $\varphi$ is in $\text{RV}_0(\beta)$ and $f(x)/\varphi(x)\to 1$ as $x\to 0$
\[
\lim_{t\to\infty} \frac{f(X(t))}{\varphi(\lambda(\omega)F^{-1}(t))}
=\lim_{t\to\infty} \frac{\varphi(X(t))}{\varphi(\lambda(\omega)F^{-1}(t))}=1.
\]

In the case when $\omega\in A_+$, since $\varphi\in \text{RV}_0(\beta)$ we have that 
\[
\lim_{t\to\infty} \frac{f(X(t,\omega))}{\varphi(F^{-1}(t))}=\lambda(\omega)^\beta.
\]
Therefore, by L'H\^opital's rule and the fact that $f(x)/\varphi(x)\to 1$ as $x\to 0$, we have 
\[
\lim_{t\to\infty} \frac{\int_t^\infty f(X(s,\omega))\,ds}{F^{-1}(t)}
=\lim_{t\to\infty} \frac{f(X(t,\omega))}{f(F^{-1}(t))}= \lambda(\omega)^\beta.
\]
Rearranging \eqref{eq.Xasyintsig} and taking limits yields for each $\omega\in A_+$ 
\[
\lim_{t\to\infty} \frac{\left(\int_t^\infty \sigma(s)\,dB(s)\right)(\omega)}{F^{-1}(t)}= -\lambda(\omega)+\lambda(\omega)^\beta.
\]
Now write $A_+=A_1\cup A_1'$ where $A_1=A_+\cap \{\lambda=1\}$ and $A_1'=A_+\cap \{\lambda\neq 1\}$. Suppose that $A_1'$ is such that $\mathbb{P}[A_1']>0$. Then 
we have that 
\[
\mathbb{P}\left[\lim_{t\to\infty} \frac{\int_t^\infty \sigma(s)\,dB(s)}{F^{-1}(t)} \text{ exists and is not equal to $0$}   \right]>0,
\]
which contradicts \eqref{eq.zeroeventSig}. Hence $\mathbb{P}[A_1']=0$. Thus $\mathbb{P}[A_+]=\mathbb{P}[A_1]$. Moreover, we have that 
\begin{equation}  \label{eq.A1ok}
\lim_{t\to\infty} \frac{\left(\int_t^\infty \sigma(s)\,dB(s)\right)(\omega)}{F^{-1}(t)}= 0, \quad \omega\in A_1.
\end{equation}

Next, we consider the case when $\omega\in A_-$, so $\lambda(\omega)<0$. As before we have
\[
\lim_{t\to\infty} \frac{f(X(t,\omega))}{\varphi(\lambda(\omega)F^{-1}(t))}=1.
\]
Using this limit and the fact that $\varphi$ is odd, we have 
\[
\lim_{t\to\infty} \frac{f(X(t,\omega))}{\varphi(F^{-1}(t))}
=\lim_{t\to\infty} \frac{\varphi(\lambda(\omega)F^{-1}(t))}{\varphi(F^{-1}(t))}
=\lim_{t\to\infty} \frac{-\varphi(-\lambda(\omega)F^{-1}(t))}{\varphi(F^{-1}(t))},
\]
so because $-\lambda(\omega)>0$, the fact that $\varphi\in \text{RV}_0(\beta)$, and that $f$ and $F^{-1}$ are asymptotic to $\varphi$ and $\Phi^{-1}$ 
respectively implies that 
\[
\lim_{t\to\infty} \frac{\varphi(X(t,\omega))}{\varphi(\Phi^{-1}(t))}=-(-\lambda(\omega))^\beta.
\]
Therefore, by L'H\^opital's rule and the fact that $f(x)/\varphi(x)\to 1$ as $x\to 0$, we have 
\[
\lim_{t\to\infty} \frac{\int_t^\infty f(X(s,\omega))\,ds}{F^{-1}(t)}
=\lim_{t\to\infty} \frac{\int_t^\infty \varphi(X(s,\omega))\,ds}{\Phi^{-1}(t)}
=\lim_{t\to\infty} \frac{\varphi(X(t,\omega))}{\varphi(\Phi^{-1}(t))}= -(-\lambda(\omega))^\beta. 
\]
Rearranging \eqref{eq.Xasyintsig} and taking limits yields for each $\omega\in A_-$ 
\[
\lim_{t\to\infty} \frac{\left(\int_t^\infty \sigma(s)\,dB(s)\right)(\omega)}{F^{-1}(t)}= -\lambda(\omega)-(-\lambda(\omega))^\beta.
\]
Now write $A_-=A_{-1}\cup A_{-1}'$ where $A_{-1}=A_+\cap\{\lambda(\omega)=-1\}$ and $A_1'=A_+\cap \{\lambda(\omega)\neq -1\}$. Suppose that $A_{-1}'$ is such that $\mathbb{P}[A_{-1}']>0$. Then 
we have that 
\[
\mathbb{P}\left[\lim_{t\to\infty} \frac{\int_t^\infty \sigma(s)\,dB(s)}{F^{-1}(t)} \text{ exists and is not equal to $0$}   \right]>0,
\]
which contradicts \eqref{eq.zeroeventSig}. Hence $\mathbb{P}[A_{-1}']=0$. Thus $\mathbb{P}[A_-]=\mathbb{P}[A_{-1}]$. Moreover, we have that 
\begin{equation}  \label{eq.A1minusok}
\lim_{t\to\infty} \frac{\left(\int_t^\infty \sigma(s)\,dB(s)\right)(\omega)}{F^{-1}(t)}= 0, \quad \omega\in A_{-1}.
\end{equation}

Finally, we consider the situation $\omega\in A_0$, so $\lambda(\omega)=0$. Then 
\[
\lim_{t\to\infty} \frac{|X(t,\omega)|}{F^{-1}(t)}=0.
\]
Therefore, as $\varphi$ is odd, and in $\text{RV}_0(\beta)$, we have 
\[
\lim_{t\to\infty} \frac{|\varphi(X(t,\omega)|}{\varphi(F^{-1}(t))}
=
\lim_{t\to\infty} \frac{\varphi(|X(t,\omega)|)}{\varphi(F^{-1}(t))}=0.
\]
Hence
\[
\lim_{t\to\infty} \frac{\varphi(X(t,\omega))}{\varphi(\Phi^{-1}(t))}=0. 
\]
Therefore, because by L'H\^opital's rule and the fact that $f(x)/\varphi(x)\to 1$ as $x\to 0$, we have 
\[
\lim_{t\to\infty} \frac{\int_t^\infty f(X(s,\omega))\,ds}{F^{-1}(t)}
=\lim_{t\to\infty} \frac{\int_t^\infty \varphi(X(s,\omega))\,ds}{\Phi^{-1}(t)}
=\lim_{t\to\infty} \frac{\varphi(X(t,\omega))}{\varphi(\Phi^{-1}(t))}=0.  
\]
Rearranging \eqref{eq.Xasyintsig} and taking limits yields 
\begin{equation}  \label{eq.A0ok}
\lim_{t\to\infty} \frac{\left(\int_t^\infty \sigma(s)\,dB(s)\right)(\omega)}{F^{-1}(t)}= 0, \quad \omega\in A_0.
\end{equation}

Therefore, we have shown that $1=\mathbb{P}[A]=\mathbb{P}[A_1\cup A_1'\cup A_{-1}\cup A_{-1}'\cup A_0]=\mathbb{P}[A_1\cup A_{-1}\cup A_0]$. Therefore, by \eqref{eq.A0ok}, \eqref{eq.A1minusok}, \eqref{eq.A1ok}, and this statement, we have that 
\[
\lim_{t\to\infty} \frac{\int_t^\infty \sigma(s)\,dB(s)}{F^{-1}(t)}=0, \quad \lim_{t\to\infty} \frac{X(t)}{F^{-1}(t)}=\lambda\in \{-1,0,1\}, \quad\text{a.s.},
\]
which proves \eqref{eq.Xperservasy}  and \eqref{eq.intsig2divF}.

\subsection{Proof of Theorem~\ref{thm.Xneccsuff2}}
To prove part (a), we note that the event 
\[
A:=\{\omega\,:\,\lim_{t \to \infty}\frac{X(t,\omega)}{F^{-1}(t)}=\lambda(\omega)\in (-\infty, \infty)\}
\]
is a sub--event of the event $\{\omega\,:\,\lim_{t\to\infty}X(t,\omega)=0\}$. Therefore, if we assume that $\mathbb{P}[A]>0$ it follows that $X(t)$ tends to zero 
with positive probability, and does so for all outcomes in $A$. 

As in the proof of Theorem~\ref{theorem:Xnecc1}, write $A=A_+\cup A_-\cup A_0$. We can use the argument employed in Theorem~\ref{theorem:Xnecc1} to prove that 
\begin{gather*}
\lim_{t\to\infty} \frac{f(X(t,\omega))}{\varphi(\Phi^{-1}(t))}=\lambda(\omega)^\beta, \quad \omega\in A_+,\\
\lim_{t\to\infty} \frac{f(X(t,\omega))}{\varphi(\Phi^{-1}(t))}=-(-\lambda(\omega))^\beta, \quad \omega\in A_-,\\
\lim_{t\to\infty} \frac{f(X(t,\omega))}{\varphi(\Phi^{-1}(t))}=0, \quad \omega\in A_0.
\end{gather*}
Therefore, as $\varphi\circ \Phi^{-1}\in L^1([0,\infty);\mathbb{R})$, it follows that 
\[
\lim_{t\to\infty} \int_0^t f(X(s,\omega))\,ds \text{ exists and is finite for each $\omega\in A$}.
\] 
Since $X(t,\omega)\to 0$ as $t\to\infty$ for each $\omega\in A$, it follows that every term on the right--hand side of \eqref{eq.sigL2rep} 
tends to a finite limit as $t\to\infty$, for each $\omega\in A$. Therefore, it follows that 
\[
\mathbb{P}\left[\lim_{t\to\infty} \int_0^t \sigma(s)\,dB(s) \text{ exists and is finite}\right]>0. 
\]
If $\sigma\not\in L^2([0,\infty);\mathbb{R})$, we have that 
\[
\mathbb{P}\left[\lim_{t\to\infty} \int_0^t \sigma(s)\,dB(s) \text{ exists and is finite}\right]=0, 
\]
a contradiction. Hence the assumption that $\mathbb{P}[A]>0$ must be false, proving part (a). 

To prove (b), part (i), notice that $\mu=0$ in  \eqref{def.mu} together with Lemma~\ref{lemma.tailsigmart} implies 
\begin{align*} 
\lefteqn{
\limsup_{t\to\infty} \frac{\left(\int_t^\infty \sigma(s)\,dB(s)\right)(\omega)^2}{F^{-1}(t)^2}}\\
&=
\limsup_{t\to\infty}
\frac{(\int_t^\infty \sigma(s)\,dB(s))^2(\omega)}{2\int_t^\infty \sigma^2(s)\,ds \log\log\left(\frac{1}{\int_t^\infty \sigma^2(s)\,ds}\right)} \cdot 
\frac{2 \int_t^\infty \sigma^2(s)\,ds \log \log \left(\frac{1}{\int_t^\infty \sigma^2(s)ds}\right)}{F^{-1}(t)^2} \\
&=1\cdot 0=0, \quad\text{a.s.} 
\end{align*}
Therefore, by Theorem~\ref{thm.stochpressuff}, it follows that $X$ obeys \eqref{eq.Xperservasy}, as required. 
To prove part (ii), let us again suppose that the event $A$ defined above is of positive probability. Arguing as in the proof of Theorem~\ref{theorem:Xnecc1}, 
we see that on the event 
\[
A':=A\cap \Omega_2:=A\cap \{\omega\,:\, \left(\lim_{t\to\infty} \int_0^t \sigma(s)\,dB(s)\right)(\omega) \text{ exists and is finite}\}
\]
(which has the same probability as $A$, because $\sigma\in L^2([0,\infty);\mathbb{R})$ ensures that the second event is a.s.) we have 
\[
 \int_t^\infty \sigma(s)\,dB(s)=-X(t)+\int_t^\infty f(X(s))\,ds.
\]
Therefore, defining $A'_+=A_+\cap \Omega_2$, $A'_-=A_-\cap \Omega_2$, and $A_0'=A_0\cap \Omega_2$, we can argue as in Theorem~\ref{theorem:Xnecc1} to show that 
\begin{align*}
\lim_{t\to\infty} \frac{\left(\int_t^\infty \sigma(s)\,dB(s)\right)(\omega)}{F^{-1}(t)}&=-\lambda(\omega)+\lambda(\omega)^\beta, \quad \omega\in A'_+,  \\
\lim_{t\to\infty} \frac{\left(\int_t^\infty \sigma(s)\,dB(s)\right)(\omega)}{F^{-1}(t)}&= -\lambda(\omega)-(-\lambda(\omega))^\beta, \quad \omega\in A'_-,  \\
\lim_{t\to\infty} \frac{\left(\int_t^\infty \sigma(s)\,dB(s)\right)(\omega)}{F^{-1}(t)}&=0, \quad \omega\in A'_0.
\end{align*}
Therefore, it follows that, for all $\omega\in A'$
\[
\lim_{t\to\infty} \frac{\left(\int_t^\infty \sigma(s)\,dB(s)\right)(\omega)}{F^{-1}(t)}=:\Lambda, \quad\text{exists and is finite}. 
\]
Now, by Lemma~\ref{lemma.tailsigmart}, there is an a.s. event $\Omega_3$ such that for all $\omega\in \Omega_3$ we have 
\[
\limsup_{t\to\infty}
 \frac{\left(\int_t^\infty \sigma(s)\,dB(s)\right)(\omega)}{\sqrt{2\int_t^\infty \sigma^2(s)\,ds \log\log\left(\frac{1}{\int_t^\infty \sigma^2(s)\,ds}\right)}}
 =1,
\] 
with the liminf being $-1$. Therefore, for  $\omega\in A'':=A'\cap \Omega_3$, for which $\mathbb{P}[A'']=\mathbb{P}[A]>0$, we have 
\begin{multline*}
\lim_{t\to\infty}
 \frac{\left(\int_t^\infty \sigma(s)\,dB(s)\right)(\omega)}{\sqrt{2\int_t^\infty \sigma^2(s)\,ds \log\log\left(\frac{1}{\int_t^\infty \sigma^2(s)\,ds}\right)}}
 \\
 =
 \lim_{t\to\infty} 
 \frac{\left(\int_t^\infty \sigma(s)\,dB(s)\right)(\omega)}{F^{-1}(t)}
 \cdot 
 \frac{F^{-1}(t)}{\sqrt{2\int_t^\infty \sigma^2(s)\,ds \log\log\left(\frac{1}{\int_t^\infty \sigma^2(s)\,ds}\right)} }
 =\Lambda\frac{1}{\mu}, 
\end{multline*}
where we interpret $1/\mu=0$ in the case when $\mu$ is infinite. But on $A''$ this limit does not exist, giving the required contradiction. 

\subsection{Proof of martingale result}

Our result follows from a number of lemmas. The first is an easily--believable claim.
\begin{lemma} \label{lemma:0}
Let $\delta\in C([0,\infty);(0,\infty))$ and suppose that 
\[
\mathbb{P}\left[ \lim_{t\to\infty} \frac{B(t)}{\delta(t)}=0\right]>0.
\]
Then $\delta^2(t)/t\to \infty$ as $t\to\infty$.
\end{lemma}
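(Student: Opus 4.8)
The plan is to prove the contrapositive. Suppose $\delta^2(t)/t$ does \emph{not} tend to $\infty$. Then there exist $M>0$ and a sequence $t_n\uparrow\infty$ with $\delta^2(t_n)\le M t_n$, equivalently $0<\delta(t_n)\le\sqrt{M t_n}$, for all $n$; passing to a subsequence and relabelling, I may further assume $t_{n+1}\ge 2 t_n$. Let $A:=\{\omega:\lim_{t\to\infty}B(t)/\delta(t)=0\}$, which by hypothesis has $\mathbb{P}[A]>0$. The goal is to show $\mathbb{P}[A]=0$, a contradiction.

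First I would transfer the hypothesis onto the natural Brownian scale: on $A$ we have $B(t_n)/\delta(t_n)\to 0$, so $|B(t_n)|/\sqrt{t_n}\le\sqrt{M}\,|B(t_n)|/\delta(t_n)\to 0$ on $A$, and since $t_n\le t_{n+1}$ also $B(t_n)/\sqrt{t_{n+1}}\to 0$ on $A$. Next I would introduce the ``block'' variables $Y_n:=(B(t_{n+1})-B(t_n))/\sqrt{t_{n+1}}$. Because the intervals $(t_n,t_{n+1}]$ are disjoint, the increments $B(t_{n+1})-B(t_n)$, and hence the $Y_n$, are independent; and $Y_n$ is centred Gaussian with variance $\sigma_n^2=(t_{n+1}-t_n)/t_{n+1}=1-t_n/t_{n+1}\in[\tfrac12,1)$. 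Writing $Y_n=B(t_{n+1})/\sqrt{t_{n+1}}-B(t_n)/\sqrt{t_{n+1}}$ and using the previous step, $Y_n\to 0$ on $A$, so $\mathbb{P}[Y_n\to 0]\ge\mathbb{P}[A]>0$.

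To finish I would contradict this using only Gaussian tails and the second Borel--Cantelli lemma. Since $\Psi$ is decreasing and $1/(2\sigma_n)\le 1/\sqrt2$, one has $\mathbb{P}[|Y_n|>\tfrac12]=2\Psi(1/(2\sigma_n))\ge 2\Psi(1/\sqrt2)>0$, whence $\sum_n\mathbb{P}[|Y_n|>\tfrac12]=\infty$. The $Y_n$ being independent, the second Borel--Cantelli lemma gives $\mathbb{P}[|Y_n|>\tfrac12\ \text{infinitely often}]=1$. But $\{Y_n\to 0\}$ is contained in the complementary event $\{|Y_n|\le\tfrac12\ \text{eventually}\}$, so $\mathbb{P}[Y_n\to 0]=0$, contradicting $\mathbb{P}[A]>0$. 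Hence $\delta^2(t)/t\to\infty$, as required.

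The main obstacle --- really the only subtlety --- is arranging matters so that genuinely \emph{independent and non-degenerate} Gaussian quantities are available: this is exactly why one passes to a rapidly growing subsequence ($t_{n+1}\ge 2t_n$), which keeps $\sigma_n^2$ bounded away from $0$, and why one must verify that the ``carry-over'' term $B(t_n)/\sqrt{t_{n+1}}$ vanishes on $A$ so that $Y_n\to 0$ there. Once those two points are in place the argument is routine.
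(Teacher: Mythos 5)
The paper states this lemma without proof, describing it only as ``an easily--believable claim'', so there is no in-paper argument to compare against. Your proof is correct and self-contained. The contrapositive set-up is the right move: failure of $\delta^2(t)/t\to\infty$ supplies a sequence $t_n\uparrow\infty$ with $\delta(t_n)\le\sqrt{Mt_n}$, and thinning to a subsequence with $t_{n+1}\ge 2t_n$ is what keeps the block variances $\sigma_n^2=1-t_n/t_{n+1}$ bounded away from zero, which is exactly the non-degeneracy you need. The transfer step $|B(t_n)|/\sqrt{t_n}\le\sqrt{M}\,|B(t_n)|/\delta(t_n)\to 0$ on $A$, together with $t_n/t_{n+1}\le 1$, correctly shows $Y_n=(B(t_{n+1})-B(t_n))/\sqrt{t_{n+1}}\to 0$ on $A$. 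The increments are independent and non-degenerate Gaussian, so $\sum_n\mathbb{P}[|Y_n|>\tfrac12]\ge\sum_n 2\Psi(1/\sqrt2)=\infty$, and the second Borel--Cantelli lemma yields $\mathbb{P}[|Y_n|>\tfrac12\ \text{i.o.}]=1$, which is incompatible with $\mathbb{P}[Y_n\to0]\ge\mathbb{P}[A]>0$. Every step checks out; this is a clean and suitably elementary way to supply the missing proof.
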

Define 
\begin{equation} \label{def.varsigma}
\varsigma(t)=\int_t^\infty \sigma^2(s)\,ds
\end{equation}
If $\varsigma$ is decreasing, there exists $T'>0$ such that we may define 
\begin{equation} \label{def.delta}
\delta(t)=tF^{-1}(\varsigma^{-1}(t)), \quad t\geq T'.
\end{equation}
The assumption that $\varsigma$ be decreasing is relatively mild and quite natural: one important case when it happens is when $\sigma(t)\neq 0$ for all $t\geq T'$, so the equation is always authentically stochastic (more technically, the diffusion coefficient is non--degenerate) for sufficiently large time. 

We now give a result which relates the asymptotic behaviour of $\int_t^\infty \sigma(s)\,dB(s)/F^{-1}(t)$ to that of $B(t)/\delta(t)$ where $\delta$ is defined 
by \eqref{def.delta}. This is clear progress, as it is easier to determine the asymptotic behaviour of $B$ directly rather than the delicate family of 
random variables $\int_t^\infty \sigma(s)\,dB(s)$.
\begin{lemma} \label{lemma:A}
Let $\sigma$ be continuous and in $L^2([0,\infty);\mathbb{R})$. 
Suppose $\varsigma$ in \eqref{def.varsigma} is decreasing. If $\delta$ is defined in \eqref{def.delta}, then 
\[
\mathbb{P}\left[  \lim_{t\to\infty} \frac{\int_t^\infty \sigma(s)\,dB(s)}{F^{-1}(t)}=0\right] 
=\mathbb{P}\left[ \lim_{t\to\infty} \frac{B(t)}{\delta(t)}=0\right].
\]
\end{lemma}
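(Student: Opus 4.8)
The plan is to prove the identity purely at the level of laws of processes, since the statement only compares two probabilities. The key observation is that, for $\sigma\in L^2([0,\infty);\mathbb{R})$, the family $\left(\int_t^\infty \sigma(s)\,dB(s)\right)_{t\ge 0}$ is a centred Gaussian process whose covariance is, for $s\le t$,
\[
\mathbb{E}\Bigl[\int_s^\infty \sigma\,dB\cdot\int_t^\infty \sigma\,dB\Bigr]=\int_t^\infty \sigma^2(u)\,du=\varsigma(t)=\varsigma(s)\wedge\varsigma(t),
\]
the last equality by monotonicity of $\varsigma$. Hence, if $\beta$ denotes a standard Brownian motion, the continuous process $t\mapsto\int_t^\infty\sigma(s)\,dB(s)$ has the same law as $t\mapsto \beta(\varsigma(t))$. (This can equally be read off from the Dambis--Dubins--Schwarz representation of the It\^o martingale $t\mapsto\int_0^t\sigma\,dB$, whose quadratic variation is $\int_0^t\sigma^2$, together with the fact that the time--reversal of a Brownian motion from a fixed time is again a Brownian motion; continuity of the tail--integral family on an almost sure event is standard.) Consequently,
\[
\mathbb{P}\Bigl[\lim_{t\to\infty}\tfrac{\int_t^\infty\sigma(s)\,dB(s)}{F^{-1}(t)}=0\Bigr]
=\mathbb{P}\Bigl[\lim_{t\to\infty}\tfrac{\beta(\varsigma(t))}{F^{-1}(t)}=0\Bigr].
\]

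Next I would change variables twice. Since $\varsigma$ is continuous and, for large $t$ (using the standing assumption $\int_t^\infty\sigma^2>0$ that already accompanies \eqref{eq.intsigpos}), strictly decreasing with $\varsigma(t)\to 0$, the substitution $r=\varsigma(t)$ shows that the last event coincides with $\{\lim_{r\to 0^+}\beta(r)/F^{-1}(\varsigma^{-1}(r))=0\}$; behaviour on any bounded $t$--interval is irrelevant to the limit. Now I invoke Brownian time inversion: $\hat\beta(v):=v\,\beta(1/v)$ for $v>0$ and $\hat\beta(0):=0$ defines a standard Brownian motion, and writing $v=1/r$ gives $\beta(r)=\hat\beta(v)/v$ with $r\to0^+\iff v\to\infty$, so that
\[
\frac{\beta(r)}{F^{-1}(\varsigma^{-1}(r))}=\frac{\hat\beta(v)}{v\,F^{-1}(\varsigma^{-1}(1/v))}=\frac{\hat\beta(v)}{\delta(v)},
\]
with $\delta$ exactly as in \eqref{def.delta}. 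Thus the event above is precisely $\{\lim_{v\to\infty}\hat\beta(v)/\delta(v)=0\}$.

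Finally, since $\hat\beta$ and the given Brownian motion $B$ have the same law, and every transformation used above is either a deterministic reparametrisation of the time axis (which cannot affect whether a limit equals zero) or an equality in law of whole processes, chaining the identities yields
\[
\mathbb{P}\Bigl[\lim_{t\to\infty}\tfrac{\int_t^\infty\sigma(s)\,dB(s)}{F^{-1}(t)}=0\Bigr]
=\mathbb{P}\Bigl[\lim_{v\to\infty}\tfrac{\hat\beta(v)}{\delta(v)}=0\Bigr]
=\mathbb{P}\Bigl[\lim_{t\to\infty}\tfrac{B(t)}{\delta(t)}=0\Bigr],
\]
which is the assertion.

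I expect the main obstacle to be the careful justification of the process--level identity in law in the first step: ensuring the tail stochastic--integral family admits a continuous modification, pinning down that $\varsigma$ (being $C^1$ with $\varsigma'=-\sigma^2$) is genuinely invertible on the relevant range once one has passed to large times, and confirming that after the two substitutions the denominator is \emph{exactly} the function $\delta$ of \eqref{def.delta} rather than some cosmetically different reparametrisation. It is also worth noting in passing that, by the Blumenthal and Kolmogorov zero--one laws applied respectively to $\beta$ at $0$ and to $\hat\beta$ at $\infty$, both probabilities in the statement are automatically either $0$ or $1$.
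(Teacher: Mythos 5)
Your proof is correct and follows essentially the same route as the paper's: identify the tail process $t\mapsto\int_t^\infty\sigma(s)\,dB(s)$ in law with a standard Brownian motion run on the decreasing clock $\varsigma(t)$, change variables $r=\varsigma(t)$ so the question becomes the small--time behaviour of a Brownian motion, and finally apply Brownian time inversion to convert it into the stated large--time statement about $B(t)/\delta(t)$ with $\delta(t)=tF^{-1}(\varsigma^{-1}(1/t))$. The only cosmetic difference is in the first identification --- you verify equality of Gaussian covariance functions, whereas the paper invokes Dambis--Dubins--Schwarz on $\int_0^t\sigma\,dB$ followed by a time reversal from $T=\int_0^\infty\sigma^2$ --- but you explicitly note this alternative yourself, and the two are equivalent.
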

\begin{proof}
Writing $M(t)=\int_t^\infty \sigma(s)\,dB(s)$, the martingale time change theorem asserts the existence of another standard Brownian motion 
$\tilde{B}$ such that $M(t)=\tilde{B}(\langle M\rangle(t))$ for all $t\geq 0$. Notice also that $M$ has an a.s. limit at infinity. Define $T:=\int_0^\infty \sigma^2(s)\,ds=\langle M\rangle(\infty)<+\infty$. Then $\varsigma(t)=T-\langle M\rangle(t)$ for all $t\geq 0$. Since $\int_t^\infty \sigma(s)\,dB(s)=M(\infty)-M(t)$, 
we have
\[
\int_t^\infty \sigma(s)\,dB(s)=M(\infty)-M(t) = \tilde{B}(T)-\tilde{B}(\langle M\rangle(t))=\tilde{B}(T)-\tilde{B}(T-\varsigma(t)).
\]
Since $\varsigma(t)$ is decreasing to $0$ as $t\to\infty$, we have 
\begin{align*}
\lefteqn{
\mathbb{P}\left[\lim_{t\to\infty} \frac{\int_t^\infty \sigma(s)\,dB(s)}{F^{-1}(t)}=0\right]}\\
&=
\mathbb{P}\left[\lim_{t\to\infty} \frac{\tilde{B}(T)-\tilde{B}(T-\varsigma(t))}{F^{-1}(t)}=0\right]\\
&
=
\mathbb{P}\left[\lim_{\tau\downarrow 0} \frac{\tilde{B}(T)-\tilde{B}(T-\tau)}{F^{-1}(\varsigma^{-1}(\tau))}=0\right],
\end{align*}
where we made the substitution $\tau=\varsigma(t)$ at the last step. Now, notice that $B_2(\tau)=\tilde{B}(T-\tau)-\tilde{B}(\tau)$ for $\tau\in [0,T]$ 
is a standard Brownian motion, so therefore
\[
\mathbb{P}\left[\lim_{t\to\infty} \frac{\int_t^\infty \sigma(s)\,dB(s)}{F^{-1}(t)}=0\right]
=
\mathbb{P}\left[\lim_{\tau\downarrow 0} \frac{B_2(\tau)}{F^{-1}(\varsigma^{-1}(\tau))}=0\right].
\]
Since $B_3(t)=tB_2(1/t)$ for $t>0$ and $B_3(0)=0$ is also standard Brownian motion, we make the substitution $\tau=1/t$ and use the definition of $\delta$ in 
\eqref{def.delta} to get
\[
\mathbb{P}\left[\lim_{t\to\infty} \frac{\int_t^\infty \sigma(s)\,dB(s)}{F^{-1}(t)}=0\right]
=
\mathbb{P}\left[\lim_{t\to\infty} \frac{B_3(t)}{\delta(t)}=0\right].
\]
Since $B_3$ has the same distribution as $B$, the claim is proven.
\end{proof}
The first conclusion of the next result can be proven using the argument from Lemma~\ref{lemsig2}.
The second conclusion can be proven in an almost identical manner to Theorem~\ref{theorem:Xnecc1}. The proof is therefore omitted.
\begin{lemma} \label{lemma:B}
Suppose that $f$ is continuous, and obeys \eqref{asym} and \eqref{RVat0} for $\beta>1$. Let $\sigma$ be continuous. 
Suppose that $X$ is the continuous adapted process $X$ which obeys \eqref{eq.sde}. If
\[
\mathbb{P}\left[\lim_{t\to\infty} \frac{X(t)}{F^{-1}(t)}\in (-\infty,\infty)\right]>0,
\]
then $\sigma\in L^2(0,\infty)$ and  
\[
\mathbb{P}\left[  \lim_{t\to\infty} \frac{\int_t^\infty \sigma(s)\,dB(s)}{F^{-1}(t)}=0\right] =1.
\]
\end{lemma}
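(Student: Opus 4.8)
The plan is to follow the two hints attached to the statement: first establish $\sigma\in L^2(0,\infty)$ by the pathwise argument of Lemma~\ref{lemsig2}, then read off the behaviour of the tail integral $\int_t^\infty\sigma(s)\,dB(s)$ essentially as in Theorem~\ref{theorem:Xnecc1}, and finally upgrade the positive-probability conclusion to an almost sure one using the triviality of the tail $\sigma$-field of $B$.

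\emph{Step 1: $\sigma\in L^2(0,\infty)$.} Set $A=\{\omega:\lim_{t\to\infty}X(t,\omega)/F^{-1}(t)=\lambda(\omega)\in(-\infty,\infty)\}$, so $\mathbb{P}[A]>0$ and $X(t)\to 0$ as $t\to\infty$ on $A$. Rearranging \eqref{eq.sde} into the form \eqref{eq.sigL2rep}, it suffices to show that $\int_0^t f(X(s))\,ds$ has a finite limit as $t\to\infty$ for each $\omega\in A$, since then $\int_0^t\sigma(s)\,dB(s)=X(t)-X(0)+\int_0^t f(X(s))\,ds$ converges to a finite limit with positive probability. This is exactly the pathwise estimate carried out in the proof of Theorem~\ref{Thm1}: for $\omega\in A$ one has $|X(t,\omega)|<(|\lambda(\omega)|+1)F^{-1}(t)$ for all large $t$, hence $|f(X(t,\omega))|\le 2\,\varphi((|\lambda(\omega)|+1)F^{-1}(t))$ by Lemma~\ref{asym_odd}, and since $\int_t^\infty f(F^{-1}(s))\,ds=F^{-1}(t)\to 0$ and $\varphi\sim f$, the function $\varphi\circ F^{-1}$ lies in $L^1(0,\infty)$ and the integral converges. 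On the other hand $M(t)=\int_0^t\sigma(s)\,dB(s)$ is a continuous martingale with $\langle M\rangle(\infty)=\int_0^\infty\sigma^2(s)\,ds$, a deterministic quantity since $\sigma$ is deterministic; if it were infinite, the martingale time change theorem together with the law of the iterated logarithm for Brownian motion would give $\limsup_{t\to\infty}M(t)=+\infty$ a.s., so $M(t)$ would fail to converge almost surely, contradicting convergence on a set of positive probability. Hence $\int_0^\infty\sigma^2(s)\,ds<\infty$.

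\emph{Step 2: the limit of the tail integral on $A$.} Since $\sigma\in L^2(0,\infty)$, the martingale convergence theorem makes $\int_t^\infty\sigma(s)\,dB(s)$ well defined on an a.s.\ event $\Omega^\ast$, and the identity \eqref{eq.Xasyintsig}, $\int_t^\infty\sigma(s)\,dB(s)=-X(t)+\int_t^\infty f(X(s))\,ds$, holds on $A\cap\Omega^\ast$. Decomposing $A=A_+\cup A_-\cup A_0$ according to the sign of $\lambda$ and running the L'H\^opital computation from the proof of Theorem~\ref{theorem:Xnecc1} verbatim shows that $\int_t^\infty f(X(s))\,ds/F^{-1}(t)$ tends to $\lambda^\beta$ on $A_+$, to $-(-\lambda)^\beta$ on $A_-$ (using the oddness of $\varphi$), and to $0$ on $A_0$. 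Combined with $X(t)/F^{-1}(t)\to\lambda$ on $A$, this shows that $\lim_{t\to\infty}\int_t^\infty\sigma(s)\,dB(s)/F^{-1}(t)$ exists and is finite for every $\omega\in A\cap\Omega^\ast$.

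\emph{Step 3: upgrade to probability one.} Consequently the event $E:=\{\lim_{t\to\infty}\int_t^\infty\sigma(s)\,dB(s)/F^{-1}(t)\text{ exists and is finite}\}$ has probability at least $\mathbb{P}[A]>0$. For each $t_0\ge 0$ and each $t\ge t_0$ the random variable $\int_t^\infty\sigma(s)\,dB(s)$ depends only on the increments $\{B(s)-B(t_0):s\ge t_0\}$, so $E\in\sigma\{B(s)-B(t_0):s\ge t_0\}$ for every $t_0$, hence $E$ lies in the tail $\sigma$-field of $B$; by Kolmogorov's zero--one law $\mathbb{P}[E]\in\{0,1\}$, so $\mathbb{P}[E]=1$. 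Finally, \eqref{eq.zeroeventSig} says that this limit, when it exists and is finite, is almost surely zero, which yields $\mathbb{P}[\lim_{t\to\infty}\int_t^\infty\sigma(s)\,dB(s)/F^{-1}(t)=0]=1$, as claimed.

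\emph{Main obstacle.} The $L^1$ estimate of $\int_0^t f(X(s))\,ds$ and the L'H\^opital limits are already done, almost word for word, in Theorems~\ref{Thm1} and~\ref{theorem:Xnecc1}, so the genuinely new content is Step 3: passing from ``limit zero on a set of positive probability'' to ``limit zero almost surely''. The delicate point there is verifying that $E$ belongs to the tail $\sigma$-field of $B$, which rests on the observation that $\int_t^\infty\sigma(s)\,dB(s)$ for $t\ge t_0$ sees only the post-$t_0$ increments of $B$; alternatively one may combine \eqref{eq.zeroeventSig} with the almost sure law of the iterated logarithm of Lemma~\ref{lemma.tailsigmart} to reach the same conclusion without explicitly invoking the zero--one law.
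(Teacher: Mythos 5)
Your proof is correct and, for Steps 1 and 2, runs along exactly the lines the paper points to (it declares the proof omitted because it follows the arguments of Lemma~\ref{lemsig2} and Theorem~\ref{theorem:Xnecc1}). The observation you correctly isolate in Step 3 is precisely what the paper's terse remark leaves implicit: Theorem~\ref{theorem:Xnecc1} is stated under the hypothesis $\mathbb{P}[A]=1$, so running its argument under $\mathbb{P}[A]>0$ only yields that $\int_t^\infty\sigma(s)\,dB(s)/F^{-1}(t)\to 0$ on a set of positive probability, and one must promote this to an almost sure statement. Your Step 3 --- identifying $E$ as a tail event of $B$ (since for $t\ge t_0$ the random variable $\int_t^\infty\sigma(s)\,dB(s)$ is determined by the post-$t_0$ increments of $B$) and invoking Kolmogorov's zero--one law --- is exactly the right way to do this and supplies the genuinely new ingredient that the paper's reference alone does not. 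One caution concerning your closing parenthetical: it is not apparent that \eqref{eq.zeroeventSig} combined with Lemma~\ref{lemma.tailsigmart} alone achieves the same upgrade without the zero--one law. Convergence of $\int_t^\infty\sigma(s)\,dB(s)/F^{-1}(t)$ to zero on a positive-probability set, together with the almost sure $\limsup/\liminf$ behaviour from Lemma~\ref{lemma.tailsigmart}, yields $\liminf_{t\to\infty}\Sigma(t)/F^{-1}(t)=0$ but not $\Sigma(t)/F^{-1}(t)\to 0$, since the times at which $|\int_t^\infty\sigma\,dB|/\Sigma(t)$ is near $1$ are random and need not coincide with times at which $\Sigma(t)/F^{-1}(t)$ is large; so that alternative is at best incomplete as stated. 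The zero--one law argument should be regarded as the primary one.
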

The following result can be deduced from the Kolmogorov--Erdos characterisation of the law of the iterated logarithm for standard Brownian motion. 
\begin{lemma} \label{lemma:C}
Suppose that $\delta\in C([0,\infty);(0,\infty))$ is such that $t\mapsto \delta^2(t)/t$ is increasing. Then 
the following are equivalent:
\begin{itemize}
\item[(a)] 
\[
\int_1^\infty \frac{1}{t}\exp\left(-\frac{\epsilon^2}{\delta^2(t)/t}\right)\,dt < +\infty, \quad \text{for all $\epsilon>0$};
\]
\item[(b)]
\[
\mathbb{P}\left[ \lim_{t\to\infty} \frac{B(t)}{\delta(t)}=0\right]=1.
\]
\end{itemize}
\end{lemma}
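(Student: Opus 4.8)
The plan is to deduce the equivalence from the Kolmogorov--Erd\H{o}s integral test for upper functions of Brownian motion (a refinement of the law of the iterated logarithm), after a short reduction. Write $\rho(t)=\delta^2(t)/t$, which by hypothesis is increasing; consequently $\delta(t)=\sqrt{t}\,\sqrt{\rho(t)}$ is increasing and tends to infinity, and so is $\psi_\epsilon(t):=\epsilon\delta(t)/\sqrt{t}=\epsilon\sqrt{\rho(t)}$ for each fixed $\epsilon>0$. Since $\rho$ is increasing it has a limit $\rho(\infty)\in(0,\infty]$, and I would first dispose of the case $\rho(\infty)<\infty$: then $\delta(t)\le C\sqrt{t}$ with $C=\sqrt{\rho(\infty)}$, and since $\limsup_{t\to\infty}B(t)/\sqrt{t}=+\infty$ a.s.\ by the law of the iterated logarithm, also $\limsup_{t\to\infty}B(t)/\delta(t)=+\infty$ a.s., so (b) fails; at the same time $\exp(-\epsilon^2\rho(t))\ge\exp(-\epsilon^2\rho(\infty))>0$ forces $\int_1^\infty t^{-1}\exp(-\epsilon^2\rho(t))\,dt=\infty$, so (a) fails. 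Hence both (a) and (b) fail in this case, and from now on I assume $\rho(t)\to\infty$, so that $\psi_\epsilon(t)\to\infty$ as well.

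Next I would reduce (b) to a statement about an almost sure constant. Since $\delta(t)\to\infty$, for each $s\ge0$ one has $B(t)/\delta(t)=(B(t)-B(s))/\delta(t)+B(s)/\delta(t)$ with the last term tending to $0$, so $\limsup_{t\to\infty}B(t)/\delta(t)$ is measurable with respect to $\sigma\{B(u)-B(s):u\ge s\}$ for every $s$; being measurable with respect to the tail $\sigma$--field $\bigcap_s\sigma\{B(u)-B(s):u\ge s\}$, which is $\mathbb{P}$--trivial, it equals an a.s.\ constant $c^\ast\in[0,\infty]$. By symmetry ($-B\stackrel{d}{=}B$) the same constant governs $\limsup_{t\to\infty}(-B(t))/\delta(t)$, so $\limsup_{t\to\infty}|B(t)|/\delta(t)=c^\ast$ a.s. Hence (b) holds iff $c^\ast=0$, which in turn holds iff $c^\ast\le\epsilon$ for every $\epsilon>0$.

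Now I would invoke the Kolmogorov--Erd\H{o}s test with the increasing function $\psi_\epsilon$, for which $\sqrt{t}\,\psi_\epsilon(t)=\epsilon\delta(t)$: the test asserts that $\{B(t)>\epsilon\delta(t)$ for arbitrarily large $t\}$ has probability $0$ or $1$ according as $\int_1^\infty t^{-1}\psi_\epsilon(t)\exp(-\psi_\epsilon(t)^2/2)\,dt$ converges or diverges, that is, absorbing the constant $\epsilon$, according as
\[
J(\epsilon):=\int_1^\infty \frac{\sqrt{\rho(t)}}{t}\exp\!\left(-\tfrac12\epsilon^2\rho(t)\right)\,dt
\]
is finite or infinite. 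Thus $c^\ast\le\epsilon$ a.s.\ when $J(\epsilon)<\infty$, while $c^\ast\ge\epsilon$ a.s.\ when $J(\epsilon)=\infty$, whence $c^\ast=0$ iff $J(\epsilon)<\infty$ for every $\epsilon>0$. Finally I would show that, as $\epsilon$ ranges over $(0,\infty)$, this family coincides with the family $I(\epsilon):=\int_1^\infty t^{-1}\exp\bigl(-\epsilon^2\delta^2(t)/t\bigr)\,dt<\infty$ appearing in~(a). Since $\rho(t)\ge\rho(1)>0$ one has $\exp(-\tfrac12\epsilon^2\rho(t))\le1\le\rho(1)^{-1/2}\sqrt{\rho(t)}$, so $\exp(-\epsilon^2\rho(t))\le\rho(1)^{-1/2}\sqrt{\rho(t)}\exp(-\tfrac12\epsilon^2\rho(t))$ and $I(\epsilon)\le\rho(1)^{-1/2}J(\epsilon)$; conversely, choosing $\epsilon'$ with $(\epsilon')^2<\tfrac12\epsilon^2$, the map $\rho\mapsto\sqrt{\rho}\,\exp(-(\tfrac12\epsilon^2-(\epsilon')^2)\rho)$ is bounded on $[\rho(1),\infty)$ by some $C$, so $\sqrt{\rho(t)}\exp(-\tfrac12\epsilon^2\rho(t))\le C\exp(-(\epsilon')^2\rho(t))$ and $J(\epsilon)\le C\,I(\epsilon')$. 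Hence ``$J(\epsilon)<\infty$ for all $\epsilon>0$'' and ``$I(\epsilon)<\infty$ for all $\epsilon>0$'' are the same condition, and chaining all the equivalences yields (a) $\Leftrightarrow$ (b).

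The step I expect to require the most care is the last one: passing between the integrand $t^{-1}\exp(-\epsilon^2\rho(t))$ of~(a) and the integrand $t^{-1}\sqrt{\rho(t)}\exp(-\tfrac12\epsilon^2\rho(t))$ produced by the Kolmogorov--Erd\H{o}s test, whose extra factor $\sqrt{\rho(t)}=\sqrt{\delta^2(t)/t}$ is genuinely present. That factor is harmless precisely because both (a) and (b) are quantified uniformly over $\epsilon>0$, so one may always trade a slightly worse exponential rate for the polynomial prefactor, exponentials dominating powers of $\rho(t)\ge\rho(1)>0$; this is exactly the bookkeeping carried out above. One should also verify the mild regularity hypotheses under which the cited form of the Kolmogorov--Erd\H{o}s test applies; here they hold because $\psi_\epsilon(t)=\epsilon\delta(t)/\sqrt{t}$ is increasing and tends to infinity, which is a direct consequence of the assumption that $t\mapsto\delta^2(t)/t$ is increasing (and not merely eventually so).
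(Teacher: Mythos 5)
Your proof is correct and follows precisely the route the paper indicates: reduce to the Kolmogorov--Erd\H{o}s integral test with $\psi_\epsilon(t)=\epsilon\delta(t)/\sqrt t$, and then absorb the prefactor $\sqrt{\rho(t)}=\delta(t)/\sqrt t$ into the exponential by trading the parameter $\epsilon$, exactly as the paper's remark immediately after the lemma explains. You have simply supplied the details the paper leaves implicit (the zero--one law reduction to an a.s.\ constant, the dismissal of the case $\rho(\infty)<\infty$, and the two-sided bookkeeping between $I(\epsilon)$ and $J(\epsilon)$).
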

A factor of $s(t):=\delta(t)/\sqrt{t}$, which appears in the Kolmogorov--Erdos characterisation, is omitted here because the dependence of the parameter 
$\epsilon$ enables this subdominant term $s$ to be subsumed into more rapidly decaying the exponential term. 

We make a remarks and then prove our main result. Suppose that $\varsigma$ is increasing. 
Notice from Lemma~\ref{lemma:B} combined with Lemma~\ref{lemma:A} that 
\[
\mathbb{P}\left[\lim_{t\to\infty} \frac{X(t)}{F^{-1}(t)}\in (-\infty,\infty)\right]>0.
\]
implies 
\[
\mathbb{P}\left[ \lim_{t\to\infty} \frac{B(t)}{\delta(t)}=0\right]=1.
\]
Taking this in conjunction with Lemma~\ref{lemma:0} we see that $t\mapsto \delta^2(t)/t$ tends to infinity as $t\to\infty$. Hence, if we want to 
preserve any of the main features of the decay rate of the underlying deterministic equation, even with positive probability, we must demand that
$\delta^2(t)/t\to \infty$ as $t\to\infty$. Strengthening this to ask that the limit is reached monotonically, we may give a deterministic characterisation 
of the preservation of the rate of decay of the solution of $y'(t)=-f(y(t))$ in the solution of \eqref{eq.sde}.
\begin{theorem} \label{theorem:mainsdedetnesssuff}
Suppose that $f$ is continuous, and obeys \eqref{asym} and \eqref{RVat0} for $\beta>1$. Let $\sigma$ be continuous. 
Suppose that $X$ is the continuous adapted process $X$ which obeys \eqref{eq.sde}. Suppose finally that $\varsigma$ defined in \eqref{def.varsigma}
is decreasing and that $\delta$ is the function defined in \eqref{def.delta}.  
\begin{itemize}
\item[(i)] If  
\[
\mathbb{P}\left[\lim_{t\to\infty} \frac{X(t)}{F^{-1}(t)}\in (-\infty,\infty) \right]>0,
\]
then $t\mapsto \delta^2(t)/t\to\infty$ as $t\to\infty$.
\item[(ii)]
If moreover $t\mapsto \delta^2(t)/t$ is increasing,  then the following are equivalent:
\begin{enumerate}
\item[(a)] $\sigma\in L^2([0,\infty);\mathbb{R})$ and 
\[
\int_1^\infty \frac{1}{t}\exp\left(-\frac{\epsilon^2}{\delta^2(t)/t}\right)\,dt < +\infty, \quad \text{for all $\epsilon>0$};
\] 
\item[(b)] 
\[
\mathbb{P}\left[\lim_{t\to\infty} \frac{X(t)}{F^{-1}(t)}\in (-\infty,\infty) \right]>0;
\]
\item[(c)]
\[
\mathbb{P}\left[ \lim_{t\to\infty} \frac{X(t)}{F^{-1}(t)}\in \{-1,0,1\} \right]=1.
\]
\end{enumerate}
\end{itemize}
\end{theorem}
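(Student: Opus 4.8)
The plan is to assemble the theorem by chaining together the lemmas already established, using essentially the same ingredients for both parts. The backbone is the following sequence of probability-preserving reductions: by Lemma~\ref{lemma:B}, the hypothesis that $X(t)/F^{-1}(t)$ has a finite limit with positive probability forces $\sigma\in L^2([0,\infty);\mathbb{R})$ and forces $\int_t^\infty\sigma(s)\,dB(s)/F^{-1}(t)\to 0$ almost surely; by Lemma~\ref{lemma:A}, which is available because $\varsigma$ is decreasing by hypothesis, the event $\{\int_t^\infty\sigma(s)\,dB(s)/F^{-1}(t)\to 0\}$ and the event $\{B(t)/\delta(t)\to 0\}$ have equal probability. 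A small preliminary step is to extend the function $\delta$ of \eqref{def.delta}, which a priori is defined only for $t\geq T'$, to an element of $C([0,\infty);(0,\infty))$ in an arbitrary positive continuous way; this alters neither the behaviour as $t\to\infty$ nor the convergence of $\int_1^\infty$, so the hypotheses of Lemmas~\ref{lemma:0} and~\ref{lemma:C} become genuinely applicable.

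For part (i) I would argue as follows. From $\mathbb{P}[\lim_{t\to\infty}X(t)/F^{-1}(t)\in(-\infty,\infty)]>0$, the backbone above yields $\mathbb{P}[\lim_{t\to\infty}B(t)/\delta(t)=0]=1$, which is in particular positive, and then Lemma~\ref{lemma:0} forces $\delta^2(t)/t\to\infty$ as $t\to\infty$. This part uses only that $\varsigma$ is decreasing, not the monotonicity of $\delta^2(t)/t$.

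For part (ii), under the extra hypothesis that $t\mapsto\delta^2(t)/t$ is increasing, I would prove the cycle (a)~$\Rightarrow$~(c)~$\Rightarrow$~(b)~$\Rightarrow$~(a). The implication (c)~$\Rightarrow$~(b) is immediate since probability one implies positive probability. For (b)~$\Rightarrow$~(a): the backbone gives $\sigma\in L^2$ and $\mathbb{P}[\lim_{t\to\infty}B(t)/\delta(t)=0]=1$, and then Lemma~\ref{lemma:C}, whose hypothesis now holds, converts this into the Kolmogorov--Erd\H{o}s integral condition, which is precisely (a). For (a)~$\Rightarrow$~(c): run Lemma~\ref{lemma:C} in the reverse direction to get $\mathbb{P}[\lim_{t\to\infty}B(t)/\delta(t)=0]=1$, then Lemma~\ref{lemma:A} to obtain $\int_t^\infty\sigma(s)\,dB(s)/F^{-1}(t)\to 0$ a.s.; together with $\sigma\in L^2$ this is exactly hypothesis (a) of Theorem~\ref{thm.Xneccsuff}, whose equivalence with its own condition (c) delivers $\mathbb{P}[\lim_{t\to\infty}X(t)/F^{-1}(t)\in\{-1,0,1\}]=1$.

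The proof is therefore largely bookkeeping once the lemmas are in hand; the only genuinely non-cosmetic point is the ``$0$--$1$'' upgrade hidden in the step (b)~$\Rightarrow$~(a), where a statement holding merely with positive probability is boosted, via Lemma~\ref{lemma:B}, to an almost-sure statement, and thence to (c) holding almost surely. I expect the main obstacle in writing this out cleanly to be tracking the several monotonicity and domain conditions ($\varsigma$ decreasing, $\delta$ continuous and positive on all of $[0,\infty)$, $\delta^2(t)/t$ increasing) carefully through each invocation of the lemmas, rather than any fresh analytic difficulty.
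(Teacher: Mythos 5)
Your proposal is correct and follows essentially the same route as the paper: part (i) is obtained exactly as in the paper's pre-theorem discussion by chaining Lemmas~\ref{lemma:B}, \ref{lemma:A} and \ref{lemma:0}, and part (ii) is the same cycle of implications (you write (a)$\Rightarrow$(c)$\Rightarrow$(b)$\Rightarrow$(a), the paper writes (c)$\Rightarrow$(b)$\Rightarrow$(a)$\Rightarrow$(c), but the constituent steps are identical). The only cosmetic difference is that you invoke Theorem~\ref{thm.Xneccsuff} at the final step where the paper invokes Theorem~\ref{thm.stochpressuff}; either works, since the latter is what establishes that direction of the former.
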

\begin{proof}
We have proved (i) in the discussion above. Now we prove (ii). 
Suppose that (c) holds. Then clearly (b) is true. This implies that $\sigma\in L^2(0,\infty)$. By Lemma~\ref{lemma:B}
we have that
\[
\mathbb{P}\left[  \lim_{t\to\infty} \frac{\int_t^\infty \sigma(s)\,dB(s)}{F^{-1}(t)}=0\right] =1.
\]
Then by Lemma~\ref{lemma:A} we have 
\[
\mathbb{P}\left[ \lim_{t\to\infty} \frac{B(t)}{\delta(t)}=0\right]=1.
\]
Since $t\mapsto \delta^2(t)/t$ is increasing, we have from Lemma~\ref{lemma:C} that 
\[
\int_1^\infty \frac{1}{t}\exp\left(-\frac{\epsilon^2}{\delta^2(t)/t}\right)\,dt < +\infty, \quad \text{for all $\epsilon>0$},
\] 
which proves (a). It remains to show that (a) implies (c). Since (a) holds, by Lemma~\ref{lemma:C}, it follows that 
\[
\mathbb{P}\left[ \lim_{t\to\infty} \frac{B(t)}{\delta(t)}=0\right]=1.
\]
Therefore Lemma~\ref{lemma:A} and the monotonicity of $\varsigma$ gives  
\[
\mathbb{P}\left[  \lim_{t\to\infty} \frac{\int_t^\infty \sigma(s)\,dB(s)}{F^{-1}(t)}=0\right] =1.
\]
Finally, by Theorem~\ref{thm.stochpressuff} it follows that $X$ obeys (c).
\end{proof}

\section{Proof of Theorem~\ref{thm:Xderiv} and~\ref{thm:Xderivconverse}}
To prove Theorem~\ref{thm:Xderiv}, we require preliminary asymptotic estimates on the $h$--increment of the It\^o integral in \eqref{eq.sde}, as well as 
an auxiliary stochastic process with the same diffusion coefficient as \eqref{eq.sde}. We prove that both of these processes are small relative to 
$(f\circ F^{-1})(t)$ as $t\to\infty$ a.s. under the condition that $S_f(\epsilon,h)$ is finite for all $\epsilon>0$. The proof of the converse, Theorem~\ref{thm:Xderivconverse}, is more straightforward and follows in the second subsection.
\subsection{Proof of Theorem~\ref{{thm:Xderiv}}}
As promised, we start with a lemma concerning the asymptotic behaviour of the $h$--increment of the It\^o integral in \eqref{eq.sde}.
\begin{lemma} \label{sigmaIntegral:zero}
Suppose that $f$ is continuous, and obeys \eqref{asym} and \eqref{RVat0} for $\beta>1$. Let $\sigma$ be continuous. 
Let $h>0$ and define $S_f(\epsilon,h)$ as in \eqref{def.Sf}. If $S_f(\epsilon,h)< +\infty$ for all $\epsilon > 0$, then 
\begin{align*}
\lim_{t \to \infty}\frac{ \int_{t}^{t+h}{\sigma(s)dB(s)} }{ (f \circ F^{-1})(t) }=0, \text{ a.s.}
\end{align*}
\end{lemma}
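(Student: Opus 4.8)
The plan is to transfer the statement from a general time $t$ to the mesh $\{nh:n\ge 1\}$, and then to run a first Borel--Cantelli argument in which the tail probabilities of the relevant martingale maxima are controlled by the summability of $S_f(\cdot,h)$.

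First, I would reduce matters to the mesh. Set
\[
v_n:=\int_{nh}^{(n+1)h}\sigma^2(s)\,ds, \qquad M_n:=\sup_{nh\le s\le(n+2)h}\left|\int_{nh}^{s}\sigma(u)\,dB(u)\right|.
\]
If $t\in[nh,(n+1)h)$ then $t+h\in[(n+1)h,(n+2)h)$, so $\int_t^{t+h}\sigma(s)\,dB(s)=\int_{nh}^{t+h}\sigma(s)\,dB(s)-\int_{nh}^{t}\sigma(s)\,dB(s)$ and hence $|\int_t^{t+h}\sigma(s)\,dB(s)|\le 2M_n$. Since $f\in\text{RV}_0(\beta)$ with $\beta>1$ and $F^{-1}\in\text{RV}_\infty(-1/(\beta-1))$, the composition $f\circ F^{-1}$ lies in $\text{RV}_\infty(-\beta/(\beta-1))$; it is positive, and as $F^{-1}(t)\to0$ and $f(0)=0$ it tends to $0$. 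By the uniform convergence theorem for regularly varying functions, $(f\circ F^{-1})(t)/(f\circ F^{-1})(nh)\to 1$ uniformly for $t\in[nh,(n+1)h]$, and $(f\circ F^{-1})((n+1)h)/(f\circ F^{-1})(nh)\to 1$; hence there is $N_1$ such that, for $n\ge N_1$, $(f\circ F^{-1})(t)\ge\tfrac12(f\circ F^{-1})(nh)$ for all $t\in[nh,(n+1)h]$ and $\tfrac12\le(f\circ F^{-1})((n+1)h)/(f\circ F^{-1})(nh)\le 2$. Consequently, for $t\in[nh,(n+1)h)$ with $n\ge N_1$,
\[
\frac{|\int_t^{t+h}\sigma(s)\,dB(s)|}{(f\circ F^{-1})(t)}\le\frac{4M_n}{(f\circ F^{-1})(nh)},
\]
and it suffices to prove that $M_n/(f\circ F^{-1})(nh)\to 0$ a.s.

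Next, I would estimate the tail of $M_n$. For fixed $n$, the process $N_s:=\int_{nh}^{nh+s}\sigma(u)\,dB(u)$, $s\in[0,2h]$, is a continuous martingale with $\langle N\rangle_{2h}=v_n+v_{n+1}$; by the martingale time change theorem $N_s=W(\langle N\rangle_s)$ for a standard Brownian motion $W$, so $M_n=\sup_{0\le s\le 2h}|W(\langle N\rangle_s)|\le\sup_{0\le r\le v_n+v_{n+1}}|W(r)|$, and the reflection principle gives $\mathbb{P}[M_n\ge a]\le 4\Psi\bigl(a/\sqrt{v_n+v_{n+1}}\bigr)$ for $a>0$ (when $v_n+v_{n+1}=0$ this is vacuous, since then $M_n=0$ a.s.). Taking $a=\epsilon(f\circ F^{-1})(nh)$ and recalling $\theta^2(n)=v_n/(f\circ F^{-1})^2(nh)$, the bound $\sqrt{v_n+v_{n+1}}\le\sqrt{v_n}+\sqrt{v_{n+1}}$ together with the comparability of $(f\circ F^{-1})(nh)$ and $(f\circ F^{-1})((n+1)h)$ yields, for $n\ge N_1$,
\[
\frac{\epsilon(f\circ F^{-1})(nh)}{\sqrt{v_n+v_{n+1}}}\ge\frac14\min\!\left(\frac{\epsilon}{\theta(n)},\frac{\epsilon}{\theta(n+1)}\right)
\]
(the middle quantity is at least $\tfrac{\epsilon}{2\theta(n)}$ when $v_n\ge v_{n+1}$, and at least $\tfrac{\epsilon}{4\theta(n+1)}$ when $v_n<v_{n+1}$, using $(f\circ F^{-1})(nh)\ge\tfrac12(f\circ F^{-1})((n+1)h)$). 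As $\Psi$ is decreasing, $\mathbb{P}[M_n\ge\epsilon(f\circ F^{-1})(nh)]\le 4\bigl(\Psi(\tfrac{\epsilon}{4\theta(n)})+\Psi(\tfrac{\epsilon}{4\theta(n+1)})\bigr)$ for $n\ge N_1$; summing and using \eqref{def.Sf},
\[
\sum_{n\ge N_1}\mathbb{P}\bigl[M_n\ge\epsilon(f\circ F^{-1})(nh)\bigr]\le 8\,S_f(\epsilon/4,h)<+\infty
\]
by hypothesis, for every $\epsilon>0$. The first Borel--Cantelli lemma then supplies, for each $\epsilon>0$, an almost sure event on which $M_n<\epsilon(f\circ F^{-1})(nh)$ for all large $n$; intersecting over $\epsilon=1/k$, $k\ge 1$, gives $M_n/(f\circ F^{-1})(nh)\to 0$ a.s., which with the first step proves the lemma.

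The main obstacle is the bookkeeping in the last step: the natural maximal estimate is in terms of $v_n+v_{n+1}$, hence it involves both $\theta(n)$ and $\theta(n+1)$, and one must argue --- splitting on whether $v_n\ge v_{n+1}$, and using that consecutive values of $f\circ F^{-1}$ are comparable by regular variation --- that the resulting probabilities are dominated by a fixed multiple of $S_f(\epsilon',h)$ for a suitable $\epsilon'>0$. The remaining ingredients (time change, the reflection principle, and the uniform convergence theorem for regularly varying functions) are routine.
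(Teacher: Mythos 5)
Your proof is correct and takes essentially the same route as the paper: reduce to the mesh $\{nh\}$, control the tail of the relevant martingale maximum via the martingale time change theorem and the reflection principle, express the tail bound in terms of $\Psi(\epsilon'/\theta(\cdot))$, and apply the first Borel--Cantelli lemma. The only substantive difference is the decomposition of the increment. The paper splits at $(n+1)h$, writing $\int_t^{t+h}=-\int_{(n+1)h}^{t}+\int_{(n+1)h}^{t+h}$ and handling the two pieces separately; you instead dominate everything by the single maximal process $M_n=\sup_{nh\le s\le (n+2)h}\bigl|\int_{nh}^s\sigma\,dB\bigr|$ over the double block and then split the variance $v_n+v_{n+1}$ according to which of $v_n$, $v_{n+1}$ is larger. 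Your version is arguably tidier: the paper's displayed bound on its first term replaces $\sup_{nh\le t\le(n+1)h}\bigl|\int_{(n+1)h}^t\sigma\,dB\bigr|$ by the single endpoint value $\bigl|\int_{nh}^{(n+1)h}\sigma\,dB\bigr|$, which as written is not a valid inequality (though the conclusion still follows since that term can be handled by a maximal inequality just like the second). Your use of the uniform convergence theorem to transfer $(f\circ F^{-1})(t)$ to $(f\circ F^{-1})(nh)$ and your constant tracking (ending with $S_f(\epsilon/4,h)$) are both fine.
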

\begin{proof}
Considering $\int_t^{t+h}{\sigma(s)dB(s)}$ we write, for $nh \leq t \leq (n+1)h$,
\begin{align*}
\left|\int_t^{t+h}{\sigma(s)dB(s)}\right| \leq \left|-\int_{(n+1)h}^{t}{\sigma(s)dB(s)}\right| + \left|\int_{(n+1)h}^{t+h}{\sigma(s)dB(s)}\right|.
\end{align*} 
It follows that 
\begin{align*}
\sup_{nh \leq t \leq (n+1)h}\left|\int_t^{t+h}{\sigma(s)dB(s)}\right| \leq &\sup_{nh \leq t \leq (n+1)h} \left|-\int_{(n+1)h}^{t}{\sigma(s)dB(s)}\right| \\ + &\sup_{nh \leq t \leq (n+1)h} \left|\int_{(n+1)h}^{t+h}{\sigma(s)dB(s)}\right|.
\end{align*} 
Similarly we obtain 
\begin{align} \label{eq:***}
\nonumber \sup_{nh \leq t \leq (n+1)h}\left|\int_t^{t+h}{\sigma(s)dB(s)}\right| \leq &\left|-\int_{nh}^{(n+1)h}{\sigma(s)dB(s)}\right| \\ + &\sup_{(n+1)h \leq t \leq (n+2)h} \left|\int_{(n+1)h}^{t}{\sigma(s)dB(s)}\right|.
\end{align}
Here we note that $S_f(\epsilon,h)< +\infty$ implies that 
\begin{align*}
\lim_{t \to \infty}\frac{\int_{nh}^{(n+1)h}{\sigma(s)dB(s)}}{(f \circ F^{-1})(t)} = 0, \quad \text{a.s.}
\end{align*}
by means of the first Borel--Cantelli lemma. Next we proceed to estimate 
\begin{align*}
\mathbb{P}\left[ Z((n+1)h) > \epsilon \right],\,\, \text{for some } \epsilon \in (0,1), 
\end{align*}
where
\begin{align*} 
Z((n+1)h) := \sup_{(n+1)h \leq t \leq (n+2)h} \frac{\left|\int_{(n+1)h}^{t}{\sigma(s)dB(s)}\right|}{(f \circ F^{-1})((n+1)h)}, \,\, n \geq 1.
\end{align*}
We also define the function
\begin{align*}
\tau(t) := \frac{ \int_{(n+1)h}^{t}\sigma^2(s)ds }{ (f \circ F^{-1})^2((n+1)h) }, \text{ for } t \in [(n+1)h, (n+2)h].
\end{align*}
By the Martingale Time Change Theorem there exists a standard Brownian motion $B_n^*$ such that
\begin{align*}
\lefteqn{
\mathbb{P}[ Z((n+1)h) > \epsilon]} \\
&=\mathbb{P}\left[ \sup_{t \in [(n+1)h, (n+2)h]}\left| B_{n+1}^*(\tau(t)) \right| > \epsilon \right] \\
&=\mathbb{P}\left[\sup_{u \in [0, \tau((n+2)h)]} |B_{n+1}^*(u)| > \epsilon \right] \\
&\leq \mathbb{P}\left[\sup_{u \in [0, \tau((n+2)h)]} B_{n+1}^*(u) > \epsilon \right] 
+ \mathbb{P}\left[\sup_{u \in [0, \tau((n+2)h)]} -B_{n+1}^*(u) > \epsilon \right] \\ 
&=\mathbb{P}\left[ |B_{n+1}^*(\tau((n+2)h))| > \epsilon \right] + \mathbb{P}\left[ |B_{n+1}^{**}(\tau((n+2)h))| > \epsilon \right],
\end{align*}
where $-B_{n+1}^* = B_{n+1}^{**}$ is a standard Brownian motion. Thus, as $B_{n+1}^*(\tau((n+2)h))$ is normally distributed with zero mean we have 
\begin{align*}
\lefteqn{
\mathbb{P}[Z((n+1)h) > \epsilon]}\\ 
&\leq 2\mathbb{P}\left[ |B_{n+1}^*(\tau((n+2)h))| > \epsilon \right] 
= 4\mathbb{P}\left[ B_{n+1}^*(\tau((n+2)h)) > \epsilon \right] \\
&= 4\Psi\left(\frac{\epsilon}{\sqrt{\tau((n+2)h)}} \right)  = 4\Psi\left(\frac{\epsilon}{\theta(n+1)} \right).
\end{align*}
But since we assumed that $S_f(\epsilon) < +\infty$, we have
\begin{align*}
\sum_{n=0}^\infty \mathbb{P}[Z((n+1)h) > \epsilon]  \leq 4\sum_{n=0}^\infty \Psi\left( \frac{\epsilon}{\sqrt{\tau((n+2)h)}} \right) 
< +\infty.
\end{align*}
We can then apply the first Borel--Cantelli Lemma to conclude that 
\begin{align*}
\limsup_{n \to \infty}Z((n+1)h)< \epsilon \quad\text{a.s.}
\end{align*}
Thus
\begin{align*}
\lim_{n \to \infty} \sup_{(n+1)h \leq t \leq (n+2)h} 
\frac{\left| \int_{(n+1)h}^{t} \sigma(s)\,dB(s) \right|}{ (f \circ F^{-1})((n+1)h) } = 0 \quad\text{a.s.}
\end{align*}
Combining this with (\ref{eq:***}) we get 
\begin{align*}
\lim_{n \to \infty} \sup_{nh \leq t \leq (n+1)h} \frac{\left| \int_{t}^{t+h} \sigma(s)\,dB(s) \right|}{ (f \circ F^{-1})(nh) } = 0,\quad \text{a.s.},
\end{align*}
as required.
\end{proof}
We next need the asymptotic behaviour of an auxiliary process which solves an affine SDE. 
\begin{lemma} \label{eq.YOUasy}
Suppose that $f$ is continuous, and obeys \eqref{asym} and \eqref{RVat0} for $\beta>1$. Let $\sigma$ be continuous. 
Let $h>0$ and define $S_f(\epsilon,h)$ as in \eqref{def.Sf}. Suppose that $S_f(\epsilon,h)< +\infty$ for all $\epsilon > 0$.
Let $Y$ be the unique continuous adapted process which solves  
\begin{align} \label{Y}
dY(t) = -Y(t)dt + \sigma(t)dB(t), \, t \geq 0, \quad Y(0)=0.
\end{align}
Then
\begin{align}
\lim_{t \to \infty}\frac{Y(t)}{(f \circ F^{-1})(t)} = 0, \quad\text{a.s.}
\end{align} 
\end{lemma}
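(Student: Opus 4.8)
The plan is to exploit the explicit variation-of-constants representation of the solution of \eqref{Y}, namely $Y(t)=\int_0^t e^{-(t-s)}\sigma(s)\,dB(s)$, or more usefully $Y(t)=e^{-(t-nh)}Y(nh)+\int_{nh}^t e^{-(t-s)}\sigma(s)\,dB(s)$ for $t\geq nh$, and thereby reduce matters to the $h$--increment estimate already obtained in Lemma~\ref{sigmaIntegral:zero}. Throughout write $M_n(s)=\int_{nh}^s\sigma(r)\,dB(r)$ and $R_n=\sup_{nh\leq s\leq (n+1)h}|M_n(s)|$, and recall two facts: (i) in the course of proving Lemma~\ref{sigmaIntegral:zero} it was shown, using $S_f(\epsilon,h)<+\infty$ for all $\epsilon>0$, the reflection principle and the first Borel--Cantelli lemma, that $R_n/(f\circ F^{-1})(nh)\to 0$ a.s.; and (ii) since $f\in\text{RV}_0(\beta)$ and $F^{-1}\in\text{RV}_\infty(-1/(\beta-1))$ with $\beta>1$, the composition $f\circ F^{-1}$ is regularly varying at infinity with the negative index $-\beta/(\beta-1)$, so that $(f\circ F^{-1})(nh)/(f\circ F^{-1})((n-1)h)\to 1$ and, by the uniform convergence theorem, $(f\circ F^{-1})(nh)/(f\circ F^{-1})(t)$ is bounded uniformly over $t\in[nh,(n+1)h]$ for all large $n$.

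First I would show, by a pathwise stochastic integration by parts (legitimate since $s\mapsto e^{-(t-s)}$ is of finite variation), that for $t\in[nh,(n+1)h]$
\[
\int_{nh}^t e^{-(t-s)}\sigma(s)\,dB(s)=M_n(t)-\int_{nh}^t e^{-(t-s)}M_n(s)\,ds,
\]
from which $\bigl|\int_{nh}^t e^{-(t-s)}\sigma(s)\,dB(s)\bigr|\leq |M_n(t)|+R_n\int_{nh}^t e^{-(t-s)}\,ds\leq 2R_n$. Applying the same identity on $[(n-1)h,nh]$ with $t=nh$ gives $|J_n|\leq 2R_{n-1}$, where $J_n:=\int_{(n-1)h}^{nh}e^{-(nh-s)}\sigma(s)\,dB(s)$, and hence $|J_n|/(f\circ F^{-1})(nh)\to 0$ a.s. by (i) and (ii).

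Next I would control $Y$ along the grid $t=nh$. From $Y(nh)=e^{-h}Y((n-1)h)+J_n$ and $Y(0)=0$, writing $c_n:=|Y(nh)|/(f\circ F^{-1})(nh)$ and $b_n:=|J_n|/(f\circ F^{-1})(nh)$, fact (ii) gives $c_n\leq q\,c_{n-1}+b_n$ for all $n$ sufficiently large, where $q:=e^{-h/2}<1$. Since $b_n\to 0$ a.s., a routine iteration of this inequality (bounding the resulting geometric sum by splitting it at a large index) yields $c_n\to 0$ a.s. Finally, for $t\in[nh,(n+1)h]$ the variation-of-constants identity and the bound $\bigl|\int_{nh}^t e^{-(t-s)}\sigma(s)\,dB(s)\bigr|\leq 2R_n$ give $|Y(t)|\leq|Y(nh)|+2R_n$, so
\[
\sup_{nh\leq t\leq(n+1)h}\frac{|Y(t)|}{(f\circ F^{-1})(nh)}\leq c_n+\frac{2R_n}{(f\circ F^{-1})(nh)}\longrightarrow 0\quad\text{a.s.},
\]
and multiplying through by the uniformly bounded factor $(f\circ F^{-1})(nh)/(f\circ F^{-1})(t)$ gives $Y(t)/(f\circ F^{-1})(t)\to 0$ a.s., as claimed.

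The main obstacle is not conceptual but organisational: one must transfer the unweighted increment estimate of Lemma~\ref{sigmaIntegral:zero} to the exponentially weighted increments forced on us by the representation of $Y$ (this is exactly the role of the integration by parts above), and then keep track of the fact that the regularly varying normalisation $f\circ F^{-1}$ may be shifted freely between the endpoints $nh$, $(n-1)h$ and $t$ of the intervals involved. The recursion $c_n\leq q\,c_{n-1}+b_n$ is the only step requiring a short argument rather than a direct appeal to earlier results, and it is entirely standard.
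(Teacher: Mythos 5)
Your proof is correct, and it takes a route that is organisationally different from, and somewhat leaner than, the paper's. Both arguments hinge on the same two ingredients: a grid recursion of the form $Y(nh)=e^{-h}Y((n-1)h)+J_n$ with $J_n=\int_{(n-1)h}^{nh}e^{-(nh-s)}\sigma(s)\,dB(s)$, and an intra-interval supremum estimate to pass from the grid to continuous time. Where you differ is in how the exponentially weighted stochastic integrals are controlled. The paper treats the grid increment $\tilde V_h(n)=J_n/(\varphi\circ\Phi^{-1})(nh)$ and the intra-interval supremum $U(n+1)$ as two fresh Gaussian/martingale objects, and runs the reflection-principle bound and the first Borel--Cantelli lemma a second and third time, each time verifying that the resulting normal tail probabilities are summable because $S_f(\epsilon/2,h)<\infty$. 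You instead observe that, by the pathwise integration by parts
\[
\int_{nh}^{t}e^{-(t-s)}\sigma(s)\,dB(s)=M_n(t)-\int_{nh}^{t}e^{-(t-s)}M_n(s)\,ds, \qquad M_n(s)=\int_{nh}^{s}\sigma\,dB,
\]
the exponential weight is harmless and gives $\bigl|\int_{nh}^{t}e^{-(t-s)}\sigma(s)\,dB(s)\bigr|\le 2R_n$ with $R_n=\sup_{nh\le s\le(n+1)h}|M_n(s)|$, so that everything reduces to the single estimate $R_n/(f\circ F^{-1})(nh)\to 0$ a.s., which was already established (under the same notation $Z((n+1)h)$) inside the proof of Lemma~\ref{sigmaIntegral:zero}. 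This avoids two repeated Borel--Cantelli computations. Your recursion $c_n\le q\,c_{n-1}+b_n$ with $q=e^{-h/2}<1$ plays exactly the role of the paper's majorising sequence $\bar Y$ with multiplier $e^{-h}(1+h/2)<1$, and your final step — bounding $(f\circ F^{-1})(nh)/(f\circ F^{-1})(t)$ uniformly over $t\in[nh,(n+1)h]$ via regular variation — matches the paper's handling of the factor $(\varphi\circ\Phi^{-1})(nh)/(\varphi\circ\Phi^{-1})(t)$. In short: same skeleton, but you buy two fewer Gaussian tail estimates at the small cost of one integration by parts, which is a genuine simplification.
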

\begin{proof}
Define 
\[
V_h(n) = \int_{(n-1)h}^{nh} e^{s-nh}\sigma(s)\,dB(s), \quad n\geq 1; \quad \tilde{V}_h(n)=\frac{V_h(n)}{(\varphi \circ \Phi^{-1})(nh)}, \quad n\geq 1.
\]
Then $(\tilde{V}_h(n))_{n\geq 1}$ is a sequence of independent normal random variables with zero mean and variance
\[
\tilde{v}_h^2(n)=\frac{1}{(\varphi\circ \Phi^{-1})(nh)^2}\int_{(n-1)h}^{nh} e^{2s-2nh}\sigma^2(s)\,ds, \quad n\geq 1.
\]
We show first that $\tilde{V}_h(n)\to 0$ a.s. as $n\to\infty$. 
By the fact that $f\circ F^{-1}$ is asymptotic to $\varphi\circ \Phi^{-1}$, there is $N=N(h)$ such that for all $n\geq N(h)$ we have 
\[
\tilde{v}_h^2(n)\leq \frac{1}{(\varphi\circ \Phi^{-1})(nh)^2} \int_{(n-1)h}^{nh} \sigma^2(s)\,ds \leq 4 \theta^2(n-1).
\]
Hence $\tilde{v}_h(n)\leq 2\theta(n-1)$ for $n\geq N(h)$. Also
$\mathbb{P}[|\tilde{V}_h(n)|>\epsilon]=2\mathbb{P}[\tilde{V}_h(n)/\tilde{v}_h(n)>\epsilon/\tilde{v}_h(n)]$, so
\[
\mathbb{P}[|\tilde{V}_h(n)|>\epsilon]=2\Psi(\epsilon/\tilde{v}_h(n)) \leq 2\Psi(\epsilon/\tilde{v}_h(n))\leq 2\Psi(\epsilon/2/\theta(n-1)), \quad n\geq N(h),
\]
since $\Psi$ is decreasing, and $\epsilon/v_h(n)\geq \epsilon/(2\theta(n-1))$ for $n\geq N(h)$. Now, as $S_f(\epsilon/2,h)<+\infty$, it follows that 
\[
\sum_{n=0}^\infty \mathbb{P}[|\tilde{V}_h(n)|>\epsilon] <+\infty
\] 
for every $\epsilon>0$, and hence, by the first Borel--Cantelli lemma, it follows that $\mathbb{P}[\lim_{n\to\infty} \tilde{V}_h(n)=0]=1$. 

Next, we note that as $Y$ is a solution of \eqref{Y}, it obeys 
\begin{align*}
Y(t) = e^{-t}\int_0^t e^s \sigma(s)\, dB(s),\quad t\geq 0.
\end{align*}
Notice that  
$Y((n+1)h) = e^{-h}Y(nh) + V_h(n+1)$ for $n\geq 0$. Now we define $\tilde{Y}(t) := Y(t)/(\varphi \circ \Phi^{-1})(t)$ for $t\geq 0$ and thus we have 
\begin{align*}
\tilde{Y}((n+1)h) &= e^{-h}\frac{Y(nh)}{(\varphi \circ \Phi^{-1})((n+1)h)} + \tilde{V}_h(n+1) \\
&= e^{-h}\frac{(\varphi \circ \Phi^{-1})(nh)}{(\varphi \circ \Phi^{-1})((n+1)h)}\tilde{Y}(nh) + \tilde{V}_h(n+1).
\end{align*}
Hence
\begin{align*}
\tilde{Y}((n+1)h) = a(nh) \tilde{Y}(nh) + \tilde{V}_h(n+1), \quad n\geq 0,
\end{align*}
where $a(nh) := e^{-h}(\varphi \circ \Phi^{-1})(nh) / (\varphi \circ \Phi^{-1})((n+1)h)$. We note that $a(nh) > 0$ for all $n \in \N$ and that $\lim_{n \to \infty}a(nh)=e^{-h}$. Notice that $(1+h/2)e^{-h}<1$ for all $h>0$. Since $a(nh)\to e^{-h}$ as $n\to\infty$, there exists $N_2(h)\in \mathbb{N}$ such that $a(nh)\leq (1+h/2)e^{-h}<1$ for all $n\geq N_2$. Next, we may write, for all $n > N_2(h)$
\begin{align*}
|\tilde{Y}((n+1)h)| &\leq a(nh) |\tilde{Y}(nh)| + |\tilde{V}_h(n+1)| \\ 
&\leq e^{-h}(1+h/2)|\tilde{Y}(nh)| + |\tilde{V}_h(n+1)|.
\end{align*}
From this inequality we define
\begin{gather*}
\bar{Y}((n+1)h) = e^{-h}(1+h/2)\bar{Y}(nh)+ |\tilde{V}_h(n+1)|, \quad n \geq  N_2(h)+1,\\ 
\bar{Y}(nh) = |\tilde{Y}(nh)| +1, \quad n = N_2(h)+1.
\end{gather*}
Therefore we have that $|\tilde{Y}(nh)|<\bar{Y}(nh)$ for $n\geq N_2(h)+1$. 
Since $\tilde{V}_h(n)\to 0$ as $n\to\infty$ a.s., it follows that $\bar{Y}(nh)\to 0$ as $n\to\infty$ a.s. Hence $\tilde{Y}(nh)\to 0$ as $n\to\infty$ a.s.

Next, let $t \in [nh, (n+1)h]$. Then
\begin{align*}
\frac{Y(t)}{(\varphi \circ \Phi^{-1})(t)} = \frac{1}{(\varphi \circ \Phi^{-1})(t)} Y(nh)e^{-(t-nh)} 
+ \frac{e^{-t}}{(\varphi \circ \Phi^{-1})(t)}\int_{nh}^{t} e^s\sigma(s)\,dB(s).
\end{align*}
Notice since $\varphi'(x)\to 0$ as $x\to 0$, and $(\Phi^{-1})'(t)=(\varphi\circ \Phi^{-1})(t)$ that 
$t\mapsto e^{-t}/(\varphi\circ \Phi^{-1})(t)$ is decreasing on $(T_3,\infty)$ for some $T_3>0$. 
Define $N_3\in \mathbb{N}$ such that $N_3h>T_3$. Also $t\mapsto (\varphi\circ \Phi^{-1})(t)$ is decreasing on $[0,\infty)$. 
Then we have for $n\geq N_3$ that $t\geq nh\geq N_3h>T_3$, and so 
\begin{align*}
\lefteqn{
\sup_{t\in [nh,(n+1)h]}
\frac{|Y(t)|}{(\varphi \circ \Phi^{-1})(t)}}
\\ 
&\leq  
\sup_{t\in [nh,(n+1)h]}
\frac{1}{(\varphi \circ \Phi^{-1})(t)} |Y(nh)|  
+ \sup_{t\in [nh,(n+1)h]}\frac{e^{-t}}{(\varphi \circ \Phi^{-1})(t)}\left|\int_{nh}^{t} e^s\sigma(s)\,dB(s)\right|\\
&\leq \frac{|Y(nh)|}{(\varphi \circ \Phi^{-1})((n+1)h)}  
+ \frac{e^{-(n+1)h}}{(\varphi \circ \Phi^{-1})((n+1)h)}\sup_{t\in [nh,(n+1)h]}\left|\int_{nh}^{t} e^s\sigma(s)\,dB(s)\right|.
\end{align*}
Since $Y(nh)/\varphi\circ \Phi^{-1}(nh)\to 0$ as $n\to\infty$ a.s. and $\varphi\circ \Phi^{-1}$ is in $\text{RV}_\infty(-\beta/(\beta-1))$ we have that 
the first term on the right--hand side has zero limit as $n\to\infty$ a.s. Therefore it remains to prove that 
\[
U(n+1):=\frac{e^{-(n+1)h}}{(\varphi \circ \Phi^{-1})((n+1)h)}\sup_{t\in [nh,(n+1)h]}\left|\int_{nh}^{t} e^s\sigma(s)\,dB(s)\right|.
\]
obeys $U(n)\to 0$ as $n\to\infty$ a.s., as this will demonstrate that 
\[
\lim_{n\to\infty} \sup_{t\in [nh,(n+1)h]} \frac{|Y(t)|}{(\varphi\circ \Phi^{-1})(t)}=0, \quad\text{a.s.}
\]
Next, we see that with $\rho(t)=\int_{nh}^{t} e^{2s}\sigma^2(s)\,ds$ for $t\in [nh,(n+1)h)$, by the martingale time change theorem, 
there exists a standard Brownian motion $B^\ast_n$ such that
\begin{align*}
\lefteqn{\mathbb{P}[U(n+1)>\epsilon]}\\
&=\mathbb{P}\left[\sup_{t\in [nh,(n+1)h]}\left|\int_{nh}^{t} e^s\sigma(s)\,dB(s)\right|>\epsilon \frac{e^{(n+1)h}}{(\varphi \circ \Phi^{-1})((n+1)h)} \right]\\
&=\mathbb{P}\left[\sup_{t\in [nh,(n+1)h]}|B^\ast_n(\rho(t))|>\epsilon \frac{e^{(n+1)h}}{(\varphi \circ \Phi^{-1})((n+1)h)} \right]\\
&=\mathbb{P}\left[\sup_{t\in [0,\rho((n+1)h)]}|B^\ast_n(t)|>\epsilon \frac{e^{(n+1)h}}{(\varphi \circ \Phi^{-1})((n+1)h)} \right].
\end{align*}
By standard arguments, we get that 
\begin{align*}
\mathbb{P}[U(n+1)>\epsilon]
&\leq 2\mathbb{P}\left[ \sup_{t\in [0,\rho((n+1)h)]} B^\ast_n(t) >\epsilon \frac{e^{(n+1)h}}{(\varphi \circ \Phi^{-1})((n+1)h)}  \right]\\
&= 2\mathbb{P}\left[|B^\ast_n(\rho((n+1)h))| >\epsilon \frac{e^{(n+1)h}}{(\varphi \circ \Phi^{-1})((n+1)h)}\right]\\
&=4\Psi\left(\epsilon \frac{e^{(n+1)h}}{(\varphi \circ \Phi^{-1})((n+1)h)} \frac{1}{\sqrt{\rho((n+1)h)}}  \right).
\end{align*}
Finally, we estimate the right--hand side of the above expression. Since 
\[
\sqrt{\rho((n+1)h)}=\left(\int_{nh}^{(n+1)h} e^{2s}\sigma^2(s)\,ds\right)^{1/2}\leq e^{(n+1)h}\left(\int_{nh}^{(n+1)h} \sigma^2(s)\,ds\right)^{1/2},
\]
we have
\begin{multline*}
\epsilon\frac{e^{(n+1)h}}{(\varphi \circ \Phi^{-1})((n+1)h)}\cdot 
\frac{1}{\sqrt{\rho((n+1)h)}}
\\
\geq 
\epsilon 
\frac{(f\circ F^{-1})(nh)}{(\varphi \circ \Phi^{-1})((n+1)h)}
\cdot\frac{1}{(f\circ F^{-1})(nh)} \left(\int_{nh}^{(n+1)h} \sigma^2(s)\,ds\right)^{-1/2}.
\end{multline*}
Next, the fact that $f\circ F^{-1}$ is asymptotic to $\varphi\circ\Phi^{-1}$ and that both are regularly varying functions 
means there is an $N_4(h)\in\mathbb{N}$ such that
\[
\frac{(f\circ F^{-1})(nh)}{(\varphi \circ \Phi^{-1})((n+1)h)}\geq \frac{1}{2}, \quad n\geq N_4(h). 
\]
Hence by the definition of $\theta(n)$, for $n\geq N_4(h)$, we get
\[
\epsilon\frac{e^{(n+1)h}}{(\varphi \circ \Phi^{-1})((n+1)h)}\cdot 
\frac{1}{\sqrt{\rho((n+1)h)}}
\geq 
\frac{\epsilon}{2} \frac{1}{\theta(n)},
\]
so as $\Psi$ is decreasing, we have for $n\geq N_4(h)$ that
\begin{align*}
\mathbb{P}[U(n+1)>\epsilon]
\leq 4\Psi\left(\epsilon \frac{e^{(n+1)h}}{(\varphi \circ \Phi^{-1})((n+1)h)} \frac{1}{\sqrt{\rho((n+1)h)}}  \right)
\leq 4\Psi\left(\frac{\epsilon/2}{\theta(n)}\right).
\end{align*}
Since $S_f(\epsilon/2,h)<+\infty$ for all $\epsilon>0$, it follows that 
\[
\sum_{n=1}^\infty 
\mathbb{P}[U(n+1)>\epsilon] < +\infty
\]
for every $\epsilon>0$, and therefore, by the first Borel--Cantelli lemma, it follows that $\mathbb{P}[\lim_{n\to\infty} U(n)=0]=1$.
As noted above, this is the remaining fact that guarantees that $\lim_{t\to\infty} Y(t)/(f\circ F^{-1}(t))=0$ a.s., as required.
\end{proof}  
We now have all the ingredients to prove Theorem~\ref{thm:Xderiv}.
\begin{proof}[Proof of Theorem~\ref{thm:Xderiv}]
Since $S_f(\epsilon,h) < +\infty$ for all $\epsilon > 0$, by Lemma~\ref{eq.YOUasy}, we have that 
\[
\lim_{t\to\infty} \frac{Y(t)}{(f\circ F^{-1})(t)}=0, \quad \text{a.s.}
\]
Consider $Z(t)=X(t)-Y(t)$ for $t\geq 0$. Since $(f\circ F^{-1})(nh)\to 0$ as $n\to\infty$, it follows that $S_f(\epsilon,h)<+\infty$ for all $\epsilon>0$ 
this implies that 
\[
S(\epsilon,h):=\sum_{n=1}^\infty \Psi\left(\frac{\epsilon}{\sqrt{\int_{nh}^{(n+1)h} \sigma^2(s)\,ds}}\right)<+\infty.
\]
This has been shown in \cite{appchengrod:2012} to give $X(t)\to 0$ as $t\to\infty$ a.s. Therefore, we have that $Z(t)\to 0$ as $t\to\infty$ a.s.
Next, we have that $Z'(t)=-f(Z(t)+Y(t))+Y(t)$ for $t\geq 0$. Now, given that $f$ is continuous, we have that 
\[
Z'(t)=-f(Z(t))+g(t), 
\]
where $g$ is continuous and is given by $g(t)=f(Z(t))-f(Z(t)+Y(t))+Y(t)$ for $t\geq 0$. Now, since $f$ is locally Lipschitz continuous, it follows that 
there is $K_\delta>0$ such that $|f(x)-f(y)|\leq K_1|x-y|$ for all $|x|,|y|\leq \delta$. Since $Z(t)\to 0$ and $Y(t)\to 0$ as $t\to\infty$, it follows that 
$|Z(t)|\leq \delta/2$, $|Y(t)|\leq \delta/2$ for all $t\geq T_1$. Therefore $|f(Z(t))-f(Z(t)+Y(t))|\leq K_\delta|Y(t)|$ for $t\geq T_1$. Hence  
$|g(t)|\leq (1+K_\delta)|Y(t)|$ for $t\geq T_1$. Therefore, we have that 
\[
\lim_{t\to\infty} \frac{g(t)}{(f\circ F^{-1})(t)}=0, \quad \text{a.s.}
\]
Thus, by Theorem~\ref{thm.detpressuff}, we have that for each outcome in an a.s. event we have that 
$Z(t,\omega)/F^{-1}(t)\to \lambda(\omega)\in \{-1,0,1\}$ as $t\to\infty$. Since $f(x)/x\to 0$ as $x\to 0$, it 
follows that $Y(t)/F^{-1}(t)\to 0$ as $t\to\infty$ a.s., so therefore we have that  
\begin{align*}
\lim_{t \to \infty}\frac{X(t,\omega)}{F^{-1}(t)} = \lambda(\omega)\in \{-1,0,1\}, 
\end{align*}
for every outcome $\omega$ in some a.s. event. This is the first limit in \eqref{eq:Xderiv}. Next let 
\begin{align*}
\Omega_0 := \left\{\omega : \text{ a solution $X(\cdot,\omega)$ exists and } \lim_{t \to \infty}X(t,\omega)=0 \right\}.
\end{align*}
We further define the events 
\begin{align*}
&A_0 := \left\{\omega : \lim_{t \to \infty}\frac{X(t,\omega)}{F^{-1}(t)}=0 \right\} \cap \Omega_0, \\
&A_1^+ := \left\{\omega : \lim_{t \to \infty}\frac{X(t,\omega)}{F^{-1}(t)}=1 \right\} \cap \Omega_0, \\
&A_1^- := \left\{\omega : \lim_{t \to \infty}\frac{X(t,\omega)}{F^{-1}(t)}=-1 \right\} \cap \Omega_0, 
\end{align*}
with $A_1 := A_1^+ \cup A_1^-$. Therefore we have that $\Omega_1 := A_0 \cup A_1$ is a.s. and since $S_f(\epsilon,h)<\infty$ for all $\epsilon>0$, by 
Lemma~\ref{sigmaIntegral:zero}, we have
\begin{align*}
\lim_{t \to \infty}\frac{\int_t^{t+h} \sigma(s)\,dB(s)}{(f \circ F^{-1})(t)}= 0, \quad \text{a.s.}
\end{align*}
Let the event on which this limit holds be $\Omega_2$ and let $\Omega_3 = \Omega_0 \cap \Omega_2$. For $\omega \in \Omega_3$ we have 
\begin{align*}
\lim_{t \to \infty}\frac{\int_t^{t+h}{f(X(s))ds}}{(f \circ F^{-1})(t)} = \lim_{t \to \infty}\frac{\int_t^{t+h}{\varphi(X(s))ds}}{(\varphi \circ F^{-1})(t)},
\end{align*}
where $\varphi$ is odd,  increasing and $\varphi \in RV_0(\beta)$. If $\omega \in A_0$, then $X(t,\omega)/F^{-1}(t) \rightarrow 0$ as $t \to \infty$. Thus 
\begin{align*}
\lim_{t \to \infty}\frac{|\varphi(X(t,\omega))|}{\varphi(F^{-1}(t))} = \lim_{t \to \infty}\frac{\varphi(|X(t,\omega)|)}{\varphi(F^{-1}(t))} = 0.
\end{align*}
Hence as $\varphi \circ F^{-1} \in \text{RV}_0(-\beta/(\beta-1))$, we have 
\begin{align*}
\lim_{t \to \infty}\frac{\int_t^{t+h}{\varphi(X(s,\omega))ds}}{\varphi(F^{-1}(t))} = 0,
\end{align*}
and therefore we have 
\begin{align*}
\lim_{t \to \infty}\frac{\int_t^{t+h}{f(X(s,\omega))ds}}{(f\circ F^{-1})(t)} = 0, \quad \omega \in A_0 \cap \Omega_3.
\end{align*}
Hence for $\omega \in A_0 \cap \Omega_3$, we have
\begin{align*}
\lim_{t\to\infty}
\frac{\frac{X(t+h,\omega)-X(t,\omega)}{h}}{(f \circ F^{-1})(t)} 
&= \lim_{t\to\infty} 
\frac{\frac{1}{h}\int_t^{t+h} -f(X(s))\,ds }{(f \circ F^{-1})(t)} + \frac{\frac{1}{h}\int_t^{t+h} \sigma(s)\,dB(s) }{(f \circ F^{-1})(t)} \\
&= 0 = -\lambda(\omega). 
\end{align*}

If $\omega \in A_1^+ \cap \Omega_3$, we have $\lim_{t \to \infty} X(t,\omega)/F^{-1}(t) = 1$, so as $\varphi$ is regularly varying, it follows that
$\lim_{t \to \infty} \varphi(X(t,\omega))/\varphi(F^{-1}(t)) = 1$. Hence 
\begin{align*}
\lim_{t \to \infty}\frac{\int_t^{t+h} \varphi(X(s,\omega))\,ds}{\varphi(F^{-1}(t))} = h,
\end{align*}
and so 
\begin{align*}
\lim_{t \to \infty}\frac{-\frac{1}{h}\int_t^{t+h} \varphi(X(s,\omega))\,ds}{\varphi(F^{-1}(t))} = -1.
\end{align*}
Therefore, for $\omega \in A_1^+ \cap \Omega_3$, we have 
\begin{align*}
\lim_{t \to \infty}\frac{\frac{X(t+h,\omega)-X(t,\omega)}{h}}{(f \circ F^{-1})(t)} = - 1 = -\lambda(\omega).
\end{align*}

Similarly, for $\omega \in A_1^- \cap \Omega_3$, we use the fact that $\varphi$ is odd to get
\begin{align*}
\lim_{t \to \infty}\frac{\varphi(X(t,\omega))}{\varphi(F^{-1}(t))} = \lim_{t \to \infty}\frac{-\varphi(-X(t,\omega))}{\varphi(F^{-1}(t))} = -1,
\end{align*}
from which we obtain, for $\omega \in A_1^- \cap \Omega_3$, 
\begin{align*}
\lim_{t \to \infty}\frac{\frac{X(t+h,\omega)-X(t,\omega)}{h}}{(f \circ F^{-1})(t)} = 1 = -\lambda(\omega).  
\end{align*}

Thus for $\omega\in  A_1 \cap \Omega_3$ we have  
\begin{align*}
\lim_{t \to \infty}\frac{\frac{X(t+h,\omega)-X(t,\omega)}{h}}{(f \circ F^{-1})(t)} =-\lambda(\omega),
\end{align*}
and so for all $\omega\in \Omega_1\cap\Omega_3$ we have 
\begin{equation*}
\lim_{t \to \infty}\frac{\frac{X(t+h,\omega)-X(t,\omega)}{h}}{(f \circ F^{-1})(t)} =-\lambda(\omega).
\end{equation*}
But since $\Omega_1$ and $\Omega_3$ are a.s. events, we have the second limit in \eqref{eq:Xderiv}, as required. 
\end{proof}

\subsection{Proof of Theorem~\ref{thm:Xderivconverse}}
Define $A=A_1\cup A_{-1}\cup A_0$. If we are in the case when $\omega\in A_1$, 
then 
\begin{align*}
\lim_{t \to \infty}\frac{X(t,\omega)}{F^{-1}(t)} = 1.
\end{align*}
Hence $X(t,\omega) \sim F^{-1}(t)$ as $t \to \infty$ and so $f(X(t,\omega)) \sim (f \circ F^{-1})(t)$ as $t \to \infty$. Therefore we have that
\begin{align*}
\lim_{t \to \infty}\frac{\frac{1}{h} \int_t^{t+h} f(X(s,\omega))\,ds}{(f \circ F^{-1})(t)} = 1.
\end{align*}
By hypothesis we know that 
\begin{align*}
\lim_{t \to \infty}\frac{-\frac{1}{h} \int_t^{t+h} f(X(s,\omega))\,ds}{(f \circ F^{-1})(t)} + 
\lim_{t \to \infty}\frac{\frac{1}{h} \int_t^{t+h} \sigma(s)\,dB(s)}{(f \circ F^{-1})(t)} = -\lambda(\omega) = -1, \text{ a.s. on $A_1$}.
\end{align*}
Thus we can conclude that 
\begin{align*}
\lim_{t \to \infty}\frac{\int_t^{t+h}{\sigma(s)\,dB(s)}}{(f \circ F^{-1})(t)} = 0, \text{ a.s. on $A_1$}.
\end{align*}
By the same argument (and using Lemma~\ref{asym_odd})) we can show that on $A_{-1}$, we get  
\begin{align*}
\lim_{t \to \infty}\frac{ \int_t^{t+h} \sigma(s)\,dB(s)}{(f \circ F^{-1})(t)} = 0, \text{ a.s. on  $A_{-1}$}.
\end{align*}
A similar limit applies for $A_0$:
\begin{align*}
\lim_{t \to \infty}\frac{ \int_t^{t+h} \sigma(s)\,dB(s)}{(f \circ F^{-1})(t)} = 0, \text{ a.s. on  $A_0$}.
\end{align*}
Therefore as the limit applies to $A_1, A_{-1}$ and $A_0$, it applies to all of $A$, a.s., and in particular along the sequence of times $nh$, for $n\geq 1$:
\begin{align} \label{integral_zero}
\lim_{t \to \infty}\frac{\int_{nh}^{(n+1)h} \sigma(s)\,dB(s)}{(f \circ F^{-1})(nh)} = 0, \text{ a.s. on $A$}.
\end{align}
Since $A$ is an event of positive probability and the random variables 
\begin{align*}
\tilde{Y_n} = \frac{\int_{nh}^{(n+1)h} \sigma(s)\,dB(s)}{(f \circ F^{-1})(nh)}
\end{align*}
are independent, the convergence in (\ref{integral_zero}) is a.s. by the Zero--One Law. Moreover, since the $\tilde{Y_n}$ are independent, the Borel--Cantelli Lemmas force $S_f(\epsilon,h) < +\infty$ for all $\epsilon>0$, as claimed.

\section{Proofs from Examples Section}
\subsection{Proof of Lemma~\ref{lemma.osyexamp}}
Let $j\in \mathbb{N}$ and consider 
\[
\int_{2j\pi}^{2(j+1)\pi} k_0(s)\,ds 
= \int_{0}^{\pi} k(u+2\pi j) \sin(u)\,du + \int_{\pi}^{2\pi} k(v+2\pi j) \sin(v)\,dv. 
\]
Now 
\begin{multline*}
\int_{\pi}^{2\pi} k(v+2\pi j) \sin(v)\,dv \\
= \int_{0}^\pi k(u+\pi+2\pi j)\sin(u+\pi)\,du = -\int_0^\pi k(u+2\pi j + \pi)\sin(u)\,du. 
\end{multline*}
Therefore
\[
\int_{2j\pi}^{2(j+1)\pi} k_0(s)\,ds 
= \int_{0}^{\pi} \{k(u+2\pi j)-k(u+2\pi j+\pi)\} \sin(u)\,du. 
\]
Hence 
\begin{align*}
\lefteqn{
\left|
\frac{1}{-k'(2j\pi)\cdot \pi} \int_{2j\pi}^{2(j+1)\pi} k_0(s)\,ds - \int_0^\pi \sin(u)\,du\right|}\\
&\leq 
\int_0^\pi \left|\frac{\{k(u+2\pi j)-k(u+2\pi j+\pi)\}}{-k'(2j\pi)\cdot \pi} -1\right| |\sin(u)|\,du\\
&\leq 
\pi \sup_{u\in [0,\pi]} \left|\frac{\{k(u+2\pi j)-k(u+2\pi j+\pi)\}}{-k'(2j\pi)\cdot \pi} -1\right|.
\end{align*}
By the mean value theorem, for any $u\in [0,\pi]$, there is $\xi_{j,u}\in [0,\pi]$ such that we have 
\begin{multline*}
\left|
\frac{k(u+2\pi j)-k(u+2\pi j+\pi)}{-k'(2j\pi)\cdot \pi}-1\right|=\left|\frac{k'(u+2\pi j+\xi_{u,j})}{k'(2j\pi)}-1\right|
\\\leq \sup_{v\in [0,2\pi]} \left|\frac{k'(v+2\pi j)}{k'(2j\pi)}-1\right|.
\end{multline*}
Since $\int_0^\pi \sin(u)\,du=2$, we have
\begin{align*}
\lefteqn{
\left|
\frac{1}{-k'(2j\pi)\cdot \pi} \int_{2j\pi}^{2(j+1)\pi} k_0(s)\,ds - 2\right|
}\\
&\leq 
\pi \sup_{v\in [0,2\pi]} \left|\frac{k'(v+2\pi j)}{k'(2j\pi)}-1\right|.
\end{align*}
By hypothesis, we therefore have that 
\begin{equation}\label{eq.k1}
\lim_{j\to\infty} \frac{1}{-k'(2j\pi)} \int_{2j\pi}^{2(j+1)\pi} k_0(s)\,ds = 2\pi.
\end{equation}
Next 
\[
\frac{k(2j\pi)-k(2j\pi+2\pi)}{-k'(2j\pi)} - 2\pi
=
\int_{2j\pi}^{2j\pi+2\pi} \left\{\frac{k'(s)}{k'(2j\pi)}-1\right\}\,ds,
\]
so 
\[
\lim_{j\to\infty} \frac{(k(2j\pi)-k(2j\pi+2\pi))}{-2\pi k'(2j\pi)} = 1.
\]
Combining this with \eqref{eq.k1} gives
\[
\lim_{j\to\infty} \frac{1}{k(2j\pi)-k(2j\pi+2\pi)} \int_{2j\pi}^{2(j+1)\pi} k_0(s)\,ds = 1.
\]
Since $k(t)\to 0$ as $t\to\infty$, by Toeplitz lemma,
\begin{equation}  \label{eq.k2}
\lim_{n\to\infty}
\frac{\int_{2n\pi}^\infty k_0(s)\,ds}{k(2\pi n)}
=
\lim_{n\to\infty}
\frac{\sum_{j=n}^\infty \int_{2j\pi}^{2(j+1)\pi} k_0(s)\,ds}{\sum_{j=n}^\infty \frac{1}{2}(k(2j\pi)-k(2j\pi+2\pi))}
=1.
\end{equation}
\eqref{eq.k2} demonstrates that the first part of (i) is valid. We now use it to prove part (ii). To do so, 
let $n(t)$ be the largest integer less than or equal to $t/(2\pi)$ i.e., $n(t)=\lfloor t/(2\pi) \rfloor$. Then
\[
\frac{\int_{2\pi(n(t)+1)}^\infty k_0(s)\,ds}{k(t)}
=\frac{\int_{2\pi(n(t)+1)}^\infty k_0(s)\,ds}{k(2\pi (n(t)+1))}\cdot \frac{k(2\pi (n(t)+1))}{k(t)} \to 1
\]
as $t\to\infty$. Also
\[
\lim_{t\to\infty} \left\{\frac{\int_t^{2\pi(n(t)+1)} k_0(s)\,ds}{k(t)}-\int_t^{2\pi(n(t)+1)} \sin(u)\,du\right\} = 0.
\]
Therefore, as $\int_t^{2\pi(n(t)+1)} \sin(u)\,du=\cos(t)-\cos(2\pi(n(t)+1))=\cos(t)-1$, we have 
\[
\lim_{t\to\infty} \left\{\frac{\int_t^\infty k_0(s)\,ds}{k(t)}-\cos(t)\right\}=0. 
\]
Therefore, we see that part (ii) is true. The proof of the second part of (i) 

Let $j\in \mathbb{N}$; noting that
\[
\int_{\pi}^{2\pi} k(v+2\pi j) |\sin(v)|\,dv = \int_0^\pi k(u+2\pi j + \pi)|\sin(u)|\,du. 
\]
we see that  
\begin{align*}
\int_{2j\pi}^{2(j+1)\pi} |k_0(s)|\,ds 
&= \int_{0}^{\pi} k(u+2\pi j) \sin(u)\,du + \int_{\pi}^{2\pi} k(v+2\pi j) |\sin(v)|\,dv\\
&= 2\int_0^{\pi} k(u+2\pi j) \sin(u)\,du.
\end{align*}
Arguing as before, we see that 
\[
\lim_{j\to\infty} \frac{\int_{2j\pi}^{2(j+1)\pi} |k_0(s)|\,ds}{k(2\pi j)}=2\int_0^\pi \sin(u)\,du=4.
\]
Also
\[
\lim_{j\to\infty} \frac{\int_{2j\pi}^{2(j+1)\pi} k(s)\,ds}{k(2\pi j)}=2\pi.
\]
Therefore
\[
\lim_{j\to\infty} \frac{\int_{2j\pi}^{2(j+1)\pi} |k_0(s)|\,ds}{\int_{2j\pi}^{2(j+1)\pi} k(s)\,ds}
=\frac{4}{2\pi}.
\] 
Hence by Toeplitz lemma, we have 
\[
\lim_{n\to\infty} 
\frac{\int_{0}^{2n\pi} |k_0(s)|\,ds}{\int_{0}^{2n\pi} k(s)\,ds} = 
\lim_{n\to\infty}
\frac{\sum_{j=0}^n \int_{2j\pi}^{2(j+1)\pi} |k_0(s)|\,ds}{\sum_{j=0}^n \int_{2j\pi}^{2(j+1)\pi} k(s)\,ds}
=\frac{4}{2\pi},
\]
and so $\lim_{t\to\infty} \int_0^t |k_0(s)|\,ds=+\infty$, as required.
Now 

\subsection{Proof of Theorem~\ref{thm.goscill}} 
Suppose $n$ is an integer such that $n\geq (2\beta-1)/(\beta-1)$, and let 
\[
g(t)=\Gamma(t) \sin\left(\left\{\int_0^t \Gamma(s)\,ds \right\}^n \right), \quad t\geq 0.
\]
Since $\Gamma$ is continuous, so is $g$. Moreover $I(t):=\int_0^t \Gamma(s)\,ds$ is in $C^1((0,\infty);(0,\infty))$ and it obeys $I(t)\to\infty$ as $t\to\infty$. Let $0\leq t<T$. Then, using integration by substitution, we obtain 
\[
\int_t^T g(s)\,ds =\int_{I(t)^n}^{I(T)^n} \frac{1}{n}u^{-(1-1/n)} \sin(u)\,du.
\]
If we identify $k(t)=t^{-(1-1/n)}/n$, and let $k_0(t)=k(t)\sin(t)$ for $t\geq 1$, it can be seen that $k$ obeys all the properties of Lemma~\ref{lemma.osyexamp}, and therefore that 
\[
\lim_{t\to\infty} \int_1^t k_0(s)\,ds =:K^\ast, \quad \lim_{t\to\infty} \int_1^t |k_0(s)|\,ds = +\infty.
\] 
Since $I(T)\to\infty$ as $T\to\infty$, $g$ is continuous on $[0,1]$ and 
\[
\int_1^T g(s)\,ds = \int_{I(1)^n}^{I(T)^n} k_0(u)\,du, \text{ and } \int_1^T |g(s)|\,ds = \int_{I(1)^n}^{I(T)^n} |k_0(u)|\,du,
\]
we have that $g$ obeys
\[
\lim_{t\to\infty} \int_0^t g(s)\,ds =K^\ast+\int_0^1 g(s)\,ds, \text{ and } \lim_{t\to\infty} \int_0^t |g(s)|\,ds = +\infty.
\]
Of course, the first limit implies that $\int_t^\infty g(s)\,ds\to 0$ as $t\to\infty$.
Clearly by construction $\limsup_{t\to\infty} |g(t)|/\Gamma(t)=1$, and $g$ has infinitely many changes of sign because $I(t)^n$, the argument of $\sin$ in $g$, tends to infinity as $t\to\infty$. 

Finally, we determine the asymptotic behaviour of $\int_t^\infty g(s)\,ds$ as $t\to\infty$. Since $I(t)\to\infty$ as $t\to\infty$, 
by Lemma~\ref{lemma.osyexamp}, we have that 
\[
\limsup_{t\to\infty} \frac{\left|\int_t^\infty g(s)\,ds\right|}
{k(I(t)^n)} 
=\limsup_{t\to\infty} \frac{\left|\int_{I(t)^n}^\infty k_0(u)\,du\right|}{k(I(t)^n)}=1.
\]
Since $k(I(t)^n)=I(t)^{-(n-1)}/n$, we have 
\begin{equation} \label{eq.gversusIn}
\limsup_{t\to\infty} \frac{\left|\int_t^\infty g(s)\,ds\right|}{\frac{1}{n}I(t)^{-(n-1)}} =1.
\end{equation}
Next, as $\Gamma(t)\to\infty$ as $t\to\infty$, it follows that $I(t)/t\to \infty$ as $t\to\infty$, so there exists $T_1$ such that $I(t)>t$ for all $t\geq T_1$.
Therefore $I(t)^{-(n-1)}<t^{-(n-1)}$ for $t\geq T_1$. Since $F^{-1}\in \text{RV}_\infty(-1/(\beta-1))$, we have that 
\[
\lim_{t\to\infty}\frac{\log F^{-1}(t)}{\log t}=-\frac{1}{\beta-1}.
\] 
Hence there is a $T_2>0$ such that 
$F^{-1}(t)>t^{-1/(\beta-1)-1/2}$ for $t\geq T_2(\epsilon)$. Now let $T_3=\max(T_1,T_2)$. For $t\geq T_3$, we have
\[
\frac{I(t)^{-(n-1)}}{F^{-1}(t)}\leq t^{-(n-1)} \cdot t^{1/(\beta-1)+1/2},
\]
and as $n\geq (2\beta-1)/(\beta-1)$, the right--hand side of this expression tends to zero as $t\to\infty$. Combining this with \eqref{eq.gversusIn} 
gives the second part of \eqref{eq.intgdivF}, as claimed. Therefore, Theorem~\ref{thm.detpressuff} applies to the solution $x$ of \eqref{eq.odepert}, 
as claimed.

\subsection{Proof of Lemma~\ref{lemma.gspikes}}
First we note some key properties of $h_s$ which will be used extensively:
\begin{align*}
&h(0,a,b) = 0, \,\,\frac{d}{dx}h_s(x,a,b)= 0 \text{ for } x = 0,a,b, \\
&\frac{d}{dx}h_s(x,a,b) > 0 \text{ for } x \in (0,a) \text{ and }
\frac{d}{dx}h_s(x,a,b) < 0 \text{ for } x \in (a,2a).
\end{align*}
Thus $h_s(x,a,b) \leq h_s(a,a,b) = b$ for $x \in [0,2a]$.
Now we proceed to show (\ref{max}):
\begin{align*}
k(n + \frac{w_n}{2}) &= k_s(n+\frac{w_n}{2}) + h_s(\frac{w_n}{2},\frac{w_n}{2},\Gamma_+(n+\frac{w_n}{2})-k_s(n+\frac{w_n}{2})) \\
&= k_s(n+\frac{w_n}{2}) + \Gamma_+(n+\frac{w_n}{2})-k_s(n+\frac{w_n}{2}) = \Gamma_+(n+\frac{w_n}{2}),
\end{align*}
which is valid since $\Gamma_+(t) > g_s(t), \, t \geq 0$. Therefore, with $t_n=n+\frac{w_n}{2}$,
\begin{align*}
\limsup_{t \to \infty}\frac{k(t)}{\Gamma_+(t)} \geq \limsup_{n \to \infty}\frac{k(t_n)}{\Gamma_+(t_n)} = 1.
\end{align*}
Considering $t \in [n, n+w_n]$, we obtain
\begin{align*}
k(t) &= k_s(t) + h_s(t-n,\frac{w_n}{2},\Gamma_+(t)-k_s(t)) \\
&\leq k_s(t) + \Gamma_+(t)-k_s(t) = \Gamma_+(t).
\end{align*}
For $t \in [n+w_n, n+1]$, $k(t) = k_s(t) < \Gamma_+(t).$ Therefore $k(t) \leq \Gamma_+(t)$ for all $t \geq 0$. Thus
\begin{align*}
1 \leq \limsup_{t \to \infty}\frac{k(t)}{\Gamma_+(t)} \leq 1, \text{ so } \limsup_{t \to \infty}\frac{k(t)}{\Gamma(t)} = 1,
\end{align*}
since $\Gamma_+(t) \sim \Gamma(t)$ as $t \to \infty$. \\[5pt] 
Given $k_s \in C^1(0,\infty)$, to show that $k \in C^1(0,\infty)$ we just need to show that it is $C^1$ at the points of transition. Write $h_s(x,a,b) = b \, \tilde{h_s(x,a)}$. Then
\begin{align*}
\frac{d}{dt}h_s(t-n,\frac{w_n}{2},\Gamma_+(t)-k_s(t)) &= \frac{\partial}{\partial x}h_s(t-n,\frac{w_n}{2},\Gamma_+(t)-k_s(t)) \\
&+ \frac{\partial}{\partial t}h_s(t-n,\frac{w_n}{2},\Gamma_+(t)-k_s(t)).(\Gamma_+'(t)-k_s'(t)) \\
&= (\Gamma_+(t)-k_s(t)).\tilde{h_s'}(t-n,\frac{w_n}{2}) \\
&+ \tilde{h_s}(t-n,\frac{w_n}{2})(\Gamma_+'(t)-k_s'(t)).
\end{align*}
Now as $\tilde{h_s'}(0,a)=\tilde{h_s'}(a,a)=\tilde{h_s'}(2a,a)=0$ and $\tilde{h_s}(0,a)=\tilde{h_s}(2a,a)=0$ we have 
\begin{align*}
k'(t) =
\begin{cases}
&k_s'(t) + (\Gamma_+(t)-k_s(t)).\tilde{h_s'}(t-n,\frac{w_n}{2}) 
+ \tilde{h_s}(t-n,\frac{w_n}{2})(\Gamma_+'(t)-k_s'(t)), \\ &t \in [n,n+w_n), \\
&k_s'(t), \, t \in [n+w_n,n+1].
\end{cases}
\end{align*}
Hence $\lim_{t \downarrow n}k'(t) = k'(n)$ and 
$\lim_{t \uparrow n+w_n}k'(t) = k'(n+w_n).$
Similarly, we have \\ $\lim_{t \downarrow n}k(t) = k_s(n) + h_s(0,\frac{w_n}{2},\Gamma_+(n)-k_s(n)) = k_s(n) = k(n)$ and 
\begin{align*}
\lim_{t \uparrow n+w_n}k(t) &= k_s(n+w_n) + h_s(w_n,\frac{w_n}{2},\Gamma_+(n+w_n)-k_s(n+w_n)) \\
&= k_s(n+w_n) + \{ \Gamma_+(n+w_n)-k_s(n+w_n) \}\tilde{h_s}(w_n,\frac{w_n}{2}) \\
&= k_s(n+w_n) = k(n+w_n).
\end{align*}
Thus we have $k \in C^1(0,\infty)$, as required.\\[5pt]
Finally we demonstrate that (\ref{intRatio}) holds. Suppose $t \in [n,n+w_n)$ and write
\begin{align*}
k(t) &= k_s(t) + h_s(t-n,\frac{w_n}{2},\Gamma_+(t)-k_s(t) )
\leq \Gamma_+(t) \leq \Gamma_+(n+1).
\end{align*}
Hence it can be shown that 
\begin{align*}
\int_t^{\infty}{k(u)du} \leq \int_t^\infty{k_s(u)du} + \sum_{j=n}^\infty{w_j \Gamma_+(j+1)}, \, t \in [n, n+w_n).
\end{align*}
Similarly, for $t \in [n+w_n,n+1]$ we have 
\begin{align*}
\int_t^{\infty}{k(u)du} \leq \int_t^\infty{k_s(u)du} + \sum_{j=n+1}^\infty{w_j \Gamma_+(j+1)}.
\end{align*}
Thus 
\begin{align*}
\int_t^{\infty}{k(u)du} \leq \int_t^\infty{k_s(u)du} + \sum_{j=n}^\infty{w_j \Gamma_+(j+1)}, \, t \in [n, n+1].
\end{align*}
Hence for $t \in [n, n+1]$,
\begin{align*}
\frac{\int_t^{\infty}{k(u)du}}{\int_t^\infty{k_s(u)du}} &\leq 1 + \frac{\sum_{j=n}^\infty{w_j \Gamma_+(j+1)}}{\int_t^\infty{k_s(u)du}}
\leq 1 + \frac{\sum_{j=n}^\infty{w_j \Gamma_+(j+1)}}{\int_{n+1}^\infty{k_s(u)du}}\\ 
&= 1 + \frac{\sum_{j=n}^\infty{w_j \Gamma_+(j+1)}}{\sum_{j=n}^{\infty}{\int_{j+1}^{j+2}{k_s(u)du}}}.
\end{align*}
Now using (\ref{w_j}) we have
\begin{align*}
0 \leq \limsup_{n \to \infty}\frac{w_n \Gamma_+(n+1)}{\int_{n+1}^{n+2}{k_s(u)du}} \leq \limsup_{n \to \infty}\frac{1}{n+1} = 0.
\end{align*}
Hence 
\begin{align*}
\lim_{n \to \infty}\frac{w_n \Gamma_+(n+1)}{\int_{n+1}^{n+2}{k_s(u)du}} = 0,
\end{align*}
and by Toeplitz lemma \cite{Toeplitz} 
\begin{align*}
\lim_{n \to \infty}\frac{\sum_{j=n}^\infty{w_j \Gamma_+(j+1)}}{\sum_{j=n}^\infty{\int_{j+1}^{j+2}{k_s(u)du}}} = 0.
\end{align*}
Hence with $n(t) \in \N$ defined by $t \leq n(t) < t+1$ we have
\begin{align*}
\limsup_{t \to \infty}\frac{\int_t^{\infty}{k(u)du}}{\int_t^\infty{k_s(u)du}} \leq 1 + \limsup_{t \to \infty}\frac{\sum_{j=n(t)}^\infty{w_j \Gamma_+(j+1)}}{\sum_{j=n(t)}^\infty{\int_{j+1}^{j+2}{k_s(u)du}}} = 1.
\end{align*}
Since $k(t) \geq k_s(t)$, for all $t \geq 0$, 
\begin{align*}
\liminf_{t \to \infty}\frac{\int_t^\infty{k(u)du}}{\int_t^\infty{k_s(u)du}} \geq 1,
\end{align*}
which completes the proof.

\end{document}